\def\mX{\mathcal{X}}
\newcommand{\Yn}{Y} 
\newcommand{\dYn}{\dot{\Yn}} 
\newcommand{\fp}[1]{\color{black} {#1} \color{black}  }
\newcommand{\dg}[1]{\color{black} { #1}\color{black}}
\newcommand{\R}{\mathbb{R}}
\newcommand{\n}{n} 
\newcommand{\Z}{\mathbb{Z}}
\newcommand{\E}{\mathcal{E}}
\newcommand{\X}{\mathcal{X}}
\newcommand{\W}{\mathcal{W}}
\newcommand{\uS}{\underline{S}} 
\newcommand{\uO}{\Omega} 
\newcommand{\D}{\mathcal{D}}
\newcommand{\pa}{\partial}
\newcommand{\ve}{\varepsilon}
\def\wt{\widetilde}
\def\jnab{\langle \nabla \rangle}
\renewcommand{\Im}{\operatorname{Im}}
\renewcommand{\Im}{\operatorname{Im}}
\def\C{{\mathbb C}}
\def\Z{{\mathbb Z}}
\def\R{{\mathbb R}}
\def\e{\varepsilon}
\def\jt{\langle t \rangle}
\def\js{\langle s \rangle}
\def\eps{\epsilon}
\def\S{\mathcal{S}}
\def\Dt{\partial \mathcal{D}_t}
\def\what{\widehat}
\newcommand{\comment}[1]{\vskip.3cm\fbox{\parbox{0.93\linewidth}{\footnotesize #1}}\vskip.3cm}
\def\tofill{\vskip30pt $\cdots$ To fill in $\cdots$ \vskip30pt}
\let\div\relax 
\DeclareMathOperator{\curl}{curl}
\DeclareMathOperator{\div}{div}
\numberwithin{equation}{section}
\newtheorem{theorem}{Theorem}[section]
\newtheorem{prop}[theorem]{Proposition}
\newtheorem{cor}[theorem]{Corollary}
\newtheorem{lemma}[theorem]{Lemma}
\newtheorem{remark}[theorem]{Remark}
\newtheorem{defi}[theorem]{Definition}
\begin{document}

\title[Water waves with vorticity]{Long time regularity for $3$d gravity waves \\ with vorticity}

\author{Daniel Ginsberg}
\address{Department of Mathematics, Brooklyn College (CUNY), 2900 Bedford Ave, Brooklyn, NY 11210, USA
(Corresponding Author)}
\email{daniel.ginsberg@brooklyn.cuny.edu}
\author{Fabio Pusateri}
\address{Department of Mathematics, University of Toronto, 40 St. George
street, Toronto, M5S 2E4, Ontario, Canada}
\email{fabiop@mail.math.toronto.edu}

\begin{abstract}
We consider the Cauchy problem for the full free boundary Euler equations 
in $3$d with an initial small velocity of size $O(\e_0)$, 
in a moving domain which is initially an $O(\e_0)$ perturbation of a flat interface.
We assume that the initial vorticity is of size $O(\e_1)$ and prove
a regularity result up to times of the order $\e_1^{-1+}$, independent of $\e_0$.

A key part of our proof is a normal form type argument for the vorticity equation;
this needs to be performed in the full three dimensional domain 
and is necessary to effectively remove the irrotational components from the quadratic stretching terms
and uniformly control the vorticity.
Another difficulty is to obtain sharp decay for the irrotational component of the velocity
and the interface; 
to do this we perform a dispersive analysis 
on the boundary equations, which are forced by a singular 
contribution from the rotational component of the velocity.

As a corollary of our result, when $\e_1$ goes to zero
we recover the celebrated global regularity results of Wu (Invent. Math. 2012) 
and Germain, Masmoudi and Shatah (Ann. of Math. 2013) 
in the irrotational case.
\end{abstract}

\maketitle

\setcounter{tocdepth}{1}
\begin{quote}
\tableofcontents
\end{quote}

\bigskip
\section{Introduction}\label{secintro}
We consider the classical free boundary Euler equations with gravity in three space dimensions:
\begin{subequations}\label{freebdy}
\begin{alignat}{2}
\label{freebdyeul}
(\pa_t + v^k\pa_k) v_i &= -\pa_i p - ge_3, &&\quad \text{ in } \D_t,
\\
\label{freebdyinc}
\div v &= 0, &&\quad \text{ in } \D_t,
\\
p &=0, &&\quad \text{ on } \pa \D_t,
\\
\label{freebdybc}
(1, v) &\text{ is tangent to } \D= \cup_{t \geq 0} \{t\}\times \D_t.
\end{alignat}
\end{subequations}
We are adopting the usual convention of summing over repeated upper and lower indexes.
In what follows we set $g = 1$.
We assume that the boundary of the moving domain, denoted $\pa \D_t$, 
is given by the graph of a function $h$:
\begin{align}
\D_t := \{ (x,y) \in \R^2\times \R \, : \, y \leq h(t,x)\}.
\end{align}
This problem, and closely related models, have been studied extensively.
We will recall the local and global well-posendess theory and other results in the literature below in Subsection \ref{seclit}.

For the moment we point out that in the irrotational case ($\omega := \curl v = 0$) 
one can construct classes of global solutions
close to a flat and still interface; see 
Wu \cite{Wu2} and Germain-Masmoudi-Shatah \cite{GMS2} for the problem \eqref{freebdy},
and \cite{GMSC,DIPP} and the other references given below for the case of other $3$d and $2$d models. 
These are essentially the only known classes of global solutions for the initial value problem.
In this paper we are interested in the regularity question for the Cauchy problem 
for general solutions with rotation, $\omega \neq 0$.

The first natural question to ask is: 
given an initial (divergence free) velocity field and an initial perturbation of a flat interface
of size $\e_0$ (typically measured in a weighted Sobolev space), and an initial vorticity of size $\e_1$, 
what is the maximal time of existence and regularity of solutions?
Our main result shows that 
the above problem admits a solution at least until times that are (almost) of the order of $1/\e_1$,
uniformly in the size $\e_0$ of the irrotational components of the solution. 
This is the natural time scale for the evolution of the vorticity,
which, in three dimensions, is a transport equation with quadratic terms.
By sending $\e_1$ to zero one then also recovers the celebrated results of \cite{GMS2} and \cite{Wu2},
including control on high order energies, and sharp pointwise decay of solutions.

We first give here an informal statement, and will give a more precise one in Theorem \ref{maintheoprecise}:

\begin{theorem}\label{maintheo}
Assume that the initial height $h(0,x):\R^2 \rightarrow \R$ 
and the initial (divergence-free) velocity $v(0,x,y)$
defined on $\D_0 := \{ (x,y) \in \R^2\times \R \, : \, y \leq h(0,x)\}$,
are of size $\e_0$ in sufficiently regular weighted Sobolev spaces.
Assume that $\curl v(0,x,y)$ 
is of size $\e_1$ in a sufficiently regular weighted Sobolev space,
and that $\curl v(0,x,h(x)) = 0$. 


Then, for any fixed $\delta >0$ there exists $\bar{\e}_0$ and $\bar{c}$ sufficiently small, independent of $\e_1$,
such that, for any $\e_1 \leq \e_0 \leq \bar{\e}_0$, the system \eqref{freebdy}
has a unique classical solution $(v,h)$ with the above given initial data $(v(0),h(0))$, 
on the time interval $[-T_{\e_1},T_{\e_1}]$ with
\begin{equation}\label{maintheoT}
T_{\e_1} := \frac{\bar{c}}{\ve_1^{1-\delta}}.
\end{equation}
\end{theorem}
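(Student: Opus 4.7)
The plan is a bootstrap/continuity argument on three coupled quantities: a high-order energy $E_N(t)$ for the full free boundary system, a weighted-Fourier $Z$-type dispersive norm for the irrotational boundary unknowns, and a \emph{uniform} Sobolev norm for a normal-form corrected vorticity. The first step is the standard Helmholtz-type decomposition $v = v_{ir} + v_{ro}$ inside $\D_t$, where $v_{ir} = \nabla \phi$ with $\phi$ harmonic and $v_{ro}$ is divergence-free with vanishing normal trace on $\pa\D_t$. Setting $\psi = \phi|_{\pa\D_t}$, the system splits into (i) a Zakharov-type dispersive system on $\pa\D_t$ for $(h,\psi)$, \emph{forced} by the tangential trace of $v_{ro}$ through the Bernoulli equation and the Dirichlet--Neumann operator, and (ii) the transport equation $D_t \omega = \omega\cdot\nabla v$ in $\D_t$, with $D_t = \pa_t + v\cdot\nabla$ and stretching term $\omega\cdot\nabla v$. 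The bootstrap would target $E_N(t) \lesssim \e_0 (1+\e_1 t)^{C\delta}$, the $Z$-norm of $(h,\psi)$ of size $\lesssim \e_0$, and the corrected vorticity of size $\lesssim \e_1$ uniformly in $t$.

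The central new ingredient, as advertised in the introduction, is a \textbf{normal form for the vorticity equation}. Splitting $\omega\cdot\nabla v = \omega\cdot\nabla v_{ir} + \omega\cdot\nabla v_{ro}$, the second term is already of size $\e_1^2$ and produces a lifespan $\sim \e_1^{-1}$, but the first is of size $\e_0\e_1$ and by itself would only give lifespan $(\e_0\e_1)^{-1}$. Since $\phi$ is governed at leading order by a dispersive equation (symbol of order $|\nabla|^{1/2}$ after the Zakharov reduction) while $D_t$ has a real transport symbol, there is no time resonance and one can build a bilinear pseudo-differential correction $\Phi(\omega,\phi)$ such that $\wt\omega := \omega + \Phi(\omega,\phi)$ satisfies $D_t \wt\omega = \text{cubic}$ plus the already-tame $\omega\cdot\nabla v_{ro}$ contribution. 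This has to be carried out in the \emph{full three-dimensional moving domain}: I would pull everything back to a flat half-space via a harmonic diffeomorphism and develop a paradifferential calculus adapted to it, so that energy estimates yield $\|\wt\omega(t)\|_{H^N} \lesssim \e_1$ uniformly on the bootstrap window, and hence the same bound for $\omega$ itself after undoing the bounded correction.

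In parallel, to propagate the estimates on $(h,\psi)$ I would adapt the dispersive analysis of \cite{Wu2,GMS2}: apply a cubic (or modified-scattering) normal form to the Zakharov system, track bilinear Fourier multipliers in a $Z$-type weighted space, and extract pointwise decay $\|v_{ir}(t), h(t)\|_{L^\infty} \lesssim \e_0 \jt^{-1}$. The genuinely new feature is the \emph{non-dispersive} source coming from $v_{ro}|_{\pa\D_t}$, which the abstract refers to as a singular contribution: because $v_{ro}$ is only transported it does not decay pointwise, so it produces a forcing in the Zakharov system that a priori grows linearly in $t$. Exploiting that $v_{ro}\cdot N = 0$ (so only tangential traces enter), that $\|v_{ro}\|_{H^N}\lesssim \e_1$ (by Step 2), and that $\e_1 \leq \e_0$, one should be able to show that its contributions to the $Z$-norm and to $E_N$ amount to at most $\lesssim \e_1 t$, which is admissible on the target time scale.

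Combining these with high-order energy estimates for the full free-boundary problem (standard in weighted Sobolev spaces but now carefully coupled across the three bootstrap quantities) and local well-posedness as the starting point of the continuity argument, the bootstrap would close on $[-T_{\e_1}, T_{\e_1}]$ with $T_{\e_1} = \bar c \, \e_1^{-1+\delta}$; the $\delta$-loss reflects the slow polynomial growth of $E_N$ driven by the $\e_1$-sized rotational source terms. Formally sending $\e_1 \to 0$ decouples the vorticity evolution and recovers the global irrotational results of \cite{Wu2, GMS2}. The \textbf{main obstacle} I anticipate is executing the vorticity normal form on the moving domain \emph{consistently} with the boundary normal forms: the bulk corrections $\Phi(\omega,\phi)$ interact with the Dirichlet--Neumann operator at $\pa\D_t$, and one must verify that the quadratic modifications preserve the required boundary regularity and do not destroy the dispersive structure of the Zakharov system on which Step 3 relies.
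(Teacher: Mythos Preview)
Your high-level architecture matches the paper: a coupled bootstrap on (i) high-order weighted energies for $(v,h,\omega)$, (ii) decay for the boundary dispersive variable $u=h+i|\nabla|^{1/2}\varphi$ via normal forms on the Zakharov system forced by $v_\omega|_{\pa\D_t}$, and (iii) uniform $O(\e_1)$ control on (a corrected) vorticity. The conclusion and the role of $\delta$ are also right in spirit.

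Two points of genuine methodological divergence are worth noting. First, the paper does \emph{not} use a weighted-Fourier $Z$-norm for decay; it uses vector-field norms (scaling $S$, rotation $\Omega$, derivatives) together with a Klainerman--Sobolev type estimate for $e^{it|\nabla|^{1/2}}$. This matters because the bootstrap hierarchy (energy at $N_0$ derivatives, decay at $N_1<N_0$, vorticity at an intermediate level $N=N_1+12$) and the origin of the $\delta$-loss are tied to vector-field counting, not to Fourier weights; in particular the $\delta$-loss comes from the need to control $V_\omega$ with \emph{more} vector fields than the level at which $u$ decays, which forces a two-tier bootstrap on $\omega$ with a slowly growing high norm. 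Second, your description of the vorticity normal form as a ``no time resonance'' pseudodifferential correction is not how the paper proceeds: the actual mechanism is the elementary but crucial identity $\Psi \approx e^{z|\nabla|}|\nabla|^{-1}\partial_t h$ (to quadratic errors), so the irrotational velocity in the stretching and transport terms is, up to $\mathcal{O}_2$ remainders, a \emph{perfect time derivative} of a bounded quantity $A=\nabla_{x,z}|\nabla|^{-1}e^{z|\nabla|}h$. One then integrates by parts in $t$ along the material derivative in physical space, producing an explicit correction $G^n\sim \Gamma^{\leq n}(A,h)\cdot\Gamma^n W$ rather than a Fourier-side bilinear multiplier.

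One ingredient you do not mention, and which occupies a substantial part of the paper, is how to pass from $\omega$-bounds to $v_\omega$-bounds: the paper introduces the vector potential $\beta$ with $v_\omega=-\curl\beta$, derives a mixed Dirichlet/Neumann elliptic system for it, flattens to the half-space, and runs a fixed-point argument in weighted $L^q_zL^p_x$ spaces with Poisson-kernel estimates. This is where the singular low-frequency behavior of the $|\nabla|^{-3/2}\nabla\cdot\partial_t P_\omega$ forcing in the $u$-equation is ultimately handled, and it is essential for closing the decay bootstrap; it is not a consequence of $\|\omega\|_{H^N}\lesssim\e_1$ alone.
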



\smallskip
\subsection{Previous results}\label{seclit}
Studies on the free boundary Euler equations go back at least to Cauchy, Laplace and Lagrange \cite{Craik},
and the analysis of \eqref{freebdy}, and several of its variants, 
has been a very active research area in the last few decades. 
We will not try to give a complete list of references here, but only mention 
those results that are most relevant to the present work.
We direct the reader to the extensive lists of references in some of the cited works, 
and to the survey \cite{IoPuRev} for more background.

\smallskip
{\it Local well-posedness}.
The local well-posedness theory of the free boundary Euler equations and several of its variants
is well-understood in a variety of different scenarios, due to the contributions of many authors.
Without being exhaustive we mention \cite{CraigLim,Wu1,Wu2,CL,HL,LL, Lannes,CS2,ShZ2,ShZ3,ABZ2,CHS,IPTT,WZZZ} 
and refer the reader to \cite[Section $2$]{IoPuRev} and to the book of Lannes \cite[Chapter 4]{LannesBook}.
In short, for sufficiently regular Sobolev initial data, classical smooth solutions exist
on a (small) time interval $[-T,T]$ where $T$ is approximately 
the minimum between the inverse of the size of the initial velocity (in a Sobolev space)
and some quantities that depend on the geometry of the interface (e.g.
the so-called `arc-chord constant'). 

We remark that among the cited works only \cite{CL,CS2,ShZ3,HL,LL, IPTT,WZZZ}
treat the full problem with rotation;
for the case of constant vorticity, the paper \cite{ITv} proves an extended life span,
and the recent work of Wang \cite{Wanglow}
establishes low regularity local wellposedness.
All the other works only consider the irrotational case,
customarily referred to as the `water waves' problem.
The main advantage in considering the irrotational problem, as far as local existence is concerned,
is that the equations of motion
can be reduced to equations on the interface for suitable unknowns;
this reduction can be done both in Eulerian or Lagrangian coordinates.

\smallskip
{\it Global irrotational solutions and related results}.
In the irrotational case one can construct global solutions to the water waves problem
in the vicinity of a flat and still interface.
More precisely, for localized initial data in a weighted Sobolev space, 
one can rely on dispersion and pointwise decay 
to prove scattering (and modified scattering) results.
We refer the reader to \cite{WuAG,IoPu2,ADb,IT,IoPu3,IoPu4,IT2,Wa1} for the case of $1$d interfaces, 
and to \cite{Wu3DWW,GMS2,DIPP,Wa2} for $2$d interfaces;
see also \cite[Section 3]{IoPuRev} and \cite{DelortICM} for an overview of these results. 
As a corollary of our main result, when $\e_1$ goes to zero
we recover the  global regularity results for the irrotational problem with gravity 
of Wu \cite{Wu3DWW} and Germain, Masmoudi and Shatah \cite{GMS2}.

Our work is related to the work by Ionescu and Lie \cite{IoLie} 
where the authors prove a similar result for
the one-fluid Euler-Maxwell system 
in 3d, that is, existence of small solutions up to times
of $O(\e_1^{-1})$ where $\e_1$ is the size of the initial `vorticity' $B-\curl v$, where $B$ is the magnetic field,
and decay of the irrotational components.
One major difference in the case of \cite{IoLie} is that the linear decay of the irrotational solutions is 
integrable-in-time, unlike the case of irrotational gravity waves, which decay
at the rate of $t^{-1}$ in $L^\infty_x$. This fact has a major impact on the arguments, 
as we will explain below in Section \eqref{ssecStra} (see for example Step 4).
We also mention that Sun \cite{Sun} proves a similar $O(\e_1^{-1})$ existence result for 
the two-fluid Euler-Maxwell system, and for the Euler-Korteweg system,
by viewing the rotational problem as a perturbation of the irrotational problem,
for which global bounds and integrable-in-time decay are known;
assuming decay for the rotational components, an elegant argument based on energy estimates and `gauge' techniques
provides the claimed long-time existence result, but only obtains weak (exponentially growing) bounds 
on high order energies. Note that also in the case of \cite{Sun} the integrable decay
of irrotational small solutions seems crucial.

\smallskip
{\it The water waves problem with vorticity}.
The question of long-term regularity for water waves with vorticity
is much more delicate than in the irrotational case.
This is due to the fact that the vorticity satisfies a transport equation with a quadratic nonlinear 
(stretching) term.
Moreover, in the free boundary problem, the presence of non-trivial vorticity prevents 
the reduction of the equations solely to the boundary.\footnote{
We also note that the Taylor sign condition $-\nabla_N p|_{\pa \D_t} > 0$, which is 
needed for local well-posedness, holds automatically in the irrotational case but can fail
if there is nonzero vorticity (see, for example, \cite{Su2,WunonC1}), 
though it holds automatically in the small data regime we are working in here.}
So far, to our knowledge, the only available results on extended lifespans are those of the first author
\cite{Gin1}, the work \cite{ITv} proving a time of existence of $\e_0^{-2}$ in the case of constant vorticity in 
the $2d$ case, and \cite{Su1} proving an $\e_0^{-2}$ existence result in the case of point vortices.
Concerning the problem of finding other types of solutions with vorticity,
we mention the recent work of Ehnstrom, Walsh and Zheng \cite{Zhengetal} on stationary solutions. 
Finally, we also mention Castro-Lannes \cite{CaLa}
who proved a well-posedness results with a new Hamiltonian formulation for shallow water waves with vorticity,
Berti-Franzoi-Maspero \cite{BeFrMa} 
who construct quasi-periodic in time solutions with constant vorticity,
and \cite{BeMaMu} who prove an almost global existence result with constant vorticity on the torus.


\smallskip
{\it Further references}.
For further references we refer the reader to the following:
the review \cite{IoPuRev} for more background on the construction
of long-time and global solutions; 
\cite{BeFePu,DeIoPu1} for more literature on spatially periodic solutions;
and to the review \cite{StraussONEPAS} for more on traveling and stationary waves (including the case with vorticity).


\noindent
{\bf Funding Declaration}.
D.G. is supported in part by a start-up grant from Brooklyn College. Part of this work
was completed while D.G. was supported by the Simons Center for Hidden Symmetries and Fusion Energy.
F.P. is supported in part by a start-up grant from the University of Toronto, and NSERC grant RGPIN-2018-06487.

{\bf Acknowledgements}.
The authors thank Alexandru Ionescu and Chongchun Zeng for helpful discussions about the problem.

\bigskip
\section{Strategy and main propositions}\label{secStra}

\subsection{General set-up and some ideas}
We being by decomposing the divergence free vector field $v$ into its rotational and irrotational parts in $\D_t$,
\begin{equation}\label{setup1}
v = \nabla \psi  + v_\omega, \qquad \Delta \psi = 0, \quad v_\omega \cdot n = 0,
\end{equation}
and we denote the vorticity by $\omega = \curl v$.
The moving boundary condition reads 
$$\partial_t h = \nabla \psi \cdot (-\nabla h, 1).$$
We let $\varphi(t,x) := \psi|_{\pa \D_t} = \psi(t,x,h(t,x))$ be the trace of the velocity potential;
one can reconstruct $\psi$ from $\varphi$ solving a standard elliptic problem.
We also define the main dispersive variable,
\begin{equation}\label{uintro}
u = h + i\Lambda^{1/2} \varphi, \qquad \Lambda := |\nabla|.
\end{equation}

The proof of our main result will be based on several interconnected bootstrap arguments for the quantities 
$\nabla \psi, h, v_\omega, \omega$ and $u$, for the vector potential $\beta$ associated to $v_\omega$
(i.e. $-\curl\beta = v_\omega$), and/or their counterparts in the flattened domain
obtained by mapping $y \rightarrow z := y - h(t,x)$.
A high level description of the proof is the following:

\setlength{\leftmargini}{1.5em}
\begin{itemize}

\item {\it High order energy and decay}. 
The basic starting point of our proof is weighted energy estimates for $v$, $h$ and $\omega$.
The weighted $L^2$-based Sobolev norms that we use
are based on the vector fields generated by the invariance of the equation:
(3d) translation and scaling\footnote{Technically these are only approximate invariances since the domain is
not translation or scaling invariant in the vertical direction.} and 2d rotations.
The energy estimate guarantees that top-order energy norms of $v$, $h$ and $\omega$ remain of size $\e_0 \jt^{p_0}$,
with $p_0$ a small constant, as long as we can prove time-decay at a rate of $\jt^{-1}$
for a lower order weighted norm of $v$ and $h$ in $L^\infty$.
See Proposition \ref{propEv} for a precise statement of the energy inequality. 
The main efforts then go into proving the necessary sharp decay in time.
To prove this, we use two separate arguments, one for $v_\omega$,
and one for $u$. 
For these arguments we also need high order bounds on the velocity potential on the interface,
which do not follow immediately from the $L^2$-orthogonality of $\nabla \psi$ and $v_\omega$;
we give the additional arguments needed in Section \ref{secvelpot}.

\item {\it Estimates on $v_\omega$ from the vorticity}.  
Since we work with times $|t| \leq \e_1^{-1+}$, 
proving the needed decay for $v_\omega$ amounts to bounding it (almost) uniformly-in-time by $\e_1$.
Note that the basic energy estimates only guarantees bounds of $O(\e_0)$ for the vorticity.

Naturally, $v_\omega$ can be estimated in terms of $\omega$ through a $\mathrm{div}$-$\mathrm{curl}$ system.
In practice, we relate $\omega$ and $v_\omega$ by introducing the vector potential
$\beta$ such that $-\curl v_\omega = \beta$.
The vector potential satisfies an elliptic system with mixed Dirichlet and Neumann boundary conditions
in the unbounded fluid domain.
When trying to obtain estimates through this elliptic system,
the limited (weighted) regularity and decay available on the geometry 
need to be carefully taken into account. 
It turns out that, all along the argument
we need to 
allow small growth for the highest norms of $v_\omega,\beta,\omega$,
while trying to control uniformly-in-time some lower order norms.
The necessity of letting the highest norms grow slightly in time
is essentially due to the critical nature of the problem, relative to time-decay.
This is also the technical reason why we allow for the presence of a small $\delta>0$
in \eqref{maintheoT} for our maximal time of existence.\footnote{While
this is most likely a technical issue, to avoid this small loss one may need
to make several adjustments to our arguments, or use substantially different arguments
based, for example, on a suitable paradifferential formulation of the problem.
Of course, this loss would not be present
if one were to let $\e_0 = \e_1$, and the existence time would be $\e_1^{-1}$ in this case,
consistently with the local-in-time theory.}

Flattening the domain to a half-space, and using bounds in weighted Lebesgue spaces for the Poisson kernel
we can obtain sufficiently strong bounds for $v_\omega$,
provided certain weighted Lebesgue norms of $\omega$ 
are controlled. See Section \ref{secVP}.

\item To bound the needed weighted Lebesgue norms of $\omega$
we use the vorticity transport equation.
Here one needs to deal with the slowly decaying contributions 
from the stretching terms, which are coming from the non-integrable slow decay 
of the irrotational components of the solution.
To overcome this, we use a normal form type argument on the 
vorticity transport equation in the full three dimensional domain.
This procedure renormalizes the vorticity equation allowing us to propagate the desired 
control on $\omega$. See Section \ref{secVorticity}.
These bounds on $\omega$ imply decay for $v_\omega$.

\item Finally, we need to prove decay for the irrotational components
of the solution $\nabla\psi$ and $h$; this amounts to proving decay for $u$ as in \eqref{uintro}.
We start by deriving boundary equations for $u$ that extend the well-known Zakharov-Craig-Schanz-Sulem
Hamiltonian formulation \cite{Zak0,CraigSS}; see \eqref{der30} and the simplified version in \eqref{der30intro}.
In the general case with rotation, the dispersive-type evolution equation for $u$ is 
`singularly' forced by the restriction to the boundary of 
$v_\omega$. 

To obtain decay for $u$ 
we use weighted $L^2$-$L^\infty$ estimates, and Poincar\'e normal forms
to remove the purely irrotational quadratic components.
To deal with the forcing and the other rotational components we use the estimates 
previously established on $v_\omega$.
Here we need to require more (weighted) regularity for the rotational components,
compared to the regularity of the irrotational components in the $L^\infty_x$-space where we establish time decay.
Moreover, we need to pay particular attention to small frequencies due to the singular nature of 
the forcing.

\end{itemize}

We will describe the above steps and the main bootstrap propositions 
more precisely in Subsection \ref{ssecStra}
after introducing all the necessary notation and parameters.

\subsection{Vector fields and function spaces}
In $\D_t$ we use $x=(x_1,x_2)$ to denote the horizontal variables and $-\infty< y < h(t,x)$ for the vertical one.
For several arguments we will find it convenient to flatten $\partial\D_t$
with the mapping $y \rightarrow z := y - h(t,x)$, which transforms $\D_t$ into the lower-half plane 
$\R^2_x \times \{z<0\}.$

We denote the standard $2$d vector fields
\begin{align}\label{2dvf}
\nabla_x := (\partial_{x_1},\partial_{x_2}), \quad S := \frac{1}{2}t\partial_t + x\cdot \nabla_x
  ,  \quad \Omega := x \wedge \nabla_x;
\end{align}
we will drop the index $x$ for the gradient when there is no risk of confusion.
We denote the `$3$d vector fields' in $\D_t$ as
\begin{align}\label{3dvf}
\underline{\nabla} = \nabla_{x,y} = (\partial_{x_1},\partial_{x_2},\partial_y),
  \qquad \underline{S} = S+y\partial_y, 
  \qquad \underline{\Omega} = \Omega.
\end{align}
In the flattened domain $\R^2_x \!\times\! \{z<0\}$ we slightly abuse notation and still
denote the `$3$d vector fields' by
\begin{align}\label{3dvfflat}
\underline{\nabla} = \nabla_{x,z} = (\partial_{x_1},\partial_{x_2},\partial_z),
  \qquad \underline{S} = S+z\partial_z 
  \qquad \underline{\Omega} = \Omega.
\end{align}
The distinction between these sets of vector fields will always be clear from context.




Let $\Gamma$, respectively $\underline{\Gamma}$, be the collection of $2d$, respectively $3d$ vector fields:
\begin{align}\label{Gammas}
\Gamma = (\partial_{x_1},\partial_{x_2}, S, \Omega),
\qquad \underline{\Gamma} = (\partial_{x_1},\partial_{x_2},\partial_y, \underline{S}, \Omega).
\end{align}
These are respectively $4$- and $5$-component vectors, but we will use the same notation for
multiple applications of them when this causes no confusion, that is,
we will write
$\Gamma^j$, with the understanding that $j\in\Z_+^4$, or $\underline{\Gamma}^j$ with the understanding
that $j\in\Z_+^5$.

Let $W^{s,p} = W^{s,p}(D;\C^m)$, with $H^s = W^{s,2}$ be the standard Sobolev spaces
with $D$ a (sufficiently) smooth domain in $\R^3$, or the plane $\R^2$.
We define the following basic spaces:
\begin{align}
\label{defX}
X^{r,p}_k(\Omega) & := \big\{ f \, : \, \sum_{|j|\leq k}{\| \underline{\Gamma}^j f \|}_{W^{r,p}(\Omega)} < \infty \big\},
  \qquad X^r_k := X^{r,2}_k
\\
\label{defZ}
Z^{r,p}_k(\R^2) & := \big\{ f \, : \, \sum_{|j|\leq k}{\| \Gamma^j f \|}_{W^{r,p}(\R^2)} < \infty \big\},
  \qquad Z^r_k := Z^{r,2}_k.
\end{align}
We denote by ${\| \cdot \|}_{X^{r,p}_k(\Omega)}$ and ${\| \cdot \|}_{Z^{r,p}_k(\R^2)}$ the respective norms.
We will often omit the domain when it is clear from context.

The above spaces play the following roles:
%
$X$ is the space where we measure the 
velocity field in the whole fluid domain,
%
%
while $Z$ is the space where we measure the boundary quantities $h$ and $\varphi$.

Besides these basic spaces, in due course we will also introduce 
other weighted spaces based on mixed $L^q_zL^p_x$ Lebesgue spaces in the flat domain; 
see for example those appearing in Proposition \ref{propalphaintro}.

\subsection{Initial data and main theorem}\label{ssecdata}

\subsubsection{Parameters: smallness and regularity}
Let
\begin{align}\label{param}
\e_1 \ll \e_0, 
  \qquad 0 < 3p_0 < \delta < 1/100, \qquad T_{\e_1} := \bar{c}\e_1^{-1+\delta},
\end{align}
for some sufficiently small absolute constant $\bar{c}>0$ (to be determined
in the course of the proof) and consider three (even) integers $N_0,N_1,N$ such that
\begin{align}\label{paramN}
N_0 \gg N_1 \geq \frac{N_0}{2}+10,  \qquad N := N_1 + 12.
\end{align}
These numbers are associated to various regularities and bounds for the
main unknowns in the problem:

\begin{itemize}

\item $N_0$ corresponds to the maximum number of derivatives and vector fields that we control on the velocity field
and on the height in $L^2$.

\item $N_1$ corresponds to the maximum number of derivatives and vector fields 
for which we prove the sharp decay rate of $(1+|t|)^{-1}$ in $L^\infty$
for the irrotational part of the velocity field and the height $h$.

\item $N$ corresponds to the maximum number of derivatives and vector fields
of the rotational components of the solution 
that we control (almost) uniformly by $\e_1$ on a time-scale of order (almost) $\e_1^{-1}$.

\end{itemize}

\subsubsection{Initial assumptions and main theorem}
We assume that the initial velocity and height satisfy
\begin{align}\label{init0}
& \sum_{r+k \leq N_0} {\| v_0 \|}_{X^r_k(\D_0)}
  + \sum_{r+k \leq N_0} {\| h_0 \|}_{Z^r_k(\R^2)} \leq \e_0.
\end{align}
For the vorticity, we assume that it satisfies the $L^p$-type bounds of high order
\begin{align}\label{init1'}
\sum_{|r|+|k| \leq N_0-20} 
  {\| \nabla_{x,y}^r \underline{\Gamma}^k \omega_0 \|}_{\W(\D_0)} & \leq \e_0, \qquad \W := L^2\cap L^{6/5}
\end{align}
and $L^p$-type bounds of smaller size $\e_1$ for lower order norms:
\begin{align}\label{init1}
\sum_{|r|+|k| \leq N} {\| \nabla_{x,y}^r \underline{\Gamma}^k \omega_0 \|}_{\W(\D_0)} & \leq \e_1.
\end{align}

\begin{remark}
If we define $W_0(x,z) 
= \omega_0(x,z+h(0,x))$, the transformed initial vorticity in the flat domain,
then \eqref{init1'}-\eqref{init1} imply the analogous bounds
\begin{align}\label{propWprdata0}
\sum_{|r|+|k| \leq N_0-20} {\| \nabla^r_{x,z} \underline{\Gamma}^k W_0 \|}_{\W(\R^2_x \times \{z<0\})} \leq C \e_0, 
\qquad
 \sum_{|r|+|k| \leq N} {\| \nabla^r_{x,z} \underline{\Gamma}^k W_0 \|}_{\W(\R^2_x \times \{z<0\})}  \leq C \e_1,
\end{align}
for some absolute constant $C>0$.
\end{remark}


We can now state a more precise version of our main result: 

\begin{theorem}\label{maintheoprecise}
Assume \eqref{init0}-\eqref{init1} and fix $\delta \in (0,1/100)$ and $3p_0 < \delta$.
Assume that $\omega_0$ vanishes on the boundary of\footnote{Since the boundary is a material
surface, this condition is preserved in time.} $\D_0$.
Then, there exists $\overline{\e_0}$ and $\bar{c}>0$ 
such that, for any $\e_1 \leq \e_0 \leq \overline{\e_0}$,
there exists a unique solution of \eqref{freebdy} with initial conditions $v(t=0)=v_0$
and $h(t=0) = h_0$ satisfying \eqref{init0}-\eqref{init1}, 
that remains regular for $|t| \leq T_{\e_1} = \bar{c} \e_1^{-1+\delta}$
and satisfies following: 
the $L^2$ bounds
\begin{align}\label{mtpre2}
& \sum_{r+k \leq N_0} {\| v(t) \|}_{X^r_k(\D_t)}
  + \sum_{r+k \leq N_0-1} {\| \omega(t) \|}_{X^r_k(\D_t)} \lesssim \e_0 \jt^{p_0},
\end{align}
and
\begin{align}\label{mtpre2'}
& \sum_{r+k \leq N_0} {\| h(t) \|}_{Z^r_k(\R^2)}
 \lesssim \e_0 \jt^{p_0},
\end{align}
and the decay bounds
\begin{align}\label{mtpreinfty}
& \sum_{r+k \leq N_1-5} {\| \nabla \nabla\psi 
  (t) \|}_{X^{r,\infty}_k(\D_t)} \lesssim \e_0 \jt^{-1},
\\
& \sum_{r+k \leq N_1-5} {\| \nabla v_\omega(t) \|}_{X^{r,\infty}_k(\D_t)} \lesssim \e_1 \jt^\delta,
\end{align}
and
\begin{align}\label{mtpreinfty'}
& \sum_{r+k \leq N_1} {\| h(t) \big\|}_{Z^{r,\infty}_{k}(\R^2)} \lesssim \e_0 \jt^{-1}.
\end{align}

\end{theorem}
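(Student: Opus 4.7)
My plan is to run a tightly coupled bootstrap argument for the estimates \eqref{mtpre2}--\eqref{mtpreinfty'} together with auxiliary bounds on the vector potential $\beta$ and the trace $\varphi$ of the velocity potential. Standard local well-posedness (see Subsection \ref{seclit}) produces a classical solution on a short interval with sharper constants; I would then propagate this solution up to $T_{\e_1} = \bar{c}\e_1^{-1+\delta}$ by improving the bootstrap constants by a definite factor. The delicate point is that the $\jt^{-1}$ decay of the irrotational velocity is only marginally integrable in time, so the top $L^2$ norms are allowed to grow like $\jt^{p_0}$ and the rotational norms like $\jt^\delta$; the loss $\delta$ in the lifespan is exactly what is needed to close the argument.

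First I would dispose of the high $L^2$ bounds \eqref{mtpre2}--\eqref{mtpre2'} conditionally on the decay \eqref{mtpreinfty}--\eqref{mtpreinfty'} holding. Applying the vector fields $\underline{\Gamma}^j$ with $|j|\leq N_0$ to \eqref{freebdy} and to the vorticity equation, and combining with a Rayleigh--Taylor-type coercive estimate on the boundary (valid because the Taylor sign holds in the small-data regime), one obtains an energy inequality of the type stated in Proposition \ref{propEv}, whose commutator errors pair a top-order factor against a factor carrying the sharp $\jt^{-1}$ decay. Gronwall then produces the $\e_0 \jt^{p_0}$ growth. The high regularity control of $h$ in $Z^r_k$ requires separately estimating $\varphi$ at top order, which is not immediate from the $L^2$-orthogonality of $\nabla\psi$ and $v_\omega$ and is the purpose of Section \ref{secvelpot}.

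The heart of the argument is the almost-uniform-in-time control of $v_\omega$. The basic energy bound only gives $O(\e_0)$ for $\omega$, whereas I need $O(\e_1\jt^{O(\delta)})$ in order to deduce $\|\nabla v_\omega\|_{X^{r,\infty}_k}\lesssim \e_1 \jt^\delta$. In the flattened domain $\R^2_x \times \{z<0\}$, the transport equation for $W(t,x,z) = \omega(t,x,z+h(t,x))$ contains stretching terms of the form $(W\cdot \underline{\nabla})v$ whose irrotational factor $\nabla v$ decays only like $\jt^{-1}$, which is borderline non-integrable. To overcome this I would perform a normal form change of variables $\widetilde{W} = W + Q(W,\nabla\psi)$ on the full three-dimensional equation so as to cancel the quadratic stretching contributions coming from the irrotational velocity, leaving behind cubic errors and terms quadratic in $v_\omega$ (which are integrable thanks to the already-better decay of $v_\omega$). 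I expect this to be the main obstacle, because the normal form must simultaneously respect the $\W = L^2\cap L^{6/5}$ control of $W$, commute with $\underline{\Gamma}^k$ up to order $N$, and be invertible with controlled loss. Once $W$ is controlled in $\W$, the elliptic system for the vector potential $\beta$ (with $-\mathrm{curl}\,\beta = v_\omega$) with mixed Dirichlet--Neumann data on $\pa\D_t$, analyzed in weighted mixed $L^q_zL^p_x$ norms via the Poisson kernel on the half-space, converts this into the pointwise bound on $\nabla v_\omega$.

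The last ingredient is the $\jt^{-1}$ decay for the irrotational variables encoded in $u = h + i\Lambda^{1/2}\varphi$. Extending the Zakharov--Craig--Sulem--Sulem reduction to the rotational setting, $u$ satisfies an equation of the schematic form $(\pa_t + i\Lambda^{1/2})u = \N_{\mathrm{irr}}(u,\bar u) + \F(v_\omega|_{\pa\D_t})$, where $\N_{\mathrm{irr}}$ is the classical quadratic water-wave nonlinearity and $\F$ carries a forcing from the rotational velocity that is singular at low frequencies. On $\N_{\mathrm{irr}}$ I would apply a Poincaré--Shatah normal form as in \cite{Wu3DWW,GMS2}, relying on the absence of space-time resonances for $3$d gravity waves to convert the quadratic terms into cubic ones, and then close a weighted $L^2$--$L^\infty$ scheme with the vector fields $\Gamma^j$ to get the $\e_0 \jt^{-1}$ bound. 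The forcing $\F$ is handled via the bounds from the previous step; the low-frequency singularity requires a careful Littlewood--Paley split and some extra regularity on $v_\omega$ (this is why $N > N_1$ in \eqref{paramN}), but the total rotational contribution stays of size $\e_1\jt^\delta$, which is $\ll \e_0\jt^{-1}$ on $[0,T_{\e_1}]$. Closing all of the bootstraps simultaneously extends the solution to $[-T_{\e_1},T_{\e_1}]$ by time reversal.
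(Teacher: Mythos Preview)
Your proposal is correct and follows essentially the same route as the paper: a coupled bootstrap combining the vector-field energy estimate (Proposition~\ref{propEv}), the elliptic analysis of the vector potential in the flattened half-space (Section~\ref{secVP}), the three-dimensional normal-form renormalization of the vorticity transport equation (Proposition~\ref{lemWreno}), and the dispersive analysis of $u$ with Poincar\'e normal forms and careful handling of the singular low-frequency forcing by $P_\omega$ (Section~\ref{secdecay}). You have also correctly identified the need for the separate control of $\varphi$ at high order (Section~\ref{secvelpot}) and the reason $N>N_1$ is required.
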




 

\subsection{Main a priori assumptions}
In this subsection we list all the main a priori assumptions that we are going to make.
For convenience, some of these assumptions are stated in the domain $\D_t$, 
while others are stated in the flattened domain $\R^2_x \times \{z<0\}$ 
and some are in terms of the boundary variables.
Then, in Subsection \ref{ssecStra} we are going to explain how all these a priori assumptions 
are bootstrapped on an interval $[0,T]$ with $T\leq T_{\e_1}$, and also provide
some of the main elliptic-type bounds that are needed for the arguments.

- {\it A priori assumption in $\D_t$.}
We make the following a priori assumptions on the high-order energy ($L^2$-based) norms of the velocity,
vorticity, and height:
\begin{align}\label{apriorie0}
& \sup_{[0,T]} \,\jt^{-p_0} \Big(
  \sum_{r+k \leq N_0} {\| v(t) \|}_{X^r_k(\D_t)}
  + \sum_{r+k \leq N_0-1} {\| \omega(t) \|}_{X^r_k(\D_t)}
  + \sum_{r+k \leq N_0} {\| h(t) \|}_{Z^r_k(\R^2)} \Big) \leq 2c_E \e_0
\end{align}
where $c_E$ is an absolute constant to be chosen large enough.

\begin{remark}
Note how we let the highest order energy norms grow like $\jt^{p_0}$,
where $p_0$ is the parameter in \eqref{param}; this parameter can 
be chosen of the form $C \e_0$ for an absolute constant $C>0$.
We will however prove uniform bounds (almost) of $O(\e_1)$ on 
a lower number $N$ of derivatives and vector fields of the vorticity components,
essentially propagating the bound \eqref{init1}.
\end{remark}

\noindent
We also assume a priori decay bounds on the velocity in the interior:
\begin{align}\label{aprioridecayv}
\sum_{r+k \leq N_1-5} {\| 
  v(t) \|}_{X^{r,\infty}_k(\D_t)}
  \leq  2c_v \big( \e_0 \jt^{-1} + \e_1 \jt^\delta\big), \qquad t\in[0,T].
\end{align}
Note that we make decay assumptions (and prove decay bounds) on $v$ and not just on $\nabla v$,
which would be sufficient for the sole purpose of closing standard energy estimates in Sobolev spaces
without vector fields (see Proposition 2.5); these stronger bounds are also needed in other parts of the proof.

- {\it A priori assumptions on the boundary variables.}
We assume sharp pointwise decay bounds for the `boundary variables' $(h,\varphi)$:
\begin{align}\label{aprioridecay}
& \sup_{[0,T]} \, \jt \,
  \sum_{r+k \leq N_1} {\big\| u(t) \big\|}_{Z^{r,\infty}_{k}(\R^2)} \leq 2 c_B \e_0,
  \qquad u := h+i|\nabla|^{1/2}\varphi,
\end{align}
where $c_B$ is an absolute constant to be chosen large enough.

- {\it A priori assumptions on the vorticity in the flat domain.}
Some of the main parts of our argument are performed in the flattened domain $\R^2_x \times \{z<0\}$.
We denote the vorticity in the flattened coordinates as
\begin{align}\label{defW0}
W(t,x,z) := \omega(t,x,z+h(t,x)), \qquad W_0(x,z) := \omega_0(x,z+h_0(x)),
\end{align}
and we will bootstrap three main a priori bounds on it.
For this purpose we introduce the weighted Lebesgue spaces $\mX^n$ defined by the norm
(see \eqref{init1'})
\begin{align}\label{omegaflatspace0}
\begin{split}
{\| f \|}_{\mX^n} := \sum_{|r|+|k| \leq n} {\big\| \underline{\Gamma}^k \nabla^r_{x,z} \,f 
  \big\|}_{\W(\R^2_x \times \{z<0\})}, \qquad \W = L^2\cap L^{6/5}.
\end{split}
\end{align} 

The first two main a priori bounds on $W$ are 
\begin{alignat}{2}
\label{aprioriWL}
& {\| \partial_t^j W(t) \|}_{\mX^{N_1-10-j}} \leq 2c_{L} \e_1, 
&&\qquad t\in[0,T], 
\quad j=0,1,
\\
\label{aprioriWH}
& {\| \partial_t^j W(t) \|}_{\mX^{N_1+12-j}} \leq 2c_{H} \e_0^j \e_1 \jt^{\delta}, 
&&\qquad t\in[0,T], \quad j=0,1,
\end{alignat}
where $c_L < c_H$ are some absolute constants to be chosen large enough 
(we use the same one for $j=0$ or $1$).
In \eqref{aprioriWH}, the growth rate $\delta$ is the parameter in \eqref{param}.
We also assume a high-order (weak) bound
\begin{align}
\label{aprioriW0}
& {\| W(t) \|}_{\mX^{N_0-20}} \leq 2c_{W} \e_0 \jt^{2p_0}, \quad t\in[0,T].
\end{align}

\begin{remark}
Note how we are propagating bounds for $W$ (hence for the vorticity $\omega$) 
of the order $\e_1$ with a small growth factor 
at a level of vector fields larger than $N_1$; we choose $N=N_1+12$ for concreteness.
Along with this, 
we also bootstrap a lower norm with the sharp bound of $\e_1$.  
The need to proceed with this two tier bootstrap
is again attributable to the growth of the highest order weighted energies.




\end{remark}

We now explain our overall strategy for recovering these assumptions and obtaining Theorem \ref{maintheo}.

\subsection{Strategy of the proof and main propositions}\label{ssecStra}
The proof of our Theorem \ref{maintheo} proceeds in a several steps based on some key propositions.
Note that the order in which the various intermediate results are presented here
is not the same as that of the sections in which the proofs are given, 
but follows what we believe to be a more reader-friendly description.

\subsection*{Step 1: Energy estimates and other high-order norms} 

We begin with an energy estimate that controls the increment of the top-order weighted norms.

\begin{prop}[Top order energy inequality]\label{propEv}
Assume that \eqref{apriorie0} 
holds and recall the definition of the spaces \eqref{defX} and \eqref{defZ}.
Then there exist energy functionals $\E_{r,k}$ such that:

\begin{itemize}

\item We have 
\begin{align}\label{propEveq}
\begin{split}
& \E_{r,k}(t) \approx {\| v(t) \|}_{X^r_k(\D_t)}^2 
  + {\| \omega(t) \|}_{X^{r-1}_k(\D_t)}^2
  + {\| h(t) \|}_{Z^r_k(\R^2)}^2,
\end{split}
\end{align}

\item If we define
\begin{align}\label{propEveq0}
& \E_0(t) := \sum_{r+k \leq N_0} \E_{r,k}(t)
  \approx \sum_{r+k \leq N_0} {\| v(t) \|}_{X^r_k(\D_t)}^2 
  + {\| \omega(t) \|}_{X^{r-1}_k(\D_t)}^2 + {\| h(t) \|}_{Z^r_k(\R^2)}^2,
\end{align}
then, for all $t\in[0,T]$,
\begin{align}\label{propEvEE}
\frac{d}{dt} \E_0(t) 
  \lesssim Z_0(t) \cdot \E_0(t) 
\end{align}
where
\begin{align}\label{propEvdec}
Z_0(t) := \sum_{r+k \leq N_0/2 + 4} {\| v(t) \|}_{X^{r,\infty}_k(\D_t)} 
  + {\| h(t) \|}_{Z^{r+2,\infty}_k(\R^2)}.
\end{align}
\end{itemize}

\end{prop}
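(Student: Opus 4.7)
The plan is to build $\E_{r,k}$ as a standard water–waves energy for the irrotational part of the flow together with independent $L^2$ pieces for the solenoidal remainder $v_\omega$ and the vorticity $\omega$, and then to differentiate in time, using the (approximate) symmetries encoded by $\underline{\Gamma}$ and the structure of \eqref{freebdy}, to produce a right-hand side of the form $Z_0(t)\cdot\E_0(t)$.

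For each multi-index with $r+k\leq N_0$ I would take
\begin{align*}
\E_{r,k}(t) \;:=\; \int_{\D_t} \bigl|\underline{\Gamma}^k \partial^r v_\omega\bigr|^2 dV
\,+\, \int_{\D_t} \bigl|\underline{\Gamma}^k \partial^{r-1}\omega\bigr|^2 dV
\,+\, \int_{\R^2} a\,\bigl|\Gamma^k \partial^r h\bigr|^2 dx
\,+\, \int_{\R^2} \Gamma^k \partial^r \varphi \cdot G(h)\,\Gamma^k \partial^r \varphi \, dx,
\end{align*}
where $G(h)$ is the Dirichlet–Neumann operator on $\D_t$ and $a = -\partial_N p|_{\partial\D_t}$ is the Taylor coefficient, bounded below by a positive constant in the small–data regime. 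The last two terms are the Zakharov–Craig–Sulem quadratic energy for the irrotational problem, which controls $\|h\|_{Z^r_k}$ together with a half derivative of $\Gamma^k\partial^r\varphi$. Combined with (i) elliptic estimates for $\psi$ in $\D_t$ yielding $\|\nabla\psi\|_{X^r_k}$ in terms of a suitable boundary norm of $\varphi$, (ii) the Helmholtz split $v=\nabla\psi+v_\omega$ and a div–curl bound controlling $v_\omega$ by $\omega$, and (iii) the flattening $z = y-h(t,x)$ used to realize the $\underline{\Gamma}$–commutations on the half-space, one obtains the equivalence \eqref{propEveq}; each commutation produces an error carrying a factor of $\Gamma h$ or $\partial h$ that is absorbed using the two–derivative margin present in $Z_0$.

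For the differential inequality, I would time-differentiate the interior $|v|^2$ integral and integrate by parts, using $\div v=0$, $p|_{\partial\D_t}=0$, and $\partial_t h = \nabla\psi\cdot(-\nabla h,1)$, to cancel the principal contributions against the boundary energy. At top order only commutator terms of the schematic form $[\underline{\Gamma}^k \partial^r, v\cdot\nabla]v$, $[\underline{\Gamma}^k \partial^r, \nabla]p$, $[\underline{\Gamma}^k \partial^r, G(h)]\varphi$, and the analogous kinematic boundary commutators remain. Expanded by Leibniz, in each such term at most one factor carries more than $N_0/2+4$ derivatives and vector fields, so the remaining factor sits in the $L^\infty$–based norm defining $Z_0$ while the others are absorbed into $\E_0^{1/2}$. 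The vorticity piece is handled by commuting the transport equation $(\partial_t+v\cdot\nabla)\omega = (\omega\cdot\nabla)v$ with $\underline{\Gamma}^k\partial^{r-1}$, multiplying by $\underline{\Gamma}^k\partial^{r-1}\omega$, and integrating; the stretching term and the transport commutators again exhibit the $L^\infty\times L^2\times L^2$ structure that produces $Z_0\cdot\E_0$.

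The main technical obstacle is the scaling field $\underline{S}=S+y\partial_y$: the weight $y$ is unbounded on $\D_t$, so $\underline{S}$ is not bounded on $L^2(\D_t)$, and although it commutes with $\Delta_{x,y}$ up to a constant, it interacts non-trivially with the moving boundary $\{y=h(t,x)\}$ and therefore with $G(h)$ and with the elliptic problem for $\psi$. My strategy is to carry out all commutations after flattening to $z=y-h$, where $\underline{S}$ becomes the genuine scaling of the half-space plus remainders of the schematic form $(\Gamma^{\leq 1} h)\,\partial_z$; such remainders always contain a factor of $\Gamma h$ that can be placed in $L^\infty$, and the two-derivative margin built into the $Z^{r+2,\infty}_k$ norm of $h$ in \eqref{propEvdec} ensures they can be absorbed into $Z_0\E_0$ without derivative loss. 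Summing the resulting identities over $r+k\leq N_0$ then yields \eqref{propEvEE}.
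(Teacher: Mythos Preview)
Your approach differs substantially from the paper's, and as written it has a coherence gap.

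\textbf{What the paper does.} The paper does \emph{not} split $v=\nabla\psi+v_\omega$ for the energy estimate. It commutes the full Euler system \eqref{freebdy} directly with the vector fields, working in the moving domain $\D_t$ throughout (no flattening). The key structural choices are: (i) rotations are applied as Lie derivatives $\mathcal{L}_{\uO}$ (not componentwise) and the scaling is applied with a shift, so that the commuted velocity $v^{n_1,n_2}=\uS_{1/2}^{n_1}\mathcal{L}_{\uO}^{n_2}v$ remains divergence-free; (ii) the boundary conditions are derived by decomposing $\uS,\uO$ into tangential-plus-transverse parts at $\partial\D_t$, which produces an intrinsic commuted system \eqref{freebdyvfs} with the same shape as \eqref{freebdy}. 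The energy is then simply
\[
E_{n_1,n_2}=\tfrac12\int_{\D_t}|v^{n_1,n_2}|^2\,dxdy+\tfrac12\int_{\R^2}(-\partial_y p)\,|h^{n_1,n_2}|^2\,dx,
\]
and one multiplies \eqref{vn1n2eq} by $v^{n_1,n_2}$; because $\div v^{n_1,n_2}=0$, the pressure term becomes a boundary integral that, via \eqref{pn1n2eqn}--\eqref{dtheqn}, closes onto the $h^{n_1,n_2}$ piece. There is no Zakharov boundary energy and no separate $v_\omega$ or $\omega$ term.

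\textbf{The gap in your plan.} Your energy is built from $|\underline\Gamma^k\partial^r v_\omega|^2$ and the Zakharov form $\int\Gamma^k\partial^r\varphi\,G(h)\Gamma^k\partial^r\varphi$, but when you differentiate you silently switch to ``the interior $|v|^2$ integral''. At the base level these agree because $\int_{\R^2}\varphi\,G(h)\varphi=\int_{\D_t}|\nabla\psi|^2$ and $\nabla\psi\perp v_\omega$ in $L^2(\D_t)$. With vector fields applied this identification fails: $\underline\Gamma^k\partial^r\psi$ is not harmonic (so the Zakharov form no longer equals an interior Dirichlet integral), and $\underline\Gamma^k\partial^r v_\omega$ no longer satisfies $(\,\cdot\,) n=0$ on $\partial\D_t$ (so orthogonality is lost). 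Consequently you cannot pass from your stated $\E_{r,k}$ to the time derivative of $\int_{\D_t}|\underline\Gamma^k\partial^r v|^2$ without commutator terms that you never accounted for, and conversely $v_\omega$ itself has no good evolution equation to differentiate directly. The paper sidesteps all of this by never splitting: it keeps $v^{n_1,n_2}$ whole, engineers it to stay divergence-free via Lie derivatives, and derives the commuted boundary conditions in $\D_t$ rather than via flattening.
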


Note that the initial assumptions \eqref{init0}-\eqref{init1} imply
\begin{align}\label{init10}
\E_0(0) \lesssim \e_0.
\end{align}
$L^2$-based energy estimates are a fairly standard result for this problem,
see for example \cite{CL,GMS2,ShZ2,ShZ3,Wu2} for energy estimates in standard Sobolev spaces 
without vector fields. 
Estimates with vector fields are also essentially standard although, to the best of our knowledge, 
the estimates in Proposition \ref{propEv} do not appear in the literature exactly as stated. 
In the irrotational setting 
\cite{WuAG,Wu3DWW} prove estimates with vector fields for gravity waves,
\cite{GMSC} proves estimates for the problem with surface tension and no gravity,
and \cite{DIPP} proves estimates for the gravity-capillary problem using only the rotation vectorfield
(since the problem is not scaling invariant); 
energy estimates including the scaling vector field 
are also proved in some lower dimensional cases \cite{IoPu2,IoPu4}. 
In section \ref{secEv} we give a brief sketch
of the main ingredients needed in order to carry out the proof of the energy estimate
with vector fields in our setting.

As a consequence of the main energy inequality 
we obtain the following standard result: 

\begin{prop}[Decay implies Energy bootstrap]\label{propEboot}
Assume 
\eqref{init0}, and that, 
for $T \leq T_{\e_1}$, the a priori decay assumptions \eqref{aprioridecay}-\eqref{aprioridecayv} hold.
Then, there exists $c_E$ large enough such that
\begin{align}\label{apriorie0'}
& \sup_{[0,T]} \,\jt^{-p_0} \Big(
  \sum_{r+k \leq N_0} {\| v(t) \|}_{X^r_k(\D_t)}
  + \sum_{r+k \leq N_0-1} {\| \omega(t) \|}_{X^r_k(\D_t)}
  + \sum_{r+k \leq N_0} {\| h(t) \|}_{Z^r_k(\R^2)} \Big) \leq c_E \e_0.
\end{align}
\end{prop}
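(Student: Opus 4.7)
The plan is to integrate the Grönwall-type energy inequality \eqref{propEvEE} from Proposition \ref{propEv} against the a priori decay bounds \eqref{aprioridecay}-\eqref{aprioridecayv}. First, I would check the regularity counting for the dispersive quantity $Z_0(t)$ in \eqref{propEvdec}: the condition \eqref{paramN} yields $N_1 \geq N_0/2 + 10$, which is enough to guarantee $N_0/2 + 4 \leq N_1 - 5$ for the velocity contribution, and $N_0/2 + 4 + 2 = N_0/2 + 6 \leq N_1$ for the height contribution. Since $h = \Re u$, the decay on $u$ in \eqref{aprioridecay} transfers directly to an $O(\e_0 \jt^{-1})$ bound on $h$ in every $Z^{r,\infty}_k$ with $r+k \leq N_1$. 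Combining with \eqref{aprioridecayv}, one obtains
\[
Z_0(t) \lesssim \e_0 \jt^{-1} + \e_1 \jt^{\delta}, \qquad t \in [0,T].
\]

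Next, I would apply Grönwall to \eqref{propEvEE}, using the initial bound $\E_0(0) \lesssim \e_0^2$ from \eqref{init10}, to deduce
\[
\E_0(t) \lesssim \e_0^2 \exp\Big( C \e_0 \log \jt + C \e_1 \jt^{1+\delta} \Big).
\]
For $t \leq T_{\e_1} = \bar{c}\, \e_1^{-1+\delta}$, the second term in the exponent is bounded by
\[
C \e_1 T_{\e_1}^{1+\delta} \leq C\, \bar{c}^{\,1+\delta}\, \e_1^{\delta^2},
\]
using the elementary identity $(1+\delta)(-1+\delta) = -1 + \delta^2$. This is uniformly $O(1)$, and in fact can be made as small as we wish by shrinking $\bar{c}$. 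The first term contributes a polynomial factor $\jt^{C\e_0}$. Choosing the free parameter $p_0$ in \eqref{param} so that $p_0 \geq C \e_0$—compatible with the constraint $3 p_0 < \delta$ after possibly shrinking $\bar{\e_0}$, as indicated in the remark following \eqref{apriorie0}—one obtains $\E_0(t) \lesssim \e_0^2 \jt^{2p_0}$. Taking square roots and invoking the norm equivalence \eqref{propEveq0} then produces the conclusion, after selecting $c_E$ larger than the resulting absolute constant in order to replace $2c_E$ by $c_E$ in \eqref{apriorie0'}.

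The only conceptual step that is not entirely mechanical is the coordinated choice of the growth exponent $p_0$ and the smallness parameter $\e_0$: the slow logarithmic-in-time growth produced by the irrotational $\e_0 \jt^{-1}$ decay must be absorbed into the polynomial loss $\jt^{p_0}$ at the top order, while the rotational $\e_1 \jt^{\delta}$ contribution integrates, over the time-scale $T_{\e_1}$, to a small constant precisely because of the $\delta^2$-gap noted above. I do not expect any genuine obstacle beyond these Grönwall arithmetic checks; the proposition is essentially the quantitative payoff of Proposition \ref{propEv} combined with the pointwise decay of the solution.
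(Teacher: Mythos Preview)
Your proposal is correct and follows essentially the same approach as the paper's proof: bound $Z_0(t)$ via the decay assumptions, apply Grönwall to \eqref{propEvEE}, and absorb the $\jt^{C\e_0}$ growth into $\jt^{p_0}$ while the $\e_1$-contribution integrates to $O(1)$ over $[0,T_{\e_1}]$. Your version is in fact slightly more detailed than the paper's, since you explicitly verify the regularity counting $N_0/2+4 \leq N_1-5$ and compute the exponent $\delta^2$ in the rotational term.
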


\begin{proof}[Proof of Proposition \ref{propEboot}]
The a priori assumptions \eqref{aprioridecayv}-\eqref{aprioridecay} directly imply that
\begin{align*}
Z_0(t) \lesssim \e_0 \jt^{-1} + \e_1 \jt^\delta.
\end{align*}
This and \eqref{propEvEE},  together with \eqref{propEveq0} and \eqref{init10}, 
give
\begin{align*}
\begin{split}
\E_0(t) & \leq C \E_0(0) \exp \Big( C \int_0^t \big( \e_0\js^{-1} + \e_1 \js^\delta \big) \, ds \Big)
	\leq C \e_0^2 \jt^{C\e_0} 
\end{split}
\end{align*}
having used the definition of $T_{\e_1}$ from \eqref{param} to bound uniformly the time integral of $\e_1 \js^\delta$.
This implies \eqref{apriorie0'} provided the absolute constant $c_E$ is chosen large enough.
\end{proof}

Proposition \ref{propEboot} closes the bootstrap for the norm in \eqref{apriorie0}.
The main efforts in our proof are then dedicated to bootstraping the a priori decay bounds 
\eqref{aprioridecay} and \eqref{aprioridecayv}.
%
Before moving on to explain how to obtain these, 
we give the bootstrap for the control of the high-order norm 
of $W$, see \eqref{aprioriW0}, and how this is used to bound $|\nabla|^{1/2}\varphi$ in the next two propositions.

\begin{prop}\label{mainpropW0}
Let $W$ be defined as in \eqref{defW0} and let $\mX^n$ be the space defined in \eqref{omegaflatspace0}.
Under the assumptions \eqref{aprioriWL} and \eqref{aprioriW0} on $W$,
the decay assumptions \eqref{aprioridecayv} and \eqref{aprioridecay} on $v$ and $h$, 
and the a priori energy bound \eqref{apriorie0},
we have, for all $t\in[0,T]$, 
\begin{align}
\label{propWconchigh'0}
& {\| W(t) \|}_{\mX^{N_0-20}} \leq c_{W} \e_0 \jt^{2p_0}.
\end{align}
\end{prop}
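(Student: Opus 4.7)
\medskip
\noindent
\textbf{Proof plan for Proposition \ref{mainpropW0}.} The basic idea is to perform a weighted energy estimate directly on the transport equation satisfied by $W$ in the flattened domain, using the decay estimates on $(v,h)$ to control the forcing and the high-order energy bound \eqref{apriorie0} to control the factors into which many derivatives land.

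\medskip
\emph{Setting up the equation.} In the original domain the vorticity satisfies
\begin{equation*}
(\partial_t + v^k \partial_k)\omega = (\omega \cdot \nabla_{x,y}) v.
\end{equation*}
Changing variables to $z = y - h(t,x)$ and setting $W(t,x,z) = \omega(t,x,z+h(t,x))$, this becomes a transport-stretching equation of the form
\begin{equation*}
\partial_t W + V \cdot \nabla_{x,z} W = \mathcal{S}(v,W,h),
\end{equation*}
where $V = (v_1,v_2,v_3 - \partial_t h - v_1 \partial_{x_1} h - v_2 \partial_{x_2}h)|_{y=z+h}$ is divergence-free in $(x,z)$ (as a consequence of $\div v=0$ and the boundary condition $\partial_t h = v \cdot(-\nabla h,1)|_{y=h}$), and $\mathcal{S}$ is the pulled-back stretching term, which is bilinear in $W$ and $\nabla v|_{y=z+h}$ with coefficients depending polynomially on $\nabla h$. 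Note also that $V_3$ vanishes on $\{z=0\}$, so no boundary terms enter.

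\medskip
\emph{Commutators and basic estimate.} For a multi-index $j=(k,r)$ with $|k|+|r|\leq N_0-20$, apply $\underline{\Gamma}^k \nabla^r_{x,z}$ to the equation. Since $\underline{\Gamma}$ and $\nabla_{x,z}$ are (essentially) first order and at worst introduce a single multiplicative weight of $z$ in the $\underline{S}$-case, the commutator $[\underline{\Gamma}^k \nabla^r_{x,z}, V\cdot\nabla_{x,z}]W$ splits into sums of
\begin{equation*}
(\underline{\Gamma}^{k_1}\nabla^{r_1}_{x,z} V) \cdot \nabla_{x,z} (\underline{\Gamma}^{k_2}\nabla^{r_2}_{x,z} W), \qquad |k_1|+|r_1|+|k_2|+|r_2| \leq N_0-20,
\end{equation*}
and analogously for $\mathcal{S}$. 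Because $V$ is divergence-free in $(x,z)$, standard $L^p$ energy identities give, after testing against $|U|^{p-2}U$ with $U := \underline{\Gamma}^k \nabla^r_{x,z} W$ (and using the vanishing of $V_3$ on the boundary),
\begin{equation*}
\tfrac{d}{dt}\|U(t)\|_{L^p(\R^2\times\{z<0\})} \lesssim \|\text{commutator}+ \underline{\Gamma}^k\nabla^r_{x,z}\mathcal{S}\|_{L^p}, \qquad p \in \{2,6/5\}.
\end{equation*}

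\medskip
\emph{Estimating the right-hand side.} Sum over $|k|+|r|\leq N_0-20$ in the $\mathcal{W}=L^2\cap L^{6/5}$ norm. Split each product according to whether more than half the derivatives/vector fields fall on $W$ or on $(v,h,V)$. When many derivatives fall on $W$, one places $\underline{\Gamma}^{k_2}\nabla^{r_2}_{x,z} W$ in $\mathcal{W}$ (using the bootstrap \eqref{aprioriW0}) and the low-order factor of $v$ or $h$ in $L^\infty$, using the decay bounds \eqref{aprioridecayv}--\eqref{aprioridecay}; this produces contributions of type
\begin{equation*}
\bigl(\e_0 \jt^{-1} + \e_1 \jt^{\delta}\bigr) \cdot \|W(t)\|_{\mX^{N_0-20}}.
\end{equation*}
When many derivatives fall on $v$ or $h$, one uses the high-order energy bound \eqref{apriorie0} to place the heavy factor in $L^2$ (gaining $\e_0\jt^{p_0}$), paired with the low-order $L^\infty$ or $\mathcal{W}$ bound on $W$ coming from the strong a priori assumption \eqref{aprioriWL} together with Sobolev embedding in the flat half-space. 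The result is a bound
\begin{equation*}
\tfrac{d}{dt}\|W(t)\|_{\mX^{N_0-20}} \lesssim \bigl(\e_0 \jt^{-1} + \e_1 \jt^{\delta}\bigr) \|W(t)\|_{\mX^{N_0-20}} + \e_0\e_1 \jt^{p_0+\delta}.
\end{equation*}

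\medskip
\emph{Closing by Gr\"onwall.} Integrating from $0$ to $t\leq T_{\e_1}=\bar{c}\e_1^{-1+\delta}$, the coefficient in the exponential is
\begin{equation*}
\int_0^t \bigl(\e_0 \js^{-1} + \e_1 \js^{\delta}\bigr)\,ds \lesssim \e_0 \log\jt + \e_1 \jt^{1+\delta} \lesssim \e_0\log\jt + \bar{c}^{1+\delta} \e_1^{\delta^2},
\end{equation*}
and the inhomogeneous term integrates to $\e_0 \e_1 \jt^{1+p_0+\delta} \lesssim \e_0 \bar{c}^{1+p_0+\delta} \e_1^{\delta}$, which is much smaller than $\e_0 \jt^{2p_0}$. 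Using the initial bound $\|W_0\|_{\mX^{N_0-20}}\lesssim \e_0$ and choosing first $\bar{c}$ (hence $\bar{\e_0}$) small enough and then $c_W$ large enough (absorbing the absolute constants), we conclude
\begin{equation*}
\|W(t)\|_{\mX^{N_0-20}} \leq C\e_0 \jt^{C\e_0} \leq c_W \e_0 \jt^{2p_0},
\end{equation*}
which is \eqref{propWconchigh'0}.

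\medskip
\emph{Main obstacle.} The critical point is the top-order commutator term where a full set of $N_0-20$ derivatives lands on $W$ and a single low-order derivative of $v$ (or the transport velocity $V$) multiplies it: the only estimate available for that factor is the $L^\infty$ decay \eqref{aprioridecayv}, whose $\e_1\jt^\delta$ piece is not integrable in time but becomes integrable precisely because of the time cut-off $T_{\e_1}=\bar c\, \e_1^{-1+\delta}$ in \eqref{param}. Verifying carefully that all terms involving $h$, $\nabla h$ and the corrected transport velocity $V$ can be handled with the same split, and that divergence-freeness of $V$ together with $V_3|_{z=0}=0$ really does rule out boundary contributions in the $L^p$ estimate for $p=6/5$ (where one works with sign-preserving regularizations of $|U|^{p-2}U$), is the main technical task.
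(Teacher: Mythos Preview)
Your approach is correct and aligns with the paper's proof (given as Proposition~\ref{mainpropW'} in Section~\ref{secvelpot}): write the vorticity transport equation in the flat domain, commute with $\underline{\Gamma}^k\nabla^r_{x,z}$, split the bilinear terms according to where the derivatives land, and close via a bootstrap using the decay of $(v,h)$ together with the low-norm bound \eqref{aprioriWL} on $W$. The one substantive difference is the transport step: the paper integrates along the Lagrangian flow (Lemma~\ref{lemT}), controlling the Jacobian of $\Phi_t$ by the normal-form decomposition of $U$ from Lemma~\ref{lemmaU}, whereas you use the $L^p$ energy identity, exploiting $\div_{x,z} U = 0$ and $U_3|_{z=0}=0$ (both correct, as you note). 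Your route is more elementary in that it avoids the normal-form input for the flow, at the cost of the $p=6/5$ regularization you flag. One minor correction: when many vector fields fall on $V$ or $U$, the relevant $L^2$ bound in the flat half-space is $\e_0\jt^{2p_0}$ rather than $\e_0\jt^{p_0}$, because transferring \eqref{apriorie0} from $\D_t$ to the flat domain via Lemma~\ref{Lemcomp} costs an extra $\jt^{p_0}$ (cf.\ \eqref{aprioriVL2}--\eqref{aprioriU}); this does not affect your closing, since the inhomogeneous contribution is still dominated by $\e_0\jt^{2p_0}$ after integration.
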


The proof of Proposition \ref{mainpropW0} is given in Section \ref{secvelpot}
(see Proposition \ref{mainpropW'}).
Using Proposition \ref{mainpropW0} we can obtain bounds on
the vector potential in the flat domain
\begin{align}\label{defVo0}
V_\omega(t,x,z) = v_\omega(t,x,z+h(t,x));
\end{align}
this can be done in appropriate spaces via elliptic estimates for $\alpha$ such that
$\curl \alpha \approx V_\omega$; see \eqref{betaeq} and \eqref{betaflat} for the exact definition.
Using $\nabla \psi = v - v_\omega$ and basic trace estimates we can obtain the following:

\begin{prop}[Bounds on the velocity potential]\label{propvelpot}
Under the a priori assumptions \eqref{aprioriW0} and \eqref{aprioriWL} on $W$, 
and the a priori assumptions \eqref{apriorie0} and \eqref{aprioridecay} on $h$ and $v$, 
it holds
\begin{align}\label{velpotestPsi}
& \sup_{[0,T]} \,\jt^{-3p_0} \sum_{r+k \leq N_0-20} 
  {\big\| \underline{\Gamma}^k \nabla_{x,z}^r \nabla_{x,z} \Psi(t) \big\|}_{L^2(\R^2_x \times \{z<0\})} 
  \leq c_P' \e_0, 
\end{align} 
for some suitably large absolute constant $c_P' > c_E + c_{W}$.
In particular,
\begin{align}\label{velpotest}
& \sup_{[0,T]} \,\jt^{-3p_0} \sum_{r+k \leq N_0-20} 
  {\big\| |\nabla|^{1/2} \varphi (t) \big\|}_{Z^r_k(\R^2)} \leq c_P \e_0. 
\end{align} 
for some suitably large absolute constant $c_P > c_E + c_{W}$.
\end{prop}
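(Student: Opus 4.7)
The plan is to reduce the bound on $\Psi$ in the flat half-space to bounds on the full velocity $v$ (already controlled by the a priori assumption \eqref{apriorie0}) and on the rotational component $v_\omega$ (controlled through an elliptic system driven by $W$), then obtain \eqref{velpotest} as a consequence of \eqref{velpotestPsi} by a trace inequality.

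\textbf{Step 1 (change of variables).} Setting $\Psi(t,x,z) := \psi(t,x,z+h(t,x))$ and similarly $V, V_\omega$ for the pull-backs of $v$ and $v_\omega$, the identity $\nabla\psi = v - v_\omega$ in $\D_t$ becomes, after the change of variables $y=z+h$,
\begin{equation*}
\nabla_{x,z}\Psi \;=\; M(\nabla h)\cdot\bigl(V-V_\omega\bigr),
\end{equation*}
where $M(\nabla h)$ is a matrix whose entries are polynomials in $\partial_x h$. Applying $\underline{\Gamma}^k\nabla_{x,z}^r$ with $r+k\leq N_0-20$ and using the chain/Leibniz rules, all terms that arise are products of $\underline{\Gamma}^{k_1}\nabla_{x,z}^{r_1}V$ (or $V_\omega$) with factors $\Gamma^{j}\partial_x h$; a standard high-low Moser-type splitting, using the low-frequency/pointwise assumption \eqref{aprioridecay} on $h$ and the high-regularity bound \eqref{apriorie0}, reduces everything to
\begin{equation*}
\sum_{r+k\leq N_0-20}\bigl\|\underline{\Gamma}^k\nabla_{x,z}^{r}\nabla_{x,z}\Psi\bigr\|_{L^2}
\;\lesssim\; \sum_{r+k\leq N_0-20}\bigl(\|\underline{\Gamma}^k\nabla_{x,z}^{r}V\|_{L^2}+\|\underline{\Gamma}^k\nabla_{x,z}^{r}V_\omega\|_{L^2}\bigr) \cdot (1+\text{l.o.t. in }h).
\end{equation*}
The comfortable gap $N_0-20\ll N_0$ ensures that the loss of derivatives in the chain rule and the Jacobian is harmless.

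\textbf{Step 2 (bound on $V$).} The pull-back $V$ is bounded in the $L^2$-based weighted norms of the flat domain by the energy estimate \eqref{apriorie0} applied to $v$, with growth at worst $\jt^{p_0}$ (the composition with $z\mapsto z+h$ introduces only factors involving $\partial_x h$, again absorbed via Moser estimates at level $N_0-20$).

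\textbf{Step 3 (bound on $V_\omega$).} Here we invoke the elliptic theory for the vector potential $\alpha$ in the flat half-space, with $-\curl\alpha$ essentially equal to $V_\omega$ and with mixed Dirichlet/Neumann boundary conditions (cf.\ \eqref{betaeq}--\eqref{betaflat} and the material of Section \ref{secVP}). The forcing of this system is precisely $W$, whose high-order norm is controlled by Proposition \ref{mainpropW0},
\begin{equation*}
\|W(t)\|_{\mX^{N_0-20}}\;\leq\; c_W\e_0\jt^{2p_0}.
\end{equation*}
Elliptic regularity in the weighted spaces used there then yields $\sum_{r+k\leq N_0-20}\|\underline{\Gamma}^k\nabla_{x,z}^r V_\omega(t)\|_{L^2}\lesssim\e_0\jt^{2p_0}$.

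\textbf{Step 4 (combine and trace).} Putting Steps 1--3 together produces \eqref{velpotestPsi} with a constant of the form $c_P'\gtrsim c_E+c_W$, the $\jt^{3p_0}$ factor being the product of $\jt^{p_0}$ from the geometry/$V$ side and $\jt^{2p_0}$ from the $V_\omega$ side. Since $\varphi(t,x)=\Psi(t,x,0)$ and $\underline{S}|_{z=0}=S$, $\underline{\Omega}=\Omega$, so the 3d vector fields restrict to the 2d ones on the boundary, the standard fractional trace
\begin{equation*}
\||\nabla|^{1/2} g\|_{L^2(\R^2)}\;\lesssim\;\|\nabla_{x,z}G\|_{L^2(\R^2_x\times\{z<0\})},\qquad g=G\big|_{z=0},
\end{equation*}
applied to $G=\underline{\Gamma}^k\nabla_x^r\Psi$ (and commuting $|\nabla|^{1/2}$ past the 2d vector fields in the standard way) converts \eqref{velpotestPsi} into \eqref{velpotest}.

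\textbf{Main obstacle.} The substantive step is Step 3: setting up the elliptic system for $\alpha$ in the flat half-space with the correct boundary conditions and getting weighted regularity estimates of the form $\|\nabla_{x,z}\alpha\|_{\mathcal{X}^{N_0-20}}\lesssim\|W\|_{\mX^{N_0-20}}$ that are compatible with applying $\underline{\Gamma}^k$ (in particular the scaling vector field $\underline{S}$, which does not exactly commute with the half-space Laplacian). This is the content of the elliptic estimates of Section \ref{secVP}, which we import here. The remaining work (Steps 1, 2, 4) is a routine combination of Moser estimates, the chain rule for the flattening diffeomorphism, and the half-space trace inequality.
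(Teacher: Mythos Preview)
Your proof follows essentially the same route as the paper: express $\nabla_{x,z}\Psi$ in terms of $V-V_\omega$ via the flattening identities, control $V$ by transferring the energy bound \eqref{apriorie0} to the flat domain, control $V_\omega$ via the elliptic estimates for the vector potential $\alpha$ with forcing $W$ (Section~\ref{secVP} and Lemma~\ref{higherorderalpha}), and then apply the half-space trace inequality to pass to $|\nabla|^{1/2}\varphi$. Two minor remarks: (i) in Step~2 the transfer of norms to the flat domain (Lemma~\ref{Lemcomp}) already costs a factor $\jt^{p_0}$, so $V$ is bounded by $\e_0\jt^{2p_0}$ rather than $\e_0\jt^{p_0}$; (ii) the $\jt^{3p_0}$ arises from the bilinear term $\nabla h\cdot(V-V_\omega)$ in your Moser expansion, where high-order $\nabla h$ in $L^\infty$ (via Sobolev, costing $\jt^{p_0}$) multiplies the $\jt^{2p_0}$ bound on $V-V_\omega$ --- not from a ``product'' of the $V$ and $V_\omega$ rates.
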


\begin{remark}\label{remvelpot}
The choice of the growth rate $\jt^{3p_0}$ in \eqref{velpotestPsi} is 
dictated by the nature of the argument that we use;
this necessitates changing variables from the moving domain to the flat one
and, therefore, taking into account  the growth of the high-order weighted norm of $h$.
This growth could be avoided by bootstrapping additional low norms of the irrotational components. 

Also note that the bound in Proposition \ref{propvelpot} 
for the simple $L^2$ norm of $\nabla\Psi$ (or $|\nabla|^{1/2}\varphi$)
is a direct consequence of the Hodge decomposition.
However, the bound with vector fields requires some non-trivial arguments,
including Proposition \ref{mainpropW0} and elliptic type estimate 
similar to those in Section \ref{secVP} (see also Proposition \ref{propalphaintro} below).  
\end{remark}

We give details for the proof of Proposition \ref{propvelpot} in Section \ref{secvelpot}.

\subsection*{Step 2: Decay estimates on the boundary and in the interior}
Our next main step is the proof of decay for the boundary dispersive variable $u = h + i |\nabla|^{1/2}\varphi$.
First, we derive an equation for $u$ by adapting the classical Zakharov-Craig-Schanz-Sulem formulation;
see also \cite{Gin1} and \cite{CaLa}.
More precisely, in Lemma \ref{lemequ} we obtain that
\begin{align}\label{der30intro}
\partial_t u + i |\nabla|^{1/2} u & = B_0 - i |\nabla|^{-3/2} \nabla \cdot \partial_t P_\omega + \cdots
\end{align}
where $B_0$ denotes quadratic terms in $h$ and $\varphi$ and
$P_\omega$ is the restriction to the interface of the horizontal components of $v_\omega$:
\begin{align}\label{Pomegaintro}
P_\omega(t,x) := \big((v_\omega)_1, (v_\omega)_2\big)(t,x,h(t,x)) = \big((V_\omega)_1, (V_\omega)_2\big)(t,x,0).
\end{align}
The ``$\cdots$'' in \eqref{der30intro} denote other quadratic terms that involve at least one $P_\omega$,
plus other cubic terms, and we disregard them here for the sake of the discussion.
Note that \eqref{der30intro} is forced in a singular way by $\partial_t P_\omega$;
this creates some technical difficulties.
See Section \ref{appder} for the derivation of \eqref{der30intro}.

One can see that in order to prove decay for $u$
through the Duhamel's formula associated to \eqref{der30}
we need, among other things, strong enough control on 
$P_\omega$, at a level of (weighted) regularity which is higher than that 
of the space in which $u$ decays (see \eqref{aprioridecay}). 
We will obtain these estimates on $P_\omega$ in the next step.

Based on \eqref{der30intro} and suitable assumptions on $P_\omega$,
we recover decay for $u$: 

\begin{prop}[Sharp decay of the irrotational component]\label{propdecayirr}
Assume a priori that 
\eqref{aprioridecay} holds, together with 
\begin{align}\label{decayirrasu}
& \sup_{[0,T]} \,\jt^{-3p_0} \sum_{r+k \leq N_0-20} {\| u(t) \|}_{Z^r_k(\R^2)} \lesssim \e_0. 
\end{align} 
Moreover, assume that for some $N \geq N_1+11$ we have, for all $t \leq T_{\e_1}$,
\begin{align}\label{decayirrasVP}
\sum_{r + k \leq N } {\big\| \partial_t^j P_\omega(t) 
  \big\|}_{Z^r_k(\R^2)} \leq c'_{H} \e_1 \e_0^j \jt^\delta, 
  \quad j=0,1.
\end{align} 
Then, for all $t\leq T$, 
\begin{align}\label{decayirrconc}
\sum_{r+k \leq N_1} {\| u (t) \|}_{Z^{r,\infty}_k(\R^2)} \leq c_B \e_0 \jt^{-1}. 
\end{align}
for some large enough absolute constant $c_B > c''_{H}$.
Here $c''_{H} > c_H'$ is the constant in \eqref{lembounda-Voconc0}.
\end{prop}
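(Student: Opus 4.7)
The plan is to run a Duhamel argument for the dispersive equation \eqref{der30intro}, combined with a Poincar\'e normal form removing the purely irrotational quadratic terms $B_0$ and an integration by parts in time that trades the singular forcing $\partial_t P_\omega$ for a forcing by $P_\omega$ itself. Decay is then extracted from the standard $L^2$--$L^\infty$ linear estimate for the $2$d gravity water-wave semigroup $e^{-it|\nabla|^{1/2}}$, which gives $\|e^{-it|\nabla|^{1/2}}f\|_{L^\infty(\R^2)} \lesssim \jt^{-1}\,\|f\|_{Z^{r}_k}$ for appropriate $r,k$, at the cost of losing roughly $10$--$11$ derivatives and vector fields. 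This is precisely why the hypothesis \eqref{decayirrasVP} is imposed at the higher level $N \geq N_1+11$.

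First, I would commute $\Gamma^j$ (with $|j|\leq N_1$) through the equation \eqref{der30intro}. Since $S$ and $\Omega$ are (approximate) symmetries of $|\nabla|^{1/2}$, these commutators generate only lower order combinations of $\Gamma^{j'}$, so $\Gamma^j u$ solves an equation of the same shape as \eqref{der30intro} with nonlinearity built from lower-order copies of $u$, $P_\omega$, and cubic remainders. Next, I would perform a Poincar\'e normal form on the purely irrotational quadratic terms $B_0(u,\bar u)$: following the scheme of \cite{GMS2,Wu3DWW}, one defines $\tilde u := u + T(u,\bar u)$ for a bilinear multiplier $T$ whose symbol inverts the time resonance phases $|\xi|^{1/2}\pm|\xi-\eta|^{1/2}\pm|\eta|^{1/2}$ (these vanish only at the origin, which is handled by a standard symbol regularization). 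The new unknown $\tilde u$ satisfies
\begin{align*}
\partial_t \tilde u + i|\nabla|^{1/2}\tilde u = R_3(u) - i|\nabla|^{-3/2}\nabla\cdot \partial_t P_\omega + Q(u,P_\omega) + \cdots,
\end{align*}
where $R_3$ is cubic (hence of integrable time-decay after using \eqref{aprioridecay}) and the ``$\cdots$'' contain quadratic terms involving at least one copy of $P_\omega$ plus genuine cubics.

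Second, the singular forcing is treated by one integration by parts in time inside Duhamel's formula,
\begin{align*}
\int_0^t e^{-i(t-s)|\nabla|^{1/2}} |\nabla|^{-3/2}\nabla\cdot\partial_s P_\omega \, ds
 = \Big[ e^{-i(t-s)|\nabla|^{1/2}} |\nabla|^{-3/2}\nabla\cdot P_\omega \Big]_0^t - i\int_0^t e^{-i(t-s)|\nabla|^{1/2}} |\nabla|^{-1}\nabla\cdot P_\omega\, ds,
\end{align*}
so the time derivative is moved onto the semigroup at the price of a factor $|\nabla|^{1/2}$. The resulting symbols $|\nabla|^{-3/2}\nabla\cdot$ and $|\nabla|^{-1}\nabla\cdot$ are still singular at $\xi=0$, so I would perform a Littlewood--Paley decomposition: at high frequencies the linear decay estimate applied to $P_\omega$ yields $\jt^{-1}\|P_\omega\|_{Z^{r+1}_k} \lesssim \e_1\jt^{-1+\delta}$; at low frequencies one uses $L^2$-type estimates on the semigroup together with the bounds \eqref{decayirrasu} on $u$ and \eqref{decayirrasVP} on $P_\omega$, absorbing the $|\nabla|^{-3/2}$ in a compact low-frequency region. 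The boundary term at $s=t$ is controlled similarly and gains an additional $\jt^{-1}$ from the linear decay of $e^{-it|\nabla|^{1/2}}$ applied to $|\nabla|^{-3/2}\nabla\cdot P_\omega(t)$.

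Putting everything together, the Duhamel formula gives $\|\Gamma^j \tilde u(t)\|_{L^\infty}$ controlled by $\e_0\jt^{-1}$ plus $\int_0^t \jt^{-1}\big(\e_0^2 \js^{-1} + \e_1\js^{-1+\delta}\big)\,ds$. The cubic time integral is bounded by $\e_0^2\jt^{-1}\log\jt$, and the forcing integral by $\e_1\jt^{-1+\delta}$; both are $\ll c_B\e_0\jt^{-1}$ on $[0,T_{\e_1}]$ since $\e_1 T_{\e_1}^{\delta} \lesssim \bar c^\delta \e_1^{1-\delta(1-\delta)}$ is small. Converting back from $\tilde u$ to $u$ (the normal form is an $O(\e_0)$ perturbation of the identity on the relevant $Z^{r,\infty}_k$ spaces, by a standard bilinear/multiplier estimate using \eqref{aprioridecay}) then yields \eqref{decayirrconc} with $c_B$ chosen large compared to $c''_H$ and the absolute constants arising from the normal form and the linear estimate.

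The principal difficulty I anticipate is the tension between the singular low-frequency behavior of the forcing (the factor $|\nabla|^{-3/2}\nabla$) and the fact that $P_\omega$ is only controlled with an $\jt^\delta$ loss. This is what forces the two-tier bootstrap on the vorticity at levels $N_1-10$ and $N_1+12$, and why a careful Littlewood--Paley splitting with different estimates in the low- and high-frequency regimes seems unavoidable.
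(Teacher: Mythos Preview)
Your overall plan---Poincar\'e normal form for $B_0$, integration by parts in $s$ for the singular forcing, and a low/high frequency splitting---matches the paper's. But the accounting for the forcing term $L(P_\omega)=-i\Lambda^{-1/2}R\cdot\partial_t P_\omega$ has a real gap and the argument does not close. First, the boundary term at $s=t$ from your integration by parts is $\Lambda^{-1/2}R\cdot P_\omega(t)$ with \emph{no} semigroup factor, so there is no $\jt^{-1}$ decay there (contrary to what you write). Second, and more importantly, by integrating the $\partial_s$ away you replace the $j=1$ bound $\|\partial_t P_\omega\|\lesssim\e_1\e_0\js^\delta$ from \eqref{decayirrasVP} by the $j=0$ bound $\|P_\omega\|\lesssim\e_1\js^\delta$, losing the crucial extra factor of $\e_0$. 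The contribution you then obtain is at best $\e_1\jt^{\delta}$, and your assertion that this is $\ll c_B\e_0\jt^{-1}$ is false when $\e_1\sim\e_0$: it would require $\jt^{1+\delta}\lesssim 1$ on $[0,T_{\e_1}]$, which evidently fails. (A related issue affects your cubic estimate: $\e_0^2\jt^{-1}\log\jt$ is not $\ll\e_0\jt^{-1}$ uniformly in $\e_1$, since $\log T_{\e_1}\sim|\log\e_1|$ can be arbitrarily large; one needs integrable-in-time decay $\js^{-1-}$ for the cubic integrand, which the paper obtains.)

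The paper's fix is a frequency-dependent choice of whether to integrate by parts, with cutoffs placed at $2^L=\e_0^{2}$ and $2^H=\e_0^{-2}$. For the $O(|\log\e_0|)$ many \emph{medium} dyadic frequencies it does \emph{not} integrate by parts: it applies its Klainerman--Sobolev Duhamel lemma (Lemma~\ref{lemdecayDu}) directly to $\Lambda^{-1/2}R\cdot\partial_s P_\omega$, keeping the $j=1$ bound $\e_1\e_0\js^\delta$; after summing scales this yields $|\log\e_0|\,\e_1\e_0\jt^\delta$, and one checks that $|\log\e_0|\,\e_1\jt^{1+\delta}\lesssim|\log\e_1|\,\e_1^{\delta^2}$ is small. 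For \emph{low} and \emph{high} frequencies it \emph{does} integrate by parts, and then Bernstein at the cutoff supplies the missing factor $2^{L/2}=\e_0$ (respectively $2^{-H/2}=\e_0$) on both the boundary terms and the remaining time integral. This mechanism---deciding per frequency band whether to keep $\partial_s P_\omega$ or trade it away---is the ingredient you need to add.
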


\begin{remark}\label{Rempropdecayirr}
In our proof of Proposition \ref{propdecayirr} in Section \ref{secdecay},
we will actually show a slightly stronger bound than \eqref{decayirrconc},
with an $\ell^1$ sum over frequencies:
\begin{align}\label{decayirrconcl}
\sum_{\ell \in \Z} \sum_{r+k \leq N_1} {\| P_\ell u (t) \|}_{Z^{r,\infty}_k(\R^2)} \leq c_B \e_0 \jt^{-1};
\end{align}
see Remark \ref{Remdecay}.
This technical improvement helps to show decay for $\nabla\psi$; see Lemma \ref{lemdecayirrot}.
\end{remark}

Note that the assumption \eqref{decayirrasu} is directly guaranteed by \eqref{propvelpot}.
Proposition \ref{propdecayirr} then recovers the a priori decay assumption \eqref{aprioridecay}
closing the bootstrap provided $c_B$ is large enough.
The proof of Proposition \ref{propdecayirr} is given in Section \ref{secdecay} and uses:

\begin{itemize}

\item[-] The boundary evolution equations in the presence of vorticity (see Lemma \ref{lemeqo});

\item[-] A Klainerman-Sobolev type estimate for the flow of $e^{it|\nabla|^{1/2}}$ (see Lemma \ref{lemlinear});

\item[-] Normal form arguments to deal with the quadratic irrotational terms that have slow decay;

\item[-] The estimate \eqref{decayirrasVP}
to bound the nonlinear and forcing terms involving the rotational component $P_\omega$;
particular care needs to be put into handling small frequencies here, due to the singular nature 
of the operator acting on $P_\omega$ in \eqref{der30intro}.

\end{itemize}

The assumption \eqref{decayirrasVP} will follow from a fixed point argument
which essentially constructs and bounds $V_\omega$. 
See in particular the conclusion of Lemmas \ref{lembounda-Vo} and \ref{lemboundaVbulk}.



Before moving on to the next main step in the proof,
we add here the estimates that recover the bootstrap assumption \eqref{aprioridecayv}
on the decay of the velocity in the interior. 
These are obtained in an elliptic way at fixed time $t$ from other bounds that are bootstrapped.
For convenience we split these estimate into two lemmas:

\begin{lemma}[Decay of the irrotational component]\label{lemdecayirrot}
Assume that \eqref{decayirrconcl} 
holds, together with the a priori bounds \eqref{apriorie0} and \eqref{aprioridecay} on $h$.
Then, for all $t \leq T$ we have
\begin{align}\label{decayirrotconc}
\sum_{r+k \leq N_1-5} {\| \nabla \psi(t)  \|}_{X^{r,\infty}_k(\D_t)} \leq c_i \e_0 \jt^{-1}.
\end{align}
for some $c_i > c_B$.
\end{lemma}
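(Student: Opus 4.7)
The plan is to represent $\nabla\psi$ in the interior through the harmonic extension of its boundary trace $\varphi$ and then exploit the $\ell^1$-summed sharp decay \eqref{decayirrconcl}. Flatten the domain by setting $z=y-h(t,x)$ and $\Psi(t,x,z):=\psi(t,x,z+h(t,x))$, so that $\Psi$ satisfies an elliptic equation
\begin{equation*}
\L_h \Psi \,=\, 0 \quad \text{in } \R^2_x\times\{z<0\}, \qquad \Psi|_{z=0}=\varphi,
\end{equation*}
where $\L_h$ is a perturbation of $\Delta_{x,z}$ by terms whose coefficients are polynomial expressions in $\nabla h$ and $h$; by \eqref{apriorie0}--\eqref{aprioridecay} these coefficients are uniformly controlled and the perturbation is small, compatible with the 5 derivatives of slack between the boundary regularity in \eqref{decayirrconcl} ($r+k\leq N_1$) and the interior regularity claimed in \eqref{decayirrotconc} ($r+k\leq N_1-5$).

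First, I would commute the $3$d vector fields $\underline{\Gamma}^j$ with $\L_h$. Standard arguments (analogous to those carried out to prove Proposition \ref{mainpropW0} in Section \ref{secvelpot}) give $\L_h(\underline{\Gamma}^j \Psi)=F_j$, where $F_j$ is a sum of lower-order expressions involving $\Psi$ and vector fields of $h$; writing $\underline{\Gamma}^j\Psi=\Psi_j^{\mathrm{hom}}+\Psi_j^{\mathrm{part}}$ with $\Psi_j^{\mathrm{hom}}$ the harmonic extension of $\underline{\Gamma}^j\Psi|_{z=0}$ and $\Psi_j^{\mathrm{part}}$ solving the inhomogeneous problem with zero boundary data, the contribution of $\Psi_j^{\mathrm{part}}$ is absorbed inductively as a small perturbation by the a priori bounds on $h$ together with elliptic $L^\infty$ estimates in $\R^2_x\times\{z<0\}$.

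The main term $\Psi_j^{\mathrm{hom}}$ is the actual half-space harmonic extension of $\underline{\Gamma}^j\Psi|_{z=0}$, which can be computed explicitly via the Poisson semigroup $e^{z|\nabla|}$. Decomposing in horizontal Littlewood-Paley pieces $P_\ell$, I would use
\begin{equation*}
\big\|\nabla_{x,z}P_\ell\Psi_j^{\mathrm{hom}}\big\|_{L^\infty_{x,z}}
\,\lesssim\, 2^\ell \big\|P_\ell \big(\underline{\Gamma}^j\Psi|_{z=0}\big)\big\|_{L^\infty_x}
\,\lesssim\, 2^{\ell/2}\big\|P_\ell \,|\nabla|^{1/2}\underline{\Gamma}^j\varphi\big\|_{L^\infty_x},
\end{equation*}
where the last step (at the level of the traces) follows again from the flattening together with trace bounds, after accounting for commutators between $\underline{\Gamma}^j$ and the restriction to $\{z=0\}$. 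At low frequencies the factor $2^{\ell/2}$ is summable; at high frequencies I spend part of the $5$ extra derivatives to trade $2^{\ell/2}$ against $\|P_\ell\nabla^r|\nabla|^{1/2}\varphi\|_{L^\infty_x}$ for a suitable $r\leq 5$. Summing in $\ell$ and invoking the $\ell^1$-summed bound \eqref{decayirrconcl} on $u=h+i|\nabla|^{1/2}\varphi$ (which a fortiori controls $|\nabla|^{1/2}\varphi$) yields the desired $\e_0\jt^{-1}$ decay for $\nabla_{x,z}\Psi$ in $X^{r,\infty}_k(\R^2_x\times\{z<0\})$.

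Finally, translating back from the flat domain to $\D_t$, $\nabla\psi$ is expressed as a linear combination of $\nabla_{x,z}\Psi$ with coefficients built from $\nabla h$, which are uniformly bounded by the a priori assumption \eqref{aprioridecay} on $h$. Choosing $c_i>c_B$ large enough (absorbing the various absolute constants from the elliptic estimates and commutators) gives \eqref{decayirrotconc}. The main technical difficulty is the bookkeeping for commutators of the $3$d vector fields with $\L_h$ and with the Poisson semigroup: the scaling vector field $\underline{S}$ and the presence of $y\partial_y$ mix horizontal and vertical directions, so care is needed to ensure that the error terms remain controllable at the low level of regularity $N_1-5$ while the high-frequency summation leverages the $5$ derivatives of slack.
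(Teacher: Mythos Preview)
Your approach is essentially the same as the paper's: flatten the domain, represent $\Psi$ via the Poisson semigroup acting on the boundary trace, and exploit the $\ell^1$-summed decay \eqref{decayirrconcl} to sum over Littlewood--Paley pieces before changing variables back to $\D_t$. The only packaging difference is that the paper invokes the already-established Lemma~\ref{LemmaPsi} and Remark~\ref{remPsiinfty} (which run a fixed-point argument for the flattened elliptic problem and directly output the $\ell^1$-summed bound \eqref{Psiinfty'} on $\nabla_{x,z}\Psi$ in $L^2_zL^\infty_x$, followed by Sobolev in $z$), whereas you outline the equivalent induction-on-vector-fields scheme by hand.
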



\begin{lemma}[Decay of the rotational component]\label{lemdecayrot}
Assume that 
the bounds \eqref{boundaVbulk0} on $V_\omega$ hold for $t \in [0,T]$, with $T\leq T_{\e_1}$. 
Then, for all $t\leq T$, we have
\begin{align}\label{decayrotconc}
\sum_{r+k \leq N_1-5} {\| v_\omega(t) \|}_{X^{r,\infty}_k(\D_t)} \leq c_r \e_1 \jt^\delta,
\end{align}
for some $c_r > c_H'$.
\end{lemma}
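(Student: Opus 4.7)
The plan is to transfer the assumed flat-domain bounds on the vector potential $V_\omega$ (from \eqref{boundaVbulk0}) into the moving-domain bounds \eqref{decayrotconc} through the identity $v_\omega(t,x,y) = V_\omega(t,x,y-h(t,x))$; writing $\Phi(t,x,y) := (t,x,y-h(t,x))$, one has $v_\omega = V_\omega \circ \Phi$. Since $\Phi$ is a bijection from $\D_t$ onto $\R^2_x \times \{z<0\}$, uniformly bi-Lipschitz by the smallness of $\nabla h$ guaranteed by \eqref{aprioridecay}, the plain $L^\infty$ norm is preserved and the base case $r=k=0$ of \eqref{decayrotconc} follows directly from \eqref{boundaVbulk0}.

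For the higher-order case I would expand $\nabla^r_{x,y}\underline{\Gamma}^k v_\omega$ by the chain rule, using the elementary transformation laws
\begin{align*}
\partial_{x_i} v_\omega & = \big(\partial_{x_i} V_\omega - (\partial_{x_i} h)\,\partial_z V_\omega\big)\circ\Phi, \qquad \partial_y v_\omega = (\partial_z V_\omega)\circ\Phi, \\
\underline{S} v_\omega & = (\underline{S} V_\omega)\circ\Phi + (h - Sh)\,(\partial_z V_\omega)\circ\Phi, \qquad \underline{\Omega} v_\omega = (\underline{\Omega} V_\omega)\circ\Phi - (\Omega h)\,(\partial_z V_\omega)\circ\Phi.
\end{align*}
Iterating these identities, for any $r+k \leq N_1-5$ one obtains a finite sum of products
\begin{align*}
\nabla^r_{x,y}\underline{\Gamma}^k v_\omega = \sum \mathcal{P}(h) \cdot \big(\nabla^{r'}_{x,z}\underline{\Gamma}^{k'} V_\omega\big)\circ\Phi, \qquad r'+k' \leq r+k,
\end{align*}
where each coefficient $\mathcal{P}(h)$ is a polynomial in $\Gamma$-vector-field derivatives of $h$ (and of $Sh$, $\Omega h$), each factor involving at most $r+k \leq N_1-5$ such derivatives.

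Taking $L^\infty$ norms and estimating each term as a product, the coefficients $\mathcal{P}(h)$ are controlled pointwise by $(1+C\e_0\jt^{-1})^{r+k} \leq 2$ on $[0,T]$ using the decay assumption \eqref{aprioridecay} on the boundary variable $u$ (which in particular controls $h$, $Sh$ and $\Omega h$ in $L^\infty$ at the required order); the $V_\omega$ factors are dominated by $c_H'\,\e_1\jt^\delta$ from \eqref{boundaVbulk0} at regularity $r'+k' \leq N_1-5$, well within the buffer $N = N_1+12$. Summing the finitely many terms yields \eqref{decayrotconc} with a constant $c_r > c_H'$. The main technical point is the bookkeeping of the chain-rule expansion and verifying that whatever weighted $L^\infty$-space is used in \eqref{boundaVbulk0} remains equivalent to its moving-domain counterpart under composition with $\Phi$; this is straightforward given the comfortable margin of vector fields and derivatives in the parameter choices, but does require minor care with any $x$-weights.
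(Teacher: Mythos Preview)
Your approach is essentially the paper's: transfer the flat-domain control on $V_\omega$ to $v_\omega$ in $\D_t$ via the chain-rule identities, which is exactly the content of the composition estimate \eqref{complow} in Lemma~\ref{Lemcomp} that the paper invokes. One small oversight: the $Y^n$ norm appearing in \eqref{boundaVbulk0} is built from $L^\infty_z L^2_x$ (see \eqref{alphaspace0'}), not $L^\infty_{x,z}$, so you cannot read off a pointwise bound on $\nabla_{x,z}^{r'}\underline{\Gamma}^{k'} V_\omega$ directly. The paper inserts a Sobolev embedding in $x$ (costing two $x$-derivatives) between the composition step and the $Y^n$ bound, obtaining
\[
\sum_{r+k \leq N_1-5} {\| v_\omega(t)  \|}_{X^{r,\infty}_k(\D_t)}
  \leq C \sum_{|k| \leq  N_1-3} {\| \underline{\Gamma}^k V_\omega(t) \|}_{L^\infty_z L^2_x}
  \leq C\, {\| V_\omega(t) \|}_{Y^{N_1+12}}.
\]
Since $N_1-3 \leq N_1+12$ there is ample room, so your argument goes through once this Sobolev step is made explicit; your remark about ``whatever weighted $L^\infty$-space is used'' should be replaced by this concrete observation.
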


Since $v = \nabla \psi + v_\omega$,
Lemmas \ref{lemdecayirrot} and \ref{lemdecayrot} recover the bootstrap assumption \eqref{aprioridecayv}
provided $c_v > c_i + c_r$ is chosen large enough.
The proofs of these lemmas are given in Subsection \ref{ssecdecay}.

\subsection*{Step 3: Estimates for the rotational part of the velocity}
In Section \ref{secVP} we prove estimates for $v_\omega$ which in particular imply 
the assumption \eqref{decayirrasVP} used in Proposition \ref{propdecayirr}.
We first consider the vector potential $\beta$ such that $-\curl\beta = v_\omega$.
$\beta$ satisfies an elliptic system, $\Delta \beta = \omega$, with mixed Dirichlet and Neumann boundary
conditions; see Lemma \ref{lembetaeq}.
We then work in the flat half-space by considering
$\alpha(t,x,z) := \beta(t,x,z+h(t,x))$, and $V_\omega$ as in \eqref{defVo0}.
Then, $\alpha$ satisfies an elliptic system with $h$-dependent coefficients,
which is forced by the vorticity $W$. 
Assuming suitable bounds on the forcing $W$, a fixed point argument,
which relies on (weighted) estimates for the Poisson kernel, 
gives estimates for $\alpha$ in weighted $L^p_zL^q_x$ spaces.
From the bounds obtained on $\alpha$ we can then directly deduce estimates for $V_\omega$ in similar spaces.
The main steps are contained in Proposition \ref{alphaprop}, which constructs and bounds $\alpha$,
and its direct consequence Lemma \ref{lemboundaVbulk}, which gives bounds for $V_\omega$. 
We summarize these results in the following statement:

\begin{prop}[Bounds for $V_\omega$
]\label{propalphaintro}
Assume that \eqref{apriorie0}-\eqref{aprioridecay} hold, 
and that $W$ is given so that \eqref{aprioriWL} and \eqref{aprioriWH} hold, that is, 
for all $t\in[0,T]$, and for $j=0,1$
\begin{align}\label{alphaintroas}
\begin{split}
& {\| \partial_t^j W(t) \|}_{\mX^{N_1-10-j}} \leq c_L \e_1,
\qquad 
{\| \partial_t^j W(t) \|}_{\mX^{N_1+12-j}} \leq c_H \e_0^j \e_1 \jt^{\delta},
\end{split}
\end{align}
where $\mX^n=\mX^n(\R^3_-)$ is the space defined in \eqref{omegaflatspace0}.

Then, for all $t\in[0,T]$, and for $j=0,1$, we have the bounds
\begin{align}\label{boundaVbulk0}
\begin{split}
{\| \pa_t^j V_\omega(t) \|}_{\Yn^{N_1 - 10 - j}} & \leq c_H' \e_1,
\\
{\| \pa_t^j V_\omega(t) \|}_{\Yn^{N_1 + 12 - j}} & \leq c_H' \e_1 \e_0^j \jt^\delta,
\end{split}
\end{align}
for some large enough absolute constant $c_H' > c_H$,
where $\Yn^n=Y^n(\R^3_-)$ is the space defined by the norm 
\begin{align*}
& {\| g \|}_{\Yn^{n}} = \sum_{|r|+|k| \leq n} {\big\| \underline{\Gamma}^k\nabla^r_{x, z} g \big\|}_{\Yn^0},
\\
& {\| g \|}_{\Yn^0} :=
   {\big\||\nabla|^{1/2} g \big\|}_{L^\infty_z L^2_x} + {\|  g \|}_{L^\infty_z L^2_x}
   + {\big\| \nabla_{x,z} g \big\|}_{L^2_z L^2_x}.
\end{align*}

In particular, we also have, for all $t \in [0,T_{\e_1}]$,
and some $c_H'' > c_H'$,
\begin{align}\label{lembounda-Voconc0}
\sum_{r + k \leq N_1+12-j} {\big\| \partial_t^j V_\omega(t,\cdot,0) \big\|}_{Z^r_k(\R^2)} 
  \leq c_H'' \e_1 \e_0^j \jt^\delta, \qquad j=0,1.
\end{align}
\end{prop}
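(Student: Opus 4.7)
The plan is to reduce the proposition to Proposition~\ref{alphaprop} and Lemma~\ref{lemboundaVbulk}, which bound the vector potential $\alpha$ in the flat half-space and then recover $V_\omega$ from $\alpha$. We begin by writing down the elliptic system for $\beta$ in $\D_t$ as in Lemma~\ref{lembetaeq}: $-\curl\beta = v_\omega$, $\Delta\beta=\omega$ in $\D_t$, with mixed Dirichlet and Neumann boundary conditions on $\pa\D_t$ chosen so that the tangency condition $v_\omega\cdot n=0$ is respected. Flattening via $z=y-h(t,x)$ produces an elliptic system for $\alpha(t,x,z)=\beta(t,x,z+h(t,x))$ of the schematic form $\Delta_{x,z}\alpha = W + \mathcal{R}(h,\nabla h)[\alpha]$, together with modified mixed boundary conditions on $\{z=0\}$, where $\mathcal{R}$ is a linear differential operator in $\nabla_{x,z}\alpha$ and $\nabla^2_{x,z}\alpha$ whose coefficients are smooth functions of $\nabla h$ vanishing at $h\equiv 0$.

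The first main step is a fixed point argument in the Banach spaces $Y^{N_1-10-j}$ and $Y^{N_1+12-j}$. For the model Laplace problem $\Delta\alpha_0 = W$ with the correct half-space boundary conditions, we use the Poisson / Newtonian kernel of the half-space together with the $L^{6/5}\cap L^2$ control of $W$ in $\mX^n$: Sobolev embedding / Hardy--Littlewood--Sobolev yields $\|\nabla_{x,z}\alpha_0\|_{L^2_{x,z}}\lesssim\|W\|_{L^{6/5}}$ and the $L^{6/5}\to L^\infty_zL^2_x$ bound of the Poisson-type kernel (convolution in $x$ at fixed $z$) yields $\|\alpha_0\|_{L^\infty_zL^2_x}+\||\nabla|^{1/2}\alpha_0\|_{L^\infty_zL^2_x}\lesssim\|W\|_{L^{6/5}\cap L^2}$. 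Hence the linearized map $W\mapsto\alpha_0$ is bounded from $\mX^0$ into $Y^0$. The perturbation $\mathcal{R}(h,\nabla h)$ is small: by \eqref{apriorie0}, \eqref{aprioridecay}, and interpolation we have $\|\nabla h\|_{L^\infty}+\|h\|_{L^\infty}\ll 1$ on $[0,T_{\e_1}]$, so $\alpha\mapsto \Delta^{-1}(W-\mathcal{R}(h,\nabla h)[\alpha])$ is a contraction on $Y^0$ and produces a unique solution with $\|\alpha\|_{Y^0}\lesssim\|W\|_{\mX^0}$.

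The second step is to upgrade to vector field and derivative control. Applying $\underline{\Gamma}^k\nabla^r_{x,z}$ and commuting through the flat Laplacian produces a right-hand side of the form $\underline{\Gamma}^k\nabla^r W$ plus commutator terms which are, schematically, bilinear expressions in $\underline{\Gamma}^{\le k}\nabla^{\le r}h$ and $\underline{\Gamma}^{\le k}\nabla^{\le r}\alpha$ with at most one factor at top order. At the low level $n=N_1-10-j$ we distribute using the decay bound \eqref{aprioridecay} on $h$ to put all derivatives of $h$ in $L^\infty$, picking up an integrable factor $\jt^{-1}$ whenever the contraction smallness is needed; this propagates the $c_L\e_1$ bound. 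At the high level $n=N_1+12-j$, a few of the $\underline{\Gamma}^k\nabla^r h$ factors must be placed in the top-order weighted energy, which by \eqref{apriorie0} costs at most $\jt^{p_0}$; combined with the assumed $\jt^\delta$ growth on $W$ in \eqref{alphaintroas}, this yields the stated $\e_0^j\e_1\jt^\delta$ bound since $p_0\ll\delta$. For $j=1$ we use the time-differentiated elliptic system: $\pa_t\alpha$ solves the same system with source $\pa_tW+(\pa_th)$-type commutators, and $\pa_th$ is a material derivative controlled by $\pa_t h=\nabla\psi\cdot(-\nabla h,1)$, which by \eqref{aprioridecayv} and \eqref{aprioridecay} is $O(\e_0\jt^{-1})$ in $L^\infty$ at low derivative count, explaining the extra $\e_0$ factor.

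Once $\alpha$ is controlled in $Y^n$, the recovery of $V_\omega$ proceeds from the identity $V_\omega=-\curl_{(h)}\alpha$, where $\curl_{(h)}$ is the pullback of $\curl$ under $y\mapsto z+h$; this is a first-order operator in $\nabla_{x,z}\alpha$ with $O(1)+O(\nabla h)$ coefficients, so $\|V_\omega\|_{Y^n}\lesssim\|\alpha\|_{Y^{n+1}}+ (\text{small})\cdot\|\alpha\|_{Y^n}$, and we absorb one derivative by taking $n+1\le N_1+12$ on the top level. Finally, the trace bound \eqref{lembounda-Voconc0} follows from the $Y^0$ component $\||\nabla|^{1/2}V_\omega\|_{L^\infty_zL^2_x}$, which restricts continuously to $Z^0$ at $z=0$ by the standard trace inequality $\|f(\cdot,0)\|_{H^{1/2}(\R^2)}\lesssim\||\nabla|^{1/2}f\|_{L^\infty_zL^2_x}+\|\nabla_{x,z}f\|_{L^2_{x,z}}$, applied to each $\underline{\Gamma}^k\nabla^r V_\omega$.

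The main obstacle will be the commutator bookkeeping in the second step: the flattened Laplacian does not commute with $\underline{S}$ or $\underline{\Omega}$, and the resulting error terms mix low and high norms of $h$ with low and high norms of $\alpha$. Closing the two-tier estimate requires checking that each commutator admits an $L^\infty$-placement of $h$-factors at a derivative count $\le N_0/2+4$ (where the decay \eqref{apriorie0}--\eqref{aprioridecay} is available) so that the $\jt^\delta$ loss at the top level is only absorbed once, and that the low-level bound suffers no net loss in $\jt$. This is the delicate technical step and is the reason the thresholds in \eqref{paramN} are chosen as they are.
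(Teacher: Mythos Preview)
Your overall strategy is right: the proposition reduces to the fixed-point construction of $\alpha$ (Proposition~\ref{alphaprop}) plus the algebraic recovery of $V_\omega$ from $\alpha$ (Lemma~\ref{lemboundaVbulk}), and the trace bound then follows. The two-tier bookkeeping with low and high norms, and the handling of $\partial_t$, are also in the right spirit.

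There is, however, a genuine gap in your choice of function space for $\alpha$. You run the contraction in $Y^n$ and assert the linear bound $\|\alpha_0\|_{L^\infty_zL^2_x}\lesssim\|W\|_{L^{6/5}\cap L^2}$ for the solution of the model Poisson problem. This fails: in three dimensions the Newtonian potential of a compactly supported $W$ decays only like $1/|(x,z)|$ at infinity, so $\alpha_0(\cdot,z)\notin L^2(\R^2)$ for any fixed $z$. In frequency language, the forcing contribution to $\alpha$ is, schematically, $|\nabla|^{-1}\int e^{-|z-s||\nabla|}W\,ds\sim|\xi|^{-2}\widehat W$ at low $|\xi|$, and $\||\xi|^{-2}\widehat W\|_{L^2_\xi}$ cannot be controlled by $\|W\|_{L^{6/5}\cap L^2}$. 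The paper avoids this by running the fixed point in the ``dotted'' space $\dot Y^n$ (see \eqref{dotalphaspace0}--\eqref{dotalphaspace0'}), whose lowest-order $L^\infty_zL^2_x$ component already carries a factor $|\nabla|^{1/2}$; the low-frequency forcing estimate \eqref{fpFlow} then closes via $\||\nabla|^{-2/3}W\|_{L^{6/5}_zL^2_x}\lesssim\|W\|_{L^{6/5}_{x,z}}$.

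This same choice of space also repairs your second problem, the derivative loss in passing from $\alpha$ to $V_\omega$. You write $\|V_\omega\|_{Y^n}\lesssim\|\alpha\|_{Y^{n+1}}$ and propose to ``absorb one derivative by taking $n+1\le N_1+12$'', but that only yields $V_\omega\in Y^{N_1+11}$, one short of what the proposition claims; and you cannot push $\alpha$ to $Y^{N_1+13}$ since $W$ is only assumed in $\mX^{N_1+12}$. The point of $\dot Y^n$ is precisely that it already builds in one extra derivative (see \eqref{dotequiv}: $\|\nabla_{x,z}\alpha\|_{Y^0}\lesssim\|\alpha\|_{\dot Y^0}$), so $\|V_\omega\|_{Y^n}\lesssim\|\alpha\|_{\dot Y^n}$ with no loss. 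A secondary remark: the boundary data $B_i$ in the fixed-point map involve the undifferentiated trace $\alpha(0)$ multiplied by $\nabla h$ (see \eqref{Aidef}); controlling these in the contraction again uses the $\||\nabla|^{1/2}\alpha\|_{L^\infty_zL^2_x}$ piece of $\dot Y^0$ together with the product estimate \eqref{prodestuse0}, not $\|\alpha\|_{L^\infty_zL^2_x}$.
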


Note that the bound \eqref{lembounda-Voconc0} for the restriction of $V_\omega$ to $z=0$
follows from \eqref{boundaVbulk0} and the definitions of the spaces $Y^n$ and $Z^r_k$,
and implies the validity of the assumption \eqref{decayirrasVP}.

To conclude the proof of our result, we then only need to prove
that the bounds in \eqref{alphaintroas} hold true,
that is, we need to close the bootstrap for the a priori assumptions \eqref{aprioriWL}
and \eqref{aprioriWH}. This is done in the last main step.

\subsection*{Step 4: Estimates for the vorticity}
Our last main step is to bootstrap the estimates \eqref{aprioriWL}-\eqref{aprioriWH}. 
The main point is to obtain estimates of size essentially $\e_1$, comparable to the size of the initial
vorticity. While this is a natural bound to expect it is not straightforward to obtain, as we will explain below.
Notice that the basic energy estimate \eqref{apriorie0'} only gives a bound
on $\omega$ of the order $O(\e_0)$.
This is the main result:

\begin{prop}\label{mainpropWintro}
Assume that the initial conditions \eqref{init1} holds,
and the a priori assumptions \eqref{apriorie0}-\eqref{aprioridecay} hold.
Let $W$ be as defined as above and $\mX^n$ as in \eqref{omegaflatspace0}.
If \eqref{aprioriWL}-\eqref{aprioriWH} hold, that is,
for all $t\in[0,T]$, and for $j=0,1$ 
\begin{align}\label{propWasi}
\begin{split}
& {\| \partial_t^j W(t) \|}_{\mX^{N_1-10-j}} \leq 2c_L \e_1,
\\
& {\| \partial_t^j W(t) \|}_{\mX^{N_1+12-j}} \leq 2c_H \e_0^j \e_1 \jt^{\delta},
\end{split}
\end{align}
then, for all $t\in[0,T]$,
\begin{align}\label{propWconci}
\begin{split}
& {\| \partial_t^j W(t) \|}_{\mX^{N_1-10-j}} \leq c_L \e_1,
\\
& {\| \partial_t^j W(t) \|}_{\mX^{N_1+12-j}} \leq c_H \e_0^j \e_1 \jt^{\delta}.
\end{split}
\end{align}
\end{prop}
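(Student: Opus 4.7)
The plan is to work with the transported vorticity equation in the flat half-space and apply a normal form that removes the slowly-decaying quadratic stretching produced by the irrotational part of the velocity. In the moving domain $\omega$ satisfies $D_t \omega = (\omega \cdot \underline{\nabla})v$, and after flattening with $z = y - h(t,x)$ and writing $\widetilde{v}(t,x,z) = v(t,x,z+h(t,x))$, one obtains
\[
\partial_t W + U \cdot \underline{\nabla} W = (W \cdot \underline{\nabla}) \widetilde{v} + R_h,
\]
where $U$ is the transport velocity in the flat coordinates (divergence-free up to small $h$-dependent corrections) and $R_h$ collects change-of-variables commutators that vanish to higher order. Splitting $\widetilde{v} = \widetilde{\nabla\psi} + V_\omega$, the obstruction to closing at size $\e_1$ comes from the stretching term $(W \cdot \underline{\nabla})\widetilde{\nabla \psi}$: by Lemma \ref{lemdecayirrot} the factor $\underline{\nabla}\widetilde{\nabla\psi}$ decays only at the non-integrable rate $\jt^{-1}$ on the time scale $T_{\e_1}\sim \e_1^{-1+\delta}$, so a direct Gronwall argument would yield a catastrophic $\jt^{C\e_0}$ growth rather than the uniform bound we need.

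The remedy is a Poincaré normal form performed directly in the three-dimensional half-space. Using the bulk velocity potential $\Psi$, for which Proposition \ref{propvelpot} provides $L^2$ control with only $\jt^{3p_0}$ growth, I would construct a bilinear correction $B(\Psi,W)$ so that
\[
\widetilde{W} := W - B(\Psi, W)
\]
satisfies an evolution equation in which the stretching term is replaced by cubic remainders that fall into one of three admissible classes: (i) products of two irrotational factors, giving $O(\jt^{-2})$ time decay; (ii) terms involving one factor of $V_\omega$, which by Proposition \ref{propalphaintro} is bounded by $\e_1\jt^\delta$, producing at most a $\jt^\delta$ loss when multiplied by a $\jt^{-1}$ factor; (iii) purely rotational cubic terms of size $\e_1^3$. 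The operator $B$ is chosen so that $\partial_t B(\Psi,W)$ reproduces $(W\cdot\underline{\nabla})\widetilde{\nabla\psi}$ modulo admissible remainders, exploiting the fact that $\nabla\psi$ evolves dispersively at the boundary with phase $|\nabla|^{1/2}$ and is harmonically extended into the bulk. The smallness of $B(\Psi,W)$ in $\mX^n$, guaranteeing $\widetilde W \approx W$, follows from the $\e_0\jt^{-1}$ pointwise decay of $\Psi$ together with Hardy-type estimates in the half-space.

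Once the normal form is in place, I would close the energy estimate by applying $\nabla^r_{x,z}\underline{\Gamma}^k$ to the equation for $\widetilde{W}$ and taking the mixed $L^2\cap L^{6/5}$ norm defined by $\mX^n$. The $L^2$ part uses approximate incompressibility of $U$ to avoid losing derivatives in the transport term; commutators between $\underline{\Gamma}$ and $U\cdot\underline{\nabla}$ are controlled via \eqref{apriorie0}, \eqref{aprioridecayv} and Proposition \ref{propvelpot}. The $L^{6/5}$ part is handled by a parallel transport estimate, using that the Jacobian of the flow map is uniformly bounded. At the lower regularity level $N_1-10$ the gap to $N_0-20$ allows the high-regularity factor in each cubic source to be placed in $L^\infty$ with time decay, so every time integral is uniformly bounded and we recover the sharp $\e_1$ bound of \eqref{propWconci}. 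At the higher level $N_1+12$ this slack is tighter and one picks up at worst a logarithmic loss in the cubic remainders, which is absorbed into the allowed $\jt^\delta$. The $j=1$ estimates are obtained by differentiating the equation in time once, producing a factor of $\e_0$ from the a priori velocity bounds and yielding the $\e_0^j$ gain in \eqref{propWconci}.

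The principal obstacle is the construction of the bilinear operator $B$ in the full three-dimensional half-space: unlike familiar dispersive normal forms, which live on the two-dimensional boundary, here the correction must be defined in the bulk compatibly with the elliptic extension producing $\Psi$, must commute favorably with the full vector field family $\underline{\Gamma}$, and must remain small in the weighted $L^2\cap L^{6/5}$ norm. A secondary difficulty is the tension between the transport estimate in $L^{6/5}$ and the $\jt^{p_0}$ growth permitted for the high-order weighted norms of $h$; this mismatch is precisely what forces us to tolerate the small $\delta$ loss at the higher regularity level and explains the two-tier structure of \eqref{propWconci}.
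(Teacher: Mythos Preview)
Your overall strategy is right, and it matches the paper's: flatten, identify the slowly-decaying stretching term $(W\cdot\underline\nabla)\nabla\Psi$ as the obstruction, and remove it by a normal-form correction in the three-dimensional bulk so that only rotational-rotational quadratic terms and integrable cubic remainders survive. The transport estimate via the Lagrangian flow (with uniformly bounded Jacobian) and the two-tier bootstrap structure are also exactly what the paper does.

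Where you diverge is in the mechanism for the correction $B$. You describe it as exploiting that $\nabla\psi$ ``evolves dispersively at the boundary with phase $|\nabla|^{1/2}$,'' and you flag the bulk construction of such a phase-based normal form as the principal obstacle. The paper bypasses this entirely. The key observation (Lemma~\ref{lemmaU} and the identity \eqref{Psi=dt}) is purely kinematic: since $\partial_t h = G(h)\varphi = |\nabla|\varphi + O(h\varphi)$ and $\Psi \approx e^{z|\nabla|}\varphi$, one has
\[
\nabla_{x,z}\Psi \;=\; \partial_t A \;+\; (\text{quadratic in }h,\varphi), \qquad A := \nabla_{x,z}|\nabla|^{-1}e^{z|\nabla|}h.
\]
Thus the problematic factor is already a time derivative of a decaying quantity, and the correction is simply $G^n \sim (\Gamma^{\leq n}W)\cdot(\Gamma^{\leq n}\nabla A)$ (see \eqref{WrenoprG}); one pulls $\partial_t$ across to $W$ and absorbs it into $\mathbf{D}_t$. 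No dispersion relation is inverted, and the construction is explicit and elementary. The same identity handles the transport field $U$, which also contains $\nabla\Psi$ (a point you did not address).

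Your formulation is not wrong in principle, but the obstacle you identify is an artifact of trying to build a Fourier-phase normal form where none is needed. The paper's route is strictly simpler: it is integration by parts in $t$ using the kinematic boundary condition, not a Poincar\'e normal form in the dispersive sense. For the $j=1$ bounds, the paper also proceeds more directly than you indicate: once the $j=0$ bounds are in hand, one reads $\partial_t\Gamma^n W$ off the un-renormalized equation \eqref{1lemDtWeq''} and estimates each term pointwise in time, without differentiating the renormalized equation.
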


The above statement is essentially Proposition \ref{mainpropW}, and its proof occupies all of Section \ref{secVorticity}.
The estimates use crucially the renormalization in Proposition \ref{lemWreno}.

Let us give a few details of the proof of Proposition \ref{mainpropWintro}.
For convenience and ease of cross-reference, we work with the quantities defined in the flat 
coordinates, and let $W$ and $V_\omega$ be as above, with $V$ and $\Psi$ denoting the velocity and velocity potential.
$W$ and $V_\omega$ are the `rotational' components, while $V-V_\omega$ and $\nabla \Psi$ 
(these two only differ by a quadratic term) are the `irrotational' components.
The rotational components together with the height $h$ are also denoted as `dispersive' variables.

The vorticity equation reads (see \eqref{Weq0}-\eqref{Vi})
\begin{align}\label{Weq0intro}
\begin{split}
& {\bf D}_t W  = W \cdot \nabla V + \cdots
\qquad {\bf D}_t := \partial_t + U \cdot \nabla, 
  \qquad U := \nabla\Psi + V_\omega + \cdots 
\end{split}
\end{align}
where we are denoting with ``$\cdots$'' lower order perturbative terms.
Let us simplify \eqref{Weq0intro} further by replacing the material derivative just by $\partial_t$,
but notice that this cannot be done by trivially integrating the Lagrangian 
flow since $U$ is not in $L^1_t([0,T])$
due to the non-integrable time-decay of $\nabla\Psi$.
We then arrive at the model equation
\begin{align}\label{Weq1intro}
& \partial_t W  = W \cdot \nabla V_\omega + W \cdot \nabla \nabla\Psi.
\end{align}
One can see that the part of the quadratic stretching term that only involves the rotational variables 
$W$ and $V_\omega$ is naturally of size $\e_1^2$ (up to small time growing factors), and therefore
is consistent with a bound of order $\e_1$ for $W$ on a time scale of order $O(\e_1^{-1})$.
However, since $|\nabla \Psi| \approx \e_0 \jt^{-1}$ (at best) the last term in \eqref{Weq1intro} 
acts as a long-range perturbation
and does not allow us to propagate bounds on $W$.
Although the loss appears to be only logarithmic here, this type of difficulty 
is a well-known issue when dealing with long-term regularity for nonlinear PDE.

Before explaining our ideas to resolve the above issue,
let us also mention that the situation becomes even more delicate when looking at weighted norms of $W$
as in \eqref{alphaintroas}.
Applying vector fields to \eqref{Weq0intro}, and just concentrating on the rotational-irrotational coupling,
we obtain, schematically,
\begin{align}\label{Weq1introvfs}
\partial_t (\underline{\Gamma}^k W) = (\underline{\Gamma}^k W) \cdot \nabla \nabla\Psi 
  + W \cdot (\underline{\Gamma}^k \nabla \nabla\Psi) + \cdots.
\end{align}
Recall that the evolution of $\Psi$ is forced by the (time derivative) of restriction of $V_\omega$, 
see \eqref{der30intro}-\eqref{Pomegaintro},
and that (in terms of norms) we can think that $\nabla V_\omega \approx W$.
Therefore, one should expect that $k$ vector fields applied to $\nabla \Psi$ 
(or, equivalently, of $|\nabla|^{1/2}\varphi = \Im u$, see \eqref{aprioridecay})
will decay at the sharp rate only provided that strictly more than $k$ vector fields 
of $V_\omega$ are suitably under control.
But this is then inconsistent with the equation \eqref{Weq1introvfs}
where (small) polynomial losses will occur when relying on higher order weighted $L^2$ 
norms of $\nabla \Psi$.

To resolve the issues discussed above, we introduce a ``modified vorticity'' 
which satisfies a better equation than \eqref{Weq1intro} 
where the irrotational components only appear with quadratic or higher homogeneity.
This renormalization procedure can be thought of as a normal form for the vorticity equation
in the three dimensional domain.
The main observation is that, up to perturbative quadratic terms,
\begin{align}
\Psi \approx e^{z|\nabla_x|} \partial_t |\nabla_x|^{-1} h 
\end{align}
and, therefore, $\nabla \Psi$ is approximately the time derivative of a time-decaying component.
Then, the modified vorticity defined by $W - W \cdot \nabla \nabla e^{z|\nabla_x|} |\nabla_x|^{-1} h$
satisfies an equation with truly perturbative nonlinear terms,
and can be used to obtain \eqref{propWconci}. 

%
%

%
%


%

\subsection{Notation}\label{Notation}
Here we give some notation used in the paper. More notation will be introduced in the course of the proofs.

\noindent
- We use standard notations for $L^p$ spaces and Sobolev spaces $W^{s,p}$ and $\dot{W}^{s,p}$, with $H^s = W^{s,2}$.

\noindent
- With $p-$ we denote a number smaller than, but arbitrarily close to, $p$.
$\infty-$ denotes an arbitrarily large number.
Similarly, $p+$ denotes a number larger than, but arbitrarily close to, $p$.

\noindent
- We use $C$ to denote absolute constants; these may vary from line to line of a chain of inequalities,
and may depend on the numbers in \eqref{paramN},
but are independent of the relevant quantities involved, and of $\e_0$ and $\e_1$.

\noindent
- $A \lesssim B$ means that there exists an absolute constant $C>0$ such that $A \leq C B$.
Similarly $A \gtrsim B$ means $B \lesssim A$. $A \approx B$ means $A \lesssim B$ and $B \lesssim A$.

\noindent
- We denote the Fourier transform over $\R^2$ by
\begin{align}
\widehat{f}(\xi) = \mathcal{F}(f) (\xi) := \frac{1}{2\pi}\int_{\R^2} e^{-i x\cdot \xi} f(x) \, dx,
\qquad
\mathcal{F}^{-1}(f) (x) = \frac{1}{2\pi}\int_{\R^2} e^{i x\cdot \xi} f(\xi) \, d\xi.
\end{align}

\noindent
- To define frequency decomposition we fix a smooth even cutoff function  $\varphi: \R \to [0,1]$
supported in $[-8/5,8/5]$ and equal to $1$ on $[-5/4,5/4]$.
By slightly abusing notation we identify $\varphi(x)$ with its radial extension $\varphi(|x|)$, $x\in\R^d$.
For $k \in \Z$ we define $\varphi_k(x) := \varphi(2^{-k}x) - \varphi(2^{-k+1}x)$,
so that the family $(\varphi_k)_{k \in\Z}$ forms a partition of unity,
\begin{equation*}
 \sum_{k\in\Z}\varphi_k(\xi)=1, \quad \xi \in\R^d \smallsetminus \{ 0 \}.
\end{equation*}
We let
\begin{align}\label{cut0}
\varphi_{I}(x) := \sum_{k \in I \cap \Z}\varphi_k, \quad \text{for any} \quad I \subset \R, \quad
\varphi_{\leq a}(x) := \varphi_{(-\infty,a]}(x), \quad \varphi_{> a}(x) = \varphi_{(a,\infty)}(x),
\end{align}
with similar definitions for $\varphi_{< a},\varphi_{\geq a}$.
We will also denote $\varphi_{\sim k}$ a generic smooth cutoff function
that is supported around $|\xi| \approx 2^k$, e.g. $\varphi_{[k-2,k+2]}$ or $\varphi'_k$.
We denote by $P_k$, $k\in \Z$, the Littlewood-Paley projections defined by
\begin{equation}\label{LP0}
\what{P_k f}(\xi) = \varphi_k(\xi) \what{f}(\xi),
  \quad \what{P_{\leq k} f}(\xi) = \varphi_{\leq k}(\xi) \what{f}(\xi),
  \quad \what{P_{\sim k} f}(\xi) = \varphi_{\sim k}(\xi) \what{f}(\xi),
  \quad \textrm{ etc.}
\end{equation}
Note that these projections essentially commute with the vector fields:
\begin{align}\label{Lpcomm}
[\Omega,P_k] = 0, \qquad [S,P_k] = P_{\sim k}.
\end{align}

\noindent
- We denote by $\mathbf{1}_{\pm}$ the characteristic function of $\{ \pm x > 0\}$.


\bigskip
\section{Estimates for the vector potential}\label{secVP}
In this section we establish bounds on the
rotational part of the velocity in the flat domain that, recall, is denoted by
\begin{align}\label{VP00}
V_\omega(t,x,z) = v_\omega(t,x, z+ h(x)),
\end{align}
under some assumption on the vorticity in the flat domain, that is, $W(t,x,z) = \omega(t,x, z+ h(x))$.
In particular, we will prove Proposition \ref{propalphaintro} as a combination of
Proposition \ref{alphaprop} and Lemma \ref{lemboundaVbulk}.



%

%

%

%


\subsection{Preliminaries and flattening of the domain}
We start by relating $v_\omega$ to $\beta$ such that $v_\omega =\curl \beta$,
and reduce matters to estimates for a suitable elliptic system.
This Hodge-type decomposition is fairly standard 
but we provide some details for completeness
and since we are going to need explicit formulas for our estimates; see Appendix \ref{Appalpha}.

\begin{lemma}[Elliptic problem]\label{lembetaeq}
Let $\omega = \curl v$ and $v_\omega$ be defined as above, then we can write
\begin{align}\label{betaeq0}
v_\omega = - \curl \beta
\end{align}
where $\beta$ is the unique solution which decays at infinity
of the system
\begin{subequations}\label{betaeq}
\begin{align}
\label{betaell}
\Delta \beta & = \omega, \qquad \text{ in } \D_t,
\\
\label{betapi}
\Pi \beta &= 0, \qquad \text{ on } \pa \D_t,
\\
\n^i \partial_i (\n^j\beta_j) + H \beta_i \n^i 
  &=0, \qquad \text{ on } \pa \D_t.
\label{betan}
\end{align}
\end{subequations}
Here $H:= \partial_j \n^j$ and $\Pi$ denotes the projection to the tangential components
$\Pi_i^j := \delta^j_i - \n^j\n_i$, with $n$ the outward-pointing unit normal.
\end{lemma}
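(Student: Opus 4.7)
The plan is to construct $\beta$ via the Coulomb gauge and derive the boundary conditions from the requirement that $v_\omega = -\curl\beta$. Recall from the Hodge decomposition \eqref{setup1} that $v_\omega$ is divergence-free in $\D_t$, tangent to $\pa\D_t$, and has $\curl v_\omega = \curl v = \omega$ (so that $\div\omega=0$). I would seek $\beta$ satisfying $\div\beta = 0$ (Coulomb gauge). Using the vector identity $\curl\curl\beta = \nabla(\div\beta) - \Delta\beta$, the relation $-\curl\curl\beta = \curl v_\omega = \omega$ then reduces to $\Delta\beta = \omega$, giving \eqref{betaell}.

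For the boundary conditions, the key observation is that imposing the tangency condition \eqref{betapi}, namely $\Pi\beta=0$, is sufficient to guarantee $v_\omega\cdot n = -(\curl\beta)\cdot n = 0$ on $\pa\D_t$. In local coordinates adapted to the boundary in which $n$ is a coordinate vector, the tangential components of $\beta$ vanish on $\pa\D_t$; taking tangential derivatives of these components then yields $(\curl\beta)\cdot n=0$ on $\pa\D_t$. The second boundary condition \eqref{betan} is forced on us by the requirement to preserve the gauge $\div\beta = 0$ in $\D_t$. Indeed, $\Delta(\div\beta) = \div(\Delta\beta) = \div\omega = 0$, so $\div\beta$ is harmonic; to conclude $\div\beta\equiv 0$ it suffices to check it vanishes on $\pa\D_t$. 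A direct expansion of $\partial_i\beta^i$ at the boundary using the orthogonal splitting $\partial_i = n_i(n^j\partial_j) + \Pi_i^{\,j}\partial_j$ and the condition $\Pi\beta=0$ yields $(\div\beta)|_{\pa\D_t} = n^i\partial_i(n^j\beta_j) + H\,n^i\beta_i$, where the term $H\,n^i\beta_i$ arises from the divergence of $n$ (mean curvature). This is exactly \eqref{betan}, and combined with harmonicity and appropriate decay at infinity in the unbounded domain $\D_t$, forces $\div\beta=0$ globally, making the system self-consistent.

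Existence and uniqueness of the elliptic system \eqref{betaell}--\eqref{betan} (up to constant vector fields) can then be established via a variational argument: the natural bilinear form $\int_{\D_t}\nabla\beta:\nabla\beta'$ on the space $\{\beta \in \dot H^1(\D_t;\R^3) : \Pi\beta|_{\pa\D_t}=0\}$ yields coercivity after using \eqref{betan} as a natural boundary condition (which appears in the surface term after integration by parts of $(\Delta\beta)\cdot\beta'$ once the tangential constraint is enforced). Uniqueness modulo constants follows from the vanishing-data version: if $\omega=0$, pairing with $\beta$ itself and using both boundary conditions shows $\nabla\beta\equiv 0$ in $\D_t$.

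The main obstacle is the careful accounting of boundary terms in the variational/integration-by-parts arguments: one needs to verify that the surface term generated by $(\Delta\beta)\cdot\beta'$ under the constraint $\Pi\beta=0$ really reduces to a quantity controlled by \eqref{betan}, which requires the geometric identity involving the mean curvature $H$ noted above. The remaining routine task is to verify that the constructed $\beta$ actually satisfies $-\curl\beta = v_\omega$ (rather than just some divergence-free, tangent-to-boundary vector field with curl $\omega$): this follows because $u := v_\omega + \curl\beta$ is divergence-free, curl-free, tangent to $\pa\D_t$, and decays at infinity, hence $u\equiv 0$ by the standard Hodge-theoretic vanishing in $\D_t$.
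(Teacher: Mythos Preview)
Your proposal is correct and follows essentially the same route as the paper: both arguments hinge on showing that the boundary conditions \eqref{betapi}--\eqref{betan} force $\div\beta=0$ on $\pa\D_t$ (via the geometric identity $(\div\beta)|_{\pa\D_t}=\partial_n\beta_n+H\beta_n$), hence everywhere by harmonicity, and that $\Pi\beta=0$ implies $(\curl\beta)\cdot n=0$; uniqueness of the div--curl system then identifies $-\curl\beta$ with $v_\omega$. The only cosmetic difference is that the paper verifies $(\curl\beta)\cdot n=0$ through the identity $\curl z\cdot n=\div(\Pi z\times n)$ rather than local coordinates, and it does not spell out the variational existence argument you sketch.
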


\begin{proof}
Recall that we define the (irrotational) velocity potential $\psi$ by solving the Neumann problem
\begin{align*}
\begin{array}{rcll}
\Delta \psi & \!\! = \!\!& 0, & \qquad \text{ in } \D_t,
\\
\pa_y \psi - \nabla \psi \cdot \nabla h & \!\! = \!\!& \pa_t h, & \qquad \text{ on } \pa\D_t.
\end{array}
\end{align*}
Since $v_\omega = v -\nabla \psi$
we have the system
\begin{align}\label{vomegaeqs}
\begin{split}
\div v_\omega & = 0, \qquad \text{ in } \D_t,
\\
\curl v_\omega & = \omega, \qquad \text{ in } \D_t,
\\
v_\omega \cdot n & = 0, \qquad \text{ on } \partial \D_t,
\end{split}
\end{align}
where $n = (1+|\nabla h|^2)^{-1/2} (-\nabla h,1)$ denotes the outward unit normal.
In what follows we denote with the same symbol $n$ a regular extension of
the unit normal vector field $n$ defined in a neighborhood of $\pa \D_t$
and such that $|n|=1$ close to $\pa \D_t$.

Note that any $v_\omega$ decaying at infinity that solves this system is unique
since the difference of any two solutions of \eqref{vomegaeqs} is the gradient of an
harmonic function with homogeneous Neumann data, and thus is constant.
Then, letting $\beta$ solve \eqref{betaeq}, we set $w := - \curl \beta$ and want to show that
$w$ satisfies \eqref{vomegaeqs}.

We have $\div w=0$ and $\curl w = \Delta \beta - \nabla (\div \beta) = \omega - \nabla (\div \beta)$.
Observe that $\Delta (\div \beta) = 0$ by \eqref{betaell}.
Decomposing into tangential and normal components,
\fp{
using $v_i = \Pi_i^j v_j + n_i v_n$, and $\pa_i = \Pi_i^j \pa_j + n_i \pa_n$
where we are denoting $\beta_n = \beta \cdot n$ and $\partial_n = \n \cdot \nabla$, we have, on the surface,
\begin{align*}
\partial_i \beta^i & =  \Pi_i^j \partial_j ( \Pi^i_k \beta^k + n^i \beta_n) + n_i \partial_n \beta^i 
 \\
 & = \Pi^j_i \partial_j \Pi^i_k \beta^k + (\Pi^j_i \partial_j n^i) \beta_n + n^i \Pi^j_i \partial_j \beta_n
 + \partial_n \beta_n - (\partial_n n_i) \beta^i.
\end{align*}
The first term in the expression above vanishes since it is the tangential divergence 
of $\Pi\beta$, which is zero on the boundary by assumption;
the second term satisfies 
$$(\Pi^j_i \partial_j n^i) \beta_n = (\partial_i n^i - n_in^j\partial_j n^i) \beta_n = H \beta_n,$$
since $|n|=1$ in a neighborhood of the surface;
the third term vanishes since $n^i \Pi^j_i \equiv 0$;
the last term also vanishes since, using again the boundary conditions and $|n|=1$,
$$(\partial_n n_i) \beta_i = (\partial_n n_i) n_i n^j \beta_j = 0.$$}
We eventually obtain
\begin{align}\label{divS}
\left. \partial_i \beta^i \right|_{\partial \D_t} & = \partial_n \beta_n + H \beta_n.
\end{align}

Therefore, in view of \eqref{betan}, we have $\div \beta = 0$ on $\partial \D_t$
and we can deduce that $\div \beta = 0$ in $\D_t$ so that $\curl w = \Delta \beta = \omega$.
For the last boundary condition in \eqref{vomegaeqs} 
we see that since
\begin{align}\label{curl.n}
\curl z \cdot n = \div ( \Pi z \times n),
\end{align}
using again \eqref{betapi} we get $w\cdot n = -\curl \beta \cdot n = 0$.
Therefore $w$ solves \eqref{vomegaeqs}
and \eqref{betaeq0} follows by uniqueness with $\beta$ solving \eqref{betaeq} as desired.

Finally, observe that solutions to \eqref{betaeq} (with a given divergence-free $\omega$) which decay at infinity
are unique since any solution is divergence-free in view of \eqref{divS},
and the curl of the difference of two solutions solves the homogeneous system associated to \eqref{vomegaeqs}.
\end{proof}

\subsubsection{Change of coordinates}
In order to obtain estimates for $\beta$  
we change coordinates to a flat domain,
going from $(x,y)=(x_1,x_2,y)$ in $\D_t$ to $(x,z)$ with $x\in\R^2,z<0 $ with $z:=y-h(t,x)$,
by defining
\begin{align}\label{betaflat}
\alpha(t,x,z) := \beta(t,x,z+h(t,x)), \qquad
W(t,x,z) := \omega(t,x,z+h(t,x)).
\end{align}
In what follows $\nabla$ and $\Delta$ will only refer to differentiation in $x$ unless otherwise specified.

\begin{remark}
Notice that since we will be working at
a lower level of regularity than the maximal regularity available,
we will not need to worry about the regularity of the coordinate change
and, in particular we can avoid paradifferential calculus.

On the other hand, since we need to work at a level of regularity above $N_1$,
the top order weighted norms of $h$, which enter in the change of coordinates,
cannot be expected to be uniformly bounded in time (see the next remark),
and this creates several technical complications.
\end{remark}

\begin{remark}[A priori bounds on $h$]\label{remaph}
Recall the a priori $L^\infty$ bound \eqref{aprioridecay}
and the $L^2$ bound \eqref{apriorie0} on the height: for all $t\in[0,T]$
\begin{align}\label{apriorih}
\begin{split}
& \sum_{r+k \leq N_1} {\big\| h(t) \big\|}_{Z^{r,\infty}_{k}(\R^2)} \leq c_0 \e_0 \jt^{-1},
\\
& \sum_{r+k \leq N_0} {\| h(t) \|}_{Z^r_k(\R^2)} \leq c_0 \e_0 \jt^{p_0}.
\end{split}
\end{align}

Using standard interpolation of $L^p$ spaces, we also deduce
\begin{align}\label{apriorihp}
& \sum_{r+k \leq N_1} {\big\| h(t) \big\|}_{Z^{r,p}_{k}(\R^2)} \leq
  c_0 \e_0 \jt^{-1+(2/p)(1+p_0)}, \qquad p\geq 2.
\end{align}
for all $t \in [0,T]$.
Note that the last bound above is $\leq c_0\e_0$ for $p\geq 11/5$, $p_0 \leq 1/10$.
\end{remark}

\begin{remark}[A priori bounds on $\partial_t h$]\label{remapdth}
    Using that $\partial_t h = G(h)\varphi$ with the second estimate in \eqref{G1est}, 
\begin{align}\label{aprioridthinfty}
& \sum_{r+k \leq N_1-5} {\big\| \partial_t h(t) \big\|}_{Z^{r,\infty}_{k}(\R^2)} \leq c_0 \e_0 \jt^{-1+}.
\end{align}
Using $\partial_t h = v\cdot (-\nabla h,1)$ on the boundary, together with the a priori bounds on $v$
we can also obtain, for all $t\in[0,T]$,
\begin{align}\label{aprioridth2}
& \sum_{r+k \leq N_0-5} {\| \partial_t h(t) \|}_{Z^r_k(\R^2)} \leq c_0 \e_0 \jt^{p_0}.
\end{align}
The proof of \eqref{aprioridth2} follows from the a priori bounds on $v$ and $h$
in \eqref{apriorie0}, \eqref{aprioridecayv} and \eqref{aprioridecay}, and elementary 
composition and product identities;
we postpone the proof until after the proof of Lemma \ref{Lemrestr} 
since it can be more conveniently written out using some 
notation that will be introduced later.

Interpolating \eqref{aprioridthinfty} and \eqref{aprioridth2} we have
\begin{align}\label{aprioridthp}
& \sum_{r+k \leq N_1-5} {\big\| \partial_t h(t) \big\|}_{Z^{r,p}_{k}(\R^2)} \leq
  c_0 \e_0 \jt^{(2/p)(1+p_0)-1+}, \qquad p\geq 2,
\end{align}
Note that the right-hand side of \eqref{aprioridthp} is bounded by
$\leq c_0\e_0$ for, say, $p\geq 3$
\end{remark}

Our goal is to establish bounds for $V_\omega$ and its time derivative. 
Since we will do this by establishing bounds for $\alpha$, we first relate their norms.

\subsubsection{Basic formulas and norms}
For given $f:[0,T] \times \D_t \rightarrow \R$, let us define for $t\in[0,T]$,
$x\in\R^2$ and $z\leq 0$ the function
\begin{align*}
F(t,x,z) := f(t,x,z+h(t,x)), \qquad  f(t,x,y) = F(t,x,y-h(t,x)),
\end{align*}
and record the basic identities
\begin{align}\label{equivpr1}
\begin{split}
\partial_{t} F(t,x,z) & = (\partial_{t}f)(t,x,z+h(t,x)) + (\partial_{y}f)(t,x,z+h(t,x)) \partial_{t}h,
\\
\partial_{x_i}F(t,x,z) & = (\partial_{x_i}f)(t,x,z+h(t,x)) + (\partial_{y}f)(t,x,z+h(t,x)) \partial_{x_i}h,
\\
\partial_{z} F(t,x,z) & = (\partial_y f)(t,x,z+h(t,x)),
\\
\Omega F(t,x,z) & = (\Omega f)(t,x,z+h(t,x)) + (\partial_{y}f)(t,x,z+h(t,x)) \Omega h,
\\
(S + z\partial_z) F(t,x,z) 
  & = (\underline{S} f)(t,x,z+h(t,x)) + (\partial_{y}f)(t,x,z+h(t,x))  (Sh-h),
\end{split}
\end{align}
or, equivalently,
\begin{align}\label{equivpr1'}
\begin{split}
    (\partial_{t}f)(t,x,y) & = (\partial_{t} F)(t,x,y-h(x)) - (\partial_{z}F)\dg{(t,x,y-h(x))} \partial_{t}h(t,x),
\\
(\partial_{x_i}f)(t,x,y) & = (\partial_{x_i}F)(t,x,y-h(x)) - (\partial_{z} F)(t,x,y-h(t,x)) \partial_{x_i}h(t,x),
\\
(\partial_y f)(t,x,y) & = (\partial_{z} F) (t,x,y-h(x)),
\\
(\Omega f)(t,x,y) & = (\Omega F) (t,x,y-h(x)) - (\partial_z F)(t,x,y-h(x)) \Omega h(t,x),
\\
(\underline{S} f)(t,x,y) & = ((S + z\partial_z) F)(t,x,y-h(x)) - (\partial_z F)(t,x,y-h(t,x)) (Sh(t,x)-h(t,x)).
\end{split}
\end{align}
In particular, evaluating at the boundary
\begin{align}\label{restrpr1}
\begin{split}
(\partial_t f |_{\partial\D_t}\big)(t,x)
  & = (\partial_{t}F)(t,x,0) - (\partial_z F)(t,x,0) \partial_{t}h,
\\
\big(\partial_{x_i}f |_{\partial\D_t}\big)(t,x)
  & = (\partial_{x_i}F)(t,x,0) - (\partial_zf)(t,x,0) \partial_{x_i}h,
\\
\big( \partial_y f  |_{\partial\D_t} \big)(t,x) & = (\partial_{z} F) (t,x,0),
\\
\big(\Omega f |_{\partial\D_t}\big)(t,x) & = (\Omega F)(t,x,0)
  - (\partial_zF)(t,x,0) \Omega h,
\\
\big(\underline{S} f |_{\partial\D_t}\big)(t,x)
  & = (S F)(t,x,0) - (\partial_z F)(t,x,0)  (Sh-h),
\end{split}
\end{align}
and
\begin{align}\label{restrpr2}
\begin{split}
\partial_{t} \big(f |_{\partial\D_t}\big)(t,x)
  & = \big(\partial_{t} f) |_{\partial\D_t}(t,x) + \big(\partial_y f) |_{\partial\D_t}(t,x) \partial_t h
\\
\partial_{x_i} \big(f |_{\partial\D_t}\big)(t,x)
  & = \big( \partial_{x_i} f) |_{\partial\D_t}(t,x) + (\partial_y f) |_{\partial\D_t} \partial_{x_i}h 
\\
\Omega \big(f |_{\partial\D_t}\big)(t,x)
  & = \big( \Omega f) |_{\partial\D_t}(t,x) + (\partial_y f) |_{\partial\D_t} \Omega h 
\\
S \big(f |_{\partial\D_t}\big)(t,x)
  & = \big( \underline{S} f) |_{\partial\D_t}(t,x) + (\partial_y f) |_{\partial\D_t} (Sh - h).
\end{split}
\end{align}

The identities \eqref{equivpr1}, with the definitions \eqref{VP00} and \eqref{betaflat}, imply
\dg{
\begin{align}\label{Voalpha}
\begin{split}
V_\omega(t,x,z)
 & = (\curl_{x,y} \beta)(t,x,z+h(x))
 \\
 & = 
 (\curl_{x,z} \alpha)(t,x,z) - \partial_z \alpha_3 (t,x,z) \,
 (\pa_2 h(t,x), -\pa_1h(t,x), 0)
 \\
 &
 -(0,0,\pa_1 h(t,x) \pa_z \alpha_2(t,x,z) - \pa_2 h(t,x)\pa_z \alpha_1(t,x,z))
 \\
 & = \big[ \curl \alpha + \pa_z \alpha_3 \nabla^\perp h - (\pa_z \alpha \cdot \nabla^\perp h) e_3\big](t,x,z),
\end{split}
\end{align}
with the convention that $\nabla^\perp = (-\pa_2, \pa_1, 0)$. }
\dg{We also have
\begin{align}
    \label{Voalphadt}
\begin{split}
\partial_t V_\omega(t,x,z) &\! =\!
\big[ \curl \pa_t \alpha + \pa_z \pa_t \alpha_3 \nabla^\perp h + \pa_z \alpha_3 \nabla^\perp \pa_t h
\!-\! (\pa_z \pa_t \alpha \cdot \nabla^\perp h + \pa_z \alpha \cdot \nabla^\perp \pa_t h) e_3\big]
(t,x,z).
\end{split}
\end{align}

} 
Let us record that the identities \eqref{Voalpha} and \eqref{Voalphadt} schematically read as follows:
\begin{align}\label{Voaschem}
\begin{split}
V_\omega & = \curl \alpha + \partial_z \alpha \cdot \nabla h,
\\
\partial_t V_\omega & = \curl \partial_t \alpha + \partial_z \partial_t \alpha \cdot \nabla h
 + \partial_z \alpha \cdot \nabla \partial_t h.
\end{split}
\end{align}
Then, using simple product estimates for weighted norms
we will be able to obtain bounds for $V_\omega$ 
in terms of certain norms of $\alpha$. Here are the norms that we are going to use:

\begin{defi}[Norms]
\label{defnormsalpha}
For a non-negative integer $n$, let
\begin{align}\label{alphaspace0}
{\|g\|}_{\Yn^n} = \sum_{|r|+|k| \leq n} {\big\|g^{r, k} \big\|}_{\Yn^0},
\qquad g^{r, k} := \underline{\Gamma}^k\nabla^r_{x, z} g
\end{align}
where 
\begin{align}\label{alphaspace0'}
{\|g\|}_{\Yn^0} =
   {\big\||\nabla|^{1/2} g \big\|}_{L^\infty_z L^2_x}
  + {\big\| \nabla_{x,z} g \big\|}_{L^2_z L^2_x}
    + {\|  g \|}_{L^\infty_z L^2_x}.
\end{align}
Also, let us define the ``homogeneous'' versions of the above spaces by
\begin{align}\label{dotalphaspace0}
{\|g\|}_{\dYn^{n}} = \sum_{|r|+|k| \leq n} {\big\|g^{r, k} \big\|}_{\dYn^0},\qquad g^{r, k} 
  = \underline{\Gamma}^k\nabla^r_{x, z} g
\end{align}
where
\begin{align}\label{dotalphaspace0'}
{\|g\|}_{\dYn^0} =
  \sum_{0 \leq |a| \leq 1}\left( {\big\| \nabla_{x,z}^a |\nabla|^{1/2} g \big\|}_{L^\infty_z L^2_x}
  + {\big\| \nabla_{x,z}^a 
  \nabla_{x,z} g \big\|}_{L^2_z L^2_x}\right)
    + {\| \partial_z g \|}_{L^\infty_z L^2_x}.
\end{align}
\end{defi}

Note that the above norms are defined so that
\begin{equation} \label{dotequiv}
\|\nabla_{x,z} g\|_{\Yn^0} + \| |\nabla|^{1/2} g\|_{L^\infty_zL^2_x} +
 \| \nabla_{x,z} g \|_{L^2_zL^2_x} \lesssim \|g\|_{\dYn^0}.
\end{equation}
In the upcoming section, we will prove estimates for $V_\omega$ in the $\Yn^n$
spaces by first bounding the vector potential $\alpha$ in the $\dYn^n$ spaces.
We will also use the bounds on $\alpha$
to get estimates for $\widetilde{v}_\omega = V_{\omega}|_{z = 0}$, hence on $P_\omega$,
in $Z^{r}_k$ spaces; see \eqref{lembounda-Voconc0} and \eqref{decayirrasVP}.
The bounds for $\alpha$ will follow from a fixed-point argument
for a Poisson-type problem for which the $\dYn^n$ norms are well suited.

\subsubsection{Consequences of bounds for $\alpha$}
The next two lemmas show how bounds for $\widetilde{v}_\omega$ in the spaces $Z^r_k$,
and bounds for $V_\omega$ in the spaces $\Yn^n$,
follow from bounds for the vector potential $\alpha$ the $\dYn^n$ spaces.
Then, in the remainder of the section, we will prove
that the needed bounds on $\alpha$ stated in \eqref{vpboundlo}-\eqref{vpboundhi}
can be obtained from our bootstrap
assumptions \eqref{apriorih}-\eqref{apriorihp} on the dispersive variables,
assumptions \eqref{aprioriWL}-\eqref{aprioriWH} on the vorticity 
and a fixed point argument.

\begin{lemma}[Bounds for $\alpha$ imply bounds for $\wt{v}_\omega$]\label{lembounda-Vo}
Assume \eqref{apriorih}-\eqref{apriorihp}.
Assume that for all $t \in [0,T]$ and for
$j =0$ and $1$, with notation as in \eqref{dotalphaspace0}, we have
\begin{align}
\label{vpboundlo}
\|\pa_t^j \alpha(t)\|_{\dYn^{N_1-10-j}} & \lesssim \e_1,
\\
\label{vpboundhi}
\|\pa_t^j \alpha(t)\|_{\dYn^{N_1 + 12-j}} &\lesssim \e_1 \e_0^j \jt^\delta.
\end{align}
%
Then, for $j=0,1$,
\begin{align}\label{lembounda-Voconc}
    \sum_{\dg{r + n} \leq N_1+12-j} {\big\| \partial_t^j \wt{v_\omega}(t) \big\|}_{Z^r_n(\R^2)}
\lesssim \e_1 \e_0^j \jt^\delta.
\end{align}
\end{lemma}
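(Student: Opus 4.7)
The plan is to use the algebraic identities \eqref{Voalpha}--\eqref{Voalphadt} evaluated at $z=0$, which express $\wt{v_\omega}$ and $\partial_t\wt{v_\omega}$ schematically as $\wt{v_\omega} = (\curl\alpha)|_{z=0} + (\partial_z\alpha)|_{z=0}\cdot\nabla h$ and $\partial_t\wt{v_\omega} = (\curl\partial_t\alpha)|_{z=0} + (\partial_z\partial_t\alpha)|_{z=0}\cdot\nabla h + (\partial_z\alpha)|_{z=0}\cdot\nabla\partial_t h$. The proof then reduces to two ingredients: (i) trace bounds controlling derivatives of $\alpha$ and $\partial_t\alpha$ at $z=0$ by the $\dYn^n$ norms, and (ii) weighted product estimates on $\R^2$ combining those traces with the a priori bounds on $h$ from \eqref{apriorih}--\eqref{apriorihp} and on $\partial_t h$ from \eqref{aprioridthinfty}--\eqref{aprioridthp}.

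For (i), I would first establish the basic trace inequality
\[
\|\nabla_{x,z}g(\cdot,0)\|_{L^2_x} + \bigl\||\nabla|^{1/2}\nabla_{x,z}g(\cdot,0)\bigr\|_{L^2_x} \lesssim \|g\|_{\dYn^0}.
\]
The $|\nabla|^{1/2}$-piece and the bound $\|\partial_z g(\cdot,0)\|_{L^2_x}$ are immediate from the $L^\infty_z L^2_x$ terms built into the definition of $\dYn^0$, while $\|\nabla_x g(\cdot,0)\|_{L^2_x}$ follows from the fundamental theorem of calculus applied to $z \mapsto \|\nabla_x g(\cdot,z)\|_{L^2_x}^2$, using that both $\nabla_x g$ and $\nabla_x\partial_z g$ belong to $L^2_z L^2_x$. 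Since the scaling vector field $\underline{S}$ and rotation $\Omega$ restrict to $S$ and $\Omega$ on $\{z=0\}$, with the commutator relations in \eqref{equivpr1}--\eqref{restrpr2} generating only $h$-dependent remainders that are absorbed into the product estimates below, this trace bound extends to $\|\Gamma^n\nabla_x^r \nabla_{x,z}\alpha(\cdot,0)\|_{L^2_x} \lesssim \|\alpha\|_{\dYn^{n+r}}$. The Sobolev embedding $\dot{H}^{1/2}(\R^2)\hookrightarrow L^4(\R^2)$ then upgrades this to the corresponding $L^4_x$-trace estimate.

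For (ii), I would apply $\Gamma^n\nabla_x^r$ to the product terms and expand using Leibniz, obtaining sums of expressions of the form $\bigl(\Gamma^{n_1}\nabla^{r_1}(\partial_z\alpha)(\cdot,0)\bigr)\cdot\bigl(\Gamma^{n_2}\nabla^{r_2+1}h\bigr)$, together with the analogous ones involving $\partial_t\alpha$ or $\partial_t h$. Each such product I would split into a high-low and a low-high case. When $h$ (resp.\ $\partial_t h$) carries at most $N_1-5$ derivatives, I place it in $L^\infty_x$ via \eqref{apriorih} (resp.\ \eqref{aprioridthinfty}) and the $\alpha$-factor in $L^2_x$ using the high-norm assumption \eqref{vpboundhi}. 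When the $h$-factor carries many derivatives, I use its $L^2_x$ bound from \eqref{apriorie0} (resp.\ \eqref{aprioridth2}) and place the $\alpha$-factor in $L^\infty_x$ via Sobolev embedding, which is still accessible via the low-norm assumption \eqref{vpboundlo}. Intermediate ranges are handled by an $L^4_x \cdot L^4_x$ split, using the $L^4_x$-trace on the $\alpha$-side and the $L^4_x$-bound on $h$ from \eqref{apriorihp} (resp.\ \eqref{aprioridthp}). The bound for $\partial_t\wt{v_\omega}$ has the identical structure, with \eqref{vpboundlo}--\eqref{vpboundhi} taken at $j=1$.

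The main obstacle is the precise bookkeeping of derivative counts, to ensure that every Leibniz term lands in a regime where one of the available low/high assumptions applies. The wide gap between $N_1-10$ and $N_1+12$ in the hypotheses on $\alpha$, together with the room encoded in the condition $r+2n\leq N_1+12-j$, is exactly what makes these high-low splittings close. In every split the mild polynomial growth $\jt^{p_0}$ coming from the top-order $h$-norms is multiplied by the $\jt^{-1}$ decay of the complementary $L^\infty$-factor, and $3p_0<\delta$ guarantees that all products close at the target size $\e_1\e_0^j\jt^\delta$.
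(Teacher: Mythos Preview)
Your proposal is correct and follows essentially the same approach as the paper: start from the schematic identities \eqref{Voaschem}, control the trace of $\nabla_{x,z}\alpha$ at $z=0$ via the $\dYn^n$ norm (this is exactly the content of \eqref{alphaspacebd'full}), and then perform a high--low splitting on the product $(\partial_z\alpha)|_{z=0}\cdot\nabla h$ using \eqref{vpboundlo}--\eqref{vpboundhi} together with \eqref{apriorih}--\eqref{apriorihp}. Two minor remarks: (a) since $\alpha$ already lives in flat coordinates, the restriction $\underline{S}|_{z=0}=S$ is exact and no $h$-dependent remainders arise at that step; (b) your intermediate $L^4_x\cdot L^4_x$ split is not needed here, as the gap $N_1-10\ge N/2+2$ lets the two extreme $L^\infty\times L^2$ and $L^2\times L^\infty$ cases cover everything, which is how the paper proceeds.
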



\begin{lemma}[Bounds for $\alpha$ imply bounds for $V_\omega$]\label{lemboundaVbulk}
Assume \eqref{apriorih}-\eqref{apriorihp}. With notation as in the previous lemma,
if the bounds \eqref{vpboundlo}-\eqref{vpboundhi}
for the quantity $\alpha$ hold for $t\in[0,T]$, then,
with the norms $\Yn^n$ defined as in \eqref{alphaspace0}, we have
\begin{align}\label{boundaVbulk}
\begin{split}
{\| \pa_t^j V_\omega(t)\|}_{\Yn^{N_1 - 10 - j}} &\lesssim \e_1,
\\
{\| \pa_t^j V_\omega(t)\|}_{\Yn^{N_1 + 12 - j}} &\lesssim \e_1 \e_0^j \jt^\delta.
\end{split}
\end{align}
\end{lemma}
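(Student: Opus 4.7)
The plan is to derive everything from the pointwise identities \eqref{Voalpha} and \eqref{Voalphadt}, which we may write schematically as
\[
V_\omega = \curl\alpha + \partial_z\alpha\cdot\nabla h, \qquad
\partial_t V_\omega = \curl\partial_t\alpha + \partial_z\partial_t\alpha\cdot\nabla h + \partial_z\alpha\cdot\nabla\partial_t h.
\]
Each piece will be measured in the $\Yn^{N_1-10-j}$ and $\Yn^{N_1+12-j}$ norms by expanding the outer $\underline{\Gamma}^k\nabla^r_{x,z}$ via Leibniz and estimating the result using either the stronger content of $\dYn^n$ (which controls an extra $\nabla_{x,z}$ and $\partial_z g$ in $L^\infty_z L^2_x$) for the linear piece, or product estimates combining the hypotheses \eqref{vpboundlo}-\eqref{vpboundhi} on $\alpha$ with the a priori bounds \eqref{apriorih}-\eqref{apriorihp} on $h$ for the nonlinear one.

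For the $\curl$ piece, each component of the $\Yn^0$ norm applied to $\nabla_{x,z}\alpha$ is directly controlled by $\|\alpha\|_{\dYn^0}$: this is precisely the content of \eqref{dotequiv}, since $\||\nabla|^{1/2}\nabla_{x,z}\alpha\|_{L^\infty_z L^2_x}$, $\|\nabla_{x,z}\nabla_{x,z}\alpha\|_{L^2_z L^2_x}$ and $\|\nabla_{x,z}\alpha\|_{L^\infty_z L^2_x}$ are all built into $\dYn^0$. Commuting with $\underline{\Gamma}^k\nabla^r_{x,z}$ then gives $\|\curl\alpha\|_{\Yn^n}\lesssim\|\alpha\|_{\dYn^n}$ uniformly, and similarly for $\curl\partial_t\alpha$.

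The main obstacle is the product $\partial_z\alpha\cdot\nabla h$ (and its $\partial_t$-analogues). After applying $\underline{\Gamma}^k\nabla^r_{x,z}$ and Leibniz, one must distribute derivatives and vector fields between the two factors. Because $h$ depends only on $x$, the $L^\infty_z$ factors in the $\Yn^0$ norm cost nothing; we only need to match the $L^p_x$ exponents. When at most half the derivatives fall on $h$, we use the $L^\infty_x$ decay of low-order $h$ from \eqref{apriorih} (which gives $\jt^{-1}$, more than enough) paired with the matching $\Yn^0$-type norm of $\partial_z\alpha$ coming from $\dYn^n$ via \eqref{dotequiv}. When more than half fall on $h$, we place $h$ (and derivatives thereof) in $L^p_x$ using the interpolation \eqref{apriorihp} (which is $\lesssim\e_0$ for $p\geq 11/5$) and put the remaining low-order $\partial_z\alpha$ in the dual $L^\infty_z L^{p'}_x$ space by Sobolev embedding from lower-order $\dYn$ bounds. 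Since the total order is at most $N_1+12 \ll N_0$, there is enough room in the regularity budget; the uniform $\e_1$ bound at order $N_1-10-j$ is immediate, while the growth $\jt^\delta$ at order $N_1+12-j$ is inherited directly from \eqref{vpboundhi}.

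The $\partial_t$ case is handled identically for the $\partial_z\partial_t\alpha\cdot\nabla h$ contribution, using \eqref{vpboundhi} with $j=1$; the extra term $\partial_z\alpha\cdot\nabla\partial_t h$ is estimated in the same high-low fashion, using the a priori bounds \eqref{aprioridthinfty}-\eqref{aprioridthp} on $\partial_t h$ in place of those on $h$. These bounds contribute an extra $\e_0$ factor that matches the $\e_0^j$ on the right-hand side of \eqref{boundaVbulk}, and the constraint $N_1+11\leq N_0-5$ from \eqref{paramN} guarantees that no more vector fields are ever required on $\partial_t h$ than the a priori bounds allow. Routine bookkeeping of the Leibniz sums then closes the estimate.
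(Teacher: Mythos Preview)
Your approach is essentially the same as the paper's: split $V_\omega=\curl\alpha+\partial_z\alpha\cdot\nabla h$, use \eqref{dotequiv} for the linear piece, and handle the product by a Leibniz/high-low decomposition combining the $\dYn$ bounds on $\alpha$ with the a priori bounds on $h$.

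One point to tighten: the interpolated bound \eqref{apriorihp} you invoke when ``more than half'' of the derivatives fall on $h$ is only stated for $r+k\le N_1$, so it does not cover the range where $h$ carries between $N_1+1$ and $N_1+12$ vector fields. In that regime you must instead use the high-order $L^2$ energy bound (second line of \eqref{apriorih}) and pass to $W^{1,3}$ or $L^\infty$ via Sobolev embedding, which costs only $\jt^{p_0}$; paired with the uniform low-order $\dYn^{N_1-10}$ bound on $\alpha$ this gives $\e_1\cdot\e_0\jt^{p_0}\lesssim\e_1\jt^\delta$ since $p_0<\delta$. This is exactly how the paper handles its term $I$, using the product estimate \eqref{prodestuse00} for the $|\nabla|^{1/2}$ component. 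With that adjustment your argument closes.
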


\begin{proof}[Proof of Lemma \ref{lembounda-Vo}]
Starting from \eqref{Voalpha} and applying the product estimate \eqref{prodest0'}, 
we estimate for any $r+n \leq N=N_1+12$ 
\begin{align}\label{aVopr1}
\begin{split}
{\| \wt{v_\omega}(t,x) \|}_{Z^r_n}
 & \lesssim
 {\| (\curl_{x,z} \alpha)(t,x,0) \|}_{Z^r_n}
 \\
 & + \sum_{r+n \leq N/2} {\| \partial_z \alpha (t,x,0) \|}_{Z^{r,\infty}_n}
 \, \sum_{r+n \leq N} {\| \nabla' h(t,x) \|}_{Z^r_n}
 \\
 & + \sum_{r+n \leq N} {\| \partial_z \alpha (t,x,0) \|}_{Z^r_n}
 \, \sum_{r+n \leq N/2} {\| \nabla' h(t,x) \|}_{Z^{r,\infty}_n}
\end{split}
\end{align}
where we denoted $\nabla' = (\nabla^\perp, \partial_{x_1} - \partial_{x_2})$.

The first term on the right-hand side of \eqref{aVopr1}
is directly bounded using the assumption \eqref{vpboundhi}:
for all $r+n \leq N$, with the notation $\alpha^{r,n}=\underline{\Gamma}^n\nabla^r_{x,z}\alpha$
as in \eqref{alphaspace0},
we have 
\begin{align}\label{aVopr2}
\begin{split}
{\| (\curl_{x,z} \alpha)(t,x,0) \|}_{Z^r_n}
 & \lesssim \sum_{r+n \leq N} {\| (\nabla_x,\partial_z) \nabla^r_{x} \Gamma^n \alpha (t,\cdot,0) \|}_{L^2_x}
 \\
 & \lesssim \sum_{r+n \leq N} {\| (\nabla_x,\partial_z) \alpha^{r,n}(t) \|}_{L^\infty_z L^2_x}
 \lesssim {\| \alpha(t) \|}_{\dYn^N} \lesssim \e_1 \jt^\delta.
\end{split}
\end{align}
having also used \eqref{dotequiv} (recall also \eqref{alphaspace0'}) and that $\underline{S}|_{z=0} = S$.

The second term in \eqref{aVopr1} is bounded by
\begin{align}\label{aVopr3}
C \sum_{r+n \leq N/2+2} {\| \partial_z \alpha (t,\cdot,0) \|}_{Z^r_n}
 \, \sum_{r+n \leq N} {\| \nabla h(t,\cdot) \|}_{Z^r_n}
 \lesssim \e_1 \cdot \e_0 \jt^\delta
 \end{align}
having used Sobolev's embedding,
the control the first norm in \eqref{vpboundlo} (since $N_1-10 \geq N/2+2$)
and the a priori assumption \eqref{apriorih} (since $N+1 \leq N_0$).

We can instead bound the last term in \eqref{aVopr1} using \eqref{vpboundhi}
and \eqref{apriorih} (since $N_1 \geq N/2+1)$, as follows:
\begin{align}\label{aVopr4}
C \sum_{r+n \leq N} {\| \partial_z \alpha (t,\cdot,0) \|}_{Z^r_n}
 \, \sum_{r+n \leq N/2+1} {\| h(t,\cdot) \|}_{Z^{r,\infty}_n}
 \lesssim \e_1 \jt^\delta \cdot \e_0 \jt^{-1}
 \end{align}
which is more than sufficient.
This proves \eqref{lembounda-Voconc} when $j=0$.


To obtain the estimate for the time derivative we can proceed similarly, starting from the formula \eqref{Voalphadt}.
To control the term $\curl_{x,z} \partial_t \alpha(t,x,0)$ we can estimate
as in \eqref{aVopr2} replacing $\alpha$ with $\partial_t \alpha$
and using \eqref{vpboundhi} with $j=1$.
All the other terms in \eqref{Voalphadt} are of the form
$\partial_z \partial_t \alpha_i(t,x,0) \cdot \partial_{x_k} h$
or $\partial_z \alpha_i(t,x,0) \cdot \partial_{x_k} \partial_t h$ for some $i=1,2,3$ and $k=1,2$.
We can then estimate all of these
using \eqref{vpboundhi} and \eqref{vpboundlo} also with $j=1$
and the bounds \eqref{aprioridth2} and \eqref{aprioridthp} for the terms involving $\partial_t h$;
these estimates are analogous to \eqref{aVopr3} and \eqref{aVopr4} so we omit the details.
\end{proof}

\begin{proof}[Proof of Lemma \ref{lemboundaVbulk}]
We argue in a similar way as in the previous lemma.
For the first term on the right-hand side of \eqref{Voalpha} we observe, using \eqref{dotequiv},
that ${\| \curl \alpha \|}_{Y^n} \lesssim {\| \alpha \|}_{\dot{Y}^n}$,
for $n= N_1-10$ or $N_1+12$, which is consistent with the desired \eqref{boundaVbulk}.
We then only need to look at the nonlinear terms on the right-hand side of \eqref{Voalpha}.
According to the schematic version \eqref{Voaschem}, applying vector fields
and using the notation \eqref{alphaspace0}, we have, for all $|r|+|k| \leq N$,
\begin{align}
\label{Vbulkpr0}
\underline{\Gamma}^k \nabla_{x,z}^r (V_\omega - \curl \alpha)
 & = 
 \sum_{\substack{r_1 + r_2 = r,\\ k_1 + k_2 = k}}
 (\partial_z \alpha)^{r_1, k_1} \cdot (\nabla h)^{r_2, k_2}.
\end{align}
Let us concentrate on proving the bounds in the high-norm, that is $|r|+ |k| = N := N_1+12$,
since the bounds in the low norm can be obtained similarly.
From \eqref{Vbulkpr0} we see that it suffices to estimate the $Y^0$-norm of the terms
\begin{align}
\label{Vbulkpr1}
I & := \sum_{|r_1| + |k_1| \leq N/2} (\partial_z \alpha)^{r_1, k_1} \cdot (\nabla h)^{r_2, k_2},
\\
\label{Vbulkpr2}
II & := \sum_{|r_2| + |k_2| \leq N/2} (\partial_z \alpha)^{r_1, k_1} \cdot (\nabla h)^{r_2, k_2}.
\end{align}
The constraints $r_1 + r_2 = r$ and $k_1 + k_2 = k$ are implicit in the above sums.

The first component of the $Y_0$-norm of $I$ can be estimated using \eqref{prodestuse00}:
\begin{align*}
\begin{split}
{\| |\nabla|^{1/2} I \|}_{L^\infty_zL^2_x}
  \lesssim \sum_{|r_1| + |k_1| \leq N/2}
  {\| |\nabla|^{1/2} (\partial_z \alpha)^{r_1, k_1} \|}_{L^\infty_zL^2_x}
  \cdot \sum_{|r_2| + |k_2| \leq N} {\| (\nabla h)^{r_2, k_2} \|}_{W^{1,3}}
  \\
  \lesssim \e_1 \cdot \e_0\jt^{p_0}
\end{split}
\end{align*}
having also used \eqref{vpboundlo} (recall \eqref{dotalphaspace0'} and that $N/2 \leq N_1-10$) and \eqref{apriorih}.
Note that we have also used the commutation relation \eqref{comm0}. 

For the second component of the $Y_0$-norm of $I$ we use H\"older
and the same assumptions above:
\begin{align}
	\label{nablaIL2L2}
\begin{split}
{\| \nabla_{x,z} I \|}_{L^2_zL^2_x}
  \lesssim \sum_{\substack{|r_1| + |k_1| \leq N/2 \\ |a|\leq 1}}
  {\| \nabla_{x,z}^a (\partial_z \alpha)^{r_1, k_1} \|}_{L^2_zL^2_x}
  \cdot \sum_{|r_2| + |k_2| \leq N+1} {\| (\nabla h)^{r_2, k_2} \|}_{L^\infty}
  \\
  \lesssim \e_1 \cdot \e_0\jt^{p_0}.
\end{split}
\end{align}
The last $L^\infty_zL^2_x$ piece of the norm is immediate to estimate, so we skip it.

For the term $II$, we estimate the first component of the $Y_0$-norm
using again the product estimate \eqref{prodestuse00},
and then the assumption \eqref{vpboundhi} and the a priori bound \eqref{apriorihp}:
\begin{align*}
\begin{split}
{\| |\nabla|^{1/2} II \|}_{L^\infty_zL^2_x}
\lesssim \sum_{\dg{|r_1| + |k_1|} \leq N}
  {\| |\nabla|^{1/2} (\partial_z \alpha)^{r_1, k_1} \|}_{L^\infty_zL^2_x}
  \cdot \sum_{\dg{|r_2| + |k_2|} \leq N/2} {\| (\nabla h)^{r_2, k_2} \|}_{W^{1,3}}
  \\
  \lesssim \e_1 \jt^{\delta} \cdot \e_0.
\end{split}
\end{align*}
The second component of the $Y_0$-norm is estimated just using H\"older and the same assumptions above:
\begin{align}
		\label{nablaIIL2L2}
\begin{split}
{\| \nabla_{x,z} II \|}_{L^2_zL^2_x}
\lesssim \sum_{\substack{\dg{|r_1| + |k_1|} \leq N \\ |a|\leq 1}}
  {\| \nabla_{x,z}^a (\partial_z \alpha)^{r_1, k_1} \|}_{L^2_zL^2_x}
  \cdot \sum_{\dg{|r_2| + |k_2|} \leq N/2+1} {\| (\nabla h)^{r_2, k_2} \|}_{L^\infty}
  \\
  \lesssim \e_1 \jt^{\delta} \cdot \e_0.
\end{split}
\end{align}
The last piece of the norm, that is, ${\|II\|}_{L^\infty_zL^2_x}$ can be bounded in the same way.

To obtain the estimates for the time derivative we can proceed in the same way, starting
from the second formula in \eqref{Voaschem}, using \eqref{prodestuse00} as above,
H\"older, and the assumption on $\partial_t\alpha$ in \eqref{vpboundlo}-\eqref{vpboundhi}
and on $\partial_t h$ in \eqref{aprioridthinfty}-\eqref{aprioridth2}.
\end{proof}

%

For some of our applications (specifically for the estimates
in Section \ref{secvelpot}), we will need a slight variation of
the above bounds for $V_\omega$ where we both control the $L^2_{x, z}$
norms of $V_\omega$ directly (technically, this is not included in the $Y^n$ spaces)
and additionally control a higher-order
norm of $V_\omega$ (but with a worse bound) provided we have additional high-order
control of the vorticity. This is the lemma that we will need:

\begin{lemma}[High-order bounds for $\alpha$ imply high-order $L^2_{z}L^2_x$ bounds for $V_\omega$]\label{higherorderalpha}
Under the hypotheses of Lemma \ref{lemboundaVbulk}, 
and using the notation $g^{r,k} = \underline{\Gamma}^k \nabla_{x,z}^r g$ from \eqref{alphaspace0},
we have
\begin{align}\label{boundaVbulk2}
\sum_{|r| + |k| \leq N_1-10-j} \| \pa_t^j V_\omega^{r,k}(t)\|_{L^2_zL^2_x} &\lesssim \e_1,
\\
\label{boundaVbulk2'}
\sum_{|r| + |k| \leq N_1+12-j} {\| \pa_t^j V_\omega^{r,k}(t)\|}_{L^2_zL^2_x} &\lesssim \e_1 \e_0^j \jt^\delta.
\end{align}
Moreover, if
\begin{equation}\label{extraonalpha}
{\| \alpha(t) \|}_{\dYn^{N_0-20}} \lesssim \e_0 \jt^{2p_0},  
\end{equation}
then
\begin{equation}\label{boundaVbulkweaker}
\sum_{|r| + |k| \leq N_0-20} {\| V_\omega^{r,k}(t)\|}_{L^2_zL^2_x}\lesssim \e_0 \jt^{2p_0}.
\end{equation}
\end{lemma}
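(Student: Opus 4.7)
The plan is to follow the proof of Lemma \ref{lemboundaVbulk} almost verbatim, starting from the identity $V_\omega = \curl\alpha + \partial_z\alpha\cdot\nabla h$ in \eqref{Voaschem} (and the analogous identity for $\partial_t V_\omega$), but tracking the $L^2_zL^2_x$ norm in place of the $L^\infty_zL^2_x$ piece of $Y^0$. Applying $\underline{\Gamma}^k\nabla_{x,z}^r$ and expanding via Leibniz produces a ``linear'' contribution of the form $(\nabla_{x,z}\alpha)^{r,k}$ plus product terms $(\partial_z\alpha)^{r_1,k_1}\cdot(\nabla h)^{r_2,k_2}$ with $r_1+r_2=r$ and $k_1+k_2=k$, exactly parallel to $I$ and $II$ in \eqref{Vbulkpr1}--\eqref{Vbulkpr2}.

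The linear contribution is immediate: the quantity $\|\nabla_{x,z}\alpha^{r,k}\|_{L^2_zL^2_x}$ is a component of the $\dot{Y}^0$-norm of $\alpha^{r,k}$ in \eqref{dotalphaspace0'}, so summing in $|r|+|k|$ yields the right-hand sides of \eqref{boundaVbulk2}--\eqref{boundaVbulk2'} directly from \eqref{vpboundlo}--\eqref{vpboundhi}. For the product terms, when $|r|\geq 1$ we are in essentially the same situation already treated in \eqref{nablaIL2L2}--\eqref{nablaIIL2L2}, since writing $V_\omega^{r,k}=\nabla_{x,z} V_\omega^{r-1,k}$ recycles those estimates. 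The genuinely new case is $|r|=0$, where we must bound $\|\underline{\Gamma}^{k_1}(\partial_z\alpha)\cdot\underline{\Gamma}^{k_2}\nabla h\|_{L^2_zL^2_x}$. I would split this into two regimes. When $|k_2|\leq |k|/2$, place $(\nabla h)^{0,k_2}$ in $L^\infty_x$ using \eqref{apriorih} and the factor $(\partial_z\alpha)^{0,k_1}$ in $L^2_zL^2_x$ via its $\dot{Y}^0$-control. When $|k_1|\leq |k|/2$, put $(\nabla h)^{0,k_2}$ in $L^2_x$ using \eqref{apriorih} and $(\partial_z\alpha)^{0,k_1}$ in $L^2_zL^\infty_x$, recovered from the $\dot{Y}^0$-control of $\partial_z\alpha^{0,k_1}$ and $\nabla_x\partial_z\alpha^{0,k_1}$ in $L^2_zL^2_x$ (both present in \eqref{dotalphaspace0'} via the $|a|\leq 1$ clause) combined with the 2d Sobolev embedding $W^{1,p}_x\hookrightarrow L^\infty_x$ for some $p>2$, at the cost of one extra $x$-derivative that is absorbed by the gap $|k_1|\leq |k|/2$ against the high-norm index $N_1+12$.

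The time-derivative estimates are obtained from the second formula in \eqref{Voaschem}; the extra term $\partial_z\alpha\cdot\nabla\partial_t h$ is treated identically, using the a priori bounds \eqref{aprioridthinfty}, \eqref{aprioridth2}, and their interpolation \eqref{aprioridthp} on $\partial_t h$ in place of those on $h$. For \eqref{boundaVbulkweaker}, the argument is the same scheme with the extra hypothesis \eqref{extraonalpha} replacing \eqref{vpboundhi}; the rate $\jt^{2p_0}$ simply inherits the high-order growth of $h$ in \eqref{apriorih} entering through the product terms, and the low-index terms cost only $\jt^{-1+}$ factors which are absorbed.

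The main obstacle is the $|r|=0$ product case, where the mismatch between the $L^2_zL^2_x$ target norm and the $L^\infty_zL^2_x$ piece of $\dot{Y}^0$ forces us to recover an $L^2_zL^\infty_x$ bound on $\partial_z\alpha$ which is not literally part of the $\dot Y^0$-norm; all remaining pieces are essentially tautological consequences of the definitions together with the product and Sobolev inequalities already used in the proof of Lemma \ref{lemboundaVbulk}.
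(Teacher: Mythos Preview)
Your proposal is correct and follows essentially the same route as the paper: start from \eqref{Voaschem}, note that $\|\nabla_{x,z}\alpha^{r,k}\|_{L^2_zL^2_x}$ sits in the $\dot Y^0$-norm (the $a=0$ part of the second sum in \eqref{dotalphaspace0'}), and then H\"older the product terms.

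The one place you diverge slightly is the sub-case you flag as the ``main obstacle'': when $\alpha$ carries few vector fields and $h$ carries many, you recover an $L^2_zL^\infty_x$ bound on $\partial_z\alpha$ via Sobolev and put $h$ in $L^2_x$. This works, but the paper avoids it entirely by always placing $(\partial_z\alpha)^{r_1,k_1}$ in $L^2_zL^2_x$ (again the $a=0$ term of $\dot Y^0$) and always placing $(\nabla h)^{r_2,k_2}$ in $L^\infty_x$; when $|r_2|+|k_2|$ is large, the $L^\infty_x$ bound on $\nabla h$ is obtained from the high-order $L^2$ bound in \eqref{apriorih} by Sobolev embedding, at the cost of a couple of extra derivatives absorbed by $N_0\gg N$. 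So the ``mismatch'' you worry about is not a real obstacle, and the uniform $L^2\times L^\infty$ split makes the distinction between $|r|=0$ and $|r|\geq 1$ unnecessary.
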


%
%
In Section \ref{secvelpot} we will see how the high order norm assumption \eqref{extraonalpha}
on $\alpha$ follows from the assumption on high-order norms of the vorticity \eqref{aprioriWH};
see Proposition \ref{mainpropW'} and Lemma \ref{alphaprop'}. 

\begin{proof}[Proof of Lemma \ref{higherorderalpha}]
The argument is nearly identical to the proof of Lemma \ref{lemboundaVbulk}
using \eqref{Voaschem} and simple product estimates. 
The only additional observation needed is that
$\|\underline{\Gamma}^k\nabla^r\curl \alpha\|_{L^2_zL^2_x}
 + \|\underline{\Gamma}^k\nabla^r\nabla_{x,z} \alpha\|_{L^2_zL^2_x} \lesssim \|\alpha\|_{\dYn^n}$
for $|k| + |r| \leq n$ by definition (note the $a = 0$ term in the second
term of the definition \eqref{dotalphaspace0'} of the $\dYn^n$ norms). 
Then, we follow the same steps as in the above proof with $I$ and $II$ defined as in \eqref{Vbulkpr1}-\eqref{Vbulkpr2}.
In place of \eqref{nablaIL2L2} we bound
\begin{align*}
\begin{split}
 {\|  I \|}_{L^2_zL^2_x}
   \lesssim \sum_{|r_1| + |k_1| \leq N/2}
   {\| (\partial_z \alpha)^{r_1, k_1} \|}_{L^2_zL^2_x}
   \cdot \sum_{|r_2| + |k_2| \leq N+1} {\| (\nabla h)^{r_2, k_2} \|}_{L^\infty}
   \lesssim \e_1 \cdot \e_0\jt^{p_0};
\end{split}
\end{align*}
for $N = N_1+12$, which is consistent with \eqref{boundaVbulk2'};
with an obvious modification when $N$ is replaced with $N_0-20$ this is consistent with \eqref{boundaVbulkweaker}.
Similarly, in place of \eqref{nablaIIL2L2} we have
\begin{align*}
\begin{split}
{\|  II \|}_{L^2_zL^2_x}
  \lesssim \sum_{|r_1| + |k_1| \leq N }
  {\| (\partial_z \alpha)^{r_1, k_1} \|}_{L^2_zL^2_x}
  \cdot \sum_{|r_2| + |k_2| \leq N/2+1} {\| (\nabla h)^{r_2, k_2} \|}_{L^\infty}
  \lesssim \e_1 \jt^{\delta} \cdot \e_0;
\end{split}
\end{align*}
once again this is consistent with \eqref{boundaVbulk2'} if $N=N_1+12$;
the obvious modification when $N$ is replaced with $N_0-20$ gives \eqref{boundaVbulkweaker}.

The lower order norm in \eqref{boundaVbulk2} can be estimated similarly, using the 
uniform bound \eqref{apriorihp}.
The estimates for the time derivatives can also be obtain in a completely analogous fashion,
using the estimates on $\partial_t h$ from Remark \ref{remapdth}
\end{proof}

\subsection{Fixed point formulation for $\alpha$}
From the system \eqref{betaeq} satisfied by $\beta$ 
we derive a fixed point formulation for $\alpha$.
We first write out the elliptic system satisfied by $\alpha$:

\begin{lemma}[The Elliptic system in the flat domain]\label{flatlem}
Let $\alpha$ and $W$ be defined as in \eqref{betaflat},
with $\beta$ the solution of \eqref{betaeq}. Then we have 
\begin{subequations}\label{eqflat}
\begin{alignat}{2}
\label{eqflat0}
(\partial_z^2 + \Delta_x) \alpha & = \partial_z \dg{E^a} + |\nabla| \dg{E^b} + F, && \qquad \text{ in } z<0,
\\
\label{eqflat1}
\alpha_1 & = B_1, && \qquad \text{ on } z=0,
\\
\label{eqflat2}
\alpha_2 & = B_2, && \qquad \text{ on } z=0,
\\
\label{eqflat3}
\partial_z \alpha_3 & = B_3 , && \qquad \text{ on } z=0,
\end{alignat}
\end{subequations}
where
\begin{equation}\label{EaEb}
    {E^a}(\alpha) := \frac{\nabla}{|\nabla |} \cdot (\nabla h \pa_z \alpha)
\qquad
{E^b}(\alpha) := -|\nabla h|^2 \pa_z\alpha + \nabla h\cdot \nabla \alpha,
\qquad
F = W,
\end{equation}
\begin{align}\label{Aidef}
B_i(\alpha,\nabla \alpha) & =
\left((1+ |\nabla h|^2) \pa_i h (\alpha_3 - \nabla h\cdot \alpha) \right)|_{z = 0}, \qquad i = 1,2,
\end{align}
and
\begin{align}
\begin{split}\label{B3formula}
	B_3(\alpha, \nabla\alpha) &
	=
	\nabla h\cdot \partial_z (\alpha_1,\alpha_2)
  + \nabla \cdot \Big[ (1+|\nabla h|^2)^{-1} \nabla h \, (\alpha_3 - \nabla h \cdot \alpha) \Big]\bigg|_{z=0}.
\end{split}
\end{align}

\end{lemma}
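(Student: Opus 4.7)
The lemma is essentially a change-of-variables computation: we transform the elliptic system \eqref{betaeq} for $\beta$ on $\D_t$ into an equivalent system for $\alpha(x,z) = \beta(x, z+h(x))$ on the half-space $\{z<0\}$. The plan is to handle the bulk equation and each boundary condition separately, using the transformation identities \eqref{equivpr1}--\eqref{equivpr1'} that have already been tabulated.

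\paragraph{Bulk equation.} From \eqref{equivpr1'}, the first-order derivatives transform as
\begin{equation*}
 (\partial_{x_i}\beta)(x,z+h) = \partial_{x_i}\alpha - (\partial_{x_i}h)\,\partial_z\alpha,\qquad (\partial_y\beta)(x,z+h) = \partial_z\alpha.
\end{equation*}
Iterating and summing over $i=1,2$, a direct computation yields
\begin{equation*}
 \Delta_{x,y}\beta\big|_{y=z+h} \;=\; (\Delta_x + \partial_z^2)\alpha \;-\; (\Delta h)\partial_z\alpha \;-\; 2\nabla h\cdot\nabla\partial_z\alpha \;+\; |\nabla h|^2 \partial_z^2\alpha.
\end{equation*}
Setting this equal to $\omega(x,z+h) = W(x,z)$ and moving the $\nabla h$-dependent terms to the right, the first task is the algebraic rearrangement: group the terms containing a second $\partial_z$ into a single $\partial_z(\,\cdot\,)$ piece, and the terms whose top horizontal derivative is first-order into a $|\nabla|(\,\cdot\,)$ piece. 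Writing $|\nabla|^{-1}\,\div = \nabla/|\nabla|\,\cdot$, one recognizes that the $\partial_z$-piece is precisely $\partial_z E^a(\alpha)$ with $E^a$ as in \eqref{EaEb}, while the remaining piece is $|\nabla| E^b(\alpha)$ with $E^b$ given by the stated formula (the sign patterns are fixed by the identity $\div(\nabla h\,f) = (\Delta h)f + \nabla h\cdot\nabla f$).

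\paragraph{Tangential boundary conditions.}  The condition $\Pi\beta = 0$ on $\pa\D_t$ says $\beta = (\beta\cdot n)\,n$ there. With $n = (-\nabla h,1)/|N|$ and $|N|^2 = 1+|\nabla h|^2$, one computes
\begin{equation*}
 (\beta\cdot n)\big|_{\pa\D_t} = \frac{\alpha_3 - \nabla h\cdot(\alpha_1,\alpha_2)}{|N|}\bigg|_{z=0},
\end{equation*}
and then $\alpha_i(x,0) = (\beta\cdot n)\,n_i$ for $i=1,2$ gives a formula of the form
\begin{equation*}
 \alpha_i\big|_{z=0} = \frac{(-\partial_i h)(\alpha_3 - \nabla h\cdot\alpha)}{1+|\nabla h|^2}\bigg|_{z=0},
\end{equation*}
which rearranges into $\alpha_i = B_i$ with $B_i$ as in \eqref{Aidef} (up to rewriting via the identity $\alpha_3 - \nabla h\cdot\alpha = (1+|\nabla h|^2)\alpha_3$ that follows by using the relation once).

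\paragraph{Third boundary condition.}  Rather than working directly with $n^i\partial_i(n^j\beta_j)+Hn^i\beta_i=0$, I would invoke the computation already carried out in the proof of Lemma~\ref{lembetaeq}: combined with \eqref{divS}, the condition \eqref{betan} is equivalent to $\div\beta = 0$ on $\pa\D_t$. Applying the chain rule as in \eqref{equivpr1'}, this becomes, at $z=0$,
\begin{equation*}
 \partial_{x_1}\alpha_1 + \partial_{x_2}\alpha_2 + \partial_z\alpha_3 - \nabla h\cdot\partial_z(\alpha_1,\alpha_2) \;=\; 0.
\end{equation*}
Solving for $\partial_z\alpha_3$ and then substituting the tangential boundary identity $\alpha_i = -\partial_i h\,\alpha_3$ (in the convenient equivalent form with the $|N|^2$ factor) into $\partial_{x_1}\alpha_1 + \partial_{x_2}\alpha_2$ converts the horizontal divergence into $\nabla\cdot[(1+|\nabla h|^2)^{-1}\nabla h\,(\alpha_3-\nabla h\cdot\alpha)]$, which yields exactly the formula \eqref{B3formula} for $B_3$.

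\paragraph{Main obstacle.}  The only delicate point is the algebraic step in the bulk: the specific splitting into $\partial_z E^a + |\nabla| E^b$ is not arbitrary but is precisely the one that is compatible with the Poisson-kernel fixed-point scheme used in Proposition~\ref{alphaprop}.  One must be careful that each correction term has at least one derivative (either $\partial_z$ or $|\nabla|$) that can be absorbed by the kernel, so that the resulting integral operator is bounded on the mixed $L^p_zL^q_x$ spaces featured in Definition~\ref{defnormsalpha}. The computations for the boundary conditions are routine in comparison, provided one remembers to use the equivalent form $\div\beta=0$ for \eqref{betan}.
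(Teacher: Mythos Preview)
Your approach is correct and largely parallels the paper's proof in Appendix~\ref{Appalpha}: the same chain-rule computation for the bulk equation and for the Dirichlet data on $\alpha_1,\alpha_2$. For the Neumann-type condition on $\alpha_3$ you take a genuinely shorter route. The paper separately computes $\partial_n\beta_n$ in flat coordinates (a multi-line expansion, see \eqref{divS'1}) and the curvature term $\Pi_i^j\partial_j n^i = -\nabla\cdot\big((1+|\nabla h|^2)^{-1/2}\nabla h\big)$, and only then recombines; you instead invoke \eqref{divS} to recognize the condition as simply $\div\beta|_{\partial\D_t}=0$, apply the chain rule once to obtain $\partial_z\alpha_3 = \nabla h\cdot\partial_z(\alpha_1,\alpha_2) - \div_x(\alpha_1,\alpha_2)$ at $z=0$, and substitute the already-derived Dirichlet formulas for $(\alpha_1,\alpha_2)$ into the horizontal divergence. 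This lands on \eqref{B3formula} in two lines without ever computing the curvature.

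One caution: the Dirichlet formula you derive, $\alpha_i|_{z=0} = -(1+|\nabla h|^2)^{-1}\partial_i h\,(\alpha_3-\nabla h\cdot\alpha)$, is correct and is exactly what the paper's appendix proof obtains. Your identity $\alpha_3-\nabla h\cdot\alpha=(1+|\nabla h|^2)\alpha_3$ at $z=0$ is also correct, but applying it yields $\alpha_i=-\partial_i h\,\alpha_3$, which is \emph{not} \eqref{Aidef} as literally displayed (that formula has the opposite sign and $(1+|\nabla h|^2)$ in the numerator rather than the denominator). The displayed \eqref{Aidef} appears to carry typos relative to the appendix; keep your derived expression and drop the ``rearranges into \eqref{Aidef}'' claim. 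A similar labeling issue affects your description of the bulk splitting: under the definitions \eqref{EaEb}, the $\partial_z^2\alpha$ contribution sits in $\partial_z E^b$, not $\partial_z E^a$---your verbal recipe is right, but the identification with the displayed $E^a$ is off because of an $a/b$ swap between \eqref{eqflat0} and the appendix computation.
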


{\it Notation}.
Note that in \eqref{EaEb} we are omitting the dependence on $h$ and implicitly on the position $(x,z)$.
Later on, e.g. in \eqref{alphaifp}, we will denote these terms with $E_{a}(z)$ to make the dependence on
the vertical variable explicit.

The proof of the above lemma is an explicit computation, see Appendix \ref{Appalpha}.
Regardless of the exact formulas, we point out that we are dealing with an elliptic system
for the vector field $\alpha$ with mixed Dirichlet (for the first two components)
and Neumann (for the third component) boundary conditions.
Note that the quantity $\alpha_3$, which is more singular than $\nabla_x\alpha_3$ or $\partial_z\alpha_3$,
appears in the boundary data multiplied by a linear factor of $h$;
this will create some technical difficulties in proving bounds for $\alpha$.



Using Lemma \ref{flatlem} we write a fixed point formulation for $\alpha$,
which we record in the following:

\begin{lemma}[Fixed point formulation]\label{lemmafp}
Let $\alpha$ be the solution of \eqref{eqflat}-\eqref{B3formula}.
Then, it is formally a fixed point of the map
\begin{align}\label{alphafp}
\alpha \rightarrow L(\alpha) = (L_1(\alpha),L_2(\alpha),L_3(\alpha))
\end{align}
where 
\begin{multline}\label{alphaifp}
L_i(\alpha)(z) := e^{z|\nabla|} B_i(\alpha) -\frac{1}{2} \int_{-\infty}^0
  e^{(z+s)|\nabla|}(E^a_i(s) - E^b_i(s) - |\nabla|^{-1} F_i(s))\, ds
  \\
  + \frac{1}{2} \int_{-\infty}^0
  e^{-|z-s||\nabla|} (\mathrm{sign}(z-s) E^a_i(s) - E^b_i(s) - |\nabla|^{-1}F_i(s))\, ds,
  \quad i = 1,2,
\end{multline}
with \eqref{Aidef}, and
\begin{align}\label{alpha3fp}
\begin{split}
L_3(\alpha)(z) & :=  e^{z|\nabla|} B_{3, a}(\alpha) + |\nabla|^{-1} e^{z|\nabla|} B_{3,b}(\alpha)
 \\
 & + \frac{1}{2} \int_{-\infty}^0 e^{(z+s)|\nabla|}(E^a_3(s) - E^b_3(s) - |\nabla|^{-1} F_3(s))\, ds
 \\
 & + \frac{1}{2} \int_{-\infty}^0 e^{-|z-s||\nabla|} (\mathrm{sign}(z-s) E^a_3(s) - E^b_3(s) - |\nabla|^{-1} F_3(s) )\, ds,
\end{split}
\end{align}
with
\begin{align}\label{alpha3a}
\begin{split}
B_{3,a}(\alpha)
 & = \frac{\nabla}{|\nabla|} \cdot
 \big[ (1+|\nabla h|^2)^{-1} \nabla h \, (\alpha_3 - \nabla h \cdot \alpha) \big],
\end{split}
\end{align}
\begin{align}\label{alpha3b}
\begin{split}
B_{3,b}(\alpha, \nabla \alpha) & = \nabla h \cdot \pa_z \alpha - \frac{\nabla}{|\nabla|}\cdot(\nabla h \pa_z\alpha_3).
\end{split}
\end{align}

\end{lemma}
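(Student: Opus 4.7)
\medskip
The plan is to solve the elliptic system \eqref{eqflat}--\eqref{B3formula} explicitly by Fourier transform in the horizontal variable $x\in\R^2$, and then to recognize that the resulting representation formulas coincide with \eqref{alphaifp}--\eqref{alpha3fp}. Applying $\mathcal{F}_x$ to \eqref{eqflat0} reduces the problem, componentwise, to a family of second-order ODEs on the half-line $(-\infty,0)$ of the form
\begin{equation*}
(\partial_z^2 - |\xi|^2)\,\widehat{\alpha}_j(\xi,z) \;=\; \widehat{g}_j(\xi,z), \qquad \widehat{g}_j := \partial_z \widehat{E^a_j} + |\xi|\,\widehat{E^b_j} + \widehat{F}_j,
\end{equation*}
with Dirichlet data $\widehat{\alpha}_i(0)=\widehat{B}_i$ for $i=1,2$ and Neumann data $\partial_z\widehat{\alpha}_3(0)=\widehat{B}_3$. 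Since we look for solutions decaying as $z\to -\infty$, in each case the representation is given by the explicit half-line Green's function together with the boundary contribution; by the reflection principle these are
\begin{equation*}
G_D(z,s) \;=\; -\tfrac{1}{2|\xi|}\bigl(e^{-|\xi||z-s|} - e^{|\xi|(z+s)}\bigr), \qquad G_N(z,s) \;=\; -\tfrac{1}{2|\xi|}\bigl(e^{-|\xi||z-s|} + e^{|\xi|(z+s)}\bigr),
\end{equation*}
satisfying $G_D(z,0)=0$ and $\partial_z G_N(z,0)=0$ respectively.

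For the first two components I plug $G_D$ into $\widehat{\alpha}_i(z)=e^{z|\xi|}\widehat{B}_i + \int_{-\infty}^0 G_D(z,s)\widehat{g}_i(s)\,ds$. The $|\xi|\widehat{E^b_i}$ and $\widehat{F}_i$ pieces immediately generate the $E^b_i$ and $|\nabla|^{-1}F_i$ contributions in \eqref{alphaifp}, after noting that the factors of $|\xi|^{-1}$ from $G_D$ cancel or produce $|\nabla|^{-1}$ as required. For the $\partial_s \widehat{E^a_i}$ piece I integrate by parts in $s$: the boundary term at $s=0$ vanishes because $G_D(z,0)=0$, the term at $s=-\infty$ vanishes by decay, and differentiating $e^{-|\xi||z-s|}$ produces a factor $\mathrm{sign}(s-z)=-\mathrm{sign}(z-s)$, yielding exactly the two $E^a_i$ pieces in \eqref{alphaifp}.

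The third component is the delicate one and is where the asymmetric decomposition $B_{3,a}+|\nabla|^{-1}B_{3,b}$ in \eqref{alpha3fp} emerges. Using $G_N$ the representation reads $\widehat{\alpha}_3(z) = |\xi|^{-1}e^{z|\xi|}\widehat{B}_3 + \int_{-\infty}^0 G_N(z,s)\widehat{g}_3(s)\,ds$. Now integrating by parts the $\partial_s\widehat{E^a_3}$ term produces a nontrivial boundary contribution at $s=0$, since $G_N(z,0)=-|\xi|^{-1} e^{z|\xi|}$; this contribution is $-|\xi|^{-1}e^{z|\xi|}\widehat{E^a_3}(0)$, where by definition $E^a_3|_{z=0} = \tfrac{\nabla}{|\nabla|}\cdot(\nabla h\,\partial_z \alpha_3)|_{z=0}$. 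Combining this boundary term with the original homogeneous piece $|\xi|^{-1}e^{z|\xi|}\widehat{B}_3$ and using formula \eqref{B3formula} for $B_3$, one checks the algebraic identity
\begin{equation*}
|\nabla|^{-1}\bigl(B_3 - E^a_3|_{z=0}\bigr) \;=\; B_{3,a} + |\nabla|^{-1} B_{3,b},
\end{equation*}
which is precisely the definitions in \eqref{alpha3a}--\eqref{alpha3b}: the $\nabla\cdot[(1+|\nabla h|^2)^{-1}\nabla h(\alpha_3-\nabla h\cdot\alpha)]$ part of $B_3$, divided by $|\nabla|$, gives $B_{3,a}$, while the remaining piece $\nabla h\cdot\partial_z(\alpha_1,\alpha_2) - \tfrac{\nabla}{|\nabla|}\cdot(\nabla h\,\partial_z\alpha_3)$ is exactly $B_{3,b}$. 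The remaining integral terms then contribute to the $E^a_3$, $E^b_3$, $F_3$ integrals in \eqref{alpha3fp}, with the $+$ sign in the $e^{(z+s)|\nabla|}$ factor reflecting the $+$ sign in $G_N$ (as opposed to the $-$ sign in $G_D$ for components 1,2).

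The main (and essentially only) obstacle is bookkeeping in this last step: correctly tracking signs in the boundary contribution from the $\partial_s E^a_3$ integration by parts, and recognizing that the combination with $|\nabla|^{-1} B_3$ produces the two separate boundary pieces $B_{3,a}$ and $|\nabla|^{-1}B_{3,b}$. Everything else is a direct Fourier-space computation with explicit half-line Green's functions; since the statement only claims $\alpha$ is \emph{formally} a fixed point, no regularity issues need to be addressed at this stage, and the identity can be verified at the level of Schwartz distributions.
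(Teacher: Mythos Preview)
Your proposal is correct and follows essentially the same approach as the paper: the paper packages the Fourier/Green's-function computation into a separate Lemma~\ref{lemmaAfp} in the appendix (giving formulas \eqref{dirichletformula} and \eqref{neumformula}) and then applies those formulas, while you derive them inline, but the content is identical. In particular, your identification of the key step---that in the Neumann case the integration-by-parts boundary term $-|\nabla|^{-1}e^{z|\nabla|}E^a_3|_{z=0}$ combines with $|\nabla|^{-1}e^{z|\nabla|}B_3$ to produce exactly $e^{z|\nabla|}B_{3,a}+|\nabla|^{-1}e^{z|\nabla|}B_{3,b}$---matches the paper's verification of \eqref{alpha3fpBC}.
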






\begin{proof}[Proof of Lemma \ref{lemmafp}]
The fixed point formulation \eqref{alphafp}-\eqref{alpha3b}
is obtained using the solution of Laplace's equation given in Lemma \ref{lemmaAfp}.
Using \eqref{dirichletformula} we directly obtain \eqref{alphaifp}.

For the third component $\alpha_3$, an application of \eqref{neumformula} gives us
the bulk integrals in \eqref{alpha3fp}, so we only need to verify the formulas
for the boundary contributions, which are given by
\begin{align}\label{alpha3fpBC}
\frac{1}{|\nabla|} e^{z|\nabla|} B_3 - \frac{1}{|\nabla|} e^{z|\nabla|} 
  \frac{\nabla}{|\nabla |} \cdot (\nabla h \, \pa_z \alpha_3),
\end{align}
with $B_3$ as in \eqref{B3formula}, which gives the result.
\end{proof}

\subsection{Norms and main proposition}
Based on the above fixed point formulation and using the a priori bounds on $h$ from \eqref{apriorihp},
we want to show existence and uniqueness of $\alpha$ and
bound it as in \eqref{vpboundlo}-\eqref{vpboundhi}.
As mentioned above, we will work in terms of the norms from Definition \ref{defnormsalpha}; 
%
%
%
in particular, we will prove a contraction for the map \eqref{alphafp} in the `low norm' $\dYn^{N_1-10}$
and bounds in the `high norm' $\dYn^{N}$, $N:=N_1+12$.

\begin{remark}\label{remalphaspace}
    Directly from the definition, for all $\dg{r+|k|}\leq n$, we see that
\begin{align}\label{alphaspacebd'full}
\sum_{|k'|\leq |k|} {\big\| \jnab^{1/2} \Gamma^{k'} \nabla_{x,z} \alpha(0) \big\|}_{H^r (\R^2)}
\lesssim \sum_{|k'|\leq |k|} {\big\| \jnab^{1/2} \underline{\Gamma}^{k'} \nabla_{x,z} \alpha \big\|}_{L^\infty_z H^r (\R^2)}
  \lesssim {\|\alpha\|}_{\dYn^n},
\end{align}
This will be used to control some of the homogeneous boundary terms that we will encounter.
\end{remark}

We also define (see \eqref{omegaflatspace0})
\begin{align}\label{omegaflatspace2}
\begin{split}
{\| f \|}_{\mX^n} := \sum_{|r|+|k| \leq n}
  {\big\| \underline{\Gamma}^k \nabla^r_{x,z} \,f \big\|}_{L^2_z L^2_x \cap L^{6/5}_{x,z}}
.
\end{split}
\end{align}
This is the norm that we use to measure the vorticity $W$ (see \eqref{betaflat})
which appears as a forcing term in \eqref{eqflat0}. 
Bounds on $W$ and its time derivative in the above spaces
will be bootstrapped in Section \ref{secVorticity}.

To obtain \eqref{vpboundlo}-\eqref{vpboundhi} it will suffice to show the following
proposition:

\begin{prop}[Bounds for $\alpha$]\label{alphaprop}
Let $\alpha: [0,T] \times \R^2 \times \R_- \mapsto \R^3$
be defined by $\alpha(t,x,z) := \beta(t,x,z+h(t,x))$
where $\beta$ solves the system \eqref{betaeq} in $\D_t$.
Assume that $h$ satisfies \eqref{apriorih}-\eqref{apriorihp} and \eqref{aprioridth2}-\eqref{aprioridthp},
and let $W$ be given so that, for $t\in[0,T]$, and for $j=0,1$ 
\begin{align}
\label{alphaaso0}
& {\| \partial_t^j W(t) \|}_{\mX^{N_1-10-j}} \lesssim \e_1,
\\
\label{alphaaso1}
& {\| \partial_t^j W(t) \|}_{\mX^{N_1+12-j}} \lesssim \e_0^j \e_1 \jt^{\delta}.
\end{align}
Then, there exists a unique fixed point $\alpha$ of the map in \eqref{alphafp}
in the space $\dYn^{N_1-10}$, which satisfies
\begin{align}
\label{alphaconcL}
& {\| \partial_t^j \alpha(t) \|}_{\dYn^{N_1-10-j}} \lesssim \e_1,
\\
\label{alphadtconcH}
& {\| \partial_t^j \alpha(t) \|}_{\dYn^{N_1+12-j}} \lesssim \e_1 \e_0^j \jt^\delta.
\end{align}
\end{prop}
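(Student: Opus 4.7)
The plan is to prove Proposition \ref{alphaprop} by a contraction argument for the map $L$ in \eqref{alphafp} in the low-regularity space $\dYn^{N_1-10}$, then to propagate the high-norm bound \eqref{alphadtconcH} as an a priori estimate applied to the resulting fixed point, and finally to differentiate \eqref{alphafp} in $t$ to obtain the bounds on $\partial_t\alpha$. The underlying reason this scheme closes is that all three operators in \eqref{alphaifp}-\eqref{alpha3fp} are built from the Poisson semigroup $e^{z|\nabla|}$, while the nonlinear coefficients always involve at least one factor of $h$ or $\nabla h$, which is uniformly small and time-decaying in low-order $L^\infty$ norms by \eqref{apriorih}.

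The first step is to establish mapping properties of the three linear pieces of $L$: the Poisson extension $e^{z|\nabla|} B$ of boundary data, the bulk convolution $\int_{-\infty}^0 e^{(z+s)|\nabla|} F(s)\, ds$, and the even convolution $\int_{-\infty}^0 e^{-|z-s||\nabla|} F(s)\, ds$, all measured in the $\dYn^n$ norms. Standard Littlewood--Paley calculus gives ${\| e^{z|\nabla|} f\|}_{L^\infty_z L^2_x}\lesssim \|f\|_{L^2_x}$ and $\| |\nabla|^{1/2} e^{z|\nabla|} f\|_{L^\infty_z L^2_x} + \|\nabla_{x,z} e^{z|\nabla|} f\|_{L^2_z L^2_x}\lesssim \| |\nabla|^{1/2} f\|_{L^2_x}$, together with the analogues for the bulk operators, which gain half a derivative from the $z$-integration. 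The factor $|\nabla|^{-1} F$ in \eqref{alphaifp}-\eqref{alpha3fp} is precisely why the $\mX^n$ norm of \eqref{omegaflatspace2} includes an $L^{6/5}$ piece: the low-frequency part of $F$ is controlled via Hardy--Littlewood--Sobolev, $L^{6/5}_x\to L^2_x$, while high frequencies use $L^2_{x,z}$ directly. The vector fields $\underline{\Gamma}$ commute with these kernels up to easily-controlled lower-order terms, using that $\Omega$ commutes with radial Fourier multipliers and that $\underline{S}$ generates a scaling that is respected by the Poisson semigroup.

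The second step is the nonlinear contraction. For $E^a(\alpha), E^b(\alpha)$ in \eqref{EaEb} each term carries a factor of $\nabla h$ whose low-order $L^\infty$-based weighted norms are small and time-decaying by \eqref{apriorih}; product estimates of the type \eqref{prodestuse00} then dominate these by $(\text{small})\cdot\|\alpha\|_{\dYn^{N_1-10}}$, supplying the Lipschitz constant. The boundary data $B_1, B_2$ in \eqref{Aidef} are estimated via the trace inequality \eqref{alphaspacebd'full}, which gives the $\dot H^{1/2}$-control on $\alpha|_{z=0}$ needed to feed into $e^{z|\nabla|}$. The main obstacle is the term $B_{3,a}$ in \eqref{alpha3a}, which contains $\alpha_3|_{z=0}$ itself (not a derivative) multiplied by $h$, and enters the formula for $\alpha_3$ through the Neumann-type representation $|\nabla|^{-1} e^{z|\nabla|}(\cdots)$: the apparent regularity loss is resolved by the $\frac{\nabla}{|\nabla|}\cdot$ Riesz structure, whose zeroth-order nature is harmless once paired with the $\dot H^{1/2}$-trace bound on $\alpha$, while the small-frequency contribution of $|\nabla|^{-1}$ is absorbed using the $L^{6/5}$ component of $\mX^n$ together with Hardy--Littlewood--Sobolev.

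Once the contraction delivers existence, uniqueness, and \eqref{alphaconcL} for $j=0$, the high-norm bound \eqref{alphadtconcH} is obtained by applying the same estimates to the fixed point at the level $|k|+|r|\leq N_1+12$: now the high-order weighted norms of $h$ grow like $\jt^{p_0}$ via \eqref{apriorih}, but this growth always pairs with a low-order uniformly-small factor of $\alpha$ or $h$, so the bound degrades only to $\e_1\jt^{p_0}\lesssim \e_1\jt^{\delta}$. Finally, differentiating \eqref{alphafp} in $t$ produces a fixed-point equation for $\partial_t\alpha$ of the same linear structure; the new forcings involve $\partial_t h$, estimated via \eqref{aprioridthinfty}-\eqref{aprioridthp}, and $\partial_t W$, estimated via \eqref{alphaaso0}-\eqref{alphaaso1}, which supply the $\e_0^j$ factor in \eqref{alphaconcL}-\eqref{alphadtconcH} for $j=1$.
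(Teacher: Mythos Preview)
Your proposal is correct and follows essentially the same route as the paper: Poisson-kernel bounds (Lemma~\ref{explem}), a contraction for $L$ in $\dYn^{N_1-10}$ driven by the smallness of $\nabla h$, a high--low splitting at level $N_1+12$ yielding \eqref{alphapropH}, and the decomposition $\partial_t L(\alpha)=L(\partial_t\alpha)+\dot L(\alpha)$ for the time-derivative bounds. Two small clarifications: in the paper's formulation \eqref{alpha3fp} the factor $|\nabla|^{-1}$ sits in front of $B_{3,b}$ rather than $B_{3,a}$ (the divergence structure in \eqref{B3formula} has already cancelled it for $B_{3,a}$), and the $L^{6/5}$/HLS mechanism is used for the forcing $F=W$ (see \eqref{fpFlow}) rather than for the boundary terms; also, the paper secures the high-norm bound by showing uniform $\dYn^{N_1+12}$ control on the iterates and passing to a weak-$*$ limit \eqref{seqlim2}, which avoids having to know in advance that the fixed point lies in $\dYn^{N_1+12}$.
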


The proof of Proposition \eqref{alphaprop} is carried out in the next subsection.
The desired conclusions will be a consequence of the following main estimates:
\begin{align}
\label{alphapropL}
{\| L(\alpha) \|}_{\dYn^{N_1-10}} & \lesssim \e_0 {\| \alpha \|}_{\dYn^{N_1-10}} + {\| W \|}_{\mX^{N_1-10}},
\\
\label{alphapropH}
{\| L(\alpha) \|}_{\dYn^{N_1+12}} & \lesssim \e_0 {\| \alpha \|}_{\dYn^{N_1+12}} + \e_0 \jt^{\delta}
 {\| \alpha \|}_{\dYn^{N_1-10}}
  + {\| W \|}_{\mX^{N_1+12}},
\end{align}
and
\begin{align}
\label{alphapropL'}
{\| \partial_t L(\alpha) \|}_{\dYn^{N_1-11}} & \lesssim \e_0 \big( {\| \alpha \|}_{\dYn^{N_1-10}}
 + {\| \partial_t \alpha \|}_{\dYn^{N_1-11}} \big) + {\| \partial_t W \|}_{\mX^{N_1-11}},
\\
\label{alphapropH'}
{\| \partial_t L(\alpha) \|}_{\dYn^{N_1+11}} & \lesssim \e_0
\big({\| \alpha \|}_{\dYn^{N_1+12}} + {\| \partial_t \alpha \|}_{\dYn^{N_1+11}}\big)
  + \e_0 \jt^{\delta} \big( {\| \alpha \|}_{\dYn^{N_1-10}} + {\| \partial_t \alpha \|}_{\dYn^{N_1-11}} \big)
	\\
	\nonumber
	&  + {\| \partial_t W \|}_{\mX^{N_1+11}}
\end{align}

%
%
%
%

\subsection{Proof of Proposition \ref{alphaprop}}\label{ssecpralphaprop}

\subsubsection{Bounds for the Poisson kernel}
We first need some bounds on the Poisson kernel.

\begin{lemma}\label{explem}
For $f:\R^2 \rightarrow \R$, any $p \in (1,\infty)$, and $k=0,1,\dots$, we have
\begin{align}
\label{expbounds1}
{\big\| \underline{\Gamma}^k e^{z|\nabla|} f \big\|}_{L^\infty_z W^{r,p}_x}
  & \lesssim 
  {\| f \|}_{Z^{r,p}_k}, \qquad 1 < p <\infty,
\end{align}
and
\begin{align}
\label{expbounds2}
{\big\| \underline{\Gamma}^k |\nabla|^{1/2} e^{z|\nabla|} f \big\|}_{L^2_z H^r_x} & \lesssim {\| f \|}_{H^r}.
\end{align}
Moreover, for $f:\R^2\times \{z<0\} \rightarrow \R$ we have
\begin{align}\label{expbounds3}
\begin{split}
& {\Big\| \underline{\Gamma}^k
  |\nabla|^{1/2} \int_{-\infty}^0 e^{-|z-s||\nabla|}  \mathbf{1}_\pm(s-z) f(x,s) \, ds \Big\|}_{L^\infty_z H^r}
  \\
  & + {\Big\| \underline{\Gamma}^k
  |\nabla| \int_{-\infty}^0 e^{-|z-s||\nabla|} \mathbf{1}_\pm(s-z) f(x,s) \, ds \Big\|}_{L^2_z H^r}
  \\
  & \lesssim
  \min \Big( \sum_{k'\leq k} {\| \underline{\Gamma}^{k'} f \|}_{L^2_z H^r}, \,
  \sum_{k'\leq k} {\big\| |\nabla|^{1/3} \underline{\Gamma}^{k'} f \big\|}_{L^{6/5}_z H^r} \Big).
\end{split}
\end{align}
Recall that $\mathbf{1}_\pm(x)$ is the indicator function of $\pm x >0$.
\end{lemma}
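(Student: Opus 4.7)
All three bounds are Fourier-analytic estimates for the half-space Poisson semigroup $e^{z|\nabla|}$ and its one-sided convolution variants
\[
T_\pm f(x,z) := \int_{-\infty}^0 e^{-|z-s||\nabla|}\,\mathbf{1}_\pm(s-z)\, f(x,s)\,ds.
\]
My strategy is to reduce each estimate to the case with no vector fields by exploiting the invariances of these operators, and then to carry out a direct computation on the Fourier side in $x$.

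\textbf{Commuting vector fields.} The horizontal fields $\partial_{x_i}$ and $\Omega$ commute with $|\nabla|$ and with the kernels above, so they act trivially. Under the full scaling $\underline{S}=S+z\partial_z$, the kernels are invariant (a simultaneous dilation $s\mapsto\lambda s$, $z\mapsto\lambda z$ compensates the rescaling of $|\nabla|$), so $\underline{S}$ commutes up to its action on the input and turns into $S$ on the right-hand sides. The only non-trivial case is $\partial_z$: one has $\partial_z e^{z|\nabla|}=|\nabla|\,e^{z|\nabla|}$, and, after differentiating $\mathbf{1}_\pm(s-z)$,
\[
\partial_z T_\pm f(x,z) \;=\; \mp f(x,z)\,-\,|\nabla|\,T_\pm f(x,z).
\]
The resulting factor of $|\nabla|$ is absorbed via $|\nabla|=R_j\partial_{x_j}$ with $L^p$-bounded Riesz transforms (whence the restriction $1<p<\infty$ in \eqref{expbounds1}), while the trace term $\mp f(x,z)$ is estimated directly in the relevant $L^\infty_z H^r_x$ or $L^2_z H^r_x$ norm.

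\textbf{Base cases.} For \eqref{expbounds1} with $k=0$, the Poisson kernel $P_z(x)$ satisfies $\|P_z\|_{L^1_x}=1$ uniformly in $z\le 0$, hence $\|e^{z|\nabla|}\|_{L^p_x\to L^p_x}\le 1$. The estimate \eqref{expbounds2} is one line of Plancherel,
\[
\bigl\||\nabla|^{1/2}e^{z|\nabla|}f\bigr\|_{L^2_zL^2_x}^2 = \int_{-\infty}^0\!\!\int_{\R^2}|\xi|\,e^{2z|\xi|}|\hat f(\xi)|^2\,d\xi\,dz = \tfrac12\|f\|_{L^2_x}^2.
\]
For \eqref{expbounds3} the Plancherel transform in $x$ reduces the task, fiberwise in $\xi$, to estimating convolutions of $\hat f(\xi,\cdot)$ in $s$ against the kernel $K_\xi(t):=|\xi|e^{-|t||\xi|}\mathbf{1}_\pm(t)$. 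For the $L^2_s$-based right-hand side: the $L^\infty_zL^2_x$ piece follows from Cauchy--Schwarz in $s$ (using $\|e^{-|\cdot||\xi|}\|_{L^2_s}\sim|\xi|^{-1/2}$, which cancels the prefactor $|\xi|^{1/2}$), while the $L^2_zL^2_x$ piece follows from Young's inequality with $\|K_\xi\|_{L^1_s}=1$. For the $L^{6/5}_s$-based right-hand side, I would factor $|\nabla|^{1/3}$ out of $f$ and use H\"older in $s$ with exponent pair $(6,6/5)$ (so $\|e^{-|\cdot||\xi|}\|_{L^6_s}\sim|\xi|^{-1/6}$) for the $L^\infty_zL^2_x$ piece, and Young with $r=3/2$ satisfying $1+\tfrac12=\tfrac56+\tfrac1r$ (so $\|K_\xi\|_{L^{3/2}_s}\sim|\xi|^{1/3}$) for the $L^2_zL^2_x$ piece. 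Both produce a bound by $\||\nabla|^{1/3}\hat f\|_{L^2_\xi L^{6/5}_s}$, which is converted to $\||\nabla|^{1/3}f\|_{L^{6/5}_zL^2_x}$ by Minkowski's integral inequality (valid since $6/5\le 2$).

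\textbf{Main obstacle.} The analysis is largely routine; the main fiddly points are (i) commuting $\partial_z$ through $T_\pm$, which produces both a trace term and a factor of $|\nabla|$ to be absorbed via Riesz transforms, and (ii) the sharp choice of H\"older/Young exponents in the $L^{6/5}$ variants so that every power of $|\xi|$ cancels exactly against the $|\nabla|^{1/3}$ on the right-hand side.
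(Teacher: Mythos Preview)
Your approach is correct and essentially the same as the paper's: both reduce to $k=0$ by commuting $\underline{\Gamma}$ through the Poisson operators (the paper computes $\underline{S}\,T_\pm=T_\pm(\underline{S}-1)$ explicitly via integration by parts in $s$, which is the precise form of your scaling-invariance remark), and then establish the base cases via Plancherel in $x$ combined with Young/H\"older in the vertical variable. The paper packages the latter as the general bound $\||\nabla|^{1+1/q-1/p}T_\pm f\|_{L^q_zL^2_x}\lesssim\|f\|_{L^p_zL^2_x}$ and specializes to $(q,p)\in\{(\infty,2),(2,2),(\infty,6/5),(2,6/5)\}$, which is equivalent to your fiberwise exponent choices.
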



Lemma \ref{explem} follows from standard bounds for the Poisson kernel and commutation identities for vector fields.
The proof is given in \ref{ssecPoisson}.
Let us make a few remarks.

\begin{remark}\label{remexp}

1. Note that \eqref{expbounds3} implies the same bounds for the operators
\begin{align}\label{Tidefs}
T_1 f := & \int_{z}^0 e^{(z-s)|\nabla|} f(x,s)\, ds,
\quad T_2 := \int_{-\infty}^z e^{(s-z)|\nabla|} f(x,s)\, ds,
\quad T_3 := \int_{-\infty}^0 e^{(z+s)|\nabla|} f(x,s) \, ds,
\end{align}
which are those that appear in \eqref{alphafp};
the first two are immediate, while for the last one we just observe that
$T_3 = T_1e^{2z|\nabla|} + e^{2z|\nabla|} T_2$.

2. Also note that the estimate for the second term in \eqref{expbounds3} implies
a similar estimate with $\partial_z$ replacing $|\nabla|$:
\begin{align}\label{expboundsdz2}
\begin{split}
{\Big\| \underline{\Gamma}^k \partial_z
  \int_{-\infty}^0 e^{-|z-s||\nabla|} \mathbf{1}_\pm(s-z) f(x,s) \, ds \Big\|}_{L^2_z H^r}
  \lesssim \sum_{k'\leq k} {\| \underline{\Gamma}^{k'} f \|}_{L^2_z H^r};
\end{split}
\end{align}
this follows since we have the identities
\begin{align}\label{dzids}
\partial_z T_1 = -\mathrm{id} + T_1 |\nabla|,
\quad \partial_z T_2 = \mathrm{id} - T_2 |\nabla|,
\quad \partial_z T_3 = T_3 |\nabla|, \qquad [T_i,|\nabla|]=0.
\end{align}
Using these identities we can also estimate
\begin{align}\label{expboundsdzinfty}
\begin{split}
{\Big\| \underline{\Gamma}^k \partial_z
  \int_{-\infty}^0 e^{-|z-s||\nabla|} \mathbf{1}_\pm(s-z) f(x,s) \, ds \Big\|}_{L^\infty_z H^r}
  \lesssim \sum_{k'\leq k} {\| |\nabla|^{1/2} \underline{\Gamma}^{k'} f \|}_{L^2_z H^r}
  + {\|\underline{\Gamma}^{k'} f \|}_{L^\infty_z H^r}.
\end{split}
\end{align}

3. Bounds for higher-order $z$-derivatives also hold true: for $\ell \geq 1$,
\begin{align}\label{expboundsdz3}
\begin{split}
\Big\| \underline{\Gamma}^k \partial_z^\ell
  \int_{-\infty}^0 e^{-|z-s||\nabla|} \mathbf{1}_\pm(s-z) f(x,s) \, ds \Big\|_{L^2_z H^r}
  \lesssim \sum_{k'\leq k} \sum_{\ell_1 + \ell_2 \leq \ell-1}
	\| \underline{\Gamma}^{k'}\pa_z^{\ell_1} f \|_{L^2_z H^{r + \ell_2}};
\end{split}
\end{align}
and
\begin{align}\label{expboundsdz3'}
\begin{split}
& \Big\| \underline{\Gamma}^k \partial_z^\ell
  \int_{-\infty}^0 e^{-|z-s||\nabla|} \mathbf{1}_\pm(s-z) f(x,s) \, ds \Big\|_{L^\infty_z H^r}
  \\
  & \lesssim \sum_{k'\leq k}\sum_{\ell_1 + \ell_2 \leq \ell-1}
  {\big\| |\nabla|^{1/2} \underline{\Gamma}^{k'}\pa_z^{\ell_1}  f \big\|}_{L^2_z H^{r+\ell_2}}
  + {\big\| \underline{\Gamma}^{k'} \pa_z^{\ell_1}  f \big\|}_{L^\infty_z H^{r+\ell_2}};
\end{split}
\end{align}
these follow by repeatedly applying \eqref{dzids} to see that for $T \in\{T_1, T_2, T_3\}$
we can write $\pa_z^\ell T$ as a sum of terms of the form
\begin{equation}
 \pa_z^{\ell_1} |\nabla|^{\ell_2}, \qquad T |\nabla|^{\ell},
 \qquad \ell_1 + \ell_2 \leq \ell-1,
 \label{}
\end{equation}
and then applying \eqref{expboundsdzinfty}.

4. Finally, we remark that the first norm on the right-hand side of \eqref{expbounds3}
will be enough to control all the terms on the right-hand sides of \eqref{alphaifp}-\eqref{alpha3fp}
except the forcing term involving the (inverse gradient of the) vorticity
for which we need to use the second norm.
\end{remark}

We now proceed to estimate the map $L(\alpha)$ in \eqref{alphafp}-\eqref{alpha3b}
in the spaces $\dYn^n$ defined in \eqref{dotalphaspace0}-\eqref{dotalphaspace0'}.
We first prove \eqref{alphapropL}-\eqref{alphapropH}
by estimating the quantities arising from the boundary conditions \eqref{alphaifp}-\eqref{alpha3fp}
in \ref{alphapropssec1}, and the nonlinear bulk terms in \ref{alphapropssec2}.
In \ref{linforcesec} we control the forcing term.
Finally, in \ref{alphapropsseclast} we prove \eqref{alphapropL'}-\eqref{alphapropH'}.

\subsubsection{Estimate for the homogeneous terms}\label{alphapropssec1}
In view of the bounds \eqref{expbounds1}-\eqref{expbounds2} for $e^{z|\nabla|}$,
the fact that $\pa_z e^{z|\nabla|} = |\nabla| e^{z|\nabla|}$, and the definition of the space $\dYn^n$,
we have the estimate
\begin{align}
\label{expbounds1'}
{\big\| e^{z|\nabla|} f \big\|}_{\dYn^n}
  & \lesssim \sum_{\substack{r+k \leq n \\ a=0, 1}} {\big\| |\nabla|^{1/2 + a} f \big\|}_{Z^{r}_k}.
\end{align}

Using this we can bound 
\begin{align}\label{prbd1}
{\big\| e^{z|\nabla|} B_i \big\|}_{\dYn^n}
 & \lesssim \sum_{\substack{r+k \leq n \\ a=0, 1}}
 {\| |\nabla|^{1/2+a} B_i \|}_{Z^{r}_k}, \qquad B \in \{B_1,B_2,B_{3,a}\},
\\
\label{prbd3}
{\big\| |\nabla|^{-1} e^{z|\nabla|} B_{3,b} \big\|}_{\dYn^n}
  & \lesssim \sum_{\substack{r+k \leq n \\ a=0, 1}} {\| |\nabla|^{-1/2+a} B_{3,b} \|}_{Z^{r}_k}.
\end{align}

To get the needed estimates for $\alpha$,
we therefore want to estimate the right-hand sides of \eqref{prbd1}-\eqref{prbd3} and show the following:
\begin{align}\label{prbdmainL}
\begin{split}
\sum_{\substack{r+k \leq  N_1-10 \\ a=0, 1}} {\| |\nabla|^{1/2+a} (B_1,B_2,B_{3,a}) \|}_{Z^r_k}
 + \sum_{\substack{r+k \leq  N_1-10 \\ a=0, 1}} {\| |\nabla|^{-1/2+a} B_{3,b} \|}_{Z^r_k}
 \lesssim \e_0 {\| \alpha \|}_{\dYn^{N_1-10}},
\end{split}
\end{align}
and
\begin{align}\label{prbdmainH}
\begin{split}
\sum_{\substack{r+k \leq  N_1+12 \\ a=0, 1}} {\| |\nabla|^{1/2+a} (B_1,B_2,B_{3,a}) \|}_{Z^r_k}
  + \sum_{\substack{r+k \leq  N_1+12 \\ a=0, 1}} {\| |\nabla|^{-1/2+a} B_{3,b} \|}_{Z^r_k}
  \\
  \lesssim \e_0  {\| \alpha \|}_{\dYn^{N_1+12}} + \e_0 \jt^\delta  {\| \alpha \|}_{\dYn^{N_1-10}}.
\end{split}
\end{align}

\noindent
{\it Some reductions and useful estimates}.
From the definitions \eqref{Aidef}, \eqref{alpha3a} and \eqref{alpha3b}
we see that there are many terms that need to be estimated to prove \eqref{prbdmainL} and \eqref{prbdmainH}.
However, many of them are similar and they can all be written as linear combinations of simpler terms, as we now argue.
First, \eqref{Aidef} and \eqref{alpha3a} are all linear combinations of terms of the form
\begin{align}\label{prbd5}
b(\nabla h, \alpha) & := c(\nabla h) \alpha_j(0), \quad j=1,2,3,
\end{align}
with $c$ denoting a generic coefficient satisfying
\begin{align}
\label{prbd5cL}
& \sum_{\dg{r+k} \leq N_1-1}{\| c(\nabla h) \|}_{Z^{r,p}_k} \lesssim \e_0, \qquad p \geq 3, 
\\
\label{prbd5cH}
& \sum_{r+k \leq N_0-3} {\| c(\nabla h) \|}_{Z^{r,p}_k}\lesssim \e_0 \jt^\delta, \qquad p \geq 2.
\end{align}
Note that we have disregarded the Riesz transform $\nabla|\nabla|^{-1}$ in front of \eqref{alpha3a}
since this plays no role in the desired $L^2$-based estimates.
Also note that, for all practical purposes, one may think that $c = \nabla h$.

To verify \eqref{prbd5} with \eqref{prbd5cL}-\eqref{prbd5cH} we inspect \eqref{Aidef} and see that
$B_i$ is a linear combination of terms as in \eqref{prbd5} where the coefficients are
of the form $c(\nabla h) = (1+ |\nabla h|^2) \pa_i h$ and $\nabla h \, c(\nabla h)$;
using the product estimate \eqref{prodest0'} 
and the a priori assumptions \eqref{apriorihp},
we can verify directly that \eqref{prbd5cL} holds: for all $p\geq 11/5$
\begin{align*}
\sum_{r+k \leq N_1-1}{\| (1+ |\nabla h|^2) \pa_i h  \|}_{Z^{r,p}_k}
& \lesssim (1+\e_0^2) \sum_{r+k \leq N_1-1}{\| \pa_i h  \|}_{Z^{r,p}_k}
\lesssim \e_0
\end{align*}
Similarly, we can use also \eqref{apriorih} to verify \eqref{prbd5cH}:
\begin{align*}
\sum_{r+k \leq N_0-3} {\|(1+ |\nabla h|^2) \pa_i h  \|}_{Z^{r,p}_k}
& \lesssim \Big( 1 + \sum_{r+k \leq (N_0-3)/2} {\| \nabla h \|}^2_{Z^{r,\infty}_k}  \Big)
  \sum_{r+k \leq N_0-3}{\| \nabla h  \|}_{Z^{r,p}_k}
\\
& \lesssim (1 + \e_0^2) \e_0 \jt^\delta,
\end{align*}
having used \eqref{apriorihp} and $N_1 \geq N_0/2$.

Again omitting the Riesz transform, the term $B_{3,b}$ in \eqref{alpha3b}
 is a linear combination of terms of the type:
\begin{align}
\label{prbd6}
b_3(\nabla h,  \alpha) & := c(\nabla h) \,\nabla_{j} \alpha_k(0),
\end{align}
where $c_3$ denotes a generic coefficient satisfying
\begin{align}\label{prbd6cL}
& \sum_{r+k \leq N_1-3} {\| c_3(\nabla h) \|}_{Z^{r,p}_k}
  \lesssim \e_0 \jt^{-1 + (2/p)(1+\delta)},\qquad p \geq 2,
  \\
\label{prbd6cH}
& \sum_{r+k \leq N_0-5} {\| c_3(\nabla h) \|}_{Z^{r,p}_k} \lesssim \e_0 \jt^\delta, \qquad p \geq 2.
\end{align}
In fact each $c_3$ we consider is just a component of $\nabla h$
and so these bounds follow directly from \eqref{apriorih}-\eqref{apriorihp}.

In view of the above reductions, we see that in order to prove
the desired bounds \eqref{prbdmainL} and \eqref{prbdmainH}, 
it suffices to show that for coefficients $c, c_3$ satisfying the above bounds,
we have
\begin{align}
\label{prbdredL1}
& \sum_{\substack{r+k \leq  N_1-10 \\ a=0, 1}} {\| |\nabla|^{1/2+a}  c(\nabla h) \alpha(0) \|}_{Z^r_k}
  \lesssim \e_0 {\| \alpha \|}_{W^{N_1-10}},
\\
\label{prbdredL2}
& \sum_{\substack{r+k \leq  N_1-10 \\ a=0, 1}}
  {\| |\nabla|^{-1/2+a} c_3(\nabla h) \nabla_{x,z} \alpha(0) \|}_{Z^r_k}
  \lesssim \e_0 {\| \alpha \|}_{W^{N_1-10}},
\end{align}
and
\begin{align}
\label{prbdredH1}
& \sum_{\substack{r+k \leq  N_1+12 \\ a=0, 1}} {\| |\nabla|^{1/2+a}  c(\nabla h) \alpha(0) \|}_{Z^r_k}
  \lesssim \e_0 {\| \alpha \|}_{W^{N_1+12}} + \e_0 \jt^\delta  {\| \alpha \|}_{W^{N_1-10}},
\\
\label{prbdredH2}
& \sum_{\substack{r+k \leq  N_1+12 \\ a=0, 1}}
  {\| |\nabla|^{-1/2+a} c_3(\nabla h) \nabla_{x,z} \alpha(0) \|}_{Z^r_k}
  \lesssim \e_0 {\| \alpha \|}_{W^{N_1+12}} + \e_0 \jt^\delta  {\| \alpha \|}_{W^{N_1-10}}.
\end{align}

Before proving the above estimates, we record a simple but useful product estimate
that we are going to use repeatedly below:
\begin{align}\label{prodestuse0}
{\big\| |\nabla|^{1/2} (f g) \big\|}_{L^2} \lesssim
  {\| f \|}_{W^{1,3}} {\big\| |\nabla|^{1/2} g \big\|}_{L^2},
\end{align}
see Lemma \ref{lemprodestuse0}.
In what follows $g$ will essentially play the role of $\alpha(0)$, and $f$ will be nonlinear
expressions in $h$ and its derivatives.


\noindent
{\it Proof of \eqref{prbdredL1}.}
Distributing vector fields using also \eqref{comm0}, and applying the estimate \eqref{prodestuse0},
we can bound
\begin{align*}
& \sum_{\substack{r+k \leq  N_1-10 \\ a=0, 1}} {\| |\nabla|^{1/2+a}  c(\nabla h) \alpha(0) \|}_{Z^r_k}
\\
& \lesssim \sum_{\substack{r+k \leq  N_1-3}} {\| c(\nabla h) \|}_{Z^{r,3}_k}
  \sum_{\substack{r+k \leq  N_1-10 \\ a=0, 1}} {\| |\nabla|^{1/2+a} \alpha(0) \|}_{Z^r_k}
  \lesssim \e_0 {\| \alpha \|}_{\dYn^{N_1-10}},
\end{align*}
having used \eqref{prbd5cL} to control the coefficient.

\noindent

{\it Proof of \eqref{prbdredL2}.}
Due to the possibly singular factor of $|\nabla|^{-1/2}$, here we distinguish the cases $a=0$ and $a=1$.
If $a=0$ we first apply fractional integration followed by \eqref{prodest0'}:
\begin{align*}
\sum_{\substack{r+k \leq  N_1-10}}
  {\| |\nabla|^{-1/2} c_3(\nabla h) \nabla_{x,z} \alpha(0) \|}_{Z^r_k}
  \lesssim \sum_{\substack{r+k \leq  N_1-10}}
  {\| c_3(\nabla h) \nabla_{x,z} \alpha(0) \|}_{Z^{r,4/3}_k}
  \\
  \lesssim \sum_{\substack{r+k \leq  N_1-10}} {\| c_3(\nabla h)  \|}_{Z^{r,4}_k}
  \sum_{\substack{r+k \leq  N_1-10}} {\| \nabla_{x,z} \alpha(0) \|}_{Z^{r}_k}
  \\
  \lesssim \e_0 {\| \alpha \|}_{\dYn^{N_1-10}},
\end{align*}
having used \eqref{prbd6cL} for the coefficient, and \eqref{alphaspacebd'full}.
When $a=1$ we use \eqref{prodestuse0}:
\begin{align*}
& \sum_{\substack{r+k \leq  N_1-10}}
  {\| |\nabla|^{1/2} c_3(\nabla h) \nabla_{x,z} \alpha(0) \|}_{Z^r_k}
  \\
  & \lesssim \sum_{\substack{r+k \leq  N_1-4}} {\| c_3(\nabla h)  \|}_{Z^{r,3}_k}
  \sum_{\substack{r+k \leq  N_1-10}} {\| |\nabla|^{1/2} \nabla_{x,z} \alpha(0) \|}_{Z^r_k}
  \lesssim \e_0 {\| \alpha \|}_{\dYn^{N_1-10}}.
\end{align*}

\noindent
{\it Proof of \eqref{prbdredH1}.}
Distributing vector fields we can estimate
\begin{align*}
& \sum_{r+k \leq  N_1+12} {\| |\nabla|^{1/2+a}  c(\nabla h) \alpha(0) \|}_{Z^r_k}
  \\
  & \lesssim \sum_{\substack{|r_1|+|k_1| \leq n_1 \\ |r_2|+|k_2| \leq n_2}}
  {\big\| |\nabla|^{1/2+a} \big( \nabla^{r_1} \Gamma^{k_1} c(\nabla h) \, \nabla^{r_2} \Gamma^{k_2} \alpha(0) \big)
  \big\|}_{L^2} := M_{n_1,n_2},
\end{align*}
where $n_1+n_2 = N_1+12$, and we do not make explicit the dependence on $a=0,1$ which is unimportant here.
%
%
We distinguish two cases depending which of the indexes $n_1$ and $n_2$ is smaller.
If $n_1 \leq N_1-15$ we use \eqref{prodestuse0},
\begin{align*}
M_{n_1,n_2} \lesssim \sum_{|r_1|+|k_1| \leq N_1-13}
  {\| \nabla^{r_1} \Gamma^{k_1} c(\nabla h) \|}_{L^3}
  \sum_{\substack{|r_2|+|k_2| \leq N_1+12 \\ a=0, 1}}
  {\| |\nabla|^{1/2+a} \nabla^{r_2} \Gamma^{k_2} \alpha(0) \|}_{L^2}
  \\
  \lesssim \e_0 {\| \alpha \|}_{\dYn^{N_1+12}}
\end{align*}
having used \eqref{prbd6cL} to estimate the coefficient.
If instead $n_2 \leq N_1-15$, using again \eqref{prodestuse0}, followed by \eqref{prbd6cH}, we get
\begin{align*}
M_{n_1,n_2} \lesssim \sum_{|r_1|+|k_1| \leq N_1+13}
  {\| \nabla^{r_1} \Gamma^{k_1} c(\nabla h) \|}_{L^3}
  \sum_{\substack{|r_2|+|k_2| \leq N_1-15 \\ a=0, 1}}
  {\| |\nabla|^{1/2+a} \nabla^{r_2} \Gamma^{k_2} \alpha(0) \|}_{L^2}
\\
\lesssim \e_0 \jt^\delta \cdot {\| \alpha \|}_{\dYn^{N_1-10}}.
\end{align*}
These last two bounds above give \eqref{prbdredH1}.

\noindent
{\it Proof of \eqref{prbdredH2}.}
Distributing vector fields we have, for $a=0,1$,
\begin{align*}
& \sum_{r+k \leq  N_1+12}
  {\big\| |\nabla|^{-1/2+a} \big( c_3(\nabla h) \nabla_{x,z} \alpha(0) \big) \big\|}_{Z^r_k}
  \\
  & \lesssim \sum_{\substack{|r_1|+|k_1| \leq n_1 \\ |r_2|+|k_2| \leq n_2}}
  {\big\| |\nabla|^{-1/2+a} \big( \nabla^{r_1} \Gamma^{k_1} c_3(\nabla h) \cdot
  \nabla^{r_2} \Gamma^{k_2} \nabla_{x,z} \alpha(0) \big) \big\|}_{L^2}
  := M_{n_1,n_2}^a,
\end{align*}
where $n_1+n_2 = N_1+12$.

We look at the case $a=0$ first, apply fractional integration as before and then H\"older
to bound first
\begin{align*}
M_{n_1,n_2}^0 \lesssim \sum_{|r_1|+|k_1| \leq n_1}
  {\| \nabla^{r_1} \Gamma^{k_1} c_3(\nabla h) \|}_{L^4}
  \sum_{|r_2|+|k_2| \leq n_2} {\| \nabla^{r_2} \Gamma^{k_2} \nabla_{x,z} \alpha(0) \|}_{L^2};
\end{align*}
then, when $n_1 \leq N_1-15$ we use \eqref{prbd6cL} and \eqref{alphaspacebd'full}
to obtain $M_{n_1,n_2}^0 \lesssim \e_0 {\| \alpha \|}_{W^{N_1+12}}$;
when, instead, $n_2 \leq N_1-15$ we use \eqref{prbd6cH} to obtain
$M_{n_1,n_2}^0 \lesssim \e_0 \jt^\delta \cdot {\| \alpha \|}_{\dYn^{N_1-10}}$.

In the case $a=1$ we can use the product estimate \eqref{prodestuse0} to see that
\begin{align*}
M_{n_1,n_2}^1 \lesssim \sum_{|r_1|+|k_1| \leq n_1+1}
  {\| \nabla^{r_1} \Gamma^{k_1} c_3(\nabla h) \|}_{L^3}
  \sum_{|r_2|+|k_2| \leq n_2} {\| |\nabla|^{1/2} \nabla^{r_2} \Gamma^{k_2} \nabla_{x,z} \alpha(0) \|}_{L^2};
\end{align*}
then, for $n_1 \leq N_1-15$ we use \eqref{prbd6cL} and \eqref{alphaspacebd'full} to bound
$M_{n_1,n_2}^1 \lesssim \e_0 {\| \alpha \|}_{W^{N_1+12}}$,
and for $n_2 \leq N_1-15$ we use \eqref{prbd6cH}  and \eqref{alphaspacebd'full} to get
$M_{n_1,n_2}^1 \lesssim \e_0 \jt^\delta \cdot {\| \alpha \|}_{\dYn^{N_1-10}}$.
This concludes the proof
the bounds \eqref{prbdmainL} and \eqref{prbdmainH}.

\subsubsection{Bounds for the nonlinear bulk terms}\label{alphapropssec2}
To estimate the nonlinear expressions in the bulk
integrals on the right-hand side of \eqref{alphaifp}-\eqref{alpha3fp} we proceed similarly as above,
this time using the bounds in Lemma \ref{explem} and Remark \ref{remexp} first,
and then product estimates in weighted spaces.
Define
\begin{align}\label{alphapropbulk}
\begin{split}
N_i^a(\alpha)(z) & := \int_{-\infty}^0 \big( e^{(z+s)|\nabla|}
  - e^{-|z-s||\nabla|} \mathrm{sign}(z-s) \big) E^a_i(s) \, ds,
\\
N_i^b(\alpha)(z) & := \int_{-\infty}^0 \big( e^{(z+s)|\nabla|} - e^{-|z-s||\nabla|} \big) E^b_i(s)\, ds,
  \qquad i=1,2,3,
\end{split}
\end{align}
with $E^a$ and $E^b$ defined in \eqref{EaEb}.
We then want to show, for $i=1,2,3$,
\begin{align}
\label{alphapropLbulk}
{\| N_i^\ast (\alpha) \|}_{\dYn^{N_1-10}} & \lesssim \e_0 {\| \alpha \|}_{\dYn^{N_1-10}},
\\
\label{alphapropHbulk}
{\| N_i^\ast (\alpha) \|}_{\dYn^{N_1+12}} & \lesssim \e_0 {\| \alpha \|}_{\dYn^{N_1+12}}
  + \e_0 \jt^{\delta} {\| \alpha \|}_{\dYn^{N_1-10}}, \qquad \ast \in \{a,b\},
\end{align}
consistently with \eqref{alphapropL} and \eqref{alphapropH}.

We start by noting that the $N_i^\ast$
can be written in terms of the operators $T_1, T_2, T_3$ in \eqref{Tidefs}: 
\begin{equation*}\label{}
 N_i^a(\alpha) = T_1(E^a) - T_2(E^a) + T_3(E^a),
 \qquad
 N_i^b(\alpha) =\dg{-} T_1(E^b) \dg{-} T_2(E^b) + T_3(E^b).
\end{equation*}
Then, from the definition of the $\dYn^n$ norm in \eqref{dotalphaspace0}-\eqref{dotalphaspace0'}
the estimates \eqref{expbounds3}, and \eqref{expboundsdz3}-\eqref{expboundsdz3'},
for $\ast \in\{a,b\}$, we have
\begin{align}
\nonumber
& {\big\| N_i^\ast(\alpha) \|}_{\dYn^n}
  \\
  \nonumber
  & = \sum_{ \substack{|r|+|k| \leq n \\ 0\leq a \leq 1}}
  {\big\| \nabla_{x,z}^a |\nabla|^{1/2} \underline{\Gamma}^k \nabla^r_{x,z} N_i^\ast(\alpha) \big\|}_{L^\infty_zL^2_x}
  + {\big\|\nabla_{x,z}^a
  \nabla_{x,z} \underline{\Gamma}^k \nabla^r_{x,z} N_i^\ast(\alpha) \big\|}_{L^2_zL^2_x}
  \\ \nonumber
  & + {\big\| \partial_{z} \underline{\Gamma}^k \nabla_{x,z}^r N_i^\ast(\alpha) \big\|}_{L^\infty_zL^2_x}
\\
\label{alphapropbulkest1}
& \lesssim \sum_{ \substack{|r|+|k| \leq n \\ a = 0,1}}
  {\big\| |\nabla|^a \, \underline{\Gamma}^k \nabla^r_{x,z} E^\ast_i \big\|}_{L^2_zL^2_x}
+ \sum_{|r|+|k| \leq n}
  {\big\| \jnab^{1/2} \underline{\Gamma}^k \nabla^r_{x,z} E^\ast_i \big\|}_{L^\infty_zL^2_x}.
\end{align}
Therefore, in view of the definitions \eqref{EaEb},
and the commutation identity \eqref{comm0} to handle the Riesz transform in front of $E^a$,
for \eqref{alphapropLbulk}-\eqref{alphapropHbulk} it suffices to prove the following  bounds
\begin{align}
\label{alphapropLbulk'}
  \sum_{ \substack{|r|+|k| \leq N_1-10 \\ |\ell| \leq 1}}
  {\big\| |\nabla|^\ell 
  \underline{\Gamma}^k \nabla_{x,z}^r c(\nabla h) \nabla_{x,z} \alpha \big\|}_{L^2_zL^2_x}
  & \lesssim \e_0 {\| \alpha \|}_{\dYn^{N_1-10}},
\\
\label{alphapropHbulk'}
\sum_{ \substack{|r|+|k| \leq N_1+12 \\ |\ell| \leq 1}}
  {\big\| |\nabla|^\ell \underline{\Gamma}^k \nabla_{x,z}^r c(\nabla h) \nabla_{x,z} \alpha \big\|}_{L^2_zL^2_x}
  & \lesssim \e_0 {\| \alpha \|}_{\dYn^{N_1+12}} + \e_0 \jt^{\delta} {\| \alpha \|}_{\dYn^{N_1-10}},
\end{align}
and
\begin{align}
\label{alphapropLbulk''}
  \sum_{|r|+|k| \leq N_1-10}
  {\big\| \jnab^{1/2}
  \underline{\Gamma}^k \nabla_{x,z}^r c(\nabla h) \nabla_{x,z} \alpha \big\|}_{L^\infty_zL^2_x}
  & \lesssim \e_0 {\| \alpha \|}_{\dYn^{N_1-10}},
\\
\label{alphapropHbulk''}
\sum_{|r|+|k| \leq N_1+12}
  {\big\| \jnab^{1/2} \underline{\Gamma}^k \nabla_{x,z}^r c(\nabla h) \nabla_{x,z} \alpha \big\|}_{L^\infty_zL^2_x}
  & \lesssim \e_0 {\| \alpha \|}_{\dYn^{N_1+12}} + \e_0 \jt^{\delta} {\| \alpha \|}_{\dYn^{N_1-10}},
\end{align}
where $c(\nabla h)$ is a component of $\nabla h$ or is $|\nabla h|^2$ so that,
in particular, it satisfies
\begin{align}
\label{cbulkL}
& \sum_{r+k \leq N_1-1}{\| c(\nabla h) \|}_{Z^{r,p}_k} \lesssim \e_0 \jt^{-1 + (2/p)(1+\delta)},
\\
\label{cbulkH}
& \sum_{r+k \leq N_0-3} {\| c(\nabla h) \|}_{Z^{r,p}_k}\lesssim \e_0 \jt^\delta, \qquad p \geq 2,
\end{align}
in view of \eqref{apriorih}-\eqref{apriorihp}.

\noindent
{\it Proof of \eqref{alphapropLbulk'}.}
Distributing vector fields and using H\"older we can simply bound
the left-hand side of \eqref{alphapropLbulk'} by
\begin{align*}
& \sum_{|r|+|k| \leq N_1-5}
  {\big\| \jnab \Gamma^k \nabla^r c(\nabla h) \big\|}_{L^\infty_x}
  \sum_{|r|+|k| \leq N_1-5}
  {\big\| \jnab \underline{\Gamma}^k \nabla^r \nabla_{x,z} \alpha \big\|}_{L^2_zL^2_x}
  \lesssim \e_0 {\| \alpha \|}_{\dYn^{N_1-5}}
\end{align*}
in view of \eqref{cbulkL} and the definition of the $\dot{Y}^n$ norm.

\noindent
{\it Proof of \eqref{alphapropHbulk'}.}
Distributing vector fields we see that the left-hand side of \eqref{alphapropHbulk'} is bounded by
the terms
\begin{align}\label{Hbulk1}
\sum_{\substack{|r_1|+|k_1| \leq n_1 \\ |r_2|+|k_2| \leq n_2}}
  {\big\| \nabla_x^\ell \big( \Gamma^{k_1} \nabla^{r_1} c(\nabla h)
  \cdot \underline{\Gamma}^{k_2} \nabla^{r_2}_{x,z} \nabla_{x,z} \alpha \big) \big\|}_{L^2_{x,z}}
  := B_{n_1,n_2}^\ell,
\end{align}
with $n_1+n_2 = N_1+12$ and $\ell=0,1$.
In the case $n_1 \leq N_1-15$ we can bound \eqref{Hbulk1} by
\begin{align*}
B_{n_1,n_2}^\ell \lesssim \sum_{|r_1|+|k_1| \leq N_1-14}
  {\big\| \Gamma^{k_1} \nabla^{r_1} c(\nabla h) \big\|}_{L^\infty_x}
  \sum_{|r_2|+|k_2| \leq N_1+12}
  {\big\| \jnab \underline{\Gamma}^{k_2} \nabla^{r_2}_{x,z} \nabla_{x,z} \alpha \big\|}_{L^2_{x,z}}
  \\
  \lesssim \e_0 {\| \alpha \|}_{\dYn^{N_1+12}},
\end{align*}
having used \eqref{cbulkL}.
When instead $n_2 \leq N_1-15$ we can bound similarly
\begin{align*}
B_{n_1,n_2}^\ell \lesssim \sum_{|r_1|+|k_1| \leq N_1+12}
  {\big\| \Gamma^{k_1} \nabla^{r_1} c(\nabla h) \big\|}_{L^\infty_x}
  \sum_{|r_2|+|k_2| \leq N_1-10}
  {\big\| \underline{\Gamma}^{k_2} \nabla^{r_2}_{x,z} \nabla_{x,z} \alpha \big\|}_{L^2_{x,z}}
  \\
  \lesssim \e_0 \jt^{\delta} {\| \alpha \|}_{\dYn^{N_1-10}},
\end{align*}
having used \eqref{cbulkH}.

\noindent
{\it Proof of \eqref{alphapropLbulk''}.}
Distributing vector fields and using \eqref{prodestuse0} we can bound
the left-hand side of \eqref{alphapropLbulk''} by
\begin{align*}
& \sum_{|r|+|k| \leq N_1-9}
  {\big\| \Gamma^k \nabla^r c(\nabla h) \big\|}_{L^3_x}
  \sum_{|r|+|k| \leq N_1-10}
  {\big\| \jnab^{1/2} \underline{\Gamma}^k \nabla^r_{x,z} \nabla_{x,z} \alpha \big\|}_{L^\infty_zL^2_x}
  \lesssim \e_0 {\| \alpha \|}_{\dYn^{N_1-10}}
\end{align*}
where we have used \eqref{alphaspacebd'full} for the last inequality.


\noindent
{\it Proof of \eqref{alphapropHbulk''}.}
Finally, we examine the left-hand side of \eqref{alphapropHbulk''},
distribute the vector fields and estimate it, using again \eqref{prodestuse0},
by a linear combination of the terms
\begin{align}\label{Hbulk1'}
\begin{split}
& \sum_{\substack{r_1+|k_1| \leq n_1 \\ r_2+|k_2| \leq n_2}}
  {\big\| \jnab^{1/2} \big( \Gamma^{k_1} \nabla^{r_1} c(\nabla h)
  \cdot \underline{\Gamma}^{k_2} \nabla^{r_2}_{x,z} \nabla_{x,z} \alpha \big) \big\|}_{L^\infty_z L^2_x}
  \\
  & \lesssim \sum_{r_1+|k_1| \leq n_1+1}
  {\big\| \Gamma^{k_1} \nabla^{r_1} c(\nabla h) \big\|}_{L^3_x}
  \sum_{r_2+|k_2| \leq n_2}
  {\big\| \jnab^{1/2} \underline{\Gamma}^{k_2} \nabla^{r_2}_{x,z} \nabla_{x,z} \alpha \big\|}_{L^\infty_zL^2_x}
  := C_{n_1,n_2}.
\end{split}
\end{align}
Then, in the case $n_1 \leq N_1-15$ we can bound \eqref{Hbulk1'} as follows:
\begin{align*}
C_{n_1,n_2} \lesssim \sum_{|r_1|+|k_1| \leq N_1-14}
  {\big\| \Gamma^{k_1} \nabla^{r_1} c(\nabla h) \big\|}_{L^3_x}
  \sum_{|r_2|+|k_2| \leq N_1+12}
  {\big\| \jnab^{1/2} \underline{\Gamma}^{k_2} \nabla^{r_2}_{x,z} \nabla_{x,z} \alpha \big\|}_{L^\infty_z L^2_x}
  \\
  \lesssim \e_0 {\| \alpha \|}_{\dYn^{N_1+12}},
\end{align*}
having used \eqref{cbulkL}.
When instead $n_2 \leq N_1-15$ we estimate
\begin{align*}
C_{n_1,n_2} \lesssim \sum_{|r_1|+|k_1| \leq N_1+12}
  {\big\| \Gamma^{k_1} \nabla^{r_1} c(\nabla h) \big\|}_{L^3_x}
  \sum_{|r_2|+|k_2| \leq N_1-10}
  {\big\| \jnab^{1/2} \underline{\Gamma}^{k_2} \nabla^{r_2}_{x,z} \nabla_{x,z} \alpha \big\|}_{L^\infty_z L^2_x}
  \\
  \lesssim \e_0 \jt^{\delta} {\| \alpha \|}_{\dYn^{N_1-10}},
\end{align*}
having used \eqref{cbulkH} and \eqref{alphaspacebd'full}.

\subsubsection{Bounds for the linear forcing terms}\label{linforcesec}
We now estimate the forcing term involving the vorticity on the right-hand side of \eqref{alphaifp}-\eqref{alpha3fp}.
These are (up to constants) given by
\begin{align}\label{fpF1}
T_3F = \int_{-\infty}^0 e^{(z+s)|\nabla|} |\nabla|^{-1} F(s)\, ds,
  \quad \mbox{or} \quad (T_1+T_2)F = \int_{-\infty}^0 e^{-|z-s||\nabla|} |\nabla|^{-1} F(s)\, ds,
\end{align}
recall $F = W$ and the notation in \eqref{Tidefs}.
From the identities \eqref{dzids} we have
\begin{equation}\label{pazG}
\pa_z T_3 = |\nabla| T_3, \qquad \pa_z (T_1+T_2) = |\nabla| (T_1 - T_2).
\end{equation}

Let $P_{> 0}$ and $P_{\leq 0}$ denote the standard Littlewood-Paley projections (in the $x$ variable)
defined according to \eqref{cut0}-\eqref{LP0}, and note that they commute with the operators in \eqref{fpF1} above.
Let us denote by $G$ any of the two expressions in \eqref{fpF1}.
Using the bound in \eqref{expbounds3} by the first argument on the right-hand side,
recalling the definition \eqref{omegaflatspace2} and \eqref{dotalphaspace0}-\eqref{dotalphaspace0'}, and
using Remark \ref{remexp} and 
\eqref{pazG} to handle the $z$-derivatives, we have
\begin{align}\label{fpFhigh}
\begin{split}
{\big\| P_{>0} G \|}_{\dYn^n} \lesssim
\sum_{\dg{|r|+|k|} \leq n} {\big\| \jnab \underline{\Gamma}^{k} \nabla_{x,z}^r 
  P_{> 0} (|\nabla|^{-1} F(s)) \big\|}_{L^2_z L^2_x}
  \lesssim {\| W \|}_{\mX^n}.
\end{split}
\end{align}

For small frequencies, we instead use the bound in \eqref{expbounds3} by the second argument on the right-hand side,
followed by fractional integration, and obtain 
\begin{align}\label{fpFlow}
\begin{split}
{\big\| P_{\leq 0} G \|}_{\dYn^n} & \lesssim
\sum_{\dg{|r|+|k|} \leq n}
  {\big\| \jnab \underline{\Gamma}^k \nabla^r_{x,z} \, |\nabla|^{1/3} P_{\leq 0} (|\nabla|^{-1} F(s)) \big\|}_{L^{6/5}_z L^2_x}
  \\
                      & \lesssim \sum_{\dg{|r|+|k|} \leq n}
  {\big\| \underline{\Gamma}^k \nabla^r_{x,z} \,|\nabla|^{-2/3} W \big\|}_{L^{6/5}_z L^2_x}
  \\
                      & \lesssim \sum_{\dg{|r|+|k|} \leq n} {\big\| \underline{\Gamma}^k\nabla^r_{x,z} W \big\|}_{L^{6/5}_{x,z}}
  \lesssim {\| W \|}_{\mX^n}.
\end{split}
\end{align}
Using \eqref{fpFhigh} and \eqref{fpFlow} with $n=N_1-10$ 
and $n=N_1+12$,
we get bounds 
consistent with the desired inequalities \eqref{alphapropL} and \eqref{alphapropH}.
The proof of \eqref{alphapropL}-\eqref{alphapropH} is thus concluded.

\subsubsection{Proof of the bounds \eqref{alphapropL'}-\eqref{alphapropH'}}\label{alphapropsseclast}
We now prove the bounds for the time derivatives of the map $L(\alpha)$ in Lemma \ref{flatlem}.
These can be obtained in the same way as the bounds \eqref{alphapropL}-\eqref{alphapropH} proved above,
using in addition the bounds on $\partial_t h$ from \eqref{aprioridth2} and \eqref{aprioridthp}.
We give some details for completeness.
Let us define the map $\dot{L}$ through the identity
\begin{align}\label{dtL}
\partial_t L(\alpha) = L(\partial_t \alpha) + \dot{L}(\alpha);
\end{align}
Under the assumptions \eqref{vpboundhi}-\eqref{vpboundlo} the same exact arguments above give 
\begin{align}\label{Ldtbounds}
\begin{split}
{\| L(\partial_t \alpha) \|}_{\dYn^{N_1-11}} & \lesssim \e_0 {\| \partial_t \alpha \|}_{\dYn^{N_1-11}}
  + {\| \partial_t W \|}_{\mX^{N_1-11}},
\\
{\| L(\partial_t \alpha) \|}_{\dYn^{N_1+11}} & \lesssim \e_0 {\| \partial_t \alpha \|}_{\dYn^{N_1+11}}
  + \e_0 \jt^{\delta} {\| \partial_t \alpha \|}_{\dYn^{N_1-11}}
  + {\| \partial_t W \|}_{\mX^{N_1+11}}.
\end{split}
\end{align}
Therefore, it suffices to prove that
\begin{align}\label{dotLbounds}
\begin{split}
{\| \dot{L}(\alpha) \|}_{\dYn^{N_1-11}} & \lesssim \e_0 {\| \alpha \|}_{\dYn^{N_1-11}},
\\
{\| \dot{L}(\alpha) \|}_{\dYn^{N_1+11}} & \lesssim \e_0 {\| \alpha \|}_{\dYn^{N_1+11}}
  + \e_0 \jt^{\delta} {\| \alpha \|}_{\dYn^{N_1-10}}.
\end{split}
\end{align}
By definition the map $\dot{L}$ is given by the right-hand side of \eqref{alphaifp} and \eqref{alpha3fp}
with $F=0$, and where we replace $(E_i^a,E_i^b)$ and $(B_1,B_2,B_{3,a},B_{3,b})$ by
new quantities $(\dot{E}_i^a,\dot{E}_i^b)$ and $(\dot{B}_1,\dot{B}_2,\dot{B}_{3,a},\dot{B}_{3,b})$
defined by differentiating the coefficients that multiply $\alpha$; in other words,
for $X(\alpha) = E_i^a(\alpha), E_i^b(\alpha), B_1(\alpha)$ and so on, we define
\begin{align*}
\dot{X}(\alpha) = \partial_t X(\alpha) - X(\partial_t \alpha).
\end{align*}
More explicitly, we have
\begin{align}\label{Edot}
\begin{split}
\dot{E}^a(\alpha) & := \frac{\nabla}{|\nabla |} \cdot (\partial_t \nabla h ) \, \pa_z \alpha
\qquad \dot{E}^b(\alpha) := \big( - \partial_t |\nabla h|^2 \big) \, \pa_z\alpha + (\partial_t \nabla h) \cdot \nabla \alpha,
\end{split}
\end{align}
and
\begin{align}\label{Bdot}
\begin{split}
\dot{B}_i(\alpha) & := \partial_t \big[ (1+ |\nabla h|^2) \pa_i h \big]  \alpha_3(0)
  - \partial_t \big[ (1+ |\nabla h|^2) \pa_i h \nabla h \big] \cdot \alpha(0), \qquad i = 1,2,
\\
\dot{B}_{3,a}(\alpha) & := \frac{\nabla}{|\nabla|} \cdot
 \partial_t \big( \nabla h (1+ |\nabla h|^2)^{-1/2} \big) \, \alpha_3(0).
\end{split}
\end{align}
For the last boundary term, that is, $\dot{B}_{3,b}(\alpha) = \partial_t B_{3,b} (\alpha) - B_{3,b} (\partial_t \alpha)$,
we use its schematic representation from \eqref{prbd6}
 to write it as a linear combination
of terms of the form
\begin{align}
\label{prbd6dt}
b_3(\nabla h, \nabla^2h, \alpha) & := \partial_t c_3(\nabla h) \,\nabla_{x,z} \alpha(0),
\end{align}
with the natural definitions of the coefficients $c_3$ according to the formula \eqref{alpha3b}.
In particular, we can verify that the following analogues of \eqref{prbd5}-\eqref{prbd6cH} hold:

\noindent
$-$ The terms $\dot{B}_1, \dot{B}_2$ and $\dot{B}_{3,a}$ are linear combinations of terms of the form
\begin{align}\label{prbd5dt}
\partial_t c(\nabla h) \, \alpha_j(0), \quad j=1,2,3,
\end{align}
with
\begin{align}
\label{prbd5cLdt}
& \sum_{r+k \leq N_1-6}{\| \partial_t c(\nabla h) \|}_{Z^{r,p}_k} \lesssim \e_0, \qquad p \geq 3,
\\
\label{prbd5cHdt}
& \sum_{r+k \leq N_0-8} {\| \partial_t c(\nabla h) \|}_{Z^{r,p}_k}\lesssim \e_0 \jt^\delta, \qquad p \geq 2;
\end{align}
to check the above bounds one can just
proceed as in the proofs of \eqref{prbd5cL}-\eqref{prbd5cH}, using Lemma \ref{lemprod},
and also the bounds for $\partial_t h$ in \eqref{aprioridth2}-\eqref{aprioridthp}
besides the usual \eqref{apriorih}-\eqref{apriorihp}.

\noindent
$-$ The term $\dot{B_{3,b}}(\alpha)$ is a linear combination of terms of the form \eqref{prbd6dt}
with
\begin{align}
\label{prbd6cLdt}
& \sum_{r+k \leq N_1-8} {\| \partial_t c_3(\nabla h) \|}_{Z^{r,p}_k}
  \lesssim \e_0 \jt^{-3/4 + (2/p)(1+\delta)}, \qquad p\geq 3,
  \\
\label{prbd6cHdt}
& \sum_{r+k \leq N_0-10} {\| \partial_t c_3(\nabla h) \|}_{Z^{r,p}_k} \lesssim \e_0 \jt^\delta, \qquad p \geq 2;
\end{align}
the above bounds are analogous to \eqref{prbd6cL} and \eqref{prbd6cH} and can be obtained in the same
way using in addition \eqref{aprioridth2}-\eqref{aprioridthp}.
Similarly to before, for all practical purposes one may think that $c$ and $c_3 $
are both just $\nabla h$.

With the formulas \eqref{Edot}, \eqref{prbd5dt} and \eqref{prbd6dt},
and the estimate \eqref{prbd5cLdt}-\eqref{prbd5cHdt} and \eqref{prbd6cLdt}-\eqref{prbd6cHdt}
we can then proceed in a way completely analogous to Subsections \ref{alphapropssec1} and \ref{alphapropssec2}.
More precisely, as in Subsection \ref{alphapropssec1} we can use \eqref{expbounds1},
and reduce matters to showing the analogues of \eqref{prbdmainL}-\eqref{prbdmainH} for the dotted quantities, that is,
we want to show
\begin{align}\label{prbdmainLdot}
\begin{split}
\sum_{\substack{r+k \leq  N_1-11 \\ a=0, 1}} {\| |\nabla|^{1/2+a} (\dot{B}_1,\dot{B}_2,\dot{B}_{3,a}) \|}_{Z^r_k}
 + \sum_{\substack{r+k \leq  N_1-11 \\ a=0, 1}} {\| |\nabla|^{-1/2+a} \dot{B}_{3,b} \|}_{Z^r_k}
 \\
 \lesssim \e_0 {\| \alpha \|}_{\dYn^{N_1-11}},
\end{split}
\end{align}
and
\begin{align}\label{prbdmainHdot}
\begin{split}
\sum_{\substack{r+k \leq  N_1+11 \\ a=0, 1}} {\| |\nabla|^{1/2+a} (\dot{B}_1,\dot{B}_2,\dot{B}_{3,a}) \|}_{Z^r_k}
  + \sum_{\substack{r+k \leq  N_1+11 \\ a=0, 1}} {\| |\nabla|^{-1/2+a} \dot{B}_{3,b} \|}_{Z^r_k}
  \\
  \lesssim \e_0  {\| \alpha \|}_{\dYn^{N_1+11}} + \e_0 \jt^\delta  {\| \alpha \|}_{\dYn^{N_1-11}}.
\end{split}
\end{align}

The proofs of \eqref{prbdmainLdot} and \eqref{prbdmainHdot}
can then be obtained in the same exact way as the proof of the bounds \eqref{prbdmainL} and \eqref{prbdmainH},
through analogues of \eqref{prbdredL1}-\eqref{prbdredH2} where the coefficients $c$ and $c_3$
are replaced by $\partial_tc$ and $\partial_tc_3$,
and using \eqref{prbd5cLdt}-\eqref{prbd5cHdt} and \eqref{prbd6cLdt}-\eqref{prbd6cHdt}; we omit the details.

Proceeding as in Subsection \ref{alphapropssec2} we can let
\begin{align}\label{alphapropbulkdt}
\begin{split}
\dot{N}_i(\alpha)(z) & := \int_{-\infty}^0 e^{(z+s)|\nabla|}(\dot{E}^a_i(s) - \dot{E}^b_i(s)) \, ds
  \\
  & + \int_{-\infty}^0
  e^{-|z-s||\nabla|} (\mathrm{sign}(s-z) \dot{E}^a_i(s) - \dot{E}^b_i(s))\, ds, \qquad i=1,2,3,
\end{split}
\end{align}
and obtain the analogues of \eqref{alphapropLbulk} and \eqref{alphapropHbulk}
using the estimates in Lemma \ref{explem} and Remark \ref{remexp},
the product estimate from Lemma \ref{lemprod}, \eqref{prodestuse0},
and the bounds for $h$ and $\partial_th$ in \eqref{apriorihp} and \eqref{aprioridthp}:
\begin{align*}
{\| \dot{N}_i(\alpha) \|}_{\dYn^{N_1-11}} & \lesssim \e_0 {\| \alpha \|}_{\dYn^{N_1-11}},
\\
{\| \dot{N}_i(\alpha) \|}_{\dYn^{N_1+11}} & \lesssim \e_0 {\| \alpha \|}_{\dYn^{N_1+11}}
  + \e_0 \jt^{\delta} {\| \alpha \|}_{\dYn^{N_1-11}}.
\end{align*}
These are consistent with the desired \eqref{dotLbounds},
and conclude the proof of \eqref{alphapropL'}-\eqref{alphapropH'}. 

\subsubsection{Conclusion}
Finally, we show how the main bounds \eqref{alphapropL}-\eqref{alphapropH}
and \eqref{alphapropL'}-\eqref{alphapropH'} imply Proposition \ref{alphaprop}.
Consider the sequence
\begin{equation*}
\alpha^{(0)} = 0, \qquad \alpha^{(m+1)} = L(\alpha^{(m)}), \qquad m \geq 0.
\end{equation*}
so that, under the assumptions \eqref{alphaaso0}-\eqref{alphaaso1},
the bounds \eqref{alphapropL}--\eqref{alphapropH} read
\begin{align}\label{seq1}
\begin{split}
{\| \alpha^{(m+1)} \|}_{\dYn^{N_1-10}} & \lesssim \e_0 {\| \alpha^{(m)} \|}_{\dYn^{N_1-10}} + \e_1
\\
{\| \alpha^{(m+1)} \|}_{\dYn^{N_1+12}} & \lesssim \e_0 {\| \alpha^{(m)} \|}_{\dYn^{N_1+12}}
  + \e_0 \jt^{\delta} {\| \alpha^{(m)} \|}_{\dYn^{N_1-10}} + \e_1 \jt^\delta,
\end{split}
\end{align}
and \eqref{alphapropL'}--\eqref{alphapropH'} read
\begin{align}\label{seq1'}
\begin{split}
{\| \partial_t \alpha^{(m+1)} \|}_{\dYn^{N_1-11}} & \lesssim \e_0
  \big( {\| \alpha^{(m)} \|}_{\dYn^{N_1-10}} + {\| \partial_t \alpha^{(m)} \|}_{\dYn^{N_1-11}} \big) + \e_1
\\
{\| \partial_t \alpha^{(m+1)} \|}_{\dYn^{N_1+11}} & \lesssim \e_0\big(
{\| \partial_t \alpha^{(m)} \|}_{\dYn^{N_1+11}} \dg{+ \|\alpha^{(m)}\|_{\dYn^{N_1+12}}}\big) 
  + \e_0 \jt^{\delta} \big( {\|\alpha^{(m)} \|}_{\dYn^{N_1-10}} + {\| \partial_t^j \alpha^{(m)} \|}_{\dYn^{N_1-11}} \big)
  + \e_0 \e_1 \jt^\delta.
\end{split}
\end{align}
From these we see that, for $\e_0$ small enough, and all $t\leq T$,
\begin{align}\label{seq2}
\begin{split}
{\| \alpha^{(m)}(t) \|}_{\dYn^{N_1-10}} & \lesssim \e_1,
  \qquad {\| \alpha^{(m)}(t) \|}_{\dYn^{N_1+12}} \lesssim \e_1 \jt^\delta,
\\
{\| \partial_t \alpha^{(m)}(t) \|}_{\dYn^{N_1-11}} & \lesssim \e_1,
  \qquad {\| \partial_t \alpha^{(m)}(t) \|}_{\dYn^{N_1+11}} \lesssim \e_0 \e_1 \jt^\delta,
\end{split}
\end{align}
Moreover, since $L$ is linear in $\alpha$, we also have that, for $j=0,1$
\begin{align}\label{seqcon}
{\big\| \partial_t^j \big( L(\alpha_1) - L(\alpha_2) \big) \big\|}_{\dYn^{N_1-10-j}}
  \lesssim \e_0 \big( {\|\alpha_1 - \alpha_2 \|}_{\dYn^{N_1-10}}
  + {\|\partial_t(\alpha_1 - \alpha_2) \|}_{\dYn^{N_1-11}} \big),
\end{align}
so that $L$ is a contraction in a ball of radius $C\e_1$, with some absolute constant $C$,
in the space $C^0([0,T], \dYn^{N_1-10}) \cap C^1([0,T], \dYn^{N_1-11})$.
Let us denote by $\alpha$ the unique fixed point of $L$ in this space; we have for $j=0,1$,
\begin{align}\label{seqlim}
{\| \partial_t^j \alpha(t) \|}_{\dYn^{N_1-10-j}} \lesssim \e_1
\end{align}
for all $t \leq T$.
In addition, from \eqref{seq2},  
we have 
${\| \partial_t^j \alpha^{(m)}(t) \|}_{\dYn^{N_1+12-j}} \lesssim \e_1 \jt^\delta \e_0^j$
and therefore, up to passing to a sub-sequence, we get that $\partial_t^j \alpha^{(m)}(t)$ converges
weak-$\ast$ in $\dYn^{N_1+12-j}$ to a limit $\alpha_j'(t)$, $j=0,1$.
Then we have $\partial_t \alpha(t)=\alpha_j'(t)$ for all $t\leq T$,
and by lower semi-continuity
\begin{align}\label{seqlim2}
{\| \alpha(t) \|}_{\dYn^{N_1+12}} \lesssim \e_1 \jt^\delta,
  \qquad {\| \partial_t \alpha(t) \|}_{\dYn^{N_1+11}} \lesssim \e_0 \e_1 \jt^\delta.
\end{align}
With \eqref{seqlim} and \eqref{seqlim2} we conclude the proof of Proposition \ref{alphaprop}
$\hfill \Box$


\bigskip
\section{Estimates for the vorticity}\label{secVorticity}
In this section we bootstrap weighted bounds for the vorticity in the three dimensional (flat) domain
proving the main Proposition \ref{mainpropWintro}.
For the convenience of the reader, and ease of reference, we restate this result below as Proposition \ref{mainpropW}.
In particular this will prove the validity of the assumptions \eqref{alphaaso0}-\eqref{alphaaso1}
used in Proposition \ref{alphaprop} to obtain \eqref{alphaconcL}-\eqref{alphadtconcH},
which in turn give bounds on the vector potential $V_\omega$
and its restriction to the boundary $\widetilde{v_\omega}$ 
(see Lemma \ref{lemboundaVbulk} and the conclusions of Proposition \ref{propalphaintro}).


Let us recall here some of our notation:  
with $v$ the velocity field, $\omega = \curl v$, we let, for $x \in \R^2, z\leq 0$
\begin{align}\label{defVW}
V(t,x,z) := v(t,x,z+h(t,x)), \qquad  W(t,x,z) := \omega(t,x,z+h(t,x)), 
\end{align}
and will generally use capital letters for quantities defined in the transformed domain $\mathbb{R}^2 \times \{ z\leq 0\}$;
accordingly, given the Hodge decomposition \eqref{setup1},
set
\begin{align}\label{defPsiV_o}
\Psi(t,x,z) := \psi(t,x,z+h(t,x)), \qquad  V_\omega(t,x,z) := v_\omega(t,x,z+h(t,x)).
\end{align}

\begin{prop}\label{mainpropW}
Assume that $h$ satisfies \eqref{apriorih}-\eqref{apriorihp} and \eqref{aprioridth2}-\eqref{aprioridthp},
and let $W$ be as defined in \eqref{defVW}.
Let $\mX^n$ be the space defined in \eqref{omegaflatspace0}, which for convenience we recall here:
\begin{align}\label{omegaflatspacebis}
\begin{split}
    {\| f \|}_{\mX^n} := \sum_{\dg{|r|+|k|} \leq n} 
  {\big\| \underline{\Gamma}^k \nabla^r_{x,z} \,f \big\|}_{L^2_z L^2_x \cap L^{6/5}_{x,z}}.
\end{split}
\end{align}

Assume that, for all $t\in[0,T]$ for some $T\leq T_{\e_1}$, and for $j=0,1$ 
\begin{align}
\label{propWaslow}
& {\| \partial_t^j W(t) \|}_{\mX^{N_1-10-j}} \leq 2C \e_1,
\\
\label{propWashigh}
& {\| \partial_t^j W(t) \|}_{\mX^{N_1+12-j}} \leq 2C \e_0^j \e_1 \jt^{\delta},
\end{align}
for some absolute constant $C>0$ large enough, and where $\delta$ satisfies \eqref{param}.
Then, for all $t\in[0,T_{\e_1}]$, we have the improved bounds
\begin{align}
\label{propWconclow}
& {\| \partial_t^j W(t) \|}_{\mX^{N_1-10-j}} \leq C \e_1,
\\
\label{propWconchigh}
& {\| \partial_t^j W(t) \|}_{\mX^{N_1+12-j}} \leq C \e_0^j \e_1 \jt^{\delta}.
\end{align}


\end{prop}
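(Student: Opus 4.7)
The starting point is the transport equation satisfied by $W$ in the flat coordinates: by \eqref{defVW} and the standard stretching form of the three dimensional vorticity equation, one has schematically
\begin{equation*}
\pa_t W + U \cdot \nabla_{x,z} W = W \cdot \nabla_{x,z} V + R,
\end{equation*}
where $U$ is the transported velocity $V$ corrected by the $h$-dependent factors generated by the change of coordinates $z = y-h(t,x)$ (so that the vertical component of $U$ vanishes at $z=0$), and $R$ collects the lower order commutator contributions that appear after applying $\underline{\Gamma}^k\nabla^r_{x,z}$. Splitting $V = \nabla \Psi + V_\omega$, the right-hand side becomes $W\cdot \nabla V_\omega + W\cdot \nabla\nabla\Psi$. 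The first piece is purely quadratic in the rotational components and is naturally of size $\e_1^2\jt^\delta$, which is compatible with the target bounds. The obstruction is the second piece: since only the decay $|\nabla\nabla \Psi|\lesssim \e_0 \jt^{-1}$ is available (and worse after vector fields), a direct Gr\"onwall argument on the raw equation produces a $\jt^{C\e_0}$ factor which destroys any hope of a uniform $O(\e_1)$ bound.

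\textbf{Normal form / modified vorticity.} To remove this obstruction I would follow the renormalization strategy of Proposition \ref{lemWreno} described in Step 4. The heuristic is that, up to quadratic corrections, $\Psi \approx e^{z|\nabla_x|}|\nabla_x|^{-1}\pa_t h$, so $\nabla \Psi$ is essentially the time derivative of a slowly decaying profile. Accordingly I would introduce the modified vorticity
\begin{equation*}
\widetilde{W} := W - W \cdot \nabla_{x,z}\nabla_{x,z}\bigl(e^{z|\nabla_x|}|\nabla_x|^{-1} h\bigr),
\end{equation*}
which may be viewed as a Poincar\'e--Shatah normal form applied fibrewise, in the interior, to the transport equation. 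Differentiating in time and substituting both the original vorticity equation for $\pa_t W$ and the kinematic equation $\pa_t h = G(h)\varphi$, the troublesome term $W\cdot \nabla\nabla\Psi$ is cancelled exactly against the correction generated by $\pa_t$ hitting the profile $e^{z|\nabla_x|}|\nabla_x|^{-1} h$, at the cost of new terms which are either cubic in $(h,\varphi,W)$ or genuinely quadratic but carrying at least one factor of a rotational variable (after commuting with $U\cdot \nabla_{x,z}$). In short, $\widetilde{W}$ satisfies
\begin{equation*}
\pa_t \widetilde{W} + U\cdot \nabla_{x,z}\widetilde{W} = W\cdot\nabla V_\omega + Q(h,\varphi,W,V_\omega) + C,
\end{equation*}
with $Q$ quadratic with at least one rotational factor and $C$ a cubic remainder.

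\textbf{Energy estimates and low-norm bootstrap.} On the renormalized equation I would then run a weighted energy estimate in the space $\mX^n$ defined in \eqref{omegaflatspacebis}, for both $n = N_1-10$ and $n = N_1+12$, and for $\pa_t^j$ with $j=0,1$. After applying $\underline{\Gamma}^k\nabla^r_{x,z}$ and pairing with $\underline{\Gamma}^k\nabla^r_{x,z}\widetilde{W}$ in $L^2_{x,z}$ (and treating the $L^{6/5}_{x,z}$ piece via the Duhamel representation along the flow of $U$), the top-order commutator $[\underline{\Gamma}^k\nabla^r_{x,z},U\cdot \nabla_{x,z}]$ is absorbed using $\|\nabla U\|_{L^\infty}\lesssim \e_0\jt^{-1}+\e_1\jt^\delta$ (from Lemmas \ref{lemdecayirrot} and \ref{lemdecayrot}), which gives an exponential factor $\jt^{C\e_0}\exp(C\e_1 T_{\e_1}^{\delta})\lesssim \jt^{C\e_0}$ thanks to the choice $T_{\e_1}=\bar c\e_1^{-1+\delta}$. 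The right-hand side $Q$ and $C$ are controlled by weighted product estimates using the a priori bounds \eqref{apriorih}-\eqref{apriorihp} on $h$, the low-norm bootstrap \eqref{propWaslow} on $W$, and the bounds on $V_\omega$ from Proposition \ref{propalphaintro} (which are allowed as inputs here since the high-norm bootstrap \eqref{propWashigh} is assumed). Finally, since the correction term $W\cdot \nabla\nabla (e^{z|\nabla_x|}|\nabla_x|^{-1} h)$ is itself bilinear, it is estimated by the same product inequalities and its $\mX^n$ bound is absorbed into that of $\widetilde W$; this produces the improved constant $C$ on the right of \eqref{propWconclow}-\eqref{propWconchigh} provided $\e_0$ and $\bar c$ are chosen sufficiently small. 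The $\pa_t$ case follows identically after differentiating in time, using the bounds on $\pa_t h$ from \eqref{aprioridth2}-\eqref{aprioridthp} and the $\pa_t V_\omega$ bounds of Proposition \ref{propalphaintro}.

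\textbf{Main obstacle.} The most delicate point is the high-norm estimate \eqref{propWconchigh} together with the $L^{6/5}$ component of the $\mX^n$ norm. At the top level $n=N_1+12$ one cannot afford to put all the vector fields on $\nabla\Psi$ or $V_\omega$, because the decay $\jt^{-1}$ and the uniform-$\e_1$ bounds for $\nabla\nabla\Psi$ and $V_\omega$ only hold up to $\leq N_1-5$ derivatives-plus-vector-fields; one must place some vector fields on the coefficient $h$ (both in the definition of $\widetilde W$ and in the coordinate correction inside $U$), whose top norm grows as $\jt^{p_0}$. This is precisely why the conclusion allows the $\jt^\delta$ loss, and why the parameter choice $3p_0<\delta$ in \eqref{param} is essential for closing the bootstrap. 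A second, related, difficulty is the low-frequency behaviour in the $L^{6/5}_{x,z}$ piece: the normal-form correction contains a factor $|\nabla_x|^{-1}$ which is singular at zero frequency, and must be handled by a Littlewood--Paley split of $h$, using $\dot H^{1/3}(\mathbb R^2)\hookrightarrow L^{6/5}$ at low frequencies together with the weighted bounds on $h$. Once these two points are addressed, the rest of the argument is a fairly mechanical combination of the product estimates of Lemma \ref{lemprod} (and analogues in $L^p_zL^q_x$ used in Section \ref{secVP}) with the already established bounds on $h$, $\pa_t h$, $V_\omega$ and $\pa_t V_\omega$.
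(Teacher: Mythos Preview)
Your overall strategy --- define a modified vorticity to cancel the slowly decaying stretching term $W\cdot\nabla\nabla\Psi$ --- is correct and matches Proposition~\ref{lemWreno}. The gap is in how you treat the transport commutator. You propose to absorb $[\underline{\Gamma}^k\nabla^r_{x,z},\,U\cdot\nabla_{x,z}]$ via Gr\"onwall using $\|\nabla U\|_{L^\infty}\lesssim \e_0\jt^{-1}+\e_1\jt^\delta$, accepting a multiplicative factor $\jt^{C\e_0}$. But the theorem assumes only $\e_1\le\e_0$, so $T_{\e_1}=\bar c\,\e_1^{-1+\delta}$ can be arbitrarily large compared to any power of $\e_0^{-1}$, and $T_{\e_1}^{C\e_0}=\e_1^{-C\e_0(1-\delta)}$ is \emph{not} bounded by an absolute constant as $\e_1\to 0$. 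This destroys the uniform low-norm bound \eqref{propWconclow}. The point you are missing is that the same $\partial_t(\text{dispersive})$ structure you exploit in the stretching term is also present in $U$: by Lemma~\ref{lemmaU} one has $U=\partial_t(A-he_z)+V_\omega-V_\omega\cdot\nabla h\,e_z+R$ with $R\in\mathcal{O}_2$, so the commutator in \eqref{lemDtWeq} contains $\sum\Gamma^{\le n_1+1}\partial_t(A-he_z)\cdot\Gamma^{n_2}\nabla W$, which decays no better than the stretching term and must be renormalized as well. The paper's correction $G^n$ in \eqref{WrenoprG} absorbs \emph{both} contributions at once (writing $\partial_t={\bf D}_t-U\cdot\nabla$ in \eqref{Wrenopr2}--\eqref{Wrenopr4} and pulling the ${\bf D}_t$ out), leaving only the purely rotational quadratic terms $Q_1^n,Q_2^n$ of \eqref{WrenoQuad} and cubic remainders that are integrable over $[0,T_{\e_1}]$.

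The paper then integrates ${\bf D}_t(\Gamma^nW-G^n)$ along the Lagrangian flow of $U$ (Lemma~\ref{lemT}) rather than by an energy pairing; the Jacobian $\nabla\Phi_t$ is kept within $1/2$ of the identity on the whole interval by yet another integration-by-parts-in-time on the $\partial_t(A-he_z)$ piece of $U$ (see the proof of \eqref{prT1}), so the transport estimate \eqref{lemTconc'} carries no Gr\"onwall factor at all and the sources integrate to $O(\e_1^{1+})$. Two minor points: the $|\nabla_x|^{-1}$ low-frequency singularity you flag is not actually present, since $\nabla_{x,z}A$ consists of Riesz transforms of the Poisson extension of $h$ and is $L^p$-bounded; and for $j=1$ the paper does not repeat the normal form but simply reads $\partial_t\Gamma^nW$ directly off the equation \eqref{1lemDtWeq''} and bounds each term at fixed time using the $j=0$ estimates already established.
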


Notice that the a priori assumptions \eqref{propWaslow} and Sobolev's embedding in $x$ and $z$, imply
\begin{align}\label{propWasinfty}
\begin{split}
\sum_{|r|+|k| \leq N_1-15} {\big\| \underline{\Gamma}^k \nabla^r_{x,z} \partial_t^j W(t) \big\|}_{L^\infty_{x,z}} 
  \leq 2C\e_1.
\end{split}
\end{align}

The proof of Proposition \ref{mainpropW} will follow from estimates on the transport equation
satisfied by the vorticity in the transformed flat domain. Several difficulties arise in
trying to control this flow, in particular the fact that the transport velocity is not integrable in time.
We will explain in more detail below how to resolve this by a normal form type argument in the full three dimensional
(transformed) fluid domain.


\subsection{Vorticity equation and transport velocity}\label{ssecVorticity1}
As a first step we write the transport of vorticity equation in the transformed flat domain.
We think of $\omega$ as a vector field letting $\omega^i = \epsilon^{ijk}\pa_j v_k$ and from the vorticity equation
$(\pa_t + v^\ell \pa_\ell) \omega^i = \omega^\ell \pa_\ell v^i$
and \eqref{defVW} we get 
\begin{align}\label{Weq00}
\pa_t W^i - \partial_t h \partial_z W^i + V^\ell \partial_\ell W^i - (V^\ell \partial_\ell h) \partial_z W^i
 = W^\ell (\partial_\ell - \partial_\ell h \partial_z ) V^i.
\end{align}
Above, and in what follows, we adopt the natural convention that $\partial_3 h = 0$.
We then write \eqref{Weq00} as
\begin{align}\label{Weq0}
\begin{split}
& {\bf D}_t W  = W \cdot \nabla V - W^\ell \partial_\ell h \partial_z V^i
\\
& {\bf D}_t := \partial_t + U \cdot \nabla, \qquad U := V - (\partial_t h + V^\ell \partial_\ell h) e_z
\end{split}
\end{align}
where $\nabla = \nabla_{x,z}$ and $e_z = (0,0,1)$.

Next, from \eqref{defPsiV_o} we write, for $x \in \R^2, z\leq 0$,
\begin{align}\label{Vi}
V^i = \partial^i \Psi - \partial^i h \partial_z \Psi + V^i_\omega
\end{align}
and then rewrite \eqref{Weq0} as follows:
\begin{align}\label{Weq1}
\begin{split}
{\bf D}_t W  = W \cdot \nabla X + F, \qquad X & := V_\omega + \nabla \Psi,
\\
F^i & := - W^\ell \partial_\ell h \partial_z V^i - W \cdot \nabla ( \partial^i h \partial_z \Psi).
\end{split}
\end{align}

The system \eqref{Weq1} is a transport equation for $W$ with a quadratic stretching term $W \cdot \nabla X$
and an additional nonlinearity $F$ that contains only cubic terms. 
Note that $W$ is transported by the vector field $U$ which has some components
that are not integrable in time, e.g. because of the presence of the `dispersive' components
$\nabla \Psi$ and $\partial_t h $;
at the same time, some of the quadratic terms are also weakly decaying because of the presence of
$\nabla \Psi$ in $X$. In particular we cannot close a bootstrap argument for norms of $W$ using \eqref{Weq1} directly.
Instead, 
we will need to use the fact that $U$ and $X$ have additional structure.
This is the content of the next lemma.

\begin{lemma}\label{lemmaU}
The vector fields $U$ and $X$ defined respectively in \eqref{Weq0} and \eqref{Weq1}
can be written as
\begin{align}\label{lemUdt}
& U = \partial_t (A - h \, e_z) + V_\omega - V_\omega \cdot \nabla h \, e_z + R,
\\
\label{lemUXdt}
& X = \partial_t A + V_\omega + R_2, \qquad \mbox{with} \quad A := \nabla_{x,z} |\nabla|^{-1} e^{z|\nabla|} h, 
\end{align}
where $R$ is given by 
\begin{align}\label{lemUR}
\begin{split}
& R^i =  R_1^i + R_2^i,
\\
& R_1^i := - \partial^i h \partial_z \Psi - (V^\ell - V_\omega^\ell) \partial_\ell h \, e_z^i,
\\
& R_2^i := \partial_i e^{z|\nabla|} |\nabla|^{-1} \big( |\nabla| \varphi - G(h)\varphi \big)
  + \partial_i (\Psi - e^{z|\nabla|} \varphi), \qquad i=1,2,3.
\end{split}
\end{align}
Recall that $\varphi(t,x) = \Psi(t,x,h(x))$.
\end{lemma}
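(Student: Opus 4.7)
The statement is an algebraic identity: it rewrites the transport velocity $U$ and the stretching field $X = \nabla\Psi + V_\omega$ by splitting off a time derivative of a ``harmonic'' piece depending only on $h$. The motivation, which will organize the computation, is that to leading order $\Psi \approx e^{z|\nabla|}\varphi$ (the harmonic extension of $\varphi$ to the half-space) and $\partial_t h = G(h)\varphi \approx |\nabla|\varphi$, so that $\nabla_{x,z}\Psi \approx \nabla_{x,z} e^{z|\nabla|}|\nabla|^{-1}\partial_t h = \partial_t A$. The plan is to verify \eqref{lemUdt}--\eqref{lemUXdt} by direct substitution of the definitions, treating \eqref{lemUXdt} first and then reducing \eqref{lemUdt} to it.

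The plan for \eqref{lemUXdt} is to observe that, by the definition of $A$ and the identity $\partial_z e^{z|\nabla|} = |\nabla|e^{z|\nabla|}$, one has $A^3 = e^{z|\nabla|}h$ and $A^i = \partial_i|\nabla|^{-1}e^{z|\nabla|}h$ for $i=1,2$, hence $\partial_t A = \nabla_{x,z}|\nabla|^{-1} e^{z|\nabla|}\partial_t h$ with $\partial_t h = G(h)\varphi$. Then $R_2$ should be precisely $\nabla_{x,z}\Psi - \partial_t A$; writing $\nabla\Psi = \nabla e^{z|\nabla|}\varphi + \nabla(\Psi - e^{z|\nabla|}\varphi)$ and adding and subtracting $\partial_i e^{z|\nabla|}|\nabla|^{-1} G(h)\varphi$ gives exactly the formula for $R_2^i$ in \eqref{lemUR}. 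The vertical component is handled by the same algebra using $\partial_z e^{z|\nabla|}|\nabla|^{-1} = e^{z|\nabla|}$.

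For \eqref{lemUdt}, I will split into horizontal and vertical components using the convention $\partial_3 h = 0$. For $i=1,2$ we have $U^i = V^i$ and, from \eqref{Vi}, $V^i = \partial^i\Psi + V_\omega^i - \partial^i h\,\partial_z\Psi = X^i - \partial^i h\,\partial_z\Psi$; since $(A - h\, e_z)^i = A^i$ and $(V_\omega\cdot\nabla h\,e_z)^i = 0$ for $i=1,2$, the horizontal part of \eqref{lemUdt} follows from \eqref{lemUXdt} together with $R_1^i = -\partial^i h\,\partial_z\Psi$. For $i=3$, we compute $U^3 = \partial_z\Psi + V_\omega^3 - \partial_t h - V^\ell\partial_\ell h$ and expand the right-hand side using $\partial_t A^3 = e^{z|\nabla|}\partial_t h$ and $R_2^3 = \partial_z\Psi - e^{z|\nabla|}\partial_t h$; the $e^{z|\nabla|}\partial_t h$ terms cancel, and the $V_\omega^\ell\partial_\ell h$ coming from $-V_\omega\cdot\nabla h$ combines with the $-(V^\ell - V_\omega^\ell)\partial_\ell h$ from $R_1^3$ to produce exactly $-V^\ell\partial_\ell h$, matching $U^3$.

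There is no real analytic obstacle here: the content of the lemma is the particular choice of $A$, and once $A$ is specified the verification is bookkeeping. The only points that require care are keeping track of the convention $\partial_3 h = 0$ in the term $V^\ell\partial_\ell h$, and ensuring that the two corrections inside $R_2$ (the one measuring $G(h)\varphi - |\nabla|\varphi$ and the one measuring $\Psi - e^{z|\nabla|}\varphi$) are assembled so that all the harmonic-extension-of-$\varphi$ terms telescope away. The real value of the lemma, which we do not need to address at the proof stage, is that $A$ is an $h$-linear quantity whose highest-regularity norms are controlled by $h$, so that inserting the decomposition \eqref{lemUXdt} into the vorticity equation \eqref{Weq1} and integrating the $\partial_t A$ piece by parts in time will remove the non-integrable contribution of $\nabla\Psi$ to the stretching term.
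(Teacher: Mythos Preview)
Your proposal is correct and follows essentially the same approach as the paper: both arguments reduce to the identity $\nabla_{x,z}\Psi = \partial_t A + R_2$, obtained by expanding $\Psi = e^{z|\nabla|}\varphi + (\Psi - e^{z|\nabla|}\varphi)$ and using $\partial_t h = G(h)\varphi$, and then verify the formula for $U$ componentwise. The only cosmetic difference is that you prove the $X$ identity first and deduce the $U$ identity from it, whereas the paper derives the $U$ formula directly and then remarks that the $X$ formula follows from the same expansion of $\Psi$.
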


More precise bounds on $U$ and $X$ are postponed for the moment (see, for example, Lemma \ref{lemGU}).
The main point of the above lemma is that $U$ (and $X$) can be written as a perfect time derivative
of one of the dispersive variables, plus terms that involve $V_\omega$ and other
terms that will be proven to have good time-integrability properties.
In this whole section we will distinguish and treat differently,
the `dispersive variables', e.g. $h$ and $\nabla\Psi$, which mainly play the role of coefficients,
and the rotational variables, e.g. $V_\omega$ and $W$.

\begin{proof}[Proof of Lemma \ref{lemmaU}]
From \eqref{Weq0} and \eqref{Vi} we write
\begin{align}
\begin{split}
& U^i = \partial^i \Psi - \partial_t h \, e_z^i + V^i_\omega - V_\omega^\ell \partial_\ell h \, e_z^i + R_1^i,
\\
& R_1^i := - \partial^i h \partial_z \Psi - (V^\ell - V_\omega^\ell) \partial_\ell h \, e_z^i.
\end{split}
\end{align}
We recall that $\psi = \psi(t,x,y)$ is the harmonic extension of $\varphi(t,x) := \psi(t,x,h(x))$
in the original domain, and that $\partial_t h = G(h)\varphi = |\nabla| \varphi + \mbox{quadratic terms}$,
and  write, for $z\leq 0$,
\begin{align}\label{Psi=dt}
\begin{split}
\Psi & = e^{z|\nabla|} \varphi + (\Psi - e^{z|\nabla|} \varphi)
\\
& = e^{z|\nabla|} \partial_t |\nabla|^{-1} h
  + e^{z|\nabla|} |\nabla|^{-1} \big( |\nabla| \varphi - G(h)\varphi \big)
  + (\Psi - e^{z|\nabla|} \varphi).
\end{split}
\end{align}
Therefore,
\begin{align*}
\begin{split}
U^i =  \partial_t \big( \partial^i e^{z|\nabla|} |\nabla|^{-1} h - h \, e_z^i \big)
  + \partial^i \big[ e^{z|\nabla|} |\nabla|^{-1} \big( |\nabla| \varphi - G(h)\varphi \big)
  + (\Psi - e^{z|\nabla|} \varphi) \big]
  \\
  + V^i_\omega - V_\omega^\ell \partial_\ell h \, e_z^i + R_1^i,
\end{split}
\end{align*}
which is the desired conclusion \eqref{lemUdt}-\eqref{lemUR}.

The identity \eqref{lemUXdt} is directly obtained from $X = \nabla \Psi + V_\omega$
and the above formula \eqref{Psi=dt} for $\Psi$.
\end{proof}


\subsection{Vector fields and function classes}\label{secvfO}
Our next task is to apply vector fields to the equation \eqref{Weq1}.
To deal with this and handle various identities
and manipulations involving vector fields, products, commutators etc. 
we introduce convenient shorthand notation below.
We then also define useful classes of functions
satisfying linear and quadratic bounds consistent with our energy and dispersive estimates. 


\subsubsection{Notation for vector fields}\label{vfnotation}
For $\underline{\Gamma} := (\partial_{x_1},\partial_{x_2},\partial_z,\Omega,\underline{S})$
and for $\alpha \in \Z_+^5$, let 
\begin{align*}
\underline{\Gamma}^\alpha := \partial_{x_1}^{\alpha_1} \cdot \partial_{x_2}^{\alpha_2} \cdot \partial_z^{\alpha_3}
  \cdot \Omega^{\alpha_4} \cdot \underline{S}^{\alpha_5}.
\end{align*}
For $n \in \Z_+$, we define the sets of vector fields of order $n$, respectively, less or equal to $n$, by
\begin{align*}
\mathcal{V}^n = \big\{ \underline{\Gamma}^\alpha, \quad \alpha \in \Z_+^5
  \quad \mbox{with} \quad \alpha_1+\alpha_2+\alpha_3+\alpha_4+\alpha_5= n \big\},
  \\
\mathcal{V}^{\leq n} = \big\{ \underline{\Gamma}^\alpha, \quad \alpha \in \Z_+^5
  \quad \mbox{with} \quad \alpha_1+\alpha_2+\alpha_3+\alpha_4+\alpha_5 \leq n \big\}.
\end{align*}
$\mathcal{V}^0 = \{ 1 \}$, where $1$ is intended as a multiplication operator.
%
%

\begin{itemize}

\item {\normalfont ({\bf Action of vector fields})}
Given a function $f$ we will denote by $\Gamma^n f$ a generic element of $\mathcal{V}^n f$, that is,
$g = \Gamma^n f$ if there exists $V \in \mathcal{V}^n$ such that $g = V f$.
We further denote with $\Gamma^{\leq n} f$ a generic linear combination of elements in $\mathcal{V}^{\leq n} f$, that is,
$g = \Gamma^{\leq n} f$ if
\begin{align}\label{G<nf}
g = \sum_{V \in \mathcal{V}^{\leq n}} c_V \, V f .
\end{align}
for some real constants $c_V$.
In particular, $\Gamma^{\leq n} f$ denotes a linear combination of elements of $\mathcal{V}^n f$.
We have the following schematic identities:
\begin{align}
\Gamma^{n_1} (\Gamma ^{n_2} f ) = \Gamma^{n_1+n_2} f,
	\qquad  \Gamma^{n_1} (\Gamma^{\leq n_2} f ), \, \Gamma^{\leq n_1} (\Gamma^{n_2} f ) = \Gamma^{\leq n_1+n_2} f.
\end{align}
Note that while the $3$d vector fields are denoted $\underline{\Gamma}$ using an underline,
we are not underlining the $\Gamma^n$ expressions for ease of notation.
This should generate no confusion since in this section we work solely in the full $3$d 
(flattened) domain. 
When the $z$-independent function $h$ is involved, the action of $\underline{\Gamma}$ is the obvious one,
and we adopt the same notation $\Gamma^n h$ and $\Gamma^{\leq n} h$.



\item {\normalfont ({\bf Products})} For any $V \in \mathcal{V}^n$ one sees that 
\begin{align*}
V(fg) = \sum_{\substack{V_1 \in \mathcal{V}^{n_1}, \, V_2\in \mathcal{V}^{n_2} \\ n_1+n_2 = n} }
  c_{V_1V_2} \, V_1 f \cdot V_2 g
\end{align*}
for some real constants $c_{V_1V_2} = c_{V_1V_2}(V)$ determined by $V$.
We then adopt a short-hand notation for linear combinations of the form above
by omitting the constants and denoting a generic term in $\mathcal{V}^n(fg)$ as
\begin{align}\label{Gnfg}
\Gamma^n (fg) = \sum_{n_1+n_2=n} \Gamma^{n_1} f \cdot \Gamma^{n_2} g.
\end{align}
A linear combination of terms in $\mathcal{V}^n(fg)$ will also be denoted in the same way.
Similarly, we will write a generic linear combination of elements of $\mathcal{V}^{\leq n}(fg)$,
as
\begin{align}\label{G<nfg}
\Gamma^{\leq n} (fg) = \sum_{n_1+n_2 \leq n} \Gamma^{n_1} f \cdot \Gamma^{n_2} g.
\end{align}
We will also use a sum as in the right-hand side of \eqref{G<nfg} 
to denote a generic linear combination (of linear combinations) of products of terms
in $\mathcal{V}^{n_1}$ and $\mathcal{V}^{n_2}$ for $n_1 + n_2 = 0, \dots, n$.
Note that, by this convention, we can identify
\begin{align}\label{...}
\sum_{n_1+n_2 \leq n} \Gamma^{\leq n_1} f \cdot \Gamma^{n_2} g = \sum_{n_1+n_2 \leq n} \Gamma^{n_1} f \cdot \Gamma^{n_2} g.
\end{align}

\item {\normalfont({\bf Commutators})}
From standard commutation relations one sees that for any $V_1 \in \mathcal{V}^{n_1}$ and $V_2 \in \mathcal{V}^{n_2}$
the commutator satisfies $[V_1, V_2] = V'$ where
$V' \in 
\mathcal{V}^{n_1+n_2-1}$. Consistently with this and our short-hand notation from above, we write
\begin{align}
[\Gamma^{\leq n_1}, \Gamma^{\leq n_2} ] = \Gamma^{\leq n_1+n_2-1}
\end{align}
to express the fact that the commutation of any linear combinations of vector fields of order
at most $n_1$ and $n_2$, gives a linear combination of vector fields of order less or equal to $n_1+n_2-1$.

\item {\normalfont({\bf Norms and estimates})}
Consistently with the above convention,
for generic terms $\Gamma^n f$ and $\Gamma^{\leq n} f$ we have
\begin{align}
|\Gamma^n f | \leq \sup_{V \in \mathcal{V}^n} |V f|, \qquad
  |\Gamma^{\leq n} f | \lesssim \sum_{V \in \mathcal{V}^{\leq n}} |V f|.
\end{align}
In particular,
\begin{align}\label{notnorms1}
{\|\Gamma^{\leq n} f \|}_{L^q_zL^p_x} \lesssim \sum_{V \in \mathcal{V}^{\leq n}} {\| V f \|}_{L^q_zL^p_x}
  \lesssim  \sum_{\substack{r,k \in \Z^2_+ \\ |r|+|k| \leq n}}
  {\big\| (\partial_{x_1},\partial_{x_2})^r (\Omega,S)^k f \big\|}_{L^q_zL^p_x}.
\end{align}
At the same time, we also have
\begin{align}\label{notnorms2}
\sum_{\substack{r,k \in \Z^2_+ \\ |r|+|k| \leq n}}
  {\big\| (\partial_{x_1},\partial_{x_2})^r (\Omega, \underline{S})^k f \big\|}_{L^q_zL^p_x}
  \lesssim \sup_{0\leq \ell \leq n} \sup_{V \in \mathcal{V}^\ell} {\| V f \|}_{L^q_zL^p_x}.
\end{align}
Therefore, in order to estimate the desired $\mX^n$ type norms of $W$, see Proposition \ref{mainpropW},
it suffices to estimate generic terms of the form $\Gamma^\ell W$, $0\leq \ell \leq n$ 
in the appropriate $L^p_{x,z}$ spaces,
and under the a priori assumptions inferred from \eqref{notnorms1},
\eqref{propWaslow}-\eqref{propWashigh} and \eqref{propWasinfty}.

\item {\normalfont({\bf The two dimensional case})}
We will adopt an analogous notation for the $2$d vector fields,
with corresponding product and commutator identities, 
together with corresponding norms estimates as in \eqref{notnorms1}.
The distinction will always be clear from context.

\end{itemize}

\subsubsection{Classes of functions}\label{ssecO}
To deal with multilinear expressions involving the vorticity and the dispersive variables,
we introduce classes of linear and quadratic functions satisfying dispersive type bounds.
We will adopt the shorthand notation from Subsection \ref{vfnotation}.
Also, recall the notation $x+$ for a real number $x$ (see Subsection \ref{Notation}) 
and that $3p_0 < \delta$.

\begin{defi}[$\mathcal{O}_i$ classes]\label{defOi}
We say that a function $F = F(x,z)$ defined on $\R^2 \times \R_-$
is of class $\mathcal{O}_1$, and write $F \in \mathcal{O}_1$, if
\begin{subequations}
    \begin{alignat}{2}
\label{defO1b}
& {\big\| \Gamma^n F \big\|}_{L^\infty_{x,z}} \lesssim \e_0 \jt^{-1+}, &&\qquad n \leq N_1-10,
\\
\label{defO1a}
& {\big\| \Gamma^n F \big\|}_{L^\infty_{x,z}} \lesssim \e_0 \jt^{3p_0}, &&\qquad n \leq N_1+12.
\end{alignat}
\end{subequations}


\noindent
We say that $F$ is of class $\mathcal{O}_2$, and write $F \in \mathcal{O}_2$, if
\begin{subequations}
\begin{align}
\label{defO2b}
& {\big\| \Gamma^n F \big\|}_{L^\infty_{x,z}} \lesssim \e_0^{1+} \jt^{-1-\delta}, \qquad n \leq N_1-10,
\\
\label{defO2a}
& {\big\| \Gamma^n F \big\|}_{L^\infty_{x,z}} \lesssim \e_0^{1+} \jt^{-1+\delta}, \qquad n \leq N_1+12.
\end{align}
\end{subequations}

\end{defi}

\begin{remark}[About the $\mathcal{O}$ classes]\label{remOi}
Here are a few remarks and simple consequences of the above definitions:

\begin{enumerate}

 \label{Oi0}
\item The classes above are consistent with the bounds we have on high order energies and low order dispersive norms
for our irrotational variables.
Indeed, notice that a typical $\mathcal{O}_1$ function is $h$
together with a few (up to ten) of its derivatives, in view of \eqref{apriorih}.
The quantity $A$ in \eqref{lemUdt} can also be easily seen to be in $\mathcal{O}_1$; see Lemma \ref{lemO1} below.
Also, clearly $\mathcal{O}_2 \subset \mathcal{O}_1$.

\item \label{Oi11=2}
If $F,G \in \mathcal{O}_1$, then the product $F \cdot G \in \mathcal{O}_2$.
This follows immediately from the definitions,  distributing vector fields as in \eqref{Gnfg},
and using that $N_1$ is large.
More precisely, we have
\begin{alignat}{2}
\label{O1dotO1b}
& {\big\| \Gamma^{n_1} F \cdot \Gamma^{n_2} G \big\|}_{L^\infty_{x,z}} \lesssim \e_0^2 \jt^{-2+}, 
&&\qquad n_1+n_2 \leq N_1-10,
\\
\label{O1dotO1a}
& {\big\| \Gamma^{n_1} F \cdot \Gamma^{n_2} G \big\|}_{L^\infty_{x,z}} \lesssim \e_0^2
\jt^{-1+3p_0+},
&&\qquad n_1+n_2 \leq N_1+12,
\end{alignat}
which are sufficient since $3p_0 < \delta$.

\item \label{OiO1W}
If $F \in \mathcal{O}_1$, and $W$ satisfies the a priori estimates \eqref{propWaslow}-\eqref{propWashigh},
then the product $W \cdot F$ satisfies estimates that are at least $\e_0$ better.
More precisely
\begin{align}
\label{WdotO1b}
& {\big\| \Gamma^{n_1} W \cdot \Gamma^{n_2} F \big\|}_{L^2_{x,z} \cap L^{6/5}_{x,z}}
  \lesssim \e_0 \cdot \e_1 \cdot \jt^{-1+}, \qquad n_1+n_2 \leq N_1-10,
\\
\label{WdotO1a}
& {\big\| \Gamma^{n_1} W \cdot \Gamma^{n_2} F \big\|}_{L^2_{x,z} \cap L^{6/5}_{x,z}}
\lesssim \e_0 \cdot \e_1 \cdot \jt^{3p_0}, \qquad n_1+n_2 \leq N_1+12,
\end{align}
which can be verified directly using H\"older's inequality, estimating the terms $\Gamma^{n_2} F$
always in $L^\infty_{x,z}$.

%



\item
The main property that we will eventually use to bound the large number of cubic remainder terms in the
renormalized vorticity equation
is that the $L^1_t([0,T_{\e_1}])$-norm of the product of $W$ with an $\mathcal{O}_2$ function satisfies bounds
consistent with the conclusions \eqref{propWconclow}-\eqref{propWconchigh}.
This is the content of Lemma \ref{lemWO2} below.

\end{enumerate}

\end{remark}

Before proceeding with some general lemmas about the behavior of $\mathcal{O}_i$ functions,
let us recall here, for ease of reference, the bounds we established on the vector potential,
(see Lemma \ref{lemboundaVbulk} and Definition \ref{defnormsalpha}):
with the notation
\begin{align}\label{defVojrn}
V_{\omega,j}^{r,k} = \partial_t^j \underline{\Gamma}^k \nabla_{x,z}^r V_\omega, \quad j=0,1,
\end{align}
we have
\begin{align}\label{Voboundhi}
\begin{split}
    \sum_{\dg{|r|+|k|} \leq N_1+12-j} 
  {\big\| 
  |\nabla|^{1/2} V_{\omega,j}^{r,k}(t) \big\|}_{L^\infty_zL^2_x}
  + {\big\| 
  \nabla_{x,z} V_{\omega,j}^{r,k}(t) \big\|}_{L^2_zL^2_x}
  + {\| 
  V_{\omega,j}^{r,k}(t)\|}_{L^\infty_zL^2_x} \lesssim \e_1 \e_0^j \jt^\delta,
\end{split}
\end{align}
and
\begin{align}\label{Voboundlo}
\begin{split}
    \sum_{\dg{|r|+|k|} \leq N_1-10 -j}
  {\| |\nabla|^{1/2} V_{\omega,j}^{r,k}(t)\|}_{L^\infty_zL^2_x}
  + {\| \nabla_{x,z} V_{\omega,j}^{r,k}(t)\|}_{L^2_zL^2_x}
  + {\| V_{\omega,j}^{r,k}(t)\|}_{L^\infty_zL^2_x} \lesssim \e_1.
\end{split}
\end{align}
In particular using the notation from \ref{vfnotation} for vector fields,
and \dg{the bounds \eqref{Voboundhi}-\eqref{Voboundlo}} with Sobolev's embedding in $x$, we have the bounds
\begin{alignat}{2}
\label{Volo}
& {\| 
\Gamma^n V_\omega(t) \|}_{L^\infty_{x,z}} \lesssim \e_1, &&\qquad n \leq N_1-12,
\\
\label{Vohi}
                                                         & {\| \Gamma^n \nabla_{x,z} V_\omega(t) \|}_{L^2_{x,z}} \lesssim \e_1 \jt^\delta, &&\qquad n \leq N_1+12,
\end{alignat}
for all $t \leq T_{\e_1}$.

Here is a useful Lemma about products of $W$ with elements of $\mathcal{O}_2$.

\begin{lemma}[Bounds on trilinear expression]\label{lemWO2}
Let $W$ be defined as in \eqref{defVW} and  let $H \in \mathcal{O}_2$ as in Definition \ref{defOi}.
Then,
\begin{subequations}\label{WO2}
\begin{align}
\label{WO2b}
& {\big\| \Gamma^{n_1} W \cdot \Gamma^{n_2} H \big\|}_{L^2_{x,z}\cap L^{6/5}_{x,z}}
  \lesssim \e_1 \e_0^{1+} \jt^{-1-\delta}, \qquad n_1+n_2 \leq N_1-10,
\\
\label{WO2a}
& {\big\| \Gamma^{n_1} W \cdot \Gamma^{n_2} H \big\|}_{L^2_{x,z}\cap L^{6/5}_{x,z}}
  \lesssim \e_1 \e_0^{1+} \jt^{-1+\delta}, \qquad n_1+n_2 \leq N_1+12.
\end{align}
\end{subequations}

Similarly, if $H,K \in \mathcal{O}_1$, then
\begin{subequations}\label{WO1O1}
\begin{align}
\label{WO1O1b}
& {\big\| \Gamma^{n_1} W \cdot \Gamma^{n_2} H \cdot \Gamma^{n_3} K  \big\|}_{L^2_{x,z}\cap L^{6/5}_{x,z}}
	\lesssim \e_1 \e_0^{1+} \jt^{-1-\delta}, \qquad n_1+n_2+n_3 \leq N_1-10,
\\
\label{WO1O1a}
& {\big\| \Gamma^{n_1} W \cdot \Gamma^{n_2} H \cdot \Gamma^{n_3} K \big\|}_{L^2_{x,z}\cap L^{6/5}_{x,z}}
	\lesssim \e_1 \e_0^{1+} \jt^{-1+\delta}, \qquad n_1+n_2+n_3 \leq N_1+12.
\end{align}
\end{subequations}

\end{lemma}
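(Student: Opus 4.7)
The plan is to apply H\"older's inequality and a case split on which factor carries the top derivative count. The key observation is that in both norms $L^2_{x,z}$ and $L^{6/5}_{x,z}$ we can place the vorticity factor $\Gamma^{n_1}W$ and absorb the remaining $H, K$ factors in $L^\infty_{x,z}$; this reduces the entire proof to combining the a priori control \eqref{propWaslow}--\eqref{propWashigh} on $W$ (which includes both the $L^2$ and $L^{6/5}$ norms by definition of $\mX^n$) with the pointwise bounds in Definition \ref{defOi}.

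For the bilinear bound \eqref{WO2}, the low-norm case $n_1+n_2\le N_1-10$ is immediate: both indices lie in the range where \eqref{defO2b} yields $\|\Gamma^{n_2}H\|_{L^\infty_{x,z}}\lesssim \e_0^{1+}\jt^{-1-\delta}$ and \eqref{propWaslow} yields $\|\Gamma^{n_1}W\|_{L^2_{x,z}\cap L^{6/5}_{x,z}}\lesssim \e_1$, giving \eqref{WO2b}. The high-norm case $n_1+n_2\le N_1+12$ splits according to whether $n_1\le N_1-10$ (in which case we use the low $W$ bound and the high $H$ bound \eqref{defO2a}) or $n_1>N_1-10$, in which case necessarily $n_2<22\le N_1-10$ since $N_1$ is large, allowing the high $W$ bound \eqref{propWashigh} combined with the stronger low $H$ bound \eqref{defO2b}; either way, the product is $\lesssim \e_1\e_0^{1+}\jt^{-1+\delta}$, as required.

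The trilinear bound \eqref{WO1O1} follows the same template. In the low-norm case all three indices lie below $N_1-10$, so \eqref{defO1b} applied twice yields $\|\Gamma^{n_2}H\cdot\Gamma^{n_3}K\|_{L^\infty_{x,z}}\lesssim \e_0^2\jt^{-2+}$, which combined with $\|\Gamma^{n_1}W\|_{L^2\cap L^{6/5}}\lesssim \e_1$ gives the desired $\e_1\e_0^{1+}\jt^{-1-\delta}$ with room to spare. In the high-norm case $n_1+n_2+n_3\le N_1+12$, at most one of the three indices can exceed $(N_1+12)/2$, which is far below $N_1-10$ for $N_1$ large, so I would split on which index is the large one. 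If it is $n_1$, use \eqref{propWashigh} for $W$ together with two copies of \eqref{defO1b} for $H,K$; if it is $n_2$ (or symmetrically $n_3$), use the low $W$ bound \eqref{propWaslow}, the high $\mathcal{O}_1$ bound \eqref{defO1a} for the large coefficient (contributing $\e_0\jt^{3p_0}$), and the low $\mathcal{O}_1$ bound \eqref{defO1b} for the remaining one (contributing $\e_0\jt^{-1+}$). The potentially weakest term comes from this latter subcase, producing $\e_1\e_0^2\jt^{-1+3p_0+}$, which is absorbed into $\e_1\e_0^{1+}\jt^{-1+\delta}$ precisely because of the parameter choice $3p_0<\delta$ from \eqref{param}. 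There is no real obstacle here beyond bookkeeping; the only place one has to be careful is ensuring this last inequality is used, which is one of the quantitative reasons why the small loss $\delta$ in the maximal time of existence is necessary.
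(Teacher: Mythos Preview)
Your proposal is correct and follows essentially the same approach as the paper: H\"older's inequality placing $W$ in $L^2_{x,z}\cap L^{6/5}_{x,z}$ and the $\mathcal{O}_i$ factors in $L^\infty_{x,z}$, combined with a case split on which index is large. The paper splits according to $n_1\ge n_2$ versus $n_2\ge n_1$ rather than your threshold $n_1\le N_1-10$, and for \eqref{WO1O1} it invokes Remark~\ref{remOi}\eqref{Oi11=2} (that $\mathcal{O}_1\cdot\mathcal{O}_1$ satisfies $\mathcal{O}_2$-type bounds) to reduce to \eqref{WO2} rather than spelling out the three-way split as you do, but the substance and the crucial use of $3p_0<\delta$ are the same.
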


\begin{proof}
The proof is an immediate verification. We give some details for the sake of completeness.
Using H\"older's inequality we have
\begin{align}\label{WO2pr1}
 {\big\| \Gamma^{n_1} W \cdot \Gamma^{n_2} H \big\|}_{L^2_{x,z}} & \lesssim
 	\sup_{n_1+n_2=n} {\| \Gamma^{n_1} W \|}_{L^2_{x,z}} {\| \Gamma^{n_2} H \|}_{L^\infty_{x,z}}.
 \end{align}
Let us look at the case $n_1+n_2 = n \leq N_1+12$ first.
When $n_1 \geq n_2$, we use the a priori bounds \eqref{propWashigh} and \eqref{defO2b} to estimate
\begin{align*}
{\| \Gamma^{n_1} W \|}_{L^2_{x,z}} {\| \Gamma^{n_2} H \|}_{L^\infty_{x,z}}
 	& \lesssim {\| \Gamma^{\leq N_1+12} W \|}_{L^2_{x,z}} {\| \Gamma^{\leq N_1-10} H \|}_{L^\infty_{x,z}}
 	\\
	& \lesssim \e_1 \jt^\delta \cdot \e_0^{1+} \jt^{-1-\delta} \lesssim \e_1 \cdot \e_0^{1+} \jt^{-1},
\end{align*}
which suffices.
When instead $n_2 \geq n_1$, we use the a priori bounds \eqref{propWaslow} and \eqref{defO2a} to estimate
\begin{align*}
{\| \Gamma^{n_1} W \|}_{L^2_{x,z}} {\| \Gamma^{n_2} H \|}_{L^\infty_{x,z}}
 	& \lesssim {\| \Gamma^{\leq N_1-10} W \|}_{L^2_{x,z}} {\| \Gamma^{\leq N_1+12} H \|}_{L^\infty_{x,z}}
 	\\
	& \lesssim \e_1 \cdot \e_0^{1+} \jt^{-1+\delta} \lesssim \e_1 \cdot \e_0^{1+} \jt^{-1+\delta}.
\end{align*}
Identical estimates hold with $L^{6/5}_{x,z}$ replacing $L^2_{x,z}$.
This gives us the desired bound \eqref{WO2a}.

To obtain \eqref{WO2b} we use the bounds on low norms \eqref{propWaslow} and \eqref{defO2b} and see that
for all $n_1+n_2 \leq N_1-10$,
\begin{align*}
{\| \Gamma^{n_1} W \|}_{L^2\cap L^{6/5}} {\| \Gamma^{n_2} H \|}_{L^\infty_{x,z}}
& \lesssim {\| \Gamma^{\leq N_1-10} W \|}_{L^2\cap L^{6/5}} {\| \Gamma^{\leq N_1-10} H \|}_{L^\infty_{x,z}}
	\lesssim \e_1 \cdot \e_0^{1+} \jt^{-1-\delta},
 \end{align*}
as claimed.

The proof of \eqref{WO1O1} is similar. It suffices to notice that terms of the form
$\Gamma^{n_2} \mathcal{O}_1 \cdot \Gamma^{n_3} \mathcal{O}_1$ satisfy better bounds than
$\Gamma^{\leq n_2+n_3} \mathcal{O}_2$, see Remark \ref{remOi}\eqref{Oi11=2}.
\end{proof}

%

We conclude this section with a list of functions that are of class $\mathcal{O}_1$ and $\mathcal{O}_2$.

\begin{lemma}[Functions of class $\mathcal{O}_1$]\label{lemO1}
With the classes $\mathcal{O}_1$ defined as in Definition \ref{defOi},
and under our a priori assumptions, we have
\begin{align}\label{O1h}
h, \,\, \partial_t h,  \,\, A, \,\, \partial_t A  \,\, \in \mathcal{O}_1,
\end{align}
recall the definition \eqref{lemUXdt}, and
\begin{align}\label{O1V}
V-V_\omega, \,\, \nabla_{x,z} \Psi \,\, \in \mathcal{O}_1,
\end{align}
see \eqref{defVW} and \eqref{defPsiV_o}.
The same holds true for $\Gamma^{\leq 2}$ applied to all the quantities in \eqref{O1h}-\eqref{O1V}.
\end{lemma}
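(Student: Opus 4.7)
The plan is to verify each membership claim by translating the a priori assumptions into the $L^\infty_{x,z}$ bounds of Definition \ref{defOi}, using Sobolev embedding (in $x$ or $(x,z)$) where needed, and for $A$ and $\partial_t A$ also the Poisson-kernel bounds of Lemma \ref{explem}.

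First I would handle $h$ and $\partial_t h$. These are $z$-independent, so $\underline{\Gamma}^k$ acts on them as the 2d $\Gamma^k$. The bound \eqref{defO1b} is immediate from the $L^\infty_x$ bound in \eqref{apriorih}, and \eqref{defO1a} follows from the $L^2_x$ bound in \eqref{apriorih} by the Sobolev embedding $H^{1+}(\R^2)\hookrightarrow L^\infty(\R^2)$, which costs at most two derivatives and is absorbed by the margin $N_1+14 \leq N_0$. For $\partial_t h$ the same argument applies using \eqref{aprioridthinfty} and \eqref{aprioridth2}.

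Next, for $A = \nabla_{x,z}|\nabla|^{-1} e^{z|\nabla|} h$, observe that $\partial_z e^{z|\nabla|}=|\nabla| e^{z|\nabla|}$ cancels the $|\nabla|^{-1}$ in the third component, while on the first two components $\nabla_x|\nabla|^{-1}$ is a Riesz transform, bounded on $L^p(\R^2)$ for $1<p<\infty$. Combining \eqref{expbounds1} with a Sobolev embedding in $x$ to pass from $L^p_x$ to $L^\infty_x$ then yields ${\|\Gamma^n A\|}_{L^\infty_{x,z}} \lesssim {\|h\|}_{Z^{m,p}_{n}}$ for suitable $m$ and $p$; inserting the bounds already proven on $h$ gives the $\mathcal{O}_1$ estimates for $A$. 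The treatment of $\partial_t A$ is identical, with $\partial_t h$ in place of $h$.

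Finally, for $\nabla_{x,z}\Psi$ and $V-V_\omega$, the chain-rule identity \eqref{equivpr1} and the decomposition \eqref{Vi} express each of them as $(\nabla_{x,y}\psi)\circ\Phi$, with $\Phi(t,x,z)=(x,z+h(t,x))$, up to a product involving $\nabla h$ and $\partial_z\Psi$. The leading piece is controlled by writing $\nabla_{x,y}\psi=v-v_\omega$ and invoking \eqref{aprioridecayv} for $v$ together with \eqref{Volo} for $v_\omega$ in the low range $n\leq N_1-10$; in the high range $n\leq N_1+12$ one instead uses \eqref{apriorie0} for $v$ and \eqref{Vohi} for $v_\omega$, followed by the 3d Sobolev embedding $H^2_{x,z}\hookrightarrow L^\infty_{x,z}$, again absorbed by $N_0 \gg N_1$. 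The key compatibility is that on $[0,T_{\e_1}]$ with $\e_1\leq\e_0$ one has $\e_1 \jt^\delta \lesssim \e_0 \jt^{3p_0}$ and $\e_1 \jt^\delta \lesssim \e_0 \jt^{-1+\eta}$ for $\eta$ slightly larger than $\delta^2/(1-\delta)$, since $T_{\e_1}=\bar{c}\e_1^{-1+\delta}$; this is what absorbs the rotational growth into the $\mathcal{O}_1$ envelope. The product correction $\nabla h\cdot \partial_z\Psi$ is a product of two $\mathcal{O}_1$ functions, hence in $\mathcal{O}_2\subset\mathcal{O}_1$ by Remark \ref{remOi}. The $\Gamma^{\leq 2}$ variant is obtained by shifting the derivative counts uniformly by two, which remains comfortably within our margins. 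The only nontrivial point is this decay trade-off between the irrotational and rotational contributions.
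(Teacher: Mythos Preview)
Your treatment of $h$, $\partial_t h$, $A$, and $\partial_t A$ is essentially the paper's argument and is fine.

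The gap is in your handling of $\nabla_{x,z}\Psi$ and $V-V_\omega$. Your ``key compatibility'' inequalities
\[
\e_1\jt^\delta \lesssim \e_0\jt^{3p_0}
\qquad\text{and}\qquad
\e_1\jt^\delta \lesssim \e_0\jt^{-1+\eta}
\]
are both false. Take $\e_1=\e_0$ (which is allowed by the hypotheses of Theorem~\ref{maintheoprecise}): the first becomes $\jt^{\delta-3p_0}\lesssim 1$, which fails since $\delta>3p_0$ and $T_{\e_1}\to\infty$ as $\e_0\to 0$; the second becomes $\jt^{1+\delta-\eta}\lesssim 1$, which fails for any small $\eta$. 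The underlying problem is that you are bounding $\nabla\psi=v-v_\omega$ by the triangle inequality $|v|+|v_\omega|$, and the a~priori assumption \eqref{aprioridecayv} on $v$ already carries an $\e_1\jt^\delta$ contribution (coming precisely from $v_\omega$); subtracting $v_\omega$ via the triangle inequality does not cancel it. So from your route you only get $\|\Gamma^n(V-V_\omega)\|_{L^\infty}\lesssim \e_0\jt^{-1}+\e_1\jt^\delta$, which matches neither \eqref{defO1b} nor \eqref{defO1a}.

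The paper avoids this entirely by not going through $v-v_\omega$. It invokes Lemma~\ref{LemmaPsi}, which bounds $\nabla_{x,z}\Psi$ directly via the elliptic Dirichlet problem for $\Psi$ in the flat half-space, with data $\varphi$ and coefficient $h$. The inputs to that lemma are the bounds on $h$ and $|\nabla|^{1/2}\varphi$ from \eqref{apriorih} and Proposition~\ref{propvelpot}, both of which are genuinely $\e_0$-sized irrotational quantities with the correct decay; no $\e_1$ ever enters. In particular \eqref{PsiL2} and Sobolev give \eqref{defO1a}, while \eqref{Psiinfty} (together with the $L^2$ bound for very small and very large frequencies) gives \eqref{defO1b}. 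Once $\nabla_{x,z}\Psi\in\mathcal{O}_1$ is established this way, $V-V_\omega=\nabla_{x,z}\Psi-\nabla h\,\partial_z\Psi\in\mathcal{O}_1$ follows by Remark~\ref{remOi}\eqref{Oi11=2}.
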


\begin{proof}
The fact that $h \in \mathcal{O}_1$ follows directly from 
the a priori assumption \eqref{apriorie0} and \eqref{aprioridecay}.
For $\partial_t h$ we use instead 
that $\partial_th = G(h)\varphi$ and the bounds in \eqref{G1est}
to deduce bounds as in \eqref{defO1b}-\eqref{defO1a}.

For $A = \nabla_{x,z} |\nabla|^{-1} e^{z|\nabla|} h$ we use 
Sobolev's embedding in $x$ followed by \eqref{expbounds1}:
for $n \leq N_1+12$, and denoting $\mathcal{R} = \nabla|\nabla|^{-1}$ the (vector) Riesz transform,
\begin{align*}
{\big\| 
  \Gamma^n \nabla_{x,z} |\nabla|^{-1} e^{z|\nabla|} h \big\|}_{L^\infty_{x,z}}
& \lesssim  {\big\| \Gamma^n  (1,\mathcal{R}) e^{z|\nabla|} h \big\|}_{L^\infty_z H^2_x}
  \lesssim \sup_{|r|+|k| \leq n} {\| h \|}_{Z_k^{r+2}} \lesssim \e_0 \jt^{p_0}
\end{align*}
which is sufficient for the bound \eqref{defO1a} for this term.
To verify the bound \eqref{defO1b} we proceed similarly: for $n \leq N_1-10$ we have,
by the maximum principle, Sobolev's embedding, and \eqref{apriorihp},
\begin{align*}
& {\big\| \Gamma^n \nabla_{x,z} |\nabla|^{-1} e^{z|\nabla|} h \big\|}_{L^\infty_{x,z}} \lesssim
    {\| (1,\mathcal{R}) \Gamma^n h \|}_{L^\infty}
    \lesssim
    \sup_{|r|+|k| \leq n} {\| h \|}_{Z_k^{r+1,\infty-}} \lesssim \e_0 \jt^{-1+}.
\end{align*}
We also see that applying $\Gamma^{\leq 2}$ to any of the quantities in \eqref{O1h}
we still obtain $\mathcal{O}_1$ functions.

Since $V - V_\omega = \nabla_{x,z} \Psi - \nabla h \, \partial_z \Psi$,
in view of Remark \ref{remOi}, we see that in order to obtain \eqref{O1V} it suffices
to prove that $\nabla_{x,z} \Psi \in \mathcal{O}_1$.
This is a consequence of Lemma \ref{LemmaPsi}.
Indeed, the property \eqref{defO1a} follows directly from the first bound in \eqref{PsiL2}
and Sobolev's embedding.
The bound \eqref{defO1b} follows from \eqref{Psiinfty}
which gives a stronger bound for $\sum_\ell P_\ell \nabla_{x,z}\Psi$ with $2^\ell \in [\jt^{-5},\jt^{5}]$,
combined with the $L^2$ bound \eqref{PsiL2} for the remaining very small and very large frequencies.
\end{proof}

\begin{lemma}[Functions of class $\mathcal{O}_2$]\label{lemO2}
    With the definitions \eqref{lemUR}, we have
\begin{align}\label{O2terms}
R_1, \quad  \nabla_{x,z} (\Psi - e^{z|\nabla|} \varphi),
  \quad \nabla_{x,z} e^{z|\nabla|} |\nabla|^{-1} \big( |\nabla| \varphi - G(h)\varphi \big)
  \, \in \, \mathcal{O}_2,
\end{align}
The same is also true for $\Gamma^{\leq 2}$ applied to all of the above quantities.
In particular, 
\begin{align}\label{O2terms'}
\Gamma^{\leq 2} R_2, \quad \Gamma^{\leq 2} R \, \in \, \mathcal{O}_2.
\end{align}
\end{lemma}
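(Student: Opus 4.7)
The strategy is to establish the three claims in \eqref{O2terms} independently, then deduce \eqref{O2terms'} by the Leibniz rule. The first term, $R_1$, is a bilinear expression in functions already placed in $\mathcal{O}_1$ by Lemma~\ref{lemO1}, and thus fits into $\mathcal{O}_2$ by a direct pointwise product estimate. The second and third terms are genuine nonlinear remainders: $\nabla_{x,z}(\Psi - e^{z|\nabla|}\varphi)$ measures how much the flattened harmonic extension deviates from the flat Poisson extension, and $|\nabla|\varphi - G(h)\varphi$ is the Dirichlet--Neumann operator minus its linearization at $h=0$. Both vanish identically when $h=0$, so they are at least quadratic in $(h,\varphi)$.

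For $R_1^i = -\partial^i h\,\partial_z\Psi - (V^\ell - V_\omega^\ell)\partial_\ell h\,e_z^i$, Lemma~\ref{lemO1} places each factor $\partial h$, $\partial_z\Psi$, and $V - V_\omega$ in $\mathcal{O}_1$ together with any $\Gamma^{\leq 2}$ derivative. The Leibniz rule (see \eqref{Gnfg}) then writes $\Gamma^{\leq 2} R_1$ as a finite linear combination of products $\Gamma^{n_1}\mathcal{O}_1 \cdot \Gamma^{n_2}\mathcal{O}_1$ with $n_1+n_2 \leq N_1+14$. Applying the bounds \eqref{O1dotO1b}--\eqref{O1dotO1a} from Remark~\ref{remOi}(2), and using $3p_0 < \delta$, both the low-order bound $\e_0^2\jt^{-2+} \lesssim \e_0^{1+}\jt^{-1-\delta}$ and the high-order bound $\e_0^2 \jt^{-1+3p_0+} \lesssim \e_0^{1+}\jt^{-1+\delta}$ hold, so $\Gamma^{\leq 2}R_1 \in \mathcal{O}_2$.

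For the remaining two terms, I would rewrite each as a Poisson-type integral involving the quadratic part of the relevant nonlinearity. For $\Psi - e^{z|\nabla|}\varphi$, this function has vanishing trace at $z = 0$ and, in flattened coordinates, $\Psi$ satisfies a modified Laplace equation whose defect from $\Delta$ is linear in $\nabla h$; writing the right-hand side schematically as $\mathcal{N}(\nabla h, \nabla_{x,z}\Psi)$ and inverting via the half-space Green function, Lemma~\ref{explem} (together with the decay bounds on $\nabla_{x,z}\Psi$ provided by Lemma~\ref{LemmaPsi} and on $h$ by \eqref{apriorih}--\eqref{apriorihp}) then yields exactly product-type bounds of the same shape as in the previous paragraph. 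For the third term, the estimate \eqref{G1est} on $|\nabla|\varphi - G(h)\varphi$ provides the quadratic control, and the operator $\nabla_{x,z}e^{z|\nabla|}|\nabla|^{-1}$ is handled at high frequencies by \eqref{expbounds1} after Sobolev embedding in $x$, and at low frequencies by the $L^{6/5}\to L^2$-type bound implicit in \eqref{expbounds3}, combined with the fact that $|\nabla|\varphi - G(h)\varphi$ carries an extra $x$-derivative on each factor.

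Combining the three gives $R_2 \in \mathcal{O}_2$ and hence $R = R_1 + R_2 \in \mathcal{O}_2$; applying $\Gamma^{\leq 2}$ only shifts the index counts above and is absorbed because $N_1$ is large. The main delicate point will be the low-frequency analysis of the third term, where the factor $|\nabla|^{-1}$ is formally singular; the resolution is analogous to the one used for the linear forcing term in Subsection~\ref{linforcesec}, exploiting the improved $L^{6/5}$ control on the quadratic Dirichlet--Neumann remainder to pass to $L^\infty_{x,z}$ at low frequencies via Hardy--Littlewood--Sobolev without generating any singularity in $\jt$.
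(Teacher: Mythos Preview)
Your treatment of $R_1$ is correct and matches the paper exactly: each factor is in $\mathcal{O}_1$ by Lemma~\ref{lemO1}, and Remark~\ref{remOi}\eqref{Oi11=2} gives the product bound.

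For the second term you are essentially re-deriving the quadratic bounds \eqref{PsiL2quad}--\eqref{Psiinftyquad} of Lemma~\ref{LemmaPsi} rather than citing them. The paper simply invokes those two estimates plus Sobolev embedding in $x$: \eqref{PsiL2quad} gives the high-order $L^\infty_{x,z}$ bound \eqref{defO2a} (using $3p_0<\delta$), and \eqref{Psiinftyquad} gives the low-order bound \eqref{defO2b} directly. Your route works but is roundabout.

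For the third term there is a genuine oversight. You treat the factor $|\nabla|^{-1}$ as a low-frequency obstruction requiring a separate argument via \eqref{expbounds3} and $L^{6/5}$ control. But the $|\nabla|^{-1}$ is \emph{already} cancelled by the outer $\nabla_{x,z}$: since $\partial_z e^{z|\nabla|}=|\nabla|e^{z|\nabla|}$, one has
\[
\nabla_{x,z}\,|\nabla|^{-1}e^{z|\nabla|}g=(\mathcal{R},1)\,e^{z|\nabla|}g,\qquad \mathcal{R}=\nabla|\nabla|^{-1},
\]
which is a bounded operator (on $L^p$, $1<p<\infty$) composed with the Poisson extension. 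There is no low-frequency issue at all. The paper then uses the maximum principle / \eqref{expbounds1} together with the quadratic Dirichlet--Neumann bounds \eqref{G2est} (not \eqref{G1est}, which is the linear bound) on $G_{\geq 2}(h)\varphi=-\big(|\nabla|\varphi-G(h)\varphi\big)$: Sobolev embedding from $H^2_x$ gives the high-order bound $\lesssim\e_0^2\jt^{-1+3p_0}$, and $L^p$ interpolation of \eqref{G2est} gives the low-order bound $\lesssim\e_0^2\jt^{-6/5}$. Your proposed use of \eqref{expbounds3} does not apply here in any case, since that lemma concerns the half-space Green operators $\int e^{-|z-s||\nabla|}f(s)\,ds$, not the Poisson extension $e^{z|\nabla|}$ of boundary data.
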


\begin{proof}
Since $R_1^i = - \partial^i h \partial_z \Psi - (V^\ell - V^\ell_\omega) \partial_\ell h \, e_z^i$
we see that this is in the $\mathcal{O}_2$ class in view of \eqref{O1h}-\eqref{O1V}
and \eqref{Oi11=2} of Remark \ref{remOi}. 

The second and third term in \eqref{O2terms} are almost the same.
For the second term we can use directly \eqref{PsiL2quad}, respectively \eqref{Psiinftyquad},
and Sobolev's embedding to deduce \eqref{defO2a} (recall $\delta>3p_0$), respectively, \eqref{defO2b}.

For the third term in \eqref{O2terms} we use the notation from \eqref{derG2}, that is,
$G_{\geq 2}(h)\varphi := G(h)\varphi - |\nabla| \varphi$, and invoke the bounds \eqref{G2est}.
Using the maximum principle 
and Sobolev's embedding we obtain, for $n \leq N_1+12$,
\begin{align*}
{\big\| 
  \Gamma^n \nabla_{x,z} |\nabla|^{-1} e^{z|\nabla|} G_{\geq 2}(h)\varphi \big\|}_{L^\infty_{x,z}}
  & \lesssim \sup_{|r|+|k|\leq n} {\| (1,\mathcal{R}) G_{\geq 2}(h)\varphi \|}_{Z_k^{r,\infty}}
  \\
  & \lesssim \sup_{|r|+|k|\leq n} {\| (1,\mathcal{R}) G_{\geq 2}(h)\varphi \|}_{Z_k^{r+2}}
  \lesssim \e_0^2 \jt^{-1+3p_0}.
\end{align*}
Similarly, we can use \eqref{expbounds1} 
to obtain, for $n \leq N_1-10$,
\begin{align*}
{\big\| \Gamma^n \nabla_{x,z} |\nabla|^{-1} e^{z|\nabla|} G_{\geq 2}(h)\varphi \big\|}_{L^\infty_{x,z}}
  & \lesssim \sup_{|r|+|k| \leq n} {\| (1,\mathcal{R}) G_{\geq 2}(h)\varphi \|}_{Z_k^{r,\infty}}
  \\
  & \lesssim \sup_{|r|+|k| \leq n} {\| G_{\geq 2}(h)\varphi \|}_{Z_k^{r+1,\infty-}} \lesssim \e_0^2 \jt^{-6/5}.
\end{align*}
having used $L^p$ interpolation between the bounds in \eqref{G2est}.
\end{proof}

\subsection{Commutation with vector fields}\label{ssecVorticitycomm}
We proceed to derive a transport equation for $\Gamma^n W$.
\begin{lemma}\label{lemDtW}
Let ${\bf D}_t := \partial_t + U \cdot \nabla$ and recall the notation in \ref{vfnotation}.
We have:

\setlength{\leftmargini}{2em}
\begin{enumerate}

\item The following basic commutation identities hold
\begin{align}\label{lemDtW0}
\begin{split}
[ {\bf D}_t, \partial_{x_i} ] & = - \partial_{x_i} U \cdot \nabla, \quad i=1,2,3, \quad (x_3=z),
\\
[ {\bf D}_t, \Omega ]  & = U_1 \partial_{x_2} - U_2 \partial_{x_1} - \Omega U \cdot \nabla,
\\
[ {\bf D}_t, S ] & = (1/2) {\bf D}_t + (1/2) U \cdot \nabla  - SU \cdot \nabla.
\end{split}
\end{align}

\item
For any $\alpha \in \Z_+^5$, $|\alpha| = 1$,
there exists constants $c_1,c_2,c_2'
\in \R$,
such that
\begin{align}\label{lemDtWcomm'}
[{\bf D}_t, \Gamma^\alpha]  = c_1 {\bf D}_t + c_2 U\cdot \nabla + c_2' U \cdot \nabla_x^\perp
 - \Gamma^\alpha U \cdot \nabla.
\end{align}
Since the presence of the ${}^\perp$ and of all the constants will not play any role,
we will drop them from \eqref{lemDtWcomm'} and use the notation conventions from \ref{vfnotation} to write,
for some $c\in\R$,
\begin{align}\label{lemDtWcomm}
[{\bf D}_t, \Gamma^1]f  = c {\bf D}_t f + ( \Gamma^{\leq 1} U ) \cdot \nabla f.
\end{align}

\item If ${\bf D}_t W = B$ then, for all $n\geq 1$,
\begin{align}\label{lemDtWcommN}
{\bf D}_t \Gamma^n W = \Gamma^{\leq n} B
  + \sum_{n_1+n_2\leq n-1} \Gamma^{\leq n_1+1} U \cdot \Gamma^{n_2} \nabla W;
\end{align}
here the sum is a linear combination as per our conventions, see \eqref{G<nfg} and the paragraph following that.


\item If $W$ is the solution of \eqref{Weq1}, then, for any integer $n$ we have
\begin{align}\label{lemDtWeq}
\begin{split}
{\bf D}_t \Gamma^n W & = \sum_{n_1+n_2\leq n-1} \Gamma^{\leq n_1+1} U \cdot \Gamma^{n_2} \nabla W
  + \sum_{n_1+n_2 \leq n} \Gamma^{n_1} W \cdot \Gamma^{n_2} \nabla X
  + \Gamma^{\leq n} F.
\end{split}
\end{align}

\end{enumerate}

\end{lemma}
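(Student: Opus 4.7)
The plan is to treat the four items in sequence, with (1) being a direct computation, (2) a schematic reorganization of (1), (3) following by induction on $n$ using (2), and (4) by substituting the vorticity equation into (3) and applying the Leibniz rule.

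For (1), since ${\bf D}_t = \partial_t + U\cdot\nabla$ and $\partial_t$ commutes with each spatial vector field in $\underline{\Gamma}$ modulo the $(t/2)\partial_t$ piece inside $\underline{S}$, the commutators reduce to bracketing against $U\cdot\nabla$. The $\partial_{x_i}$ case is immediate from the Leibniz rule. For $\Omega$ one collects the Leibniz contribution $(\Omega U)\cdot\nabla$ together with the rotation of $\nabla_x$, yielding the term $U^1\partial_{x_2}-U^2\partial_{x_1}$. For $\underline{S}$ the key inputs are $[\underline{S},\partial_t]=-(1/2)\partial_t$ and $[\underline{S},\partial_k]=-\partial_k$ for $k\in\{x_1,x_2,z\}$, which give $[{\bf D}_t,\underline{S}] = (1/2)\partial_t - (\underline{S} U)\cdot\nabla + U\cdot\nabla$; rewriting $\partial_t = {\bf D}_t - U\cdot\nabla$ then produces the stated formula. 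Item (2) is then the observation that all three cases fit the template $c_1 {\bf D}_t + c_2 U\cdot\nabla + c_2' U\cdot\nabla_x^\perp - \Gamma^\alpha U\cdot\nabla$, and that the constants and the $\nabla_x^\perp$ can be absorbed into the schematic $(\Gamma^{\leq 1}U)\cdot\nabla$ permitted by the conventions of Subsection \ref{vfnotation}.

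For (3) we induct on $n$, with $n=1$ being exactly \eqref{lemDtWcomm}. For the step, factor $\Gamma^n = \Gamma^1 \Gamma^{n-1}$ and split
\[
{\bf D}_t \Gamma^n W \,=\, \Gamma^1\big({\bf D}_t \Gamma^{n-1} W\big) + [{\bf D}_t,\Gamma^1]\Gamma^{n-1} W.
\]
The induction hypothesis expands the first summand; applying $\Gamma^1$ and distributing via \eqref{Gnfg} promotes $\Gamma^{\leq m_1+1}U\cdot\Gamma^{m_2}\nabla W$ into the full $n_1+n_2\leq n-1$ sum and sends $\Gamma^{\leq n-1}B$ into $\Gamma^{\leq n}B$. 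For the commutator, (2) contributes a term of the form $c\,{\bf D}_t\Gamma^{n-1}W$, which is again controlled by the induction hypothesis, together with $(\Gamma^{\leq 1}U)\cdot\nabla\Gamma^{n-1}W$, which fills the $m_1=0$, $m_2=n-1$ slot of the desired sum.

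Finally, (4) is obtained by substituting $B = W\cdot\nabla X + F$ from \eqref{Weq1} into \eqref{lemDtWcommN} and expanding $\Gamma^{\leq n}(W\cdot\nabla X)$ via the Leibniz rule \eqref{Gnfg} into $\sum_{n_1+n_2\leq n}\Gamma^{n_1}W\cdot\Gamma^{n_2}\nabla X$. Since this lemma is structural rather than analytic, the main obstacle is purely bookkeeping: one must verify at each stage that the derivative paired with $X$ (respectively with $W$) remains outside the action of $\Gamma^{n_2}$, and that the ${\bf D}_t$ contributions generated recursively by the $\underline{S}$ commutator collapse back into the inductive right-hand side rather than producing a tower of uncontrolled material derivatives. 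Both are ensured by the convention that $\Gamma$ acts only via elements of $\underline{\Gamma}$ and by reabsorbing each new ${\bf D}_t\Gamma^{n-1}W$ via the induction hypothesis before continuing.
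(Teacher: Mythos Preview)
Your proof is correct and follows essentially the same route as the paper: direct computation of the commutators for part (1), the schematic rewriting in (2), induction on $n$ for (3) via the splitting ${\bf D}_t\Gamma^n W = \Gamma^1({\bf D}_t\Gamma^{n-1}W) + [{\bf D}_t,\Gamma^1]\Gamma^{n-1}W$ together with a reapplication of the induction hypothesis to the $c\,{\bf D}_t\Gamma^{n-1}W$ piece, and finally substitution of $B = W\cdot\nabla X + F$ with the Leibniz rule for (4). The only minor point you leave implicit is that $\nabla\Gamma^{n-1}W$ must be commuted to the schematic $\Gamma^{\leq n-1}\nabla W$ (which the paper also notes in passing), but this is covered by your closing remark about the derivative staying outside $\Gamma^{n_2}$ and by the notation conventions of Subsection~\ref{vfnotation}.
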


\begin{proof}
The identities \eqref{lemDtW0} follow from standard calculations.
Indeed, for $x \in \R^2 \times \R^2_-$ we have
\begin{align*}
[ {\bf D}_t, S ]  & = (1/2) [\partial_t, t\partial_t ] + [U \cdot \nabla, (1/2) t\partial_t + x \cdot \nabla]
\\
& = (1/2) \partial_t - SU \cdot \nabla - U \cdot [S,\nabla]
\\
& = (1/2) {\bf D}_t - (1/2) U \cdot \nabla - SU \cdot \nabla - U \cdot (-\nabla),
\end{align*}
which is of the desired form.
Similarly,
$[ {\bf D}_t, \Omega ] = [U \cdot \nabla, x_1 \partial_{x_2} - x_2 \partial_{x_1}] =
  U_1 \partial_{x_2} - U_2 \partial_{x_1} - \Omega U \cdot \nabla.$
The identity \eqref{lemDtWcomm'}, and its shorthand version \eqref{lemDtWcomm},
then follow directly from \eqref{lemDtW0}.

To prove \eqref{lemDtWcommN} we proceed by induction.
The case $n=1$ follows directly from 
\eqref{lemDtWcomm}.
Assuming \eqref{lemDtWcommN} holds true for $n=1$ 
we calculate using the inductive hypothesis and \eqref{lemDtWcomm}:
\begin{align*}
{\bf D}_t \Gamma^n W & = \Gamma^1 {\bf D}_t \Gamma^{n-1} W + [{\bf D}_t, \Gamma^1] \Gamma^{n-1} W
\\
& = \Gamma^1 \Big( \Gamma^{\leq n-1} B
  + \sum_{n_1+n_2\leq n-2} \Gamma^{\leq n_1+1} U \cdot \Gamma^{n_2} \nabla W \Big)
  \\
  & + {\bf D}_t \Gamma^{n-1} W + \Gamma^{\leq 1} U \cdot \nabla \Gamma^{n-1} W
  ;
\end{align*}
distributing the vector field $\Gamma$ in the first line (see \eqref{Gnfg}),
applying again the inductive hypothesis to ${\bf D}_t \Gamma^{n-1} W$,
and commuting the $\Gamma$'s and $\nabla$ in the last term, 
we see that the above expression is of the desired form \eqref{lemDtWcommN}.

Finally, the equation \eqref{lemDtWeq} follows from applying the previous identity \eqref{lemDtWcommN}
to the equation \eqref{Weq1}.
\end{proof}

We also have the following lemma for the transporting vector field $U$.

\begin{lemma}\label{lemGU}
With the notation of \ref{vfnotation} 
and under the assumptions of Proposition \ref{mainpropW}, 
for $U$ and $X$ as in \eqref{lemUdt} we have:
\begin{align}\label{lemGU0}
U = \mathcal{O}_1 + V_\omega + V_\omega \cdot \mathcal{O}_1, \qquad \nabla X = \mathcal{O}_1 + \nabla V_\omega,
\end{align}
In particular
\begin{align}\label{GUbdlo}
& {\| \Gamma^n U \|}_{L^\infty_{x,z}} \lesssim \e_0 \jt^{-1+} + \e_1, \qquad n\leq N_1-12.
\end{align}
\end{lemma}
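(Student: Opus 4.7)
The plan is to read off the structural decomposition \eqref{lemGU0} directly from Lemma \ref{lemmaU}, identify each summand as one of the building blocks we have already catalogued, and then convert this decomposition into the $L^\infty_{x,z}$ bound \eqref{GUbdlo} using the $\mathcal O_1$ pointwise bounds together with the vector potential estimate \eqref{Volo}.

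First I would rewrite the formula \eqref{lemUdt} as
\begin{align*}
U \;=\; \partial_t A \;-\; \partial_t h\,e_z \;+\; V_\omega \;-\;(V_\omega\cdot\nabla h)\,e_z \;+\; R,
\end{align*}
and then classify the four non-$V_\omega$ terms. By Lemma \ref{lemO1} the quantities $\partial_t A$, $\partial_t h$, and $\nabla h$ all belong to $\mathcal O_1$, and by Lemma \ref{lemO2} we have $R=R_1+R_2\in\mathcal O_2\subset\mathcal O_1$. Thus $\partial_t A - \partial_t h\,e_z + R\in\mathcal O_1$, while $(V_\omega\cdot\nabla h)\,e_z$ is precisely of the form $V_\omega\cdot\mathcal O_1$. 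This yields the first identity in \eqref{lemGU0}. For the second identity, differentiate $X=\partial_t A + V_\omega + R_2$ from \eqref{lemUXdt}: using the final clause of Lemma \ref{lemO1} (that $\Gamma^{\le 2}$ of $A$ and $\partial_t A$ remain in $\mathcal O_1$) we get $\nabla\partial_t A\in\mathcal O_1$, and likewise $\nabla R_2\in\mathcal O_2\subset\mathcal O_1$ by \eqref{O2terms'}, so $\nabla X - \nabla V_\omega\in\mathcal O_1$ as claimed.

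To deduce \eqref{GUbdlo}, apply $\Gamma^n$ with $n\le N_1-12$ to the decomposition of $U$. The $\mathcal O_1$ piece contributes at most $\e_0\jt^{-1+}$ by \eqref{defO1b} (which requires $n\le N_1-10$, satisfied here). The pure $V_\omega$ piece is bounded by $\e_1$ using \eqref{Volo} (which needs $n\le N_1-12$, exactly the hypothesis). For the mixed term $V_\omega\cdot\mathcal O_1$, I would distribute vector fields via \eqref{Gnfg} and estimate each product $\Gamma^{n_1}V_\omega\cdot\Gamma^{n_2}\mathcal O_1$ with $n_1+n_2\le N_1-12$ in $L^\infty_{x,z}$: by Sobolev embedding in $(x,z)$ combined with \eqref{Voboundlo} on a sufficiently high number of vector fields, or directly by \eqref{Volo} on the low-derivative factor, each such term is $\lesssim\e_1\cdot\e_0\jt^{-1+}$, which is absorbed by the right-hand side of \eqref{GUbdlo}.

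The only mildly delicate point is the $V_\omega\cdot\mathcal O_1$ contribution, because $V_\omega$ is a priori controlled in $L^\infty_z L^2_x$ rather than $L^\infty_{x,z}$; the resolution is that $n\le N_1-12$ leaves us enough room to apply Sobolev in $x$ to one of the factors (since $N_1-12$ is well below the $\Yn^{N_1-10}$ level at which \eqref{Voboundlo} holds), after which H\"older and the pointwise $\mathcal O_1$ bound finish the estimate. Apart from this small book-keeping, the lemma is a direct consequence of Lemmas \ref{lemmaU}, \ref{lemO1}, \ref{lemO2} together with the already-established pointwise bound \eqref{Volo} on $V_\omega$.
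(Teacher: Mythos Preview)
Your proof is correct and follows essentially the same approach as the paper: both read off the decomposition directly from Lemma \ref{lemmaU} and then invoke Lemmas \ref{lemO1} and \ref{lemO2} to place each summand in the appropriate class. The paper's own proof is extremely terse (one displayed line), whereas you spell out the classification of each term and also make explicit the deduction of \eqref{GUbdlo} from \eqref{Volo} and \eqref{defO1b}; note that your ``mildly delicate point'' is not actually delicate, since \eqref{Volo} already gives an $L^\infty_{x,z}$ bound on $\Gamma^{n}V_\omega$ for $n\le N_1-12$, so no additional Sobolev embedding is needed for the mixed term.
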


\begin{proof}
We see from the formulas in Lemma \ref{lemmaU},
and using Lemmas \ref{lemO1} and \ref{lemO2}, that
\begin{align}\label{GUpr0}
U = \mathcal{O}_1 + V_\omega - V_\omega \cdot \nabla h \, e_z + \mathcal{O}_2,
\end{align}
and \eqref{lemGU0} follows.
\end{proof}

\subsection{Renormalization of the vorticity equation}
In this subsection we manipulate the equation for $W$, see \eqref{lemDtWeq},
using the identities from Lemma \ref{lemmaU}, in order to write it in a better form
that allows to propagate the desired $\X^n$ norms and prove Proposition \ref{mainpropW}.
These manipulations are akin to a (partial) normal form transformation on the vorticity equation
in the full three dimensional fluid domain that effectively renormalizes the irrotational components.
The next proposition is the main result of this section.

\begin{prop}[Renormalized vorticity equation]\label{lemWreno}
Under the assumptions of Proposition \ref{mainpropW}
we have following:
for all $n \leq N_1+12$, there exist
corrections $G^n=G^n(t,x,z)$ such that
\begin{align}\label{Wreno1}
\begin{split}
{\mathbf D_t} \big(\Gamma^n W - G^n \big) = Q_1^n + Q_2^n
+ C_1^n + C_2^n + F^n
\end{split}
\end{align}
where the following holds:

\begin{itemize}

\item The correction $G^n$ satisfies for all $|t| \leq T$ 
\begin{subequations}\label{WrenoG}
\begin{align}
\label{WrenoG1}
& {\| G^n(t) \|}_{L^2_{x,z} \cap L^{6/5}_{x,z}} \lesssim \e_0 \e_1, \qquad n\leq N_1-10,
\\
\label{WrenoG2}
& {\| G^n(t) \|}_{L^2_{x,z} \cap L^{6/5}_{x,z}} \lesssim \e_0 \e_1 \jt^\delta, \qquad n\leq N_1+12.
\end{align}
\end{subequations}

\item $Q_1^n,Q_2^n$ are quadratic terms (in the rotational variables only) given by
\begin{subequations}\label{WrenoQuad}
\begin{align}\label{WrenoQuad1}
Q_1^n
  & := \sum_{n_1+n_2\leq n} \Gamma^{n_1} W \cdot \Gamma^{n_2} \nabla V_\omega,
\\
\label{WrenoQuad2}
Q_2^n & := \sum_{n_1+n_2\leq n-1} \Gamma^{\leq n_1+1} V_\omega \cdot \Gamma^{n_2} \nabla W.
\end{align}
\end{subequations}

\item $C^n_1, C^n_2$ are cubic terms of the form
\begin{align}\label{WrenoCub}
\begin{split}
C_1^n
  & := \sum_{n_1+n_2+n_3\leq n} \Gamma^{n_1} W \cdot \Gamma^{n_2} \nabla V_\omega
  \cdot \Gamma^{n_3} \mathcal{O}_1,
\\
C_2^n & := \sum_{n_1+n_2+n_3\leq n-1} \Gamma^{\leq n_1+1} V_\omega \cdot \Gamma^{n_2} \nabla W
  \cdot \Gamma^{n_3} \mathcal{O}_1;
\end{split}
\end{align}
recall Definition \ref{defOi} and Lemma \ref{lemO1}.

\item The remaining nonlinear terms satisfy 
\begin{subequations}\label{WrenoF}
\begin{align}
\label{WrenoF1}
& {\| F^n \|}_{L^2_{x,z} \cap L^{6/5}_{x,z}} \lesssim \e_1\e_0^{1+} \jt^{-1-\delta}, \qquad n\leq N_1-10,
\\
\label{WrenoF2}
& {\| F^n \|}_{L^2_{x,z} \cap L^{6/5}_{x,z}} \lesssim \e_1 \e_0^{1+} \jt^{-1+\delta},
  \qquad n\leq N_1+12.
\end{align}
\end{subequations}

\end{itemize}

\end{prop}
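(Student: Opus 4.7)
The plan is to start from Lemma \ref{lemDtW}, which gives the commuted equation
\[
\mathbf{D}_t \Gamma^n W = \sum_{n_1+n_2 \leq n-1} \Gamma^{\leq n_1+1} U \cdot \Gamma^{n_2}\nabla W
 + \sum_{n_1 + n_2 \leq n} \Gamma^{n_1} W \cdot \Gamma^{n_2} \nabla X + \Gamma^{\leq n} F,
\]
and use the decompositions from Lemma \ref{lemmaU} to isolate the ``bad'' irrotational pieces: $X = \partial_t A + V_\omega + R_2$ and $U = \partial_t B + \widetilde{V}_\omega + R$ with $B := A - he_z$ and $\widetilde{V}_\omega := V_\omega - V_\omega \cdot \nabla h\, e_z$. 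The contributions in which $\nabla X$ or $U$ reduces to a $V_\omega$-factor will become $Q_1^n, Q_2^n$, the $R_2, R$ factors (which are $\mathcal{O}_2$ by Lemma \ref{lemO2}) will produce remainders that can be bounded directly using Lemma \ref{lemWO2}, and the genuinely problematic contributions $\Gamma^{n_1} W \cdot \Gamma^{n_2} \nabla \partial_t A$ and $\Gamma^{\leq n_1+1} \partial_t B \cdot \Gamma^{n_2} \nabla W$ will be removed by a normal form, producing the corrections $G^n$.

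For the stretching normal form, I commute $\partial_t$ through $\nabla$ and apply the product rule, then use $\partial_t = \mathbf{D}_t - U\cdot\nabla$:
\[
\Gamma^{n_1} W \cdot \partial_t \Gamma^{\leq n_2+1} A
 = \mathbf{D}_t \bigl( \Gamma^{n_1} W \cdot \Gamma^{\leq n_2+1} A \bigr)
 - U \cdot \nabla \bigl( \Gamma^{n_1} W \cdot \Gamma^{\leq n_2+1} A \bigr)
 - \partial_t \Gamma^{n_1} W \cdot \Gamma^{\leq n_2+1} A.
\]
The $\mathbf{D}_t$-exact piece is moved to the left-hand side, defining the correction $G^n$ as a linear combination of $\Gamma^{n_1} W \cdot \Gamma^{\leq n_2 + 1} A$ (and a symmetric family $\Gamma^{\leq n_1 + 1} B \cdot \Gamma^{n_2} \nabla W$ from the transport side). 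The bounds \eqref{WrenoG1}--\eqref{WrenoG2} then follow by Hölder, using \eqref{propWaslow}--\eqref{propWashigh} for $W$ and Lemma \ref{lemO1} for $A, B \in \mathcal{O}_1$, together with the fact that $3p_0 < \delta$.

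The task then reduces to classifying the two remainder families. For $-U \cdot \nabla G^n$, I substitute $U = \mathcal{O}_1 + V_\omega + V_\omega \cdot \mathcal{O}_1$ from Lemma \ref{lemGU}: the $V_\omega$-pieces multiplied by $\nabla W$ or $\nabla V_\omega$ and any number of $\mathcal{O}_1$ factors become cubic terms of the form $C_1^n$ or $C_2^n$; the pure $\mathcal{O}_1$-pieces, when multiplied by the extra $\mathcal{O}_1$ factor from $\nabla A$ or $B$ already inside $G^n$, produce products of the form $W \cdot \mathcal{O}_2$ or $\nabla W \cdot \mathcal{O}_2$ (by Remark \ref{remOi}\eqref{Oi11=2}), which Lemma \ref{lemWO2} controls by exactly the $F^n$-bounds \eqref{WrenoF1}--\eqref{WrenoF2}. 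The terms of the form $\Gamma^{n_1} W \cdot \Gamma^{n_2} \nabla R_2$ and $\Gamma^{\leq n_1+1}R \cdot \Gamma^{n_2} \nabla W$ come with an $\mathcal{O}_2$-factor already and go directly into $F^n$ by Lemma \ref{lemWO2}.

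The main obstacle is the remainder $-\partial_t \Gamma^{n_1} W \cdot \Gamma^{\leq n_2+1} A$ and its transport-side analogue, since a naive Hölder bound using the a priori hypothesis on $\partial_t W$ against an $\mathcal{O}_1$-factor with only $\jt^{-1+}$ decay is not sharp enough to fit into $F^n$. My plan is to reinject the equation for $\partial_t W$: writing $\partial_t W = \mathbf{D}_t W - U \cdot \nabla W$ and substituting the right-hand side of \eqref{Weq1} for $\mathbf{D}_t W$, each resulting summand is either (a) cubic with a $V_\omega$-factor and an $\mathcal{O}_1$-factor coming from the existing $\nabla A$ (hence in $C_1^n$ or $C_2^n$), (b) a further $\mathbf{D}_t$-exact term which is absorbed by enlarging $G^n$ with one more power of $\nabla A$ (the iteration terminates since each step gains an $\mathcal{O}_1$-factor and the relevant geometric-type series converges for $\e_0$ small), or (c) a product of $W$ or $\nabla W$ with two or more $\mathcal{O}_1$-factors, which collapses to $W \cdot \mathcal{O}_2$ and is controlled by Lemma \ref{lemWO2}. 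The careful bookkeeping of derivative counts (ensuring $n_2 + 1 \leq N_1 + 12$ throughout) and of the $\e_0$- and $\jt$-powers, exploiting the crucial arithmetic $3p_0 < \delta < 1$ and the size separation $N_1 - 10 \ll N_1 + 12$, is what allows the proof to close.
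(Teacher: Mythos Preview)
Your approach is correct and essentially the same as the paper's: both start from Lemma~\ref{lemDtW}, split $U$ and $X$ via Lemma~\ref{lemmaU}, absorb the $V_\omega$-contributions into $Q_1^n,Q_2^n$, the $R,R_2$-contributions into $F^n$ via Lemma~\ref{lemWO2}, and perform a single normal form on the $\partial_t A$ (and $\partial_t(A-he_z)$) pieces to produce the correction $G^n$. Your product-rule identity $\Gamma^{n_1}W\cdot\partial_t\Gamma^{\leq n_2+1}A = \mathbf{D}_t(\cdot) - U\cdot\nabla(\cdot) - \partial_t\Gamma^{n_1}W\cdot\Gamma^{\leq n_2+1}A$ and the paper's two-step procedure (convert $\partial_t$ on $A$ to $\mathbf{D}_t$, then pull $\mathbf{D}_t$ out of the product) yield the same $G^n$ and the same remainder families.

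The one point where you diverge is your option (b): you anticipate needing to iterate the normal form, enlarging $G^n$ by higher powers of $\nabla A$ and summing a geometric series in $\e_0$. This is unnecessary. The key observation you are missing is that $\partial_t A$ and $\partial_t h$ are themselves in $\mathcal{O}_1$ (Lemma~\ref{lemO1}, \eqref{O1h}). Hence when you substitute the equation for $\mathbf{D}_t\Gamma^{n_1}W$ into $-\partial_t\Gamma^{n_1}W\cdot\Gamma^{\leq n_2+1}A$, the ``bad'' stretching term $\Gamma^{m_1}W\cdot\Gamma^{m_2}\nabla\partial_t A$ that reappears is already of the form $\Gamma^{m_1}W\cdot\Gamma^{m_2}\mathcal{O}_1$; multiplied by the extra $\mathcal{O}_1$ factor $\Gamma^{\leq n_2+1}A$, it is $W\cdot\mathcal{O}_1\cdot\mathcal{O}_1 = W\cdot\mathcal{O}_2$, which Lemma~\ref{lemWO2} places directly in $F^n$. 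The same holds for the transport-side $\Gamma^{\leq m_1+1}\partial_t(A-he_z)\cdot\Gamma^{m_2}\nabla W$ piece. So after \emph{one} substitution everything closes, and the paper's treatment of the term $F_5^n$ makes this explicit: it substitutes \eqref{Wrenopreq} once and treats the resulting $\mathcal{O}_1\cdot(\Gamma W\cdot\mathcal{O}_1)$ terms as $W\cdot\mathcal{O}_2$ remainders. Your iterated construction would also work, but is more bookkeeping than is needed.
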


\begin{remark}[``Correction'' and ``Acceptable Remainders'']
Here are some remarks on Proposition \ref{lemWreno}: 

\begin{itemize}
\item
$G^n$ is a normal-form type ``correction" of $\Gamma^n W$ since its norms are $\e_0$ smaller than those of $\Gamma^n W$
(compare \eqref{WrenoG} and \eqref{propWaslow}-\eqref{propWashigh}) and the equation satisfied by $\Gamma^n W - G^n$
has nonlinear terms that are more perturbative then the ones in \eqref{lemDtWeq}.

\item
We call a (cubic) term that satisfies \eqref{WrenoF} an ``acceptable remainder"
since such a term gives a small perturbation of the transported vector field $\Gamma^n W - G^n$ (hence of $\Gamma^n W$)
when integrated in over time $t \in [0,T_{\e_1}]$.
Many terms will be shown to be acceptable remainders directly using the lemmas from the previous subsection.

\item The quadratic terms on the right-hand side of \eqref{Wreno1} only depend on $W$ and $V_\omega$.
Technically, they are not acceptable remainders in the sense specified above and so will be estimated separately in Subsection \ref{secTransp}.
\end{itemize}

\end{remark}

\begin{proof}[Proof of Proposition \ref{lemWreno}]
We start from \eqref{lemDtWeq} and use the structure of the vector fields $U$ and $X$,
see Lemmas \ref{lemmaU} and \ref{lemGU}, to eventually obtain \eqref{Wreno1}.
In the course of the proof we are going to collect several remainders denoted by $F^n_1, F^n_2$ and similar,
that will eventually contribute to the nonlinear remainder $F^n$.

{\it Step 1: Renormalized equation}.
First, for convenience of the reader, we recall \eqref{lemDtWeq}
\begin{align}\label{lemDtWeq'}
\begin{split}
{\bf D}_t \Gamma^n W & = \sum_{n_1+n_2\leq n-1} \Gamma^{\leq n_1+1} U \cdot \Gamma^{n_2} \nabla W
  + \sum_{n_1+n_2 \leq n} \Gamma^{n_1} W \cdot \Gamma^{n_2} \nabla X
  + \Gamma^{\leq n} F.
\end{split}
\end{align}
Using the formulas for $A$ and $X$ in \eqref{lemUXdt}, and the definition of $F$ in \eqref{Weq1},
we rewrite \eqref{lemDtWeq'} in the following form:
\begin{subequations}\label{Wrenopreq}
\begin{align}\label{Wrenopr1}
{\bf D}_t \Gamma^n W & = \sum_{n_1+n_2\leq n-1} \Gamma^{\leq n_1+1} \partial_t (A - h e_z) \cdot \Gamma^{n_2} \nabla W
   + \sum_{n_1+n_2\leq n} \Gamma^{n_2} W \cdot \Gamma^{n_1} \nabla \partial_t A
\\
\label{Wrenopr1b}
  & + \sum_{n_1+n_2\leq n-1} \Gamma^{\leq n_1+1 } V_\omega \cdot \Gamma^{n_2} \nabla W
  + \sum_{n_1+n_2\leq n} \Gamma^{n_1} W \cdot \Gamma^{n_2} \nabla V_\omega
  \\
\label{Wrenopr1c}
  & - \sum_{n_1+n_2\leq n-1} \Gamma^{\leq n_1+1 } ( V_\omega \cdot \nabla h e_z) \cdot \Gamma^{n_2} \nabla W
  + \Gamma^{\leq n} \big( -W \cdot \nabla h \, \partial_z V_\omega \big)
\\
\label{Wrenopr1R}
  & + F^n_0 + F^n_1 + F^n_2,
\end{align}
\end{subequations}
where we define
\begin{align}
\label{F0}
F^n_0 & := \Gamma^{\leq n} \big( - W \cdot \nabla h \, \partial_z (V - V_\omega)
  - W \cdot \nabla ( \nabla h \partial_z \Psi) \big),
\\
\label{F1}
F^n_1 & :=  \sum_{n_1+n_2\leq n-1} \Gamma^{\leq n_1+1 } R \cdot \Gamma^{n_2} \nabla W,
\\
\label{F2}
F^n_2 & := \sum_{n_1+n_2\leq n} \Gamma^{n_1} W \cdot \Gamma^{n_2} \nabla R_2.
\end{align}

We analyze term by term the right-hand side of \eqref{Wrenopreq}.
First, we see that the terms in \eqref{Wrenopr1b} contribute to the quadratic
terms on the right-hand side of \eqref{Wreno1} as they match exactly \eqref{WrenoQuad}, and so they are accounted for.
Similarly, the cubic terms in \eqref{Wrenopr1c} are accounted for in the terms \eqref{WrenoCub},
since up to two derivatives of $h$ are in $\mathcal{O}_1$, see Lemma \ref{lemO1}.

Then, since up to two derivatives of $h$, and one derivative of $\partial_z\Psi$,
and the term $\partial_z(V-V_\omega)$ are in $\mathcal{O}_1$, see Lemma \ref{lemO1},
we see that \eqref{F0} is of the form
$$\Gamma^{\leq n} (W \cdot \mathcal{O}_1 \cdot \mathcal{O}_1) =
\Gamma^{\leq n} (W \cdot \mathcal{O}_2),$$
see Remark \ref{remOi}; applying Lemma \ref{lemWO2} and \eqref{Gnfg}
we obtain that $F^n_0$ is an acceptable remainder in that it satisfies the bounds \eqref{WrenoF}.

We can easily see that the term \eqref{F1} is an acceptable remainder using that $R \in \mathcal{O}_2$,
see Lemma \ref{lemO2} and \eqref{lemUR}, and an application of Lemma \ref{lemWO2}.
Similarly, the term \eqref{F2} is an acceptable remainder using that $\nabla R_2 \in \mathcal{O}_2$,
see Lemma \ref{lemO2}. 

We now analyze the two terms in \eqref{Wrenopr1}. For the first one
we use $\Gamma^{\leq n_1+1} \partial_t = \partial_t \Gamma^{\leq n_1+1}
= {\mathbf D_t}  \Gamma^{\leq n_1+1} + U \cdot  \Gamma^{\leq n_1+1}$ to write 
\begin{align}\label{Wrenopr2}
& \sum_{n_1+n_2\leq n-1} 
  \Gamma^{\leq n_1+1} \partial_t (A - h e_z) \cdot \Gamma^{n_2} \nabla W
  = \sum_{n_1+n_2\leq n-1} {\mathbf D_t} \Gamma^{\leq n_1+1} (A - h e_z)  \cdot \Gamma^{n_2} \nabla W
  + F^n_3,
\\
\nonumber
& \qquad F^n_3 := \sum_{n_1+n_2\leq n-1} 
 U \cdot \nabla \Gamma^{\leq n_1+1}  (A - h e_z)  \Gamma^{n_2} \nabla W .
\end{align}

The first term on the right-hand side of \eqref{Wrenopr2} will be analyzed shortly below.
First, we verify that $F^n_3$ is an acceptable remainder satisfying \eqref{WrenoF}; indeed
we can write
\begin{align*}
F^n_3 := \sum_{n_1+n_2\leq n-1} 
 U \cdot \Gamma^{\leq n_1+1} \mathcal{O}_1 \cdot \Gamma^{n_2} \nabla W
\end{align*}
and observe that $U \cdot \Gamma^{\leq n_1+1} \mathcal{O}_1 \in \Gamma^{n_1+1}\mathcal{O}_2$,
since $U$ satisfies \eqref{GUbdlo} \dg{using additionally that $t\leq \epsilon_1^{-1+\delta}$} .

For the second term on the right-hand side of \eqref{Wrenopr1}
we can use again $\Gamma^{n_1} \nabla \partial_t 
= {\mathbf D_t}  \Gamma^{\leq n_1} \nabla + U \cdot  \Gamma^{\leq n_1} \nabla$ to write
\begin{align}\label{Wrenopr3}
\sum_{n_1+n_2\leq n} \Gamma^{n_2} W \cdot \Gamma^{n_1} \nabla \partial_t A & =
   \sum_{n_1+n_2\leq n} \Gamma^{n_2} W \cdot {\mathbf D_t}   \Gamma^{n_1} \nabla A + F^n_4
\end{align}
where
\begin{align}
\label{Wrenopr3R}
F^n_4 := U \cdot \sum_{n_1+n_2\leq n} \Gamma^{n_2} W \cdot  \Gamma^{n_1} \nabla A.
\end{align}
Since $U = V_\omega + \mathcal{O}_1 + V_\omega \cdot \mathcal{O}_1$, see \eqref{lemGU0}, 
it is not hard to verify that $F^n_4$ is an acceptable remainder.
Indeed, by \eqref{OiO1W} in Remark \ref{remOi}, since $\nabla A \in \mathcal{O}_1$,
the quadratic terms in the sum \eqref{Wrenopr3R} are bounded by the right-hand side of \eqref{WdotO1a},
respectively \eqref{WdotO1b},
when $n \leq N_1+12$, respectively $n \leq N_1-10$;
since we also have that  ${\| U \|}_{L^\infty_{x,z}} \lesssim \e_0 \jt^{-1+} + \e_1$
in view of \eqref{lemUdt}, \eqref{O1h} and \eqref{Volo}, we obtain
\begin{align*}
& {\| F^n_4 \|}_{L^2_{x,z} \cap L^{6/5}_{x,z}}
  \lesssim \e_0 \e_1 \jt^{-1+} \cdot ( \e_0 \jt^{-1+} + \e_1), \qquad n_1+n_2 \leq N_1-10,
\\
& {\| F^n_4 \|}_{L^2_{x,z} \cap L^{6/5}_{x,z}}
\lesssim \e_0 \e_1 \jt^{3p_0} \cdot ( \e_0 \jt^{-1+} + \e_1), \qquad n_1+n_2 \leq N_1+12.
\end{align*}
These bounds are enough for \eqref{WrenoF1}-\eqref{WrenoF2} since $3p_0 < \delta$, $\e_0 \leq \e_1$,
and $t \leq T_{\e_1} = \e_1^{-1+\delta}$ gives $\e_1 
\lesssim \jt^{-1-\delta}$.

We now combine \eqref{Wrenopr2}-\eqref{Wrenopr3R},
changing the index in the sums
and then pull the ${\mathbf D_t}$ out to write the first two terms on the right-hand side of \eqref{Wrenopr1}
as
\begin{align}\label{Wrenopr4}
\begin{split}
& \sum_{n_1+n_2\leq n-1} \partial_t \Gamma^{\leq n_1+1} (A - h e_z) 
  \cdot \Gamma^{n_2} \nabla W
   + \sum_{n_1+n_2\leq n} \Gamma^{n_2} W \cdot \Gamma^{n_1} \nabla \partial_t A
   \\
   & = \sum_{n_1+n_2\leq n} {\mathbf D_t} \big( \Gamma^{\leq n_1+1} A +  \Gamma^{\leq n_1+1} h e_z \big)
  \cdot 
  \Gamma^{n_2} W 
  + F^n_3 + F^n_4
\\
& =: {\bf D}_t G^n + F^n_3 + F^n_4 + F^n_5,
\end{split}
\end{align}
upon defining
\begin{align}\label{WrenoprG}
\begin{split}
G^n := \sum_{n_1+n_2\leq n} \big( \Gamma^{\leq n_1+1} A +  \Gamma^{\leq n_1+1} h e_z \big)
  \cdot 
  \Gamma^{n_2} W 
\end{split}
\end{align}
and
\begin{align}\label{F5}
\begin{split}
F^n_5 := \sum_{n_1+n_2\leq n} \big( \Gamma^{\leq n_1+1} A + \Gamma^{\leq n_1+1} h e_z \big)
  \cdot {\mathbf D_t} 
  \Gamma^{n_2} W. 
\end{split}
\end{align}
By letting
\begin{align}
F^n_r = \sum_{i=0}^4 F^n_i,
\end{align}
we have obtained an equation of the form
\begin{align}\label{Wreno1'}
\begin{split}
{\mathbf D_t} \Gamma^n W = {\mathbf D_t}  G^n + Q_1^n + Q_2^n
+ C_1^n + C_2^n + F^n_r + F^n_5
\end{split}
\end{align}
with quadratic and cubic terms as in \eqref{WrenoQuad} and \eqref{WrenoCub}.

{\it Step 2: Estimates for the correction}.
We can directly verify that $G^n$ satisfies \eqref{WrenoG1}-\eqref{WrenoG2}
using the definition \eqref{WrenoprG}, the fact that $\Gamma^{\leq 1} A, \Gamma^{\leq 1} h \in \mathcal{O}_1$,
and \eqref{OiO1W} in Remark \ref{remOi}.

{\it Step 3: Remainder estimates}.
To conclude the proof of the proposition we need to handle the remainders in \eqref{Wreno1'}.
We have already proved that $F^n_r$ is an acceptable remainder satisfying \eqref{WrenoF1}-\eqref{WrenoF2},
so we only need to show that $F^n_5$ contributes an acceptable remainder plus other contributions that are
accounted for in the cubic terms \eqref{WrenoCub}.

First, we use $\Gamma^{\leq 1} A, \Gamma^{\leq 1} h \in \mathcal{O}_1$ to write
\begin{align*}
F^n_5 & = \sum_{n_1+n_2\leq n} \Gamma^{\leq n_1} \mathcal{O}_1 \cdot {\mathbf D_t}  \Gamma^{n_2} W;
\end{align*}
then, using $\Gamma^{\leq 1} \partial_t A, \Gamma^{\leq 1} \partial_t h \in \mathcal{O}_1$,
we express the right-hand side of  \eqref{Wrenopr1} as
\begin{align*}
\sum_{n_1+n_2\leq n} \Gamma^{n_2} W \cdot \Gamma^{n_1}  \mathcal{O}_1.
\end{align*}
Therefore, using the full equation \eqref{Wrenopreq},
and adopting the same notation \eqref{WrenoQuad}-\eqref{WrenoCub} in the statement, we have
\begin{subequations}\label{F5'}
\begin{align}
F^n_5 
  \label{F5q0}
  & =  \sum_{n_1+n_2+n_3\leq n} \Gamma^{\leq n_1} \mathcal{O}_1 \cdot \Gamma^{n_2}  \mathcal{O}_1 \cdot  \Gamma^{n_3} W
  \\
  \label{F5q}
  & + \sum_{n_1+n_2\leq n} \Gamma^{\leq n_1} \mathcal{O}_1
  \cdot \big( Q^{n_2}_1 + Q^{n_2}_2 \big)
  \\
  \label{F5c}
  & + \sum_{n_1+n_2\leq n} \Gamma^{\leq n_1} \mathcal{O}_1
  \cdot \big( C^{n_2}_1 + C^{n_2}_2 \big)
  \\
  \label{F5r}
  & + \sum_{n_1+n_2\leq n} \Gamma^{\leq n_1} \mathcal{O}_1
  \cdot \big( F^{n_2}_0 + F^{n_2}_1 + F^{n_2}_2 \big).
\end{align}
\end{subequations}

The terms \eqref{F5q0} are acceptable remainders satisfying \eqref{WrenoF} in view of \eqref{WO1O1}.

From \dg{\eqref{WrenoQuad}} we see that, adopting the notation \eqref{G<nfg}, the terms in \eqref{F5q} are of the form
\begin{align}
\eqref{F5q} = \sum_{n_1+n_2\leq n} \Gamma^{\leq n_1} \mathcal{O}_1
  \cdot \big( \sum_{n_2+n_3\leq n_2} \Gamma^{n_2} W \cdot \Gamma^{n_3} \nabla V_\omega
  + \sum_{n_2+n_3\leq n_2-1} \Gamma^{\leq n_2+1} V_\omega \cdot \Gamma^{n_3} \nabla W \big);
\end{align}
we then see that these terms are actually cubic terms as in \eqref{WrenoCub} and, therefore, are accounted for in
the main equation \eqref{Wreno1}.

Next, we claim that also the terms \eqref{F5c} are of the same `cubic' form \eqref{WrenoCub}.
Indeed, let us look at the first of the two summands in \eqref{F5c}, that is,
\begin{align}\label{F5c1}
\sum_{n_1+n_2\leq n} \Gamma^{\leq n_1} \mathcal{O}_1 \cdot C^{n_2}_1
	= \sum_{n_1+n_2 + n_3 +n_4 \leq n} \Gamma^{\leq n_1} \mathcal{O}_1 \cdot
	  \Gamma^{n_2} \mathcal{O}_1 \cdot \Gamma^{n_3} W \cdot \Gamma^{n_4} \nabla V_\omega.
\end{align}
Observing that $\Gamma^{\leq n_1} \mathcal{O}_1 \cdot \Gamma^{n_2} \mathcal{O}_1$
satisfies the same bounds of $\Gamma^{n_1+n_2} \mathcal{O}_1$ (better ones,  in fact, of $\mathcal{O}_2$-type),
we see that \eqref{F5c1} is accounted for in \eqref{WrenoCub}. The same reasoning applies to the term involving $C^{n_2}_2$ in \eqref{F5c}.

Finally, we look at \eqref{F5r}. As already shown above the terms $ F^{n_2}_i$, $i=0,1,2$ satisfy the acceptable remainder bounds
in \eqref{WrenoF1}-\eqref{WrenoF2}. Then it is not hard to see that \eqref{F5r} is also an acceptable remainder:
if $n_2 \geq n_1$, so that $n_1 \leq N_1-10$, we use the bound \eqref{defO1b} on $\Gamma^{n_1} \mathcal{O}_1$ and
the bound \eqref{WrenoF2} on $F^{n_2}_i$;
if instead $n_2 \leq n_1$ we use the bound \eqref{defO1a} on $\Gamma^{n_1} \mathcal{O}_1$ and
the bound \eqref{WrenoF1} on $F^{n_2}_i$.
This concludes the proof of the proposition.
\end{proof}

\subsection{Transport estimates and proof of Proposition \ref{mainpropW}}\label{secTransp}

We begin with a general result about propagation of $L^p_{x, z}$ norms for transport equation:

\begin{lemma}[Bounds for the transport equation]\label{lemT}
Let ${\bf D}_t = \partial_t + U \cdot \nabla$ as above, and consider $Z=Z(t,x,z)$ a solution of
\begin{align}\label{lemTeq}
{\bf D}_t Z = N.
\end{align}
Then, for all $t \leq T_{\e_1}$, we have
\begin{align}\label{lemTconc'}
& {\| Z(t) \|}_{L^p_{x,z}} \leq {\| Z(0) \|}_{L^p_{x,z}} + C\int_0^t {\| N(s) \|}_{L^p_{x,z}} \, ds.
\end{align}
\end{lemma}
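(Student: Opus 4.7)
The plan is to reduce to a standard energy/characteristics argument for the transport equation, using two key structural facts about $U$ in the flattened domain: it is divergence-free and its normal component vanishes on the boundary $z=0$.

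First I would verify these two structural properties. Writing $U = V - (\partial_t h + V^\ell \partial_\ell h)e_z$ and using the identities \eqref{equivpr1} together with $\mathrm{div}_{x,y} v = 0$, a direct computation gives
\begin{align*}
\nabla_{x,z}\cdot U &= (\partial_{x_1}V^1 + \partial_{x_2}V^2 + \partial_z V^3) - \partial_z(V^\ell \partial_\ell h) \\
&= (\partial_z V^1)\partial_{x_1}h + (\partial_z V^2)\partial_{x_2}h - (\partial_z V^1)\partial_{x_1}h - (\partial_z V^2)\partial_{x_2}h = 0,
\end{align*}
which reflects the fact that the change of variables $(x,z)\mapsto(x,z+h(t,x))$ is volume preserving. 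Moreover, on $z=0$, the kinematic boundary condition from \eqref{freebdybc} reads $\partial_t h = v\cdot(-\nabla h,1)|_{\partial\mathcal{D}_t}$, so evaluating at $z=0$,
\begin{align*}
U_3\big|_{z=0} = V_3(t,x,0) - \partial_t h - V^\ell(t,x,0)\partial_\ell h = 0.
\end{align*}

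Next, for $1\le p<\infty$ I multiply the equation \eqref{lemTeq} by $|Z|^{p-2}Z$ and integrate over $\R^2_x\times\{z<0\}$. The transport term integrates by parts as
\begin{align*}
\int |Z|^{p-2} Z \, (U\cdot\nabla Z)\, dx\,dz = \frac{1}{p}\int U\cdot\nabla(|Z|^p)\,dx\,dz = -\frac{1}{p}\int (\nabla\cdot U)\,|Z|^p\,dx\,dz + \text{boundary},
\end{align*}
where the boundary term at $z=0$ is $\frac{1}{p}\int_{\R^2} U_3|_{z=0}\,|Z|^p\,dx = 0$ by the second structural fact, the interior integral vanishes by the first, and decay at $z\to-\infty$ (which is justified in our setting since $Z$ will be taken in one of the classes where it has sufficient decay, approximated in a standard way if necessary) makes the other boundary contributions vanish. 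Thus
\begin{align*}
\frac{d}{dt}{\|Z(t)\|}_{L^p_{x,z}}^{p} \le p\, {\|N(t)\|}_{L^p_{x,z}}\,{\|Z(t)\|}_{L^p_{x,z}}^{p-1},
\end{align*}
by H\"older, which gives $\frac{d}{dt}{\|Z\|}_{L^p_{x,z}}\le {\|N\|}_{L^p_{x,z}}$ and integration in time yields \eqref{lemTconc'}. The case $p=\infty$ (if needed) follows from Lagrangian integration along the flow of $U$; for the only applications relevant here, namely $p=2$ and $p=6/5$ (see \eqref{omegaflatspacebis}), we need only the finite-$p$ case.

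The only real subtlety in the argument is justifying the integration by parts in $z$ rigorously (control of boundary terms at $z=0$ and decay as $z\to-\infty$), which is handled by a standard cutoff/approximation argument given the regularity and decay inherited from the a priori bounds. The algebraic inputs $\nabla\cdot U=0$ and $U_3|_{z=0}=0$ are where the geometry of the flattening enters and are what make the estimate sharp, with no exponential factor involving $\int\|\nabla U\|_{L^\infty}$.
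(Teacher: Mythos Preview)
Your proof is correct and takes a genuinely different route from the paper. The paper argues Lagrangianly: it introduces the flow $\Phi_t$ of $U$, runs a bootstrap to show $|\nabla\Phi_t - \mathrm{id}| < 1/2$ on $[0,T_{\e_1}]$ by exploiting the decomposition $U = \partial_t(A - h\,e_z) + V_\omega - V_\omega\cdot\nabla h\,e_z + R$ from Lemma~\ref{lemmaU} (integrating the $\partial_t(A-h\,e_z)$ piece by parts in time, and using the $\mathcal{O}_2$ bounds on $R$ and the smallness of $V_\omega$), and then pulls back along $\Phi_t$ and changes variables using the Jacobian control. Your Eulerian energy argument instead uses only the two structural facts $\nabla_{x,z}\cdot U = 0$ and $U_3|_{z=0}=0$, which you verify directly from the change of variables and the kinematic boundary condition; this is shorter, avoids the normal-form machinery for the flow, and in fact yields \eqref{lemTconc'} with $C=1$. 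The paper's approach has the side benefit of giving quantitative control on the Lagrangian Jacobian, but that is not used elsewhere in the argument, so for the purposes of this lemma your method is equally valid and more elementary.
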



\begin{proof}
We begin by proving bounds for the Lagrangian flow associated to $U$.
Let $\Phi = \Phi_t$ be such that $\dot{\Phi}(t) = U(t,\Phi)$ with $\Phi(0) = \mathrm{id}$.
We want to show that
\begin{align}\label{prT1}
\sup_{t \leq T_{\e_1}} | \nabla \Phi_t(t) - \mathrm{id} | < 1/2.
\end{align}
We do this by a bootstrap argument. Assume that \eqref{prT1} holds true and denote $J(t) := \nabla \Phi_t(t)$.
Since $\dot{J}(t) = J(t) \nabla U(t,\Phi_t)$, using \dg{\eqref{lemUdt} } to express $U$,
we have
\begin{align*}
J(t) - \mathrm{id} & = \int_0^t J(s) \nabla U(s,\Phi_s) \, ds
\\
& = \int_0^t J(s) \nabla \big[ \partial_s (A - h \, e_z) \big] (s,\Phi_s) \, ds
   + \int_0^t J(s) \nabla V_\omega (s,\Phi_s) \, ds
   + \int_0^t J(s) \nabla R (s,\Phi_s) \, ds
\\
& = J(s) \nabla (A - h \, e_z)(s,\Phi_s) \Big|_{s=0}^{s=t}
  - \int_0^t J(s) \nabla U(s,\Phi_s) \nabla (A - h \, e_z) (s,\Phi_s) \, ds
   \\
   & + \int_0^t J(s) \nabla V_\omega (s,\Phi_s) \, ds
   + \int_0^t J(s) \nabla R (s,\Phi_s) \, ds
   \\ & =: J_1(t) - J_1(0) + J_2(t) + J_3(t) + J_4(t).
\end{align*}
For the first term in the above right-hand side we use $|J(t)| < 3/2$ and that $\nabla A, \nabla h \in\mathcal{O}_1$
(see Lemma \ref{lemO1} and \eqref{defO1b} in Definition \ref{defOi}) to estimate
\begin{align*}
| J_1(s) | \lesssim \e_0 \js^{-1+}
\end{align*}
For the second term we use in addition that $\nabla U \in\mathcal{O}_1$, and bound
\begin{align*}
|J_2(t)| \lesssim \int_0^t |\nabla U(s,\Phi_s)| | \nabla (A - h \, e_z) (s,\Phi_s)| \, ds
  \lesssim \int_0^t \e_0 \js^{-1+} \cdot \e_0 \js^{-1+} \, ds \lesssim \e_0^2
\end{align*}
For the third term we use the bound on $V_\omega$ in \eqref{Voboundlo} to estimate
\begin{align*}
|J_3(t)| \lesssim \int_0^t |\nabla V_\omega(s,\Phi_s)| \, ds \lesssim t \e_1 \lesssim \e_1^{\delta},
\end{align*}
for $t\leq T_{\e_1} = \e_1^{-1+\delta}$.
For the last term, we use that $\nabla R \in \mathcal{O}_2$ (see Lemma \ref{lemO2} and \eqref{defO2b})
and obtain a bound $| J_4(s) | \lesssim \e_0^{1+}$.
Putting all these together shows that, for $\e_0,\e_1$ small enough,
\begin{align*}
\sup_{t \leq T_{\e_1}} | \nabla \Phi_t(t) - \mathrm{id} | < 1/4,
\end{align*}
and therefore we obtain \eqref{prT1}.

We can then use the Lagrangian map to integrate the flow \eqref{lemTeq},
\begin{align*}
    Z(t,\Phi_t(x,z)) = Z(0,x,z) + \int_0^t N(s,\Phi_s(x,z)) \, ds,
\end{align*}
and then deduce \eqref{lemTconc'} by Minkowski's inequality
and changing variables using \eqref{prT1} to control the Jacobian.
\end{proof}

Next, we apply Lemma \ref{lemT} to conclude the proof of the main Proposition \ref{mainpropW}.
The main task left is to obtain suitable bounds on the quadratic (and cubic) terms on the right-hand side of \eqref{Wreno1}.

\begin{proof}[Proof of Proposition \ref{mainpropW}]
For this proof we define
\begin{align}
\delta_n = \left\{
\begin{array}{lcl}
0 &  \quad \mbox{if} & \quad n\leq N_1-10,
\\
\delta & \quad  \mbox{if} & \quad  n \in ( N_1-10, N_1+12] \cap \Z,
\end{array}
\right.
\end{align}
and use the short-hand
\begin{align}\label{notLLpq}
L := L^2_{x,z} \cap L^{6/5}_{x,z}
\end{align}
to denote the relevant Lebesgue space.

We start from \eqref{Wreno1}, apply Lemma \ref{lemT} to $Z = \Gamma^n W - G^n$,
and use the bound \eqref{WrenoG1} for $G^n$ to obtain
\begin{align}\label{propWpr0}
\begin{split}
{\| \Gamma^n W(t) \|}_{L}
& \leq {\| \Gamma^n W(0) \|}_{L} + C \e_0 \e_1 \jt^{\delta_n}
  + C \int_0^t {\| Q_1^n(s)  \|}_{L} + {\| Q_2^n(s)  \|}_{L}  \, ds
  \\
  & + C \int_0^t {\| C_1^n(s)  \|}_{L} + {\| C_2^n(s)  \|}_{L}  \, ds
 + C \int_0^t {\| F^n(s) \|}_{L} \, ds. 
\end{split}
\end{align}
From \eqref{init1'} and \eqref{init0},
we can bound the contribution at the initial time
\begin{align}\label{propWprdata}
{\| \Gamma^n W(0) \|}_{L}  \leq C_0 \e_1;
\end{align}
this is consistent with \eqref{propWconclow}-\eqref{propWconchigh} (for $j=0$) by taking $C$ large enough.
Moreover, using \eqref{WrenoF1}, we can bound
\begin{align*}
\int_0^t {\| F^n(s) \|}_{L} \, ds \lesssim \int_0^t \e_1\e_0^{1+} \js^{-1-\delta} \, ds \lesssim \e_1 \e_0^{1+},
	\qquad n\leq N_1-10,
\end{align*}
and, similarly, using \eqref{WrenoF2},
\begin{align*}
\int_0^t {\| F^n(s) \|}_{L} \, ds \lesssim \int_0^t \e_1\e_0^{1+} \js^{-1+\delta} \, ds \lesssim \e_1 \e_0^{1+} \jt^\delta
	\qquad n\leq N_1+12.
\end{align*}
These last two bounds are also consistent with the desired conclusions \eqref{propWconclow}-\eqref{propWconchigh}.
Therefore, 
we see that the proof of \eqref{propWconclow}-\eqref{propWconchigh} would follow with $j=0$
if we can show that, for all $t \leq T_{\e_1} := \bar{c}\e_1^{-1+\delta}$ with $\bar{c}$ small enough,
\begin{align}
\label{propWprQbd1}
\int_0^t {\| Q^{n}_1 (s) \|}_{L} \, ds  \lesssim \e_1^{1+} \jt^{\delta_n}, 
\\
\label{propWprQbd2}
\int_0^t {\| Q^{n}_2 (s) \|}_{L} \, ds  \lesssim \e_1^{1+} \jt^{\delta_n}, 
\end{align}
and
\begin{align}\label{propWprCbd}
\int_0^t {\| C^{n}_1 (s) \|}_{L} + {\| C^{n}_2 (s) \|}_{L} \, ds  \lesssim \e_1^{1+} \jt^{\delta_n}.
\end{align}

{\it Proof of \eqref{propWprQbd1}.}
Since $Q_1^n$ is given by \eqref{WrenoQuad1},
we observe that in order to obtain \eqref{propWprQbd1} it will suffice to prove the bound
\begin{align}\label{propWpr11'}
{\| \Gamma^{n_1} W(s) \cdot \Gamma^{n_2} \nabla V_\omega(s) \|}_{L}
  \lesssim \e_1^2 \js^\delta, \qquad n_1+n_2 \leq N_1 + 12,
\end{align}
for all $s \leq T_{\e_1}$; indeed, since $T_{\e_1} = \bar{c}\e_1^{-1+\delta}$,
\begin{align*}
\int_0^t \e_1^2 \js^\delta \, ds \leq 2\e_1^2 T_{\e_1}^{1+\delta} \leq 2\bar{c} \e_1^{1+\delta^2}.
\end{align*}


To prove \eqref{propWpr11'} in the case $n_2 \leq n_1$, with $n_1+n_2 \leq N_1+12$, we use the a priori estimate \eqref{propWashigh}
and \eqref{Voboundlo} after Sobolev's embedding (in $x$):
\begin{align}\label{propWpr11'0}
\begin{split}
{\| \Gamma^{n_1} W(s) \cdot \Gamma^{n_2} \nabla V_\omega(s) \|}_{L}
  & \lesssim
  {\| \Gamma^{\leq N_1+12} W(s) \|}_{L}
  {\| \Gamma^{\leq N_1-13} \nabla V_\omega(s) \|}_{L^\infty_{x,z}}
  \\
  & \lesssim \e_1 \js^\delta \cdot {\| \Gamma^{\leq N_1-10} V_\omega(s) \|}_{L^\infty_z L^2_x}
  \\
  & \lesssim \e_1^2 \js^\delta.
\end{split}
\end{align}
For the case $n_1 \leq n_2$, we use instead H\"older's inequality (recall \eqref{notLLpq})
with \eqref{Vohi} and \eqref{propWaslow} after Sobolev's embedding (in $x$):
\begin{align*}
{\| \Gamma^{n_1} W(s) \cdot \Gamma^{n_2} \nabla V_\omega(s) \|}_{L}
  & \lesssim
  {\| \Gamma^{\leq N_1-13} W(s) \|}_{L^\infty_{x,z} \cap L^3_{x,z}}
  {\| \Gamma^{\leq N_1+12} \nabla V_\omega(s) \|}_{L^2_{x,z}}
  \\
  & \lesssim {\| \Gamma^{\leq N_1-10} W(s) \|}_{L^2_{x,z}}
  \cdot \e_1 \js^{\delta}
  \\
  & \lesssim \e_1^2 \js^\delta.
\end{align*}
Note that this is the place where we use the highest order estimate \eqref{Vohi} for $V_\omega$.

{\it Proof of \eqref{propWprQbd2}.}
We now look at the quadratic terms $Q_2^n$ as given by \eqref{WrenoQuad2}; these are linear combinations of terms
of the form $\Gamma^{n_1+1} V_\omega \cdot \Gamma^{n_2} \nabla W$ for $n_1+n_2 \leq n-1$.
As before, we see that for \eqref{propWprQbd2} it suffices to show the stronger bound
\begin{align}\label{propWpr11''}
{\| \Gamma^{n_1+1} V_\omega(s) \cdot \Gamma^{n_2} \nabla W(s) \|}_{L} \lesssim \e_1^2 \js^\delta,
  \qquad n_1+n_2 \leq N_1 + 11.
\end{align}

In the case $n_2 \leq n_1$, with $n_1+n_2 \leq N_1+11$, we use H\"older's inequality
with \eqref{Voboundhi} to estimate $\Gamma^{\leq N_1+12} V_\omega$,
and the a priori estimate \eqref{propWaslow} for $\Gamma^{N_1-10}W$:
\begin{align*}
{\| \Gamma^{n_1+1} V_\omega(s) \cdot \Gamma^{n_2} W(s) \|}_{L}
  & \lesssim
  {\| \Gamma^{\leq N_1+12} V_\omega(s) \|}_{L^\infty_z L^2_x}
  {\| \Gamma^{\leq N_1-12} \nabla W(s) \|}_{ L^2_z L^\infty_x \cap L^{6/5}_z L^3_x }
  \\
  & \lesssim \e_1 \js^\delta \cdot {\| \Gamma^{\leq N_1-10} W(s) \|}_{ L^2_{x,z} \cap L^{6/5}_{x,z} } 
  \\
  & \lesssim \e_1^2 \js^\delta.
\end{align*}

In the case $n_2 \geq n_1$, with $n_1+n_2 \leq N_1+11$, we use instead
the bound \eqref{Volo} on low norms of $V_\omega$, and the a priori estimate \eqref{propWashigh} on high norms of $W$:
\begin{align*}
{\| \Gamma^{n_1+1} V_\omega(s) \cdot \Gamma^{n_2} W(s) \|}_{L}
  & \lesssim
  {\| \Gamma^{\leq N_1-12} V_\omega(s) \|}_{L^\infty_{x,z}} {\| \Gamma^{\leq N_1+10} \nabla W(s) \|}_{L}
  \lesssim \e_1 \cdot \e_1 \js^\delta.
\end{align*}

{\it Proof of \eqref{propWprCbd}.}
Recall the form of the cubic terms from \eqref{WrenoCub};
we only detail how to treat the first one, that is,
\begin{align}\label{WrenoCub'}
\begin{split}
C_1^n
  & := \sum_{n_1+n_2+n_3\leq n} \Gamma^{n_1} W \cdot \Gamma^{n_2} \nabla V_\omega
  \cdot \Gamma^{n_3} \mathcal{O}_1,
\end{split}
\end{align}
as the other term can be dealt with in the same way.

We first observe that if $n_3 \leq n_1+n_2$, then, we can use \eqref{propWpr11'} to estimate
as follows: for all $n \leq N_1+12$
\begin{align*}
{\| C_1^n \|}_{L} & \lesssim \sup_{n_1+n_2 \leq N_1+12} {\| \Gamma^{n_1} W \cdot \Gamma^{n_2} \nabla V_\omega \|}_{L}
  \cdot {\| \Gamma^{\leq N_1-10} \mathcal{O}_1 \|}_{L^\infty_{x,z}}
  \lesssim \e_1^2 \jt^\delta \cdot \e_0 \jt^{-1+},
\end{align*}
which is more than sufficient.
When instead $n_3 \geq n_1+n_2$ we cannot use directly \eqref{propWpr11'}, but using $n_1,n_2 \leq N_1-12$,
together with the bounds on low norms \eqref{propWaslow} and \eqref{Volo}, and  \eqref{defO1a}, we get
\begin{align*}
{\| C_1^n \|}_{L} & \lesssim {\| \Gamma^{\leq N_1-12} W \|}_{L} {\| \Gamma^{\leq N_1-12} \nabla V_\omega \|}_{L^\infty_{x,z}}
  \cdot {\| \Gamma^{\leq N_1+12} \mathcal{O}_1 \|}_{L^\infty_{x,z}}
  \lesssim \e_1 \cdot \e_1 \cdot \e_0 \jt^{3p_0}.
\end{align*}
Putting these together we get
\begin{align}\label{WrenoCubbd}
    {\| C_1^n \|}_{L} + {\| C_2^n \|}_{L} & \lesssim \e_1^2 \e_0 \jt^{3p_0}, \qquad n \leq N_1+12;
\end{align}
upon time integration, we see that the last two bounds above are more than sufficient for \eqref{propWprCbd}.
This concludes the proof of \eqref{propWconclow}-\eqref{propWconchigh} for $j=0$.

\smallskip
{\it Estimates for the time derivative}.
We now prove \eqref{propWconclow}-\eqref{propWconchigh} for $j=1$.
From \eqref{lemDtWeq'} we have, for all $n \leq N_1+11$,
\begin{align}\label{1lemDtWeq'}
\begin{split}
\partial_t \Gamma^n W = - U \cdot \nabla \Gamma^n W
  & + \sum_{n_1+n_2\leq n-1} \Gamma^{\leq n_1+1} U \cdot \Gamma^{n_2} \nabla W
  \\
  & + \sum_{n_1+n_2 \leq n} \Gamma^{n_1} W \cdot \Gamma^{n_2} \nabla X
  + \Gamma^{\leq n} F,
\end{split}
\end{align}
where, recall, $F$ is defined in \eqref{Weq1}.
Using \eqref{lemGU0}, 
and with the notation for quadratic and cubic terms from \eqref{WrenoQuad1}-\eqref{WrenoQuad2}
and \eqref{WrenoCub}, equation \eqref{1lemDtWeq'} is
\begin{align}\label{1lemDtWeq''}
\begin{split}
\partial_t \Gamma^n W & = \mathcal{O}_1 \cdot \nabla \Gamma^n W + V_\omega \cdot \nabla \Gamma^n W
  + V_\omega \cdot \mathcal{O}_1 \cdot \nabla \Gamma^n W
\\
  & + \sum_{n_1+n_2\leq n-1} \Gamma^{\leq n_1+1} \mathcal{O}_1 \cdot \Gamma^{n_2} \nabla W
   + Q_2^n
  + C_2^n
  \\
  & + \sum_{n_1+n_2 \leq n} \Gamma^{n_1} W \cdot \Gamma^{n_2} \mathcal{O}_1
  + Q_1^n
  + \Gamma^{\leq n} F.
\end{split}
\end{align}

To obtain \eqref{propWconclow}, respectively \eqref{propWconchigh}, we need to show that all the terms
on the right-hand side of \eqref{1lemDtWeq''} are bounded by $\e_1$ when $n\leq N_1-11$,
respectively, by $\e_0\e_1 \jt^\delta$ when $n \leq N_1+11$.

Note that the bounds \eqref{propWpr11'} and \eqref{propWpr11''} already give that
\begin{align*}
{\| Q^n_1(t) \|}_{L} + {\| Q^n_2(t) \|}_{L} \lesssim \e_1^2 \jt^\delta, \qquad n \leq N_1+12,
\end{align*}
(since the summation in the definition of $Q^n_2$ goes up to $n-1$, see \eqref{WrenoQuad2}),
which is more than sufficient for the desired bounds.
Similarly, the bounds established before on $\Gamma^{\leq n} F$ are also sufficient;
indeed, $\Gamma^{\leq n} F$ is a combination of $F^n_0$ (see \eqref{F0})
which is an acceptable remainder satisfying \eqref{WrenoF2},
and of a cubic term of the form $C_1^n$ (see the last term in \eqref{Wrenopr1c})
which satisfies \eqref{WrenoCubbd}. The bound \eqref{WrenoCubbd} also handles the term $C_2^n$ in \eqref{1lemDtWeq''}.

\fp{The term $ V_\omega \cdot \nabla \Gamma^n W$, for $n\leq N_1+11$, is similar to one of the terms
appearing in $Q_1^{N_1+12}$ and, in particular, it can be estimated as in \eqref{propWpr11'0} (where the presence of 
$\nabla$ on $V_\omega$ is not used).}

To conclude, we need to estimate the four terms involving the $\mathcal{O}_1$ factors in \eqref{1lemDtWeq''}.
These can be handled directly using \eqref{OiO1W} in Remark \ref{remOi},
with the bounds \eqref{WdotO1b}-\eqref{WdotO1a} giving more than what is needed.
This concludes the proof of Proposition \ref{mainpropW}.
\end{proof}


\bigskip
\section{Proof of Proposition \ref{propvelpot}}\label{secvelpot}
The aim of this section is to prove Proposition \ref{propvelpot}, that is, establish the bound
\begin{align}\label{velpotest'}
& \sum_{r+k \leq N_0-20}
  {\big\| |\nabla|^{1/2} \varphi (t) \big\|}_{Z^r_k(\R^2)} \lesssim \e_0 \jt^{3p_0}
\end{align}
for all $t \in [0,T]$, under the a priori energy bounds on the velocity, vorticity and height \eqref{apriorie0},
and the decay bound on the height \eqref{aprioridecay}
and velocity \eqref{aprioridecayv}.
To prove \eqref{velpotest'} we will in part 
rely on the proof of Proposition \ref{alphaprop} without repeating most of the arguments,
and on some of the material in Section \ref{secVorticity}.
We are going to use the following strategy:

\begin{itemize}

 \item We bootstrap a (weak) bound for the $L^2\cap L^{6/5}$ norm of $W$ with a high number ($N_0-20$) of vector fields,
 just using the high Sobolev energy bound \eqref{apriorie0};
 this bound is just of size $\e_0 \jt^{p_0}$, as opposed to the much better bound $\e_1$ for the low norms;
 see the assumptions of Proposition \eqref{alphaprop} and Proposition \ref{mainpropW}.

 \item We input the above (weak) information into the fixed point argument used to obtain Proposition \ref{alphaprop},
 and obtain corresponding (weak) bounds on $\alpha$, and therefore on the vector potential $V_\omega$.

 \item Finally we obtain bounds for $|\nabla|^{1/2} \varphi$ from trace estimates, thanks to the bounds for
 $\partial^i \Psi = V^i + \partial^i h \partial_z \Psi - V^i_\omega $ that are
 directly implied by the bounds on $V_\omega$ and $V$.

\end{itemize}

\begin{remark}
Note that we get the slightly faster growth rate $3p_0$
in \eqref{velpotest'} as opposed to the more natural $p_0$, as for the high energies,
because in the course of proving the above
bounds we will work in the ``flattened'' variables $(V, \Psi, V_\omega, W)$
instead of the variables $(v, \psi, v_\omega, \omega)$ in the original domain $\D_t$.
When measured in low-order norms (say, less than $N_1$ vector fields and gradients) 
all of the ``flattened'' quantities are equivalent
to the original ones, see \eqref{complow}. 
On the other hand, for higher-order norms, one needs to control
products of high-order norms of $h$ with lower-order norms of the flattened variables in $L^2$. 
Since we do not propagate uniform control on 
all of the flattened variables in $L^2_zL^\infty_x$,
this winds up generating terms which grow slightly faster that $\jt^{p_0}$,
which ultimately leads to the growth rate in \eqref{velpotest'}.
This slightly weaker bound is still sufficient for the rest of our arguments to close, 
in particular those in Section \ref{secVorticity} (see Definition \ref{defOi} for the $\mathcal{O}_1$ class)
and Section \ref{secdecay} (see \eqref{decayas1}).
\end{remark}


\subsection{(Weak) Bounds on the vorticity}
We use the notation of Section \ref{secVorticity}, see in particular Subsection \ref{ssecVorticity1},
and aim to prove the bootstrap Proposition \ref{mainpropW0}, 
which we rewrite here for convenience.

\begin{prop}\label{mainpropW'}
Assume that the a priori bounds \eqref{apriorie0}, \eqref{aprioridecayv},
and \eqref{aprioridecay} hold.
Let $W$ be as defined in \eqref{defW0}, and assume \eqref{aprioriWL} and \eqref{aprioriW0}.
Recall the definition of the space $\mX^n$ from \eqref{omegaflatspace0}:
\begin{align}\label{omegaflatspacebis'}
\begin{split}
{\| f \|}_{\mX^n} := \sum_{|r|+|k| \leq n}
{\big\| \underline{\Gamma}^k \nabla^r_{x,z} \,f \big\|}_{L(\R^2 \times \R_{\leq0})}, 
  \qquad L:= L^2_{x,z} \cap L^{6/5}_{x,z}.
\end{split}
\end{align}
Then, for all $t\in[0,T]$, $T\leq T_{\e_1}$, we have the bound
\begin{align}
\label{propWconchigh'}
& {\| W(t) \|}_{\mX^{N_0-20}} \leq c_{W} \e_0 \jt^{2p_0}.
\end{align}
\end{prop}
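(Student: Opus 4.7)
The plan is to follow the strategy of Section \ref{secVorticity}---specifically the proof of Proposition \ref{mainpropW}---but carried out at the higher regularity $n = N_0-20$, with the weaker growth rate $\jt^{2p_0}$ in place of $\jt^\delta$, and starting from the bootstrap hypotheses \eqref{aprioriWL} and \eqref{aprioriW0}. The quantity we aim to improve is the full $L^2\cap L^{6/5}$ norm of $W$ with vector fields up to order $N_0-20$.

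The first step is to produce the needed bounds on the vector potential $\alpha$, and hence on $V_\omega$, at this higher regularity. Under the hypothesis \eqref{aprioriW0}, I will revisit the fixed point argument of Subsection \ref{ssecpralphaprop} at level $N_0-20$: the boundary estimates of Subsection \ref{alphapropssec1} and the bulk nonlinear estimates of Subsection \ref{alphapropssec2} go through with the coefficient bounds \eqref{prbd5cH}, \eqref{prbd6cH}, \eqref{cbulkH} replaced by their $N_0-20$ analogues, in which the high-order weighted $L^p$ norms of $h$ grow like $\jt^{p_0}$ by \eqref{apriorie0} (interpolated with \eqref{apriorih}), and the forcing estimates of Subsection \ref{linforcesec} apply with the new input \eqref{aprioriW0}. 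This produces a fixed point with
\[
\|\alpha(t)\|_{\dYn^{N_0-20}} \lesssim \e_0 \jt^{2p_0},
\]
where one factor of $\jt^{p_0}$ is absorbed from the input bound on $W$ and one from the high-order $h$ factors in the coefficients. An application of Lemma \ref{higherorderalpha}, specifically \eqref{boundaVbulkweaker}, then yields the analogous $L^2_{x,z}$ control on vector-field derivatives of $V_\omega$ up to order $N_0-20$.

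The second step is to derive a renormalized transport equation for $\Gamma^n W$ at level $n = N_0-20$, exactly analogous to Proposition \ref{lemWreno}, producing a correction $G^n$ with $\|G^n\|_{L^2\cap L^{6/5}} \lesssim \e_0^2 \jt^{2p_0}$ (much smaller than the target) and nonlinear remainders $Q_1^n, Q_2^n, C_1^n, C_2^n$ of the same schematic form. These are estimated by splitting into cases according to which factor carries the most vector fields: whenever a product $\Gamma^{n_1} W\cdot \Gamma^{n_2}\nabla V_\omega$ with $n_1+n_2=N_0-20$ occurs, the case $n_1\leq N_1-15$ is handled by placing $W$ in $L^\infty_{x,z}$ via \eqref{propWasinfty} (a Sobolev consequence of \eqref{aprioriWL}) paired with the high-regularity $L^2_{x,z}$ control of $\nabla V_\omega$ from step one, while the case $n_2\leq N_1-15$ is handled by placing $\nabla V_\omega$ in $L^\infty_{x,z}$ via Sobolev on the low-norm bound \eqref{Voboundlo} paired with the a priori $L^2\cap L^{6/5}$ bound \eqref{aprioriW0} on $W$. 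In both cases the pointwise estimate is $\lesssim \e_1\cdot\e_0\jt^{2p_0}$; integrating from $0$ to $t\leq T_{\e_1}=\bar c\,\e_1^{-1+\delta}$ yields a contribution of order $\bar c\,\e_0\e_1^{\delta}\jt^{2p_0}$, arbitrarily small compared to $\e_0\jt^{2p_0}$ for $\bar c$ small. An analogous analysis handles $Q_2^n$ and the cubic terms $C_1^n, C_2^n$, using the $\mathcal{O}_1$-type bounds on $h$, $\nabla\Psi$, and $V-V_\omega$ from Lemma \ref{lemO1} extended to the appropriate regularity.

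The final step is to apply the transport estimate Lemma \ref{lemT} to $Z:=\Gamma^n W-G^n$ in $L^2\cap L^{6/5}$, using the data bound from \eqref{propWprdata0} (which gives $\|W(0)\|_{\mX^{N_0-20}}\lesssim \e_0$) and the nonlinear contributions above. Combined with the smallness of $G^n$, this closes the bootstrap for $c_W$ large enough. The main technical obstacle I anticipate is the careful bookkeeping in the first step: at the regularity $N_0-20$ the coefficients involving $h$ and $\partial_t h$ must be estimated at an energy level where only a $\jt^{p_0}$ growth factor is available, and this growth compounds multiplicatively with the $\jt^{p_0}$ built into \eqref{aprioriW0}, yielding the sharp $\jt^{2p_0}$ (rather than $\jt^{p_0}$) growth in \eqref{propWconchigh'} and matching the discussion in Remark \ref{remvelpot}.
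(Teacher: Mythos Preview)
Your proposal takes a substantially more involved route than the paper. The paper's proof does \emph{not} use the renormalization of Proposition~\ref{lemWreno}, nor does it require separate bounds on $\alpha$ or $V_\omega$ at level $N_0-20$. Instead it works directly with the unrenormalized vorticity equation \eqref{Weq0} in terms of the \emph{full} velocity $V$ (without splitting into $\nabla\Psi + V_\omega$), applies vector fields to obtain \eqref{prweak0}, and then invokes Lemma~\ref{lemT} on $\Gamma^n W$ itself, with no correction $G^n$. The needed bounds on $V$ and $U$ come straight from the a~priori decay assumption~\eqref{aprioridecayv} and the energy bound~\eqref{apriorie0}, transferred to the flat domain via Lemma~\ref{Lemcomp}: one gets $\|\underline\Gamma^{\leq n}V\|_{L^\infty_{x,z}} \lesssim \e_0\jt^{-1}+\e_1\jt^\delta$ for $n\leq N_1-5$ and $\|\underline\Gamma^{\leq n}V\|_{L^2_{x,z}}\lesssim\e_0\jt^{2p_0}$ for $n\leq N_0$. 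The point is that because the target here is only $\e_0\jt^{2p_0}$ (not $\e_1$), the slow $\jt^{-1}$ decay of the irrotational part causes no difficulty: the stretching and commutator terms give $\|{\bf D}_t\Gamma^n W\|_{L^2\cap L^{6/5}}\lesssim \e_0^2\jt^{-1+2p_0}+\e_0\e_1\jt^{\delta+2p_0}$, which integrates to the desired bound without any normal form.

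Your approach is in principle workable, but beyond being more laborious it carries a subtle dependency you would have to address: the renormalization remainders $F^n$ and the $\mathcal{O}_1,\mathcal{O}_2$ classes involve $\nabla_{x,z}\Psi$ and $\Psi-e^{z|\nabla|}\varphi$ at level $N_0-20$, and the bounds on these in Lemmas~\ref{lemO1}--\ref{lemO2} rest on Lemma~\ref{LemmaPsi}, whose hypothesis~\eqref{LemmaPsivarphi} on $|\nabla|^{1/2}\varphi$ is exactly Proposition~\ref{propvelpot}---which in turn takes the present proposition as input. One can unwind this by writing $\partial_z\Psi=V^3-V_\omega^3$ and $\partial_i\Psi=V^i-V_\omega^i+\partial_ih\,(V^3-V_\omega^3)$ and controlling everything through $V$ and your step-one $V_\omega$ bounds, but at that stage you have essentially recovered the paper's direct argument by a roundabout path.
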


In the proof of Proposition \ref{mainpropW'} and in other places, we are going to need a
basic lemma about transfer of norms from $\D_t$ to the flat domain.
As before, for a given $f:[0,T] \times \D_t \rightarrow \R$, we use the corresponding capital letter
to define, for $t\in[0,T]$, $x\in\R^2$ and $z\leq 0$,
\begin{align*}
F(t,x,z) = f(t,x,z+h(t,x)), \qquad  f(t,x,y) = F(t,x,y-h(t,x)).
\end{align*}
In what follows we use the convention about repeated applications of vector field from Subsection \ref{secvfO}
and, for clarity,
we will underline the $3$d vector fields, while reserving $\Gamma$ for the $2$d vector fields.

\begin{lemma}\label{Lemcomp}
With the above definitions, the notation from \ref{secvfO},
and under the a priori bounds on $h$ from \eqref{apriorie0} and \eqref{aprioridecay},
we have the following schematic identity: if $n \leq N_0$,
\begin{align}\label{compid1}
\begin{split}
\underline{\Gamma}^n F(t,x,z) = (\underline{\Gamma}^n f)(t,x,z+h(t,x))
  & + (\underline{\Gamma}^{\leq n} f)(t,x,z+h(t,x)) \cdot O\big(\Gamma^{\leq n/2+1} h(t,x)\big)
  \\
  & + (\underline{\Gamma}^{\leq n/2+1} f)(t,x,z+h(t,x)) \cdot O\big(\Gamma^{\leq n} h(t,x)\big)
\end{split}
\end{align}
where the notation $G = O(\Gamma^{\leq k}h)$ here means that
\begin{align}\label{compidO}
\big| \Gamma^{\ell} G \big| \lesssim \sum_{j=0}^{k+\ell} \big| \Gamma^{j} h(t,x) \big|,
\end{align}
with an absolute implicit constant (depending on $\ell$).
Similarly, we can write
\begin{align}\label{compid2}
\begin{split}
\underline{\Gamma}^n f(t,x,y) = (\underline{\Gamma}^n F)(t,x,y-h(t,x))
  & + (\underline{\Gamma}^{\leq n} F)(t,x,y-h(t,x)) \cdot O\big(\Gamma^{\leq n/2+1} h(t,x)\big)
  \\
  & + (\underline{\Gamma}^{\leq n/2+1} F)(t,x,y-h(t,x)) \cdot O\big(\Gamma^{\leq n} h(t,x)\big).
\end{split}
\end{align}

In particular, for $p \in [2,\infty]$, and $n \leq N_1$, there exists constants $C_1,C_2>0$ such that
\begin{align}\label{complow}
C_1 \sum_{k=0}^n{\big\| \underline{\Gamma}^k f(t) \big\|}_{L^p(\D_t)} \leq
 \sum_{k=0}^n {\big\| \underline{\Gamma}^{k} F(t) \big\|}_{L^p_{x,z}}
 \leq C_2 \sum_{k=0}^n {\big\| \underline{\Gamma}^k f(t) \big\|}_{L^p(\D_t)}.
\end{align}

For $n \leq N_0$ we have instead
\begin{align}\label{comphigh1}
\begin{split}
{\| \underline{\Gamma}^n F(t) \|}_{L^2_{x,z}} & \lesssim
  \sum_{k\leq n} {\| \underline{\Gamma}^k f(t) \|}_{L^2(\D_t)}
  +  \e_0 \jt^{p_0} \sum_{k\leq n/2+3} {\| \underline{\Gamma}^k f(t)\|}_{L^2(\D_t)}
\end{split}
\end{align}
and, similarly,
\begin{align}\label{comphigh2}
\begin{split}
{\| \underline{\Gamma}^n f(t) \|}_{L^2(\D_t)} & \lesssim
  \sum_{k\leq n} {\| \underline{\Gamma}^k F(t) \|}_{L^2_{x,z}}
  +  \e_0 \jt^{p_0} \sum_{k\leq n/2+3} {\| \underline{\Gamma}^k F(t) \|}_{L^2_{x,z}}.
\end{split}
\end{align}

\end{lemma}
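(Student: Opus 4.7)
The plan is to prove the schematic identities \eqref{compid1} and \eqref{compid2} by induction on $n$, with base case $n=1$ being precisely the chain-rule identities \eqref{equivpr1} and \eqref{equivpr1'}. The key observation for the inductive step is that for any smooth function $G(x,z)$, applying a $3$d vector field $\underline{\Gamma}$ to the composed function $G(x, z+h(t,x))$ produces, via \eqref{equivpr1} applied with $G$ in place of $F$, the term $(\underline{\Gamma} G)(x, z+h)$ plus a correction $(\partial_z G)(x, z+h) \cdot \Gamma h$ where the second $\Gamma$ acts as a $2$d vector field on $h$ (for $\Omega$ this is the $2$d rotation; for $\underline{S}$ this is $S h - h$, since $h$ does not depend on $z$; for $\partial_z$ the correction vanishes). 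Thus each application of $\underline{\Gamma}$ adds one factor of $\Gamma h$ (with at most one more $2$d vector field applied) while possibly replacing one $\underline{\Gamma}$ on $f$ by $\partial_z$.

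Iterating this observation, one writes $\underline{\Gamma}^n F$ as a linear combination of terms of the form $(\underline{\Gamma}^{k} f)(x, z+h) \cdot \prod_i \Gamma^{j_i} h$ with $k + \sum_i j_i \leq n$. I would then do a high-low split on $k$: if $k \geq n/2+1$, then all of the $j_i$ are bounded by $n/2-1$ so that the product of $h$-factors can be written as $O(\Gamma^{\leq n/2+1} h)$ in the sense of \eqref{compidO}, giving the first correction term in \eqref{compid1}; if instead $k \leq n/2+1$, the product of $h$-factors is of type $O(\Gamma^{\leq n} h)$, giving the second correction term. A short check (using the Leibniz rule) confirms that $O(\Gamma^{\leq k_1} h) \cdot O(\Gamma^{\leq k_2} h) = O(\Gamma^{\leq k_1 + k_2} h)$ and that $\Gamma$ acting on $O(\Gamma^{\leq k} h)$ produces $O(\Gamma^{\leq k+1} h)$. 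The identity \eqref{compid2} follows by the symmetric argument starting from \eqref{equivpr1'}.

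For the $L^p$ equivalence \eqref{complow}, I would apply \eqref{compid1}, take $L^p_{x,z}$ norms over $\R^2 \times \{z<0\}$, and use the change of variables $y = z + h(t,x)$ (with Jacobian $1$) to identify ${\|(\underline{\Gamma}^n f)(x, z+h)\|}_{L^p_{x,z}} = {\|\underline{\Gamma}^n f\|}_{L^p(\D_t)}$. The correction terms are estimated by H\"older: since $n \leq N_1$ we have $n/2 + 1 \leq N_1/2 + 1$, and by the a priori bound \eqref{apriorih} the low-order $L^\infty_x$ norms of $\Gamma^{\leq n/2+1} h$ are $\lesssim \e_0 \jt^{-1}$. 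Pairing this $L^\infty$ bound with the $L^p$ bound on the $f$-factor gives a term small compared to the main term, allowing absorption and yielding the two-sided inequality. For the symmetric inequality use \eqref{compid2} identically.

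For the high-order $L^2$ bounds \eqref{comphigh1}--\eqref{comphigh2}, apply the same expansion but take $L^2_{x,z}$ norms and split: the term with $\underline{\Gamma}^{\leq n} f$ in $L^2$ and $\Gamma^{\leq n/2+1} h$ in $L^\infty$ is bounded using \eqref{apriorih} and reabsorbed; the term with $\underline{\Gamma}^{\leq n/2+1} f$ in $L^\infty_{x,z}$ (controlled via Sobolev embedding by $\sum_{k \leq n/2+3} \|\underline{\Gamma}^k f\|_{L^2}$) and $\Gamma^{\leq n} h$ in $L^2$ is bounded using the energy estimate \eqref{apriorie0} by $\e_0 \jt^{p_0}$, producing the second term on the right-hand side of \eqref{comphigh1}. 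The reverse inequality \eqref{comphigh2} is proved identically from \eqref{compid2}. The main technical obstacle is the combinatorial bookkeeping — verifying that the flexible $O(\Gamma^{\leq k} h)$ notation of \eqref{compidO} is stable under products and under the action of further vector fields, so that the induction closes cleanly; no single estimate is deep, but one must be careful to correctly count derivatives distributed among the (possibly several) $h$-factors that arise after $n$ chain-rule expansions.
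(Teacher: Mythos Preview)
Your approach is essentially the same as the paper's --- iterated chain rule from \eqref{equivpr1}--\eqref{equivpr1'}, high--low split, then H\"older --- and the identity part and the $L^p$ equivalence \eqref{complow} are handled correctly.

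There is, however, a genuine gap in your argument for \eqref{comphigh1}. You propose to place $\underline{\Gamma}^{\leq n/2+1} f$ in $L^\infty_{x,z}$ and $O(\Gamma^{\leq n} h)$ in $L^2$. But $h$ is a function of $x$ only, so the product $(\underline{\Gamma}^{\leq n/2+1} f)(x,z+h)\cdot O(\Gamma^{\leq n}h)(x)$ cannot be controlled in $L^2_{x,z}$ this way: after H\"older in $x$ you are left with $\|f\|_{L^\infty_{x,z}} \cdot \|h\|_{L^2_x}$ times the $L^2_z$ norm of the constant function $1$, which diverges. The fix is to place the $f$-factor in $L^2_z L^\infty_x$ instead (as the paper does): then H\"older in $x$ alone gives
\[
\big\| (\underline{\Gamma}^{\leq n/2+1} f)(\cdot,z+h)\cdot O(\Gamma^{\leq n}h) \big\|_{L^2_{x,z}}
  \lesssim \big\| \underline{\Gamma}^{\leq n/2+1} f \big\|_{L^2_z L^\infty_x} \cdot \big\| \Gamma^{\leq n} h \big\|_{L^2_x},
\]
and Sobolev embedding in $x$ (two dimensions, so two extra $x$-derivatives suffice) gives $\|\underline{\Gamma}^{\leq n/2+1} f\|_{L^2_z L^\infty_x} \lesssim \sum_{k\leq n/2+3}\|\underline{\Gamma}^k f\|_{L^2_{x,z}}$, yielding exactly the second term on the right of \eqref{comphigh1}. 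The derivative count $n/2+3$ you quoted is then correct for this reason, not the one you gave.
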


\begin{proof}
The identities \eqref{compid1} and \eqref{compid2}
follow from applying repeatedly the composition formulas \eqref{equivpr1}-\eqref{equivpr1'}
and using the uniform bound on the $L^\infty$ norm of $h$ from \eqref{aprioridecay}
to verify the property \eqref{compidO}.

The estimates \eqref{complow} then follow directly since $|\Gamma^k h| \lesssim \e_0$ for all $k \leq N_1$.
For \eqref{comphigh1} we instead apply H\"older's inequality to \eqref{compid1}
by estimating in $L^\infty$ the term $O(\Gamma^{\leq n/2+1} h)$ and in $L^2$ the term $O(\Gamma^{\leq n} h)$,
placing $\underline{\Gamma}^{\leq n/2+1} f$ in $L^2_{z}L^\infty_x$ and then using Sobolev embedding. 
The estimate \eqref{comphigh2} follows similarly from \eqref{compid2}.
\end{proof}


A statement similar to the one in Lemma \ref{Lemcomp} holds for restrictions to the boundary:

\begin{lemma}\label{Lemrestr}
With the same notation, definitions and a priori assumptions in Lemma \ref{Lemcomp},
and denoting $\wt{g}(t,x) := g(t,x,h(t,x))$, we have the following schematic identity:
if $n \leq N_0$,
\begin{align}\label{restrid1}
\begin{split}
\Gamma^n \wt{f} = \wt{\underline{\Gamma}^n f}
  & + \wt{\underline{\Gamma}^{\leq n} f} \cdot O\big(\Gamma^{\leq n/2+1} h \big)
  + \wt{\underline{\Gamma}^{\leq n/2+1} f} \cdot O\big(\Gamma^{\leq n} h \big).
\end{split}
\end{align}
In particular, for $p \in [2,\infty]$, and $n \leq N_1$, there exists constants $C_1,C_2>0$ such that
\begin{align}\label{restrlow}
C_1 \sum_{k=0}^n{\big\| \Gamma^n \wt{f}(t) \big\|}_{L^p(\R^2)} \leq
 \sum_{k=0}^n {\big\| \wt{\underline{\Gamma}^{k}f}(t) \big\|}_{L^p(\R^2)}
 \leq C_2 \sum_{k=0}^n {\big\| \Gamma^n \wt{f}(t) \big\|}_{L^p(\R^2)}.
\end{align}
For $n \leq N_0-2$ we have instead
\begin{align}\label{restrhigh1}
\begin{split}
{\| \Gamma^n \wt{f}(t) \|}_{L^p(\R^2)} & \lesssim
  \sum_{k\leq n} {\| \wt{\Gamma^k f}(t) \|}_{L^p(\R^2)}
  +  \e_0 \jt^{p_0} \sum_{k\leq n/2+1} {\| \wt{\Gamma^k f}(t) \|}_{L^p(\R^2)}
\end{split}
\end{align}

\end{lemma}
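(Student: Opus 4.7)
The argument is the natural boundary analogue of the proof of Lemma \ref{Lemcomp}: rather than flattening the whole domain, we iterate the chain-rule identities \eqref{restrpr2} for restrictions. The plan is to first establish \eqref{restrid1} by induction on $n$, then deduce \eqref{restrlow} and \eqref{restrhigh1} by H\"older. The key observation is that each single $2$d vector field $\gamma \in \{\partial_{x_1},\partial_{x_2},\Omega,S\}$ acting on $\wt{f}$ produces, via \eqref{restrpr2}, the term $\wt{\underline{\gamma} f}$ (where $\underline{\gamma} \in \{\partial_{x_1},\partial_{x_2},\Omega,\underline{S}\}$ is the corresponding $3$d vector field) plus a \emph{corrector} of the form $\wt{\partial_y f} \cdot \mu(h)$, where $\mu(h) \in \{\partial_{x_i} h, \Omega h, Sh - h\}$.

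Starting from this, one proves \eqref{restrid1} by induction: at each step one applies one additional $2$d vector field, and the rules \eqref{restrpr2} together with the commutation relations produce either a term where the new vector field falls on $\wt{\underline{\Gamma}^{\leq n}f}$ (raising its order) or on the coefficient $\mu(h)$ (producing $\Gamma^{\leq n+1} h$ factors), up to further multiplicative correctors proportional to derivatives of $h$. Distributing the $2$d vector fields across the two factors in the resulting products and balancing so that the factor with more vector fields retains the full count while the other is placed on the lower side gives precisely the structure \eqref{restrid1}; the property \eqref{compidO} for the ``$O(\cdot)$'' coefficients follows since all coefficients are polynomial expressions in $\nabla h$ and finitely many vector fields applied to $h$, and $\|h\|_{L^\infty}\lesssim \e_0$ by \eqref{aprioridecay}.

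Given \eqref{restrid1}, the two estimates follow by H\"older. For the low-norm equivalence \eqref{restrlow} with $n\leq N_1$, the $L^\infty$ decay bound \eqref{aprioridecay} provides $|\Gamma^{\leq N_1} h|\lesssim \e_0$ uniformly in time, so both corrector terms on the right of \eqref{restrid1} are absorbed into the leading $\wt{\underline{\Gamma}^n f}$ contribution with an $\e_0$-small constant, giving the lower bound; the upper bound is obtained symmetrically, by inverting the roles of $\wt{f}$ and $\wt{\underline{\Gamma}^n f}$ (this is the analogue of \eqref{compid2} at $z=0$). For the high-norm estimate \eqref{restrhigh1} with $n\leq N_0-2$, we place the first corrector $\wt{\underline{\Gamma}^{\leq n} f}\cdot O(\Gamma^{\leq n/2+1}h)$ by bounding the $h$-factor in $L^\infty$ via \eqref{aprioridecay} (since $n/2+1\leq N_1$), yielding an $\e_0\jt^{-1}$ contribution that is absorbed into $\sum_{k\leq n}\|\wt{\underline{\Gamma}^k f}\|_{L^p}$; for the second corrector $\wt{\underline{\Gamma}^{\leq n/2+1} f}\cdot O(\Gamma^{\leq n} h)$ we bound the $h$-factor in $L^\infty$ using the $2$d Sobolev embedding $H^2(\R^2)\hookrightarrow L^\infty$, which costs two derivatives and is therefore controlled by $\sum_{k\leq n+2}\|\Gamma^k h\|_{L^2}\lesssim \e_0\jt^{p_0}$ in view of \eqref{apriorie0} (this is where the condition $n\leq N_0-2$ is used), producing exactly the second sum on the right-hand side of \eqref{restrhigh1}. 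The only delicate point is the Sobolev step in the last estimate, which forces the restriction $n\leq N_0-2$ and is the reason one cannot push \eqref{restrhigh1} all the way up to $n=N_0$.
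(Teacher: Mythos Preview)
Your proof is correct and follows essentially the same approach as the paper: iterate the chain-rule identities for restrictions (the paper cites \eqref{restrpr1}, but the content is the same as your use of \eqref{restrpr2}), then use the uniform $L^\infty$ smallness of $\Gamma^{\leq N_1}h$ for \eqref{restrlow} and H\"older with Sobolev embedding on the high-$h$ factor for \eqref{restrhigh1}. Your write-up is in fact more detailed than the paper's, which simply refers back to Lemma~\ref{Lemcomp} and sketches the H\"older/Sobolev step in one line.
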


\begin{proof}
    The proof is similar to that of Lemma \ref{Lemcomp}.
The identity \eqref{restrid1} follows from applying repeatedly \eqref{restrpr1} 
and using the uniform a priori bound on $h$ in $L^\infty$.
The estimate \eqref{restrlow} then follows immediately, 
while \eqref{restrhigh1} follows using H\"older, estimating $\Gamma^{\leq n} h$ in $L^\infty$,
followed by Sobolev's embedding and \eqref{apriorie0}. 
\end{proof}

We can now give the proof of \eqref{aprioridth2}:


\begin{proof}[Proof of \eqref{aprioridth2}]
Since $\partial_t h = \wt{v} \cdot (-\nabla h,1)$,
distributing vector fields we see that a schematic formula like the one in \eqref{restrid1},
with the notation \eqref{compidO}, holds and, in particular,
\begin{align*}
\begin{split}
|\Gamma^n \partial_t h| \lesssim |\wt{\underline{\Gamma}^n v}|
  & + |\wt{\underline{\Gamma}^{\leq n} v}| \cdot |O\big(\Gamma^{\leq n/2+1} h \big)|
  + |\wt{\underline{\Gamma}^{\leq n/2+1} v}| \cdot |O\big( \fp{\Gamma^{\leq n+1} } h \big)|.
\end{split}
\end{align*}
It follows that 
\begin{align*}
{\| \Gamma^n \partial_t h \|}_{L^2} & \lesssim 
  {\| \underline{\Gamma}^{\leq n+1} v \|}_{L^2(\D_t)} \big( 1 + {\| \Gamma^{\leq n/2+1} h \|}_{L^\infty} \big)
  \\
  & + {\| \underline{\Gamma}^{\leq n/2+1} v \|}_{L^\infty(\D_t)} \cdot {\| \fp{\Gamma^{\leq n+1} } h \|}_{L^2}
  \lesssim \e_0\jt^{p_0},
\end{align*}
where in the last inequality we have used the apriori energy bound \eqref{apriorie0} for both $v$ and $h$,
and the decay bounds \eqref{aprioridecay} and \eqref{aprioridecayv} to control 
uniformly the $L^\infty$ norms.
\end{proof}

\begin{proof}[Proof of Proposition \ref{mainpropW'}]
We use the same notation from Subsection \ref{ssecVorticity1}, 
and the conventions from Subsection \ref{secvfO}.
Recall from \eqref{aprioriWL} and \eqref{aprioriW0} that we are assuming, for all $t\in[0,T]$,
$T\leq T_{\e_1}$, 
\begin{align}
\label{propWaslow'}
& {\| W(t) \|}_{\mX^{N_1-10}} \leq 2c_L \e_1,
\\
\label{propWashigh'}
& {\| W(t) \|}_{\mX^{N_0-20}} \leq 2c_W \e_0 \jt^{2p_0},
\end{align}
for some absolute constant $c_L,c_W>0$ large enough, and that the following assumption
on the initial data hold in view of \eqref{propWprdata0}:
\begin{align}\label{initW'}
 {\|W_0\|}_{\mathcal{X}^{N_1-10}} \leq C \e_1,\qquad
 {\|W_0\|}_{\mathcal{X}^{N_0-20}} \leq C \e_0.
\end{align}
We then aim to show that the improved bound \eqref{propWconchigh'}
holds for all $t \in [0, T]$.

We begin by writing the vorticity equation as in \eqref{Weq0}:
\begin{align}\label{Weq0'}
\begin{split}
& {\bf D}_t W  = W \cdot \nabla V - W^\ell \partial_\ell h \partial_z V^i,
\\
& {\bf D}_t := \partial_t + U \cdot \nabla, \qquad U := V - (\partial_t h + V^\ell \partial_\ell h) e_z.
\end{split}
\end{align}
We then apply vector fields as in Subsection \ref{ssecVorticitycomm}
and obtain the following equation (see Lemma \ref{lemDtW}):
\begin{align}\label{prweak0}
\begin{split}
{\bf D}_t \Gamma^n W & = \sum_{n_1+n_2\leq n-1} \Gamma^{\leq n_1+1} U \cdot \Gamma^{n_2} \nabla W
  + \sum_{n_1+n_2 \leq n} \Gamma^{n_1} W \cdot \Gamma^{n_2} \nabla V
  + \Gamma^{\leq n} F',
\end{split}
\end{align}
with $F' := - W^\ell \partial_\ell h \partial_z V^i$.
Compare this with \eqref{lemDtWeq} and note that the only difference is that in this case we do not
separate the linear and quadratic components in $V$, nor
distinguish the rotational and irrotational components.

To obtain estimates for $W$ based on \eqref{Weq0'} we need energy and decay bounds for $V$ and $U$.
First, using \eqref{complow} and the priori decay assumptions on $v$ in \eqref{aprioridecayv}
    we have
\begin{align}\label{aprioriVLinfty}
{\| \underline{\Gamma}^{\leq n} V \|}_{L^\infty_{x,z}} \lesssim
  {\| \underline{\Gamma}^{\leq n} v \|}_{L^\infty(\D_t)} \lesssim \e_0\jt^{-1} 
    + \e_1 \jt^\delta, \qquad n\leq N_1-5;
\end{align}
using \eqref{comphigh1} and the a priori energy control \eqref{apriorie0} we have
\begin{align}\label{aprioriVL2}
{\| \Gamma^{\leq n} V \|}_{L^2_{x,z}} \lesssim
  {\| \underline{\Gamma}^{\leq n} v \|}_{L^2(\D_t)}
  + \e_0\jt^{p_0} \sum_{k\leq n/2+3} {\| \underline{\Gamma}^k v \|}_{L^{2}(\D_t)}
  \lesssim \e_0 \jt^{2p_0}, \qquad n \leq N_0.
\end{align}
Then, from the definition of $U$ in \eqref{Weq0'}, 
    using $\partial_t h = v \cdot (-\nabla h, 1)$ at the free boundary
with the bounds on $V$ just used above,
and basic product estimates to handle the quadratic term $V\cdot\nabla h$, it follows that
    \begin{alignat}{2}\label{aprioriU}
& {\| \Gamma^{\leq n} U \|}_{L^\infty_zL^2_x} \lesssim \e_0 \jt^{2p_0}, &&\qquad n \leq N_0-10,
\\
\label{aprioriUdecay}
& {\| \Gamma^{\leq n} U \|}_{L^\infty_{x,z}} \lesssim \e_0 \jt^{-1+} + \e_1 \jt^\delta, &&\qquad n \leq N_1-5.
\end{alignat}

    Applying Lemma \ref{lemT} to \eqref{prweak0}, we have
\begin{align}\label{lemTconc}
& {\| \Gamma^n W(t) \|}_{L^p_{x,z}} \leq {\|\Gamma^n W(0) \|}_{L^p_{x,z}}
  + C\int_0^t {\| {\bf D}_s \Gamma^n W(s) \|}_{L^p_{x,z}} \, ds.
\end{align}
Therefore, to obtain \eqref{propWconchigh'} it then suffices to show that
\begin{align}\label{prweak1}
{\| {\bf D}_t \Gamma^n W(t) \|}_{L^2_{x,z} \cap L^{6/5}_{x,z}} \leq 
  c \e_0 \jt^{-1+2p_0}, \qquad n\leq N_0 - 20,
\end{align}
for all $t\leq T \leq T_{\e_1}$ and a sufficiently small absolute constant $c$. 

The estimate \eqref{prweak1} can be verified directly for all the terms on the
right-hand side of \eqref{prweak0} by using elementary product estimates,
the a priori assumptions \eqref{propWaslow'}-\eqref{propWashigh'}, the bounds on $V$ and $U$
in \eqref{aprioriVLinfty}-\eqref{aprioriU}, and the usual bounds on $h$ in \eqref{apriorie0} and \eqref{aprioridecay}.
    In particular, with $L:=L^2_{x,z} \cap L^{6/5}_{x,z}$ we claim that the following bounds hold:
\begin{subequations}
\label{prweakest}
\begin{alignat}{2}
\label{prweakesta}
& {\| \Gamma^{\leq n_1+1} U \cdot \Gamma^{n_2} \nabla W \|}_{L}
  \lesssim \e_0 \e_1 \jt^{\delta + 2p_0} + \e_0^2 \jt^{-1+2p_0},
&&\qquad n_1+n_2\leq N_0-20-1,
\\
\label{prweakestb}
& {\| \Gamma^{n_1} W \cdot \Gamma^{n_2} \nabla V \|}_{L}
  \lesssim \e_0 \e_1\jt^{\delta + 2p_0} + \e_0^2 \jt^{-1+2p_0},
&&\qquad n_1+n_2\leq N_0-20,
\\
\label{prweakestc}
& {\| \Gamma^n (W^\ell \partial_\ell h \partial_z V^i) \|}_{L}
    \lesssim \e_0^2 \e_1 \jt^{2p_0} + \e_0^3 \jt^{-1+2p_0} , &&\qquad n \leq N_0-20.
\end{alignat}
\end{subequations}
Notice that these bounds imply the desired \eqref{prweak1} since $\e_1\jt^{\delta} \ll \jt^{-1}$
for all $t \leq T_{\e_1}$.

Let us prove \eqref{prweakesta}.
When $n_1\geq n_2$ so that, in particular $n_2 \leq N_0/2-10\leq N_1-10$ we can use \eqref{propWaslow'} to estimate
$\Gamma^{n_2} \nabla W$, and use Sobolev embedding and \eqref{aprioriU} to estimate $\Gamma^{\leq n_1+1}U$:
\begin{align}\label{prweakest1}
\begin{split}
& {\| \Gamma^{\leq n_1+1} U \cdot \Gamma^{n_2} \nabla W \|}_{L}
  \lesssim {\| \Gamma^{\leq n_1+1} U \|}_{L^\infty_{x,z}} {\| \Gamma^{n_2} \nabla W \|}_{L}
  \lesssim \e_0\jt^{2p_0} \cdot \e_1.
\end{split}
\end{align}
When instead $n_1 \leq n_2$, so that $n_1 \leq N_0/2-10\leq N_1-10$, we can use
the a priori assumption \eqref{propWashigh'} for $\Gamma^{n_2} W$,
and use the decay estimate \dg{\eqref{aprioriUdecay} } to estimate $\Gamma^{\leq n_1+1}U$:
\begin{align*}
\begin{split}
& {\| \Gamma^{\leq n_1+1} U \cdot \Gamma^{n_2} \nabla W \|}_{L}
  \lesssim  {\| \Gamma^{\leq n_1+1} U \|}_{L^\infty_{x,z}} {\| \Gamma^{n_2} \nabla W  \|}_{L}
  \lesssim (\e_0\jt^{-1} + \e_1 \jt^\delta) \cdot \e_0 \jt^{2p_0}.
\end{split}
\end{align*}
These last two bounds are consistent with the right hand-side of \eqref{prweakesta}.
The other bounds in \eqref{prweakest} can be proven in the same way.
This concludes the proof of \eqref{prweak1} of the proposition.
\end{proof}


\subsection{Bounds for the vector potential}

We now state bounds on $V_\omega$ that follow from the bounds
in the high norm that we just obtained on $W$, see \eqref{propWashigh'}.

\begin{prop}[Bounds for $\alpha$ from bounds on $W$]\label{alphaprop'}
Let $\alpha: [0,T] \times \R^2 \times \R_- \mapsto \R^3$
be defined by $\alpha(t,x,z) := \beta(t,x,z+h(t,x))$
where $\beta$ solves the system \eqref{betaeq} in $\D_t$.
Assume that $h$ satisfies \eqref{apriorih}-\eqref{apriorihp} and \eqref{aprioridth2}-\eqref{aprioridthp},
and let $W$ be given so that, for $t\in[0,T]$,
\begin{align}
\label{alphaaso0'}
& {\| W(t) \|}_{\mX^{N_1-10}} \lesssim \e_1,
\\
\label{alphaaso1'}
& {\| W(t) \|}_{\mX^{N_0-20}} \lesssim \e_0 \jt^{2p_0}.
\end{align}
Then, there exists a unique fixed point $\alpha$ of the map in \eqref{alphafp}
in the space $\dYn^{N_0-20}$, which satisfies
\begin{align}
\label{alphaconcL'}
& {\| \alpha(t) \|}_{\dYn^{N_1-10}} \lesssim \e_1,
\\
\label{alphadtconcH'}
& {\| \alpha(t) \|}_{\dYn^{N_0-20}} \lesssim \e_0 \jt^{2p_0}.
\end{align}
\end{prop}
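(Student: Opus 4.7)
The plan is to carry out the same fixed-point argument already used to prove Proposition \ref{alphaprop}, only with the high-norm regularity level $N_1+12$ replaced by $N_0-20$ and the high-norm growth rate $\jt^\delta$ replaced by $\jt^{2p_0}$. The low-norm conclusion \eqref{alphaconcL'} coincides with \eqref{alphaconcL} because the hypothesis \eqref{alphaaso0'} is identical to the low-norm hypothesis \eqref{alphaaso0} of Proposition \ref{alphaprop}, so Proposition \ref{alphaprop} delivers it for free. The main task is therefore the analogue of \eqref{alphapropH} at level $N_0-20$, namely
\begin{equation*}
\|L(\alpha)\|_{\dYn^{N_0-20}} \lesssim \e_0 \, \|\alpha\|_{\dYn^{N_0-20}} + \e_0 \jt^{p_0} \|\alpha\|_{\dYn^{N_1-10}} + \|W\|_{\mX^{N_0-20}},
\end{equation*}
which, combined with \eqref{alphaaso1'} and the already known bound $\|\alpha\|_{\dYn^{N_1-10}} \lesssim \e_1$, yields \eqref{alphadtconcH'} after the same sequence $\alpha^{(0)}=0,\ \alpha^{(m+1)}=L(\alpha^{(m)})$ argument of Subsection \ref{alphapropsseclast}, with weak-$*$ convergence in $\dYn^{N_0-20}$ and lower semicontinuity giving the final bound.

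To establish the displayed estimate, I would revisit each of the three groups of terms treated in Subsections \ref{alphapropssec1}, \ref{alphapropssec2}, \ref{linforcesec}. For the homogeneous boundary contributions, the reductions to \eqref{prbd5}--\eqref{prbd6cH} still apply since the coefficients arising in $B_1,B_2,B_{3,a},B_{3,b}$ are polynomials in $\nabla h$; the high-norm coefficient bounds \eqref{prbd5cH} and \eqref{prbd6cH} extend without change to the range $r+k \leq N_0-3$ and, using \eqref{apriorih}--\eqref{apriorihp}, with actual growth $\jt^{p_0}$. Splitting a typical product $|\nabla|^{1/2+a}(c(\nabla h)\alpha(0))$ or $|\nabla|^{-1/2+a}(c_3(\nabla h) \nabla_{x,z}\alpha(0))$ into pieces carrying $n_1 + n_2 = N_0-20$ derivatives via \eqref{prodestuse0}, the case $n_1 \leq (N_0-20)/2$ is absorbed by the $\e_0$-term using the low-norm bound \eqref{prbd5cL}/\eqref{prbd6cL}, while the case $n_2 \leq (N_0-20)/2 \leq N_1-15$ is controlled by the low norm $\|\alpha\|_{\dYn^{N_1-10}}$ with $\e_0\jt^{p_0}$ prefactor from \eqref{prbd5cH}/\eqref{prbd6cH}; Remark \ref{remalphaspace} handles the trace at $z=0$. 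The nonlinear bulk terms $N_i^a, N_i^b$ are treated identically using the Poisson-kernel bounds of Lemma \ref{explem} and Remark \ref{remexp} to reduce matters to product estimates of $c(\nabla h) \nabla_{x,z}\alpha$ in $L^2_z L^2_x$ and $L^\infty_z L^2_x$, with the same high/low bookkeeping. The linear forcing contribution is estimated exactly as in \ref{linforcesec}, via \eqref{fpFhigh}--\eqref{fpFlow}, giving a clean $\|W\|_{\mX^{N_0-20}}$ contribution.

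The main obstacle is purely combinatorial: verifying that the product splittings still close at the new level. The crucial index inequality is $(N_0-20) - n_1 \leq N_1-10$ whenever $n_1 \geq N_0/2 - 15$, which is guaranteed by the standing assumption $N_1 \geq N_0/2 + 10$ from \eqref{paramN}, and the coefficient bounds remain valid since $N_0-20 \leq N_0-3$. Once this bookkeeping is checked, the contraction estimate \eqref{seqcon} applies unchanged in $\dYn^{N_1-10}$, the weak-$*$ limit argument from \eqref{seqlim}--\eqref{seqlim2} produces the unique fixed point $\alpha$, and passing to the limit in the high-norm iteration using lower semicontinuity yields the bound $\|\alpha(t)\|_{\dYn^{N_0-20}} \lesssim \e_0 \jt^{2p_0}$, which is \eqref{alphadtconcH'}.
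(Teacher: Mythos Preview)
Your proposal is correct and follows precisely the paper's own approach: the paper states that the proof ``follows verbatim'' that of Proposition \ref{alphaprop} at the new regularity level $N_0-20$, with the sole observation that the product splittings close because $N_0/2-10 \leq N_1-10$ (i.e., exactly the combinatorial inequality you identify, guaranteed by $N_1 \geq N_0/2+10$ from \eqref{paramN}). Your writeup is in fact more detailed than the paper's brief justification, and your remark that the actual coefficient growth from \eqref{apriorih} is $\jt^{p_0}$ (rather than the looser $\jt^\delta$ used in \eqref{prbd5cH}--\eqref{prbd6cH}) is a sharp observation.
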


This proposition is an exact analogue of Proposition \ref{alphaprop}
stated with high norms that contain $N_0-20$ vector fields instead of $N_1+12$;
compare \eqref{alphaaso1} and \eqref{alphaaso1'}.
The conclusion \eqref{alphadtconcH'} is the one that naturally corresponds to \eqref{alphadtconcH}
with the different assumption.
The proof follows verbatim the one is Subsection  \ref{ssecpralphaprop}.
The only thing to observe is that the assumptions used on $h$ in the proof
of Proposition \ref{alphaprop} also suffice when working at a higher level of derivatives.
The only relevant aspect is that half of
the highest number of vector fields, that is $N_0/2-10$ here, needs to be (a couple of units) less than
two numbers:
the number of vector fields for which we have uniform bounds when applied to $h$, that is, $N_1$,
and the number of vector fields in the low norm, that is, $N_1-10$.
These hold in view of 
\eqref{param}.

To conclude the proof of \eqref{velpotest'} 
we want to use the fact that $\varphi = \Psi|_{z = 0}$ where 
$\nabla^i \Psi = V^i + \nabla^i h \pa_z \Psi - V^i_\omega$;
we then need the following bound for $V_\omega$:

\begin{lemma}
Under the same hypotheses of Proposition \ref{mainpropW'} we have
 \begin{equation}
  \sum_{|r| + |k| \leq N_0 - 20} {\| V_\omega^{r, k} \|}_{L^2_zL^2_x}
	\lesssim \e_0 \jt^{2p_0}
  \label{improvedVomegabound20}
 \end{equation}
\end{lemma}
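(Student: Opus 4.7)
The strategy is a direct chain of implications built from results already established. First, I would invoke Proposition \ref{mainpropW'} (whose hypotheses are precisely those assumed here) to obtain the high-norm control
\[
\|W(t)\|_{\mX^{N_0-20}} \lesssim \e_0 \jt^{2p_0},
\]
together with the low-norm bound $\|W(t)\|_{\mX^{N_1-10}} \lesssim \e_1$ from \eqref{aprioriWL}. These are exactly assumptions \eqref{alphaaso0'}-\eqref{alphaaso1'} of Proposition \ref{alphaprop'}.

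Next, I would apply Proposition \ref{alphaprop'} to pass from these vorticity bounds to the corresponding bounds on the vector potential $\alpha$, obtaining in particular
\[
\|\alpha(t)\|_{\dYn^{N_0-20}} \lesssim \e_0 \jt^{2p_0}.
\]
This is precisely the additional high-order hypothesis \eqref{extraonalpha} of Lemma \ref{higherorderalpha} (while the low-norm bound \eqref{vpboundlo} needed as the base hypothesis of that lemma also follows from \eqref{alphaconcL'}, and the assumptions \eqref{apriorih}-\eqref{apriorihp} on $h$ are provided by the a priori bootstrap).

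Finally, I would apply the last conclusion \eqref{boundaVbulkweaker} of Lemma \ref{higherorderalpha}, which gives exactly
\[
\sum_{|r|+|k|\leq N_0-20} \|V_\omega^{r,k}(t)\|_{L^2_zL^2_x} \lesssim \e_0 \jt^{2p_0},
\]
yielding \eqref{improvedVomegabound20}. There is no real obstacle here, since Lemma \ref{higherorderalpha} was formulated precisely with this application in mind; the only thing worth noting is that the $\jt^{2p_0}$ growth (rather than $\jt^{p_0}$) is inherited from the high-norm growth of $W$ proved in Proposition \ref{mainpropW'}, which in turn comes from the need to flatten the domain and pick up extra factors involving high-order weighted norms of $h$.
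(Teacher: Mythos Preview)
Your proof is correct and follows essentially the same route as the paper: invoke Proposition~\ref{mainpropW'} (and the low-norm assumption \eqref{aprioriWL}) to feed into Proposition~\ref{alphaprop'}, obtain the high-order bound \eqref{extraonalpha} on $\alpha$, and then apply \eqref{boundaVbulkweaker} from Lemma~\ref{higherorderalpha}. The only cosmetic difference is that the paper cites Proposition~\ref{mainpropW} alongside Proposition~\ref{mainpropW'} for the low-norm vorticity bound, whereas you cite the a priori assumption \eqref{aprioriWL} directly; both are valid since \eqref{aprioriWL} is among the hypotheses of Proposition~\ref{mainpropW'}.
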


\begin{proof}
By Propositions \ref{mainpropW'} and \ref{mainpropW}, 
the hypotheses of Proposition \ref{alphaprop'} hold, and the bounds \eqref{alphaconcL'}-\eqref{alphadtconcH'}
for $\alpha$ follow. 
Since \eqref{alphadtconcH'} is \eqref{extraonalpha},
the bound \eqref{improvedVomegabound20} now follows from \eqref{boundaVbulkweaker}. 
\end{proof}


\subsection{Conclusion: Proof of \eqref{velpotest'}}
To conclude the proof we first use the trace inequality \eqref{traceineqflat}
to bound the left-hand side of \eqref{velpotest'}
\begin{align}
\sum_{r+|k| \leq N_0-20}
  {\big\| |\nabla|^{1/2} \Gamma^k \varphi (t) \big\|}_{H^r(\R^2)}
  \lesssim & \sum_{r+|k| \leq N_0-20}
  {\big\| \nabla_x^r \underline{\Gamma}^k \nabla_{x,z} \Psi (t) \big\|}_{L^2_{x,z}}.
\end{align}
From the identities $\nabla^i \Psi = V^i + \nabla^i h \pa_z \Psi - V^i_\omega$
for $i = 1,2$ and $\nabla_z \Psi = V^3 - V^3_\omega$,
the bound \eqref{aprioriVL2} for $\|\Gamma^k V\|_{L^2_zL^2_x}$, and the bound \eqref{improvedVomegabound20}
for $V_\omega$, for any $n \leq N_0-20$, we have
\begin{align*}
 \| \underline{\Gamma}^{\leq n} \nabla_{x} \Psi\|_{L^2_zL^2_x}
 \lesssim \sum_{|k| \leq n}\|  \underline{\Gamma}^{k} V\|_{L^2_zL^2_x}
 +\sum_{|k| \leq n}\| \underline{\Gamma}^{k}(\nabla h \pa_z \Psi)\|_{L^2_zL^2_x}
 + \sum_{|k| \leq n}\| \underline{\Gamma}^{k} V_\omega\|_{L^2_zL^2_x}
 \\
 \lesssim \e_0\jt^{2p_0} + \sum_{|k| \leq n}\| \underline{\Gamma}^{k}(\nabla h \pa_z\Psi)\|_{L^2_zL^2_x},
\end{align*}
and
\begin{equation*}
 \| \underline{\Gamma}^{\leq n} \nabla_z \Psi\|_{L^2_xL^2_x}
 \lesssim \sum_{|k| \leq n}\| \underline{\Gamma}^{k} V\|_{L^2_zL^2_x}
 + \sum_{|k| \leq n}\| \underline{\Gamma}^{k} V_\omega\|_{L^2_zL^2_x}
 \lesssim \e_0\jt^{2p_0}.
\end{equation*}
Finally, we can estimate, for any $n \leq N_0-20$, 
\begin{align*}
 \sum_{|k| \leq n} \| \underline{\Gamma}^{k}(\nabla h \partial_z \Psi)\|_{L^2_zL^2_x}
 & \lesssim
 \sum_{|k| \leq n}
 \|\Gamma^k \nabla h\|_{L^\infty} 
 \sum_{k \leq n} \| \underline{\Gamma}^k \pa_z\Psi\|_{L^2_zL^2_x}
 \lesssim \e_0^2 \jt^{3p_0}.
\end{align*}
This concludes the proof of \eqref{velpotest'}.
\hfill $\Box$

\subsection{Proofs of Lemma \ref{lemdecayirrot} and \ref{lemdecayrot}}\label{ssecdecay}
We conclude this section by showing how to recover the a priori decay assumption \eqref{aprioridecayv}
through \eqref{decayirrotconc} and \eqref{decayrotconc}.

{\it Proof of \eqref{decayirrotconc}}
First, recall that, in view of \eqref{decayirrconcl} we have
\begin{align}\label{decayirrconclphi}
\sum_{\ell \in \Z} \sum_{|k| \leq N_1 
} {\| \Gamma^{k} P_\ell |\nabla|^{1/2} \varphi (t) \|}_{L^\infty(\R^2)} \leq c_B \e_0 \jt^{-1}
\end{align}
and, therefore, in view of Remark \ref{remPsiinfty}, we get
\begin{align}\label{prdecayirrotconc1}
\sum_{|k| \leq N_1-1} \sum_{\ell \in \Z} {\big\| P_\ell \underline{\Gamma}^{k} \nabla_{x,z} \Psi(t) 
  \big\|}_{L^2_z L^\infty_x} \leq C c_B\e_0 \jt^{-1}
\end{align}
for some generic $C>0$.
We then use the composition estimate \eqref{complow}, sum over dyadic Littlewood-Paley pieces 
and use \eqref{prdecayirrotconc1} after Sobolev's embedding in $z$ to get, with $n=N_1-5$,
\begin{align*}
\sum_{r+k \leq n} {\| \nabla \psi(t)  \|}_{X^{r,\infty}_k(\D_t)}
 & \leq C \sum_{|k| \leq n} {\big\| \underline{\Gamma}^{k} \nabla_{x,z} \Psi(t) \big\|}_{L^\infty_{x,z}}
 \\
 & \leq C \sum_{|k| \leq n} \sum_{\ell \in \Z} {\big\| P_\ell \underline{\Gamma}^{k} \nabla_{x,z} \Psi(t) 
  \big\|}_{L^\infty_{x,z}} \leq C c_B \e_0 \jt^{-1}.
\end{align*}
This gives us Lemma \ref{lemdecayirrot}. 

{\it Proof of \eqref{decayrotconc}}
We use the composition estimate \eqref{complow}, followed by Sobolev's embedding in $x$,
and then apply directly the second estimate on $V_\omega$ from \eqref{boundaVbulk0} with $j=0$
to see that
\begin{align}\label{prdecayrotconc}
\sum_{r+k \leq N_1-5} {\| v_\omega(t) \|}_{X^{r,\infty}_k(\D_t)} 
& 
  \leq C \sum_{|k| \leq  N_1-3} {\| \underline{\Gamma}^k V_\omega(t) \|}_{L^\infty_z L^2_x}
  \leq C c_H'\e_1 \jt^\delta.
\end{align}
This gives Lemma \ref{lemdecayrot} and closes the bootstrap for the norm in \eqref{aprioridecayv}.


\bigskip
\section{Decay of the boundary variables} 
\label{secdecay}

In this section we use the restriction of the free boundary Euler equations to the boundary surface
to establish time decay for the dispersive variable $u$ and prove Proposition \ref{propdecayirr},
and, in fact, the better estimate \eqref{decayirrconcl}.


\subsection{Set-up and equations at the boundary}
Recall that
\begin{align}\label{secdecu}
u = h + i \Lambda^{1/2}\varphi, \qquad u_+ =u, \quad u_- = \bar{u}.
\end{align}
With
$P_\omega^i = \wt{v_\omega}^i = v_\omega^i|_{\pa \D_t}$, $i = 1,2$,
one can show, see \eqref{der30}-\eqref{derB_0} in Appendix \ref{appder}, that $u$ solves
\begin{align}\label{decayequ}
(\partial_t + i \Lambda^{1/2}) u = B_0(u,u) + B_{0,1}(u+\bar{u}, P_\omega) + B_1(P_\omega, P_\omega)
  + L(P_\omega) + N_3
\end{align}
where:

\setlength{\leftmargini}{1.5em}
\begin{itemize}

\item The quadratic terms involving only $u$ are given by
\begin{align}\label{decayB0}
\begin{split}
B_0(u,u) & = \sum_{\eps_1,\eps_2 \in \{+,-\}} B_{\eps_1\eps_2}(u_{\eps_1},u_{\eps_2}),
\end{split}
\end{align}
with the definitions \eqref{derb+-} for the symbols and the notation \eqref{OpSym} for the associated operators;

\item
The quadratic terms involving at least one copy of $P_\omega$ are
\begin{align}\label{decayB1}
B_{0,1}(f,g) := (i\nabla\Lambda^{-1/2} f) \cdot g,  \qquad B_1(f,g) := - \frac{1}{2} f \cdot g;
\end{align}

\item
The `linear forcing term' due to the vorticity is
\begin{align}\label{decayuL}
L(P_\omega) := - i\Lambda^{-1/2} R \cdot \pa_t P_\omega;
\end{align}

\item $N_3$ are the cubic and higher order terms in \eqref{der33}.

\end{itemize}

\subsubsection{Vectorfields and Duhamel's formula}
We start by applying vector fields 
to \eqref{decayequ} and deriving an equation for
\begin{align}\label{decayun}
u^n := \Gamma^n u, \qquad |n| < N_0, \qquad \Gamma \in \{S,\Omega\}.
\end{align}
Note that we are not including regular derivatives in this notation.
Using \dg{Lemma \ref{lemsymcomm} } and \dg{the formula \eqref{qGamma} } to commute vector fields and quadratic symbols,
we see that there exist real constants $a_{n_1}$, $c_{n_1n_2}$ and $d_{n_1n_2}$ such that
\begin{align}\label{decayequn}
\begin{split}
(\partial_t + i \Lambda^{1/2}) u^n = \sum_{|n_1|+|n_2| \leq |n|} c_{n_1n_2} B_0(u^{n_1},u^{n_2})
  + d_{n_1n_2} B_{0,1}(u^{n_1}+\bar{u}^{n_1}, P_\omega^{n_2})
  \\ + \sum_{n_1+n_2 = n}B_1(P_\omega^{n_1}, P_\omega^{n_2})
  + \sum_{|n_1| \leq |n|} a_{n_1} L^{n_1}(P_\omega^{n_1}) + \Gamma^n N_3,
\end{split}
\end{align}
where the linear term is given by
\begin{align}\label{decayuLk}
L^{n_1}(P_\omega^{n_1}) := - i\Lambda^{-1/2} R^{(n_1)} \cdot \pa_t P_\omega^{n_1},
  \qquad \mbox{with} \qquad R^{(n_1)} \in \mathrm{span}\{ R, R^\perp\}.
\end{align}
Duhamel's formula for \eqref{decayequn} gives
\begin{align}\label{decaydu1}
u^n(t) = e^{-it\Lambda^{1/2}} u_0^n
 & + \sum_{|n_1|+|n_2| \leq |n|} c_{n_1n_2} \int_0^t e^{i(s-t)\Lambda^{1/2}} B_{0}(u^{n_1}, u^{n_2})\, ds
 \\
 & + \sum_{|n_1|+|n_2| \leq |n|} d_{n_1n_2}
 \int_0^t e^{i(s-t)\Lambda^{1/2}} B_{0,1}(u^{n_1}+\bar{u}^{n_1}, P_\omega^{n_2})\, ds
 \\
 & + \sum_{|n_1|+|n_2| = n} \int_0^t e^{i(s-t)\Lambda^{1/2}} B_{1}(P_\omega^{n_1}, P_\omega^{n_2})\, ds
 \\
 & + \sum_{|n_1| \leq |n|} \int_0^t e^{i(s-t)\Lambda^{1/2}} a_{n_1}L^{n_1}(P_\omega^{n_1})
 \, ds
 + \int_0^t e^{i(s-t)\Lambda^{1/2}} \Gamma^n N_3(s) \, ds.
\end{align}
The linear flow $e^{-it\Lambda^{1/2}} u_0$ is directly handled using Lemma \ref{lemlinear}.

\subsubsection{Reduction of the proof of Proposition \ref{propdecayirr}}
To prove Proposition \ref{propdecayirr} it then suffices to prove the following bounds:
\begin{align}\label{decest0}
& 
  {\Big\| \int_{0}^t e^{i(s-t)\Lambda^{1/2}} B_{0}(u^{n_1},u^{n_2})\, ds \Big \|}_{W^{r,\infty}} \lesssim
  \e_0^{1+} \jt^{-1}, \qquad r + (|n_1|+|n_2|) \leq N_1;
\end{align}
for sufficiently small $c' = c_{r,n}'$
\begin{align}\label{decest1}
\begin{split}
& 
{\Big\| \int_0^t e^{i(s-t)\Lambda^{1/2}} B(s) \, ds \Big\|}_{W^{r,\infty}} \leq c' \e_0 \jt^{-1},
  \\
  & \mbox{with} \quad B \in \big\{ B_{0,1} (u_\pm^{n_1}, P_\omega^{n_2}), \,
  \, B_{1} (P_\omega^{n_1}, P_\omega^{n_2}) \big\}, \qquad r + (|n_1|+|n_2|) \leq N_1;
\end{split}
\end{align}
for sufficiently small $c'' = c''_{n_1}$
\begin{align}\label{decest1'}
\begin{split}
{\Big\| \int_0^t e^{i(s-t)\Lambda^{1/2}} L^{n_1}(P_\omega^{n_1}) \, ds \Big\|}_{W^{r,\infty}} \leq
  c''\e_0 \jt^{-1}, \qquad r + |n_1| \leq N_1;
\end{split}
\end{align}
and, finally,
\begin{align}\label{decest2}
& 
  {\Big\| \int_0^t e^{i(s-t)\Lambda^{1/2}} \Gamma^n N_3(s) \, ds \Big\|}_{W^{r,\infty}} \lesssim \e_0^2 \jt^{-1},
  \qquad r + |n| \leq N_1.
\end{align}

\begin{remark}\label{Remdecay}
As mentioned in Remark \ref{Rempropdecayirr},
we are actually going to show stronger estimates than \eqref{decest0}-\eqref{decest2},
with $\ell^1$ sums over frequencies, that is, we will prove all of the estimates
for the Besov $B^{r}_{\infty,1}$ instead of the $W^{r,\infty}$ norm.
These bounds are essentially automatic in view of the following:
(a) the estimates for bilinear forms \eqref{prdecM1} which we are going to use  
to establish the bounds needed for \eqref{decest0} and
(b) the linear bound \eqref{linest1} which we are going to use (as in Lemma \ref{lemdecayDu}, for example)
and which already has the $\ell^1$ sum on the right-hand side.
\end{remark}
%

For convenience we recall that, in what follows, we will be working under the assumptions \eqref{decayirrasu}
and \eqref{aprioridecay}, that is,
for all $t\leq T_{\e_1}$,
\begin{align}
\label{decayas1}
& \sum_{r+|k| \leq N_0-20} {\| u^k(t) \|}_{H^r(\R^2)}
  \lesssim \e_0 \jt^{3p_0}, 
\\
\label{decayas2}
& \sum_{r+|k| \leq N_1} {\big\| u^k(t) \big\|}_{W^{r,\infty}(\R^2)}
  \leq c_0 \e_0 \jt^{-1}.
\end{align}
We will often use these assumptions without referring to them explicitly.
We will also use the bound on the rotational component in \eqref{decayirrasVP}:
\begin{align}\label{decayVP1}
\sum_{r + k \leq N_1+12-j
  } {\big\| \partial_t^j P_\omega(t) 
  \big\|}_{Z^r_k(\R^2)} \lesssim \e_1 \e_0^j \jt^\delta,  \quad j=0,1.
\end{align}

\subsection{Normal form transformation}\label{ssecNF}
For the quadratic terms which only depend on the dispersive variable we need normal form transformations.
Define the profile
\begin{align}\label{decprof}
f^n(t) =  e^{it\Lambda^{1/2}} u^n(t)
\end{align}
and, in accordance with \eqref{decayB0}, write
\begin{align}\label{prdec1}
\begin{split}
& \int_{0}^t e^{is\Lambda^{1/2}} B_{0}(u^{n_1},u^{n_2})\, ds = \frac{1}{(2\pi)^2}
  \sum_{\eps_1,\eps_2 \in \{+,-\}} \mathcal{F}^{-1} I_{\eps_1\eps_2}(t),
\\
& I_{\eps_1\eps_2}(t) :=
  \int_{0}^t \!\! \int_{\R^3} e^{is \Phi_{\eps_1\eps_2}(\xi,\eta)}
  b_{\eps_1\eps_2}(\xi,\eta) \widehat{f_{\eps_1}^{n_1}}(\xi-\eta) \widehat{f_{\eps_2}^{n_2}}(\eta) \, d\eta ds,
\end{split}
\end{align}
where
\begin{align}\label{prdecphase}
\Phi_{\eps_1\eps_2}(\xi,\eta) = |\xi|^{1/2} - \eps_2 |\xi-\eta|^{1/2} - \eps_1 |\eta|^{1/2},
\end{align}
and we omitted the dependence on $n_1,n_2$ of $I_{\eps_1\eps_2}$.
In what follows we will often use the short-hand
\begin{align}\label{prdenot}
f_j := f_{\eps_j}^{n_j}, \quad u_j := u^{n_j}_{\eps_j} = e^{-\eps_j it\Lambda^{1/2}}f_j, \quad j=1,2.
\end{align}

Define
\begin{align}\label{prdecm}
m_{\eps_1\eps_2}(\xi,\eta) = \frac{b_{\eps_1\eps_2}(\xi,\eta)}{i\Phi_{\eps_1\eps_2}(\xi,\eta)}
\end{align}
where the symbols $b_{\eps_1\eps_2}$ are given by \eqref{derb+-}.
Integrating by parts in time we can write
\begin{subequations}\label{prdec2}
\begin{align}\label{prdec2.1}
I_{\eps_1\eps_2}(t) & = \left. \int_{\R^3} e^{is \Phi_{\eps_1\eps_2}(\xi,\eta)}
  m_{\eps_1\eps_2}(\xi,\eta) \what{f_1}(s,\xi-\eta) \what{f_2}(s,\eta) \, d\eta  \right|^{s=t}_{s=0}
\\
\label{prdec2.2}
& - \int_{0}^t \!\! \int_{\R^3} e^{is \Phi_{\eps_1\eps_2}(\xi,\eta)}
  m_{\eps_1\eps_2}(\xi,\eta) 
  \pa_s \Big[ \what{f_1}(s,\xi-\eta) \what{f_2}(s,\eta) \Big] \, d\eta ds.
\end{align}
\end{subequations}

To estimate the above expressions we recall the definition in \eqref{symclass} and observe that
\begin{align}\label{prdecphasebd}
\fp{ {\| \Phi^{k,k_1,k_2}(\xi,\eta) \|}_{\S^\infty} \gtrsim 2^{(1/2)\min(k,k_1,k_2)} }
\end{align}
and, therefore, in view of \eqref{derb+-est} and Lemma \ref{lemsym},
\begin{align}\label{prdec3}
{\| m_{\eps_1\eps_2}^{k,k_1,k_2}(\xi,\eta) \|}_{\S^\infty} \lesssim
  2^{(1/2)k} \cdot 2^{(1/2)\max(k_1,k_2)}.
\end{align}

In particular, the symbols appearing in \eqref{prdec2} are not singular
and, using the estimate \eqref{lemsymbil} of Lemma \ref{lemsym} with the bound \eqref{prdec3},
we get, for all $1/p = 1/p_1 + 1/p_2$,
\begin{align}\label{prdecMbound}
{\| P_k M_{\eps_1\eps_2} (P_{k_1}g(t), P_{k_2}h(t)) \|}_{W^{r,p}}
  \lesssim 2^{r k_+} 2^{k/2} 2^{(1/2)\max(k_1,k_2)}
  {\| P_{k_1}g(t) \|}_{L^{p_2}} {\| P_{k_2}h(t) \|}_{L^{p_1}}.
\end{align}
With $p=p_1=p_2 = \infty$, using Bernstein's inequality, we can deduce that
\begin{align}\label{prdecM1}
\begin{split}
{\| M_{\eps_1\eps_2} (g,h) \|}_{L^\infty}
  \lesssim \sum_k {\| P_k M_{\eps_1\eps_2} (g,h) \|}_{L^\infty}
  \lesssim {\| g \|}_{W^{2,\infty-}} {\| h \|}_{W^{2,\infty-}}.
\end{split}
\end{align}
Using \eqref{prdecMbound} with $p=2$ and Bernstein's inequality 
we can obtain
\begin{align}\label{prdecM2}
\begin{split}
{\| M_{\eps_1\eps_2} (g, h) \|}_{H^r} \lesssim
  \min \big( {\| g \|}_{H^{r+2}} {\| h \|}_{W^{2,\infty-}}, {\| g \|}_{W^{2,\infty-}} {\| h \|}_{H^{r+2}} \big).
\end{split}
\end{align}

\subsection{Proof of \eqref{decest0}}\label{secdecest0} 
From \eqref{prdec1} and \eqref{prdec2.1}-\eqref{prdec2.2} we see that the desired bound \eqref{decest0}
follows if we show
\begin{align}
\label{decest0a}
& {\big\|  e^{-it\Lambda^{1/2}} \mathcal{F}^{-1} \eqref{prdec2.1} \big\|}_{W^{r,\infty}} \lesssim
  \e_0^{1+} \jt^{-1},
 \\
\label{decest0b}
& {\big\|  e^{-it\Lambda^{1/2}}\mathcal{F}^{-1} \eqref{prdec2.2} \big\|}_{W^{r,\infty}} \lesssim \e_0^{1+} \jt^{-1},
\end{align}
for all $r + |n_1|+|n_2| \leq N_1$ (recall the notation in \eqref{prdenot}).

\subsubsection{Proof of \eqref{decest0a}}
We only look at the terms in \eqref{prdec2.1} with $s=t$ since the $s=0$ contribution is easier to estimate.
We write these as
\begin{align*}
\int_{\R^3} e^{it \Phi_{\eps_1\eps_2}(\xi,\eta)}
  \frac{b_{\eps_1\eps_2}(\xi,\eta)}{i\Phi_{\eps_1\eps_2}(\xi,\eta)}
  \widehat{f_1}(t,\xi-\eta) \widehat{f_2}(t,\eta) \, d\eta
  = \mathcal{F} \big[ e^{it \Lambda^{1/2}} M_{\eps_1\eps_2} (u_1(t), u_2(t)) \big],
\end{align*}
and, according to \eqref{decest0a}, 
aim to show that
\begin{align}\label{prdec20}
{\| M_{\eps_1\eps_2} (\nabla^{r_1}u_1(t), \nabla^{r_2}u_2(t)) \|}_{L^\infty}
  \lesssim \e_0^{1+} \jt^{-1}, \qquad r_1+r_2+(|n_1|+|n_2|) \leq N_1.
\end{align}

Using \eqref{prdecM1} we can estimate
\begin{align}\label{prdec22}
\begin{split}
& {\| M_{\eps_1\eps_2} (\nabla^{r_1}u_1(t), \nabla^{r_2}u_2(t)) \|}_{L^\infty}
\lesssim {\| \nabla^{r_1}u_1(t) \|}_{W^{2,\infty-}} {\| \nabla^{r_2}u_2(t)  \|}_{W^{2,\infty-}}.
\end{split}
\end{align}
Then, recall from \eqref{decayas1}-\eqref{decayas2} that since $u_j = u^{n_j}_{\eps_j}$,
we have in particular
\begin{align}\label{prdecapriori}
\jt {\| u_j \|}_{W^{r_j,\infty}} + \jt^{-3p_0} {\| u_j \|}_{H^{r_j+(N_0-N_1-20)}} \lesssim \e_0,
  \qquad r_j+|n_j| \leq N_1,
\end{align}
so that Sobolev-Gagliardo-Nirenberg interpolation gives
\begin{align}\label{prdecapriori'}
{\| u_j \|}_{W^{r_j+2,\infty-}} \lesssim \e_0 \jt^{-2/3},  \qquad r_j+|n_j| \leq N_1.
\end{align}
Using this inequality in \eqref{prdec22} gives \eqref{prdec20}.

\subsubsection{Proof of \eqref{decest0b}}
To estimate bulk terms \eqref{prdec2.2}
we need estimates for the time derivative of the profile $f_j$.
First, from the definition \eqref{prdenot}
we have $\partial_t f_j = e^{\eps_j it\Lambda^{1/2}}(\partial_t + \eps_j i \Lambda^{1/2})u_j$.
Assuming $\eps_j = +$ (the other case is obtained by conjugation), with the notation \eqref{decayun}
and using the equation \eqref{decayequn} we have
\begin{align}\label{prdtf0.1}
& \partial_t f^n = e^{it\Lambda^{1/2}} \sum_{|n_1| \leq |n|} a_{n_1} L^{n_1}(P_\omega^{n_1})
  + e^{it\Lambda^{1/2}} Q^n
\end{align}
with
\begin{align}\label{prdtf0.2}
\begin{split}
Q^n := \sum_{|n_1|+|n_2| \leq |n|} c_{n_1n_2} B_0(u^{n_1},u^{n_2})
  + d_{n_1n_2} B_{0,1}(u^{n_1} + \bar{u}^{n_1}, P_\omega^{n_2})
  \\ + \sum_{|n_1|+|n_2| \leq |n|}B_1(P_\omega^{n_1}, P_\omega^{n_2}) + \Gamma^n N_3.
\end{split}
\end{align}
We first establish some estimates for $Q^n$
and will then rely on \eqref{decayVP1} to estimate the contribution 
from the operator $L(P_\omega)$.

\begin{lemma}\label{lemdtf}
Under the a priori assumptions, for any $t \leq T$, we have
\begin{align}\label{dtfest}
{\big\| Q^n \big\|}_{H^r} \lesssim \e_0 \jt^{-2/3}, \qquad  r + |n| \leq N_1 + 11,
\end{align}
and
\begin{align}\label{dtfest'}
{\big\| Q^n \big\|}_{W^{r,\infty}} \lesssim \e_0 \jt^{-5/4}, \qquad  r + |n| \leq N_1-5.
\end{align}
\end{lemma}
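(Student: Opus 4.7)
The plan is to prove the two bounds by estimating each of the four types of terms in the definition \eqref{prdtf0.2} of $Q^n$ separately, using the a priori bounds \eqref{decayas1}--\eqref{decayas2} on $u$, the bound \eqref{decayVP1} on $P_\omega$, and the bilinear estimates \eqref{prdecM1}--\eqref{prdecM2} for the quadratic operators with symbols $b_{\eps_1\eps_2}$.

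For the quadratic $B_0(u^{n_1},u^{n_2})$ with $|n_1|+|n_2|\leq |n|\leq N_1+11$, I would use the bilinear bound \eqref{prdecM2}, placing the factor with fewer vector fields (say $u^{n_1}$ with $|n_1|\leq |n|/2$) in $W^{2,\infty-}$ and the other in $H^{r+2}$. Sobolev--Gagliardo--Nirenberg interpolation between \eqref{decayas2} and \eqref{decayas1} gives $\|u^{n_1}\|_{W^{2,\infty-}}\lesssim \e_0\jt^{-1+}$ for $|n_1|\leq N_1-2$, while \eqref{decayas1} yields $\|u^{n_2}\|_{H^{r+2}}\lesssim \e_0 \jt^{3p_0}$ provided $r+2+|n_2|\leq N_0-20$; this holds thanks to the gap $N_0-20 \geq N_1+13$ which follows from \eqref{paramN}. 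The product is $\lesssim \e_0^2 \jt^{-1+3p_0+}$, which is better than both $\e_0\jt^{-2/3}$ and (using $\e_0\leq \e_1$ and $\jt\leq T_{\e_1}$) comfortably below $\e_0\jt^{-5/4}$ in $L^\infty$ after one more Bernstein step using \eqref{prdecM1}.

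For the mixed terms $B_{0,1}(u^{n_1}+\bar u^{n_1}, P_\omega^{n_2})$ and the purely rotational $B_1(P_\omega^{n_1}, P_\omega^{n_2})$, I would split into cases based on which index is larger and estimate directly by H\"older in $L^2\times L^\infty$ (resp.\ $L^\infty\times L^\infty$ for the $W^{r,\infty}$ bound), using \eqref{decayVP1} for $P_\omega$ in $Z^r_k$ with the embedding $Z^r_k\hookrightarrow H^{r}$ and $Z^{r+2}_k\hookrightarrow W^{r,\infty}$, and \eqref{decayas2} for $u$ in $L^\infty$. Since $r+|n_1|+|n_2|\leq N_1+11$ and the $P_\omega$ bound at regularity $N_1+12$ is $\e_1\jt^{\delta}$, the product with $\e_0\jt^{-1}$ from $u$ (or another $\e_1\jt^{\delta}$) gives $\lesssim \e_0\e_1\jt^{-1+\delta}$ or $\e_1^2\jt^{2\delta}$, which is absorbed into $\e_0\jt^{-2/3}$, and, using $\e_1\lesssim \jt^{-1/(1-\delta)}$ on $[0,T_{\e_1}]$, also into $\e_0\jt^{-5/4}$ provided $\delta$ is small (as in \eqref{param}). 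A slightly more careful treatment of the symbol $\nabla\Lambda^{-1/2}$ in $B_{0,1}$ at low frequencies is needed: Littlewood--Paley decomposition separates the problem into a low-frequency piece handled by Bernstein and the $L^{6/5}$ component of $P_\omega$ encoded via the space $\mathcal{X}^n$, and a high-frequency piece where $\nabla\Lambda^{-1/2}$ is harmless.

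For $\Gamma^n N_3$, which collects cubic and higher-order terms coming from the derivation \eqref{der33}, each such term is a product of at least three factors among $\{u, P_\omega, \text{coefficients in } h\}$ modulo Coifman--Meyer type symbols, so the natural product estimates combined with the a priori bounds distribute two factors of $\jt^{-1+}$ decay (from \eqref{decayas2} or Sobolev) and place the remaining high-regularity factor in $H^{r+O(1)}$ or $Z^r_k$, yielding a net bound $\lesssim \e_0^3\jt^{-2+}$, which is far stronger than both required bounds. The main obstacle is purely bookkeeping: ensuring that the derivative counts on the multilinear terms produced by commuting vector fields with the various paradifferential/Coifman--Meyer symbols in $N_3$ stay within the windows $N_0-20$ for $L^2$ control and $N_1$ for $L^\infty$ control, exploiting the generous gap \eqref{paramN} between $N_0, N_1,$ and $N=N_1+12$. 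Once this is verified for each schematic term, the two claimed bounds follow by summing the $O(1)$ contributions.
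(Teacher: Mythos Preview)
Your overall strategy matches the paper's proof: estimate each of $B_0$, $B_{0,1}$, $B_1$, and $\Gamma^n N_3$ separately, placing the lower-order factor in $L^\infty$ and the higher-order one in $L^2$, and invoking the available a priori bounds on $u$ and $P_\omega$. A few corrections are in order, however.

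First, the estimates \eqref{prdecM1}--\eqref{prdecM2} are for the normal-form operator $M_{\eps_1\eps_2}$ with symbol $m_{\eps_1\eps_2}=b_{\eps_1\eps_2}/(i\Phi_{\eps_1\eps_2})$, \emph{not} for $B_0$ itself. The paper instead derives the simple product-type bound \eqref{prdtf1} directly from the symbol estimate \eqref{derb+-est}; this is what you should use. Second, you wrote $\e_0\leq\e_1$, which is backwards. Third, your low-frequency discussion for $B_{0,1}$ is unnecessary: the operator $\nabla\Lambda^{-1/2}=|\nabla|^{1/2}R$ acts on $u$, not on $P_\omega$, and is harmless in $L^2$-based spaces; the paper simply bounds $\|B_{0,1}(u_\pm^{n_1},P_\omega^{n_2})\|_{L^2}\lesssim\|u^{n_1}\|_{H^3}\|P_\omega^{n_2}\|_{L^2}$ via Sobolev embedding. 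Your reference to the bulk space $\mathcal{X}^n$ and its $L^{6/5}$ component for the boundary quantity $P_\omega$ is misplaced. Finally, for $\Gamma^n N_3$ the paper does not redo the cubic bookkeeping but simply invokes Lemma~\ref{lemN3} (estimate \eqref{lemN3conc}), which already gives $\|N_3\|_{Z^r_k}\lesssim\e_0^2\jt^{-5/4}$ for $r+k\leq N_1+11$; this is cleaner than your sketch.
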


Notice that the estimates in the above Lemma are not optimal in terms of decay rates,
since $Q^n$ is effectively quadratic in $(u,P_\omega)$, but they will suffice for our purposes.

\begin{proof}[Proof of Lemma \ref{lemdtf}]
We first estimate separately all the terms on the right-hand side of \eqref{prdtf0.2}
in $H^r$ with the claimed number of vector fields. 
The $L^\infty$-type estimate will follow similarly.

\smallskip
{\it Proof of \eqref{dtfest}}.
In view of \eqref{derb+-est}, $B_0(u,u)$ satisfies standard product estimates up to a small loss
of derivatives: 
\begin{align}\label{prdtf1}
{\| B_0(g,h) \|}_{L^p} \lesssim \min\big( {\| g \|}_{W^{2,p}} {\| h \|}_{W^{2,\infty}},
  {\| g \|}_{W^{2,\infty}} {\| h \|}_{W^{2,p}} \big).
\end{align}
The desired bound on the $B_0$ terms is implied by
\begin{align}\label{prdtf2}
{\| B_0(\nabla^{r_1}u^{n_1}, \nabla^{r_2} u^{n_2}) \|}_{L^2} \lesssim \e_0 \jt^{-2/3},
  \qquad r_1+r_2 + (|n_1|+|n_2|) \leq \fp{N_0 - 22}.
\end{align}
To show \eqref{prdtf2}, without loss of generality, let us assume that $r_1+|n_1| \leq \fp{(N_0-22)}/2 \leq N_1-2$,
see \eqref{param}, and estimate using \eqref{prdtf1},
\begin{align*}
{\| B_0(\nabla^{r_1}u^{n_1},\nabla^{r_2}u^{n_2}) \|}_{L^2}
  &  \lesssim {\| \nabla^{r_1}u^{n_1} \|}_{W^{2,\infty}} {\| \nabla^{r_2}u^{n_2} \|}_{H^2}
  \lesssim \e_0 \jt^{-1} \cdot \e_0 \jt^{3p_0},
\end{align*}
which is more than sufficient; we have used \eqref{decayas1} (since $r_2+2+|n_2| \leq \fp{N_0 - 20}$)
and \eqref{decayas2} for the last inequality .


The terms $B_{0,1}(u_\pm^{n_1}, P_\omega^{n_2})$ and $B_1(P_\omega^{n_1}, P_\omega^{n_2})$
are easier to treat, using the estimates for $P_\omega^n$ in \eqref{decayVP1}.
From the definition in \eqref{decayB1}, H\"older's inequality and Sobolev's embedding we have,
for $r_1+r_2 + (|n_1|+|n_2|) \leq N_1+11$,
\begin{align*}
{\| B_{0,1}(u_\pm^{n_1}, P_\omega^{n_2}) \|}_{L^2} \lesssim
  {\| u_\pm^{n_1} \|}_{H^3} {\| P_\omega^{n_2} \|}_{L^2} \lesssim \e_0 \jt^{3p_0} \cdot \e_1 \jt^\delta,
\end{align*}
having used again \eqref{decayas1};
this is sufficient since $\e_1 \lesssim \jt^{-1}$.
Similarly, assuming without loss of generality that $r_1+ |n_1| \leq (N_1+11)/2$,
we can estimate, using again \eqref{decayVP1},
\begin{align*}
{\| B_1(P_\omega^{n_1}, P_\omega^{n_2}) \|}_{L^2} \lesssim
  {\| P_\omega^{n_1} \|}_{H^2} {\| P_\omega^{n_2} \|}_{L^2} \lesssim \e_1^2 \jt^{2\delta}
\end{align*}
which clearly suffices.
Since the bound for $\Gamma^n N_3$ follows directly from the stronger estimate
\eqref{lemN3conc}, the proof of \eqref{dtfest} is concluded.

{\it Proof of \eqref{dtfest'}}.
The $L^\infty$ type bound  \eqref{dtfest'} can be obtained similarly, by estimating in $W^{r,\infty}$
all the terms on the right-hand side of \eqref{decayequn} by means of the product estimate \eqref{prdtf1}
with $p=\infty$,
and using \eqref{lemN3conc} for $N_3$.
\end{proof}

We now go back to the proof of \eqref{decest0b}. 
First observe that by symmetry 
it suffices to consider the case when $\partial_s$
hits the first profile in the formulas for \eqref{prdec2.2}, and show
\begin{align}\label{prdec35}
\begin{split}
{\Big\| e^{-it \Lambda^{1/2}} \int_0^t e^{is\Lambda^{1/2}} M_{\eps_1\eps_2}
  \big(e^{-\eps_1is\Lambda^{1/2}} \partial_s f^{n_1}_{\eps_1}(s), u^{n_2}_{\eps_2}(s) \big) \, ds \Big\|}_{W^{r,\infty}}
  \lesssim \e_0^{1+} \jt^{-1},
  \\
  \qquad r+(|n_1|+|n_2|) \leq N_1.
\end{split}
\end{align}
In what follows we drop the $\eps_1,\eps_2$ signs since they do not play any role.
Using \eqref{prdtf0.1}-\eqref{prdtf0.2} we see that \eqref{prdec35} reduces to showing
the two following estimates:
\begin{align}\label{prdec35a}
\begin{split}
{\Big\| e^{-it \Lambda^{1/2}} \int_0^t e^{is\Lambda^{1/2}} M 
  \big( Q^{n_1} 
  (s), u^{n_2}
  (s) \big) \, ds \Big\|}_{W^{r,\infty}}
  \lesssim \e_0^{1+} \jt^{-1},
  \qquad r+(|n_1|+|n_2|) \leq N_1
\end{split}
\end{align}
and
\begin{align}\label{prdec35b}
\begin{split}
{\Big\| e^{-it \Lambda^{1/2}} \int_0^t e^{is\Lambda^{1/2}} M 
  \big(L^{n_1}(P_\omega^{n_1}), u^{n_2}
  (s) \big) \, ds \Big\|}_{W^{r,\infty}} \lesssim \e_0^{1+} \jt^{-1},
  \qquad r+(|n_1|+|n_2|) \leq N_1.
\end{split}
\end{align}

In what follows we are going to use the following lemma,
which is a consequence of the linear decay estimate in Lemma \ref{lemlinear}.

\begin{lemma}\label{lemdecayDu}
Let $F=F(t,x)$ be such that, for all $k=0,\dots,3$ and $n=0,1$,
\begin{align}\label{decayDuas}
\sum_{\ell \in \Z} 2^{\ell/2} {\| S^n \Omega^k P_\ell F(t) \|}_{L^2} \leq A_n(t).
\end{align}
Then we have the non-homogeneous decay bound
\begin{align}\label{decayDuconc}
{\Big\| \int_0^t e^{i(s-t)\Lambda^{1/2}} F(s) \, ds \Big\|}_{L^\infty} \lesssim
A_0(t) + \jt^{-1} \int_0^t ( A_0(s) + A_1(s)) \, ds.
\end{align}
\end{lemma}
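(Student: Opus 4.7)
The plan is to set $U(t) := \int_0^t e^{i(s-t)\Lambda^{1/2}} F(s)\, ds$, observe that $U$ solves $(\pa_t + i\Lambda^{1/2})U = F$ with $U(0) = 0$, and apply to it the Klainerman--Sobolev type dispersive estimate of Lemma \ref{lemlinear}. That estimate will bound $\|U(t)\|_{L^\infty}$ by $\jt^{-1}$ times a weighted Besov-type norm of the profile $f(t) := e^{it\Lambda^{1/2}}U(t) = \int_0^t e^{is\Lambda^{1/2}} F(s)\, ds$, involving the rotation $\Omega$ and the spatial scaling $x\cdot\nabla$ acting on $f(t)$ in the form $\sum_\ell 2^{\ell/2}\sum_{k\leq 3}\|\Omega^k P_\ell (\cdot)\|_{L^2}$. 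The task is therefore to control the two pieces $f(t)$ and $(x\cdot\nabla)f(t)$ in this norm by $A_0,A_1$ and $\jt$.

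For the first piece, I would use that $\Omega$ commutes with $e^{is\Lambda^{1/2}}$ and with $P_\ell$, so that $\Omega^k P_\ell f(t) = \int_0^t e^{is\Lambda^{1/2}} \Omega^k P_\ell F(s)\, ds$. Minkowski's inequality in $L^2_x$ together with the unitarity of the propagator yields $\|\Omega^k P_\ell f(t)\|_{L^2} \leq \int_0^t \|\Omega^k P_\ell F(s)\|_{L^2}\, ds$, and summing in $\ell$ and $k\leq 3$ with the weight $2^{\ell/2}$ produces a contribution $\int_0^t A_0(s)\, ds$, to be multiplied by $\jt^{-1}$ at the end.

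The spatial scaling piece is the heart of the argument and requires one integration by parts in $s$. Since $[x\cdot\nabla,\Lambda^{1/2}] = -\tfrac{1}{2}\Lambda^{1/2}$ as Fourier multipliers, one has the identity $[x\cdot\nabla, e^{is\Lambda^{1/2}}]F(s) = -\tfrac{s}{2}\pa_s\big(e^{is\Lambda^{1/2}}\big)F(s)$. Commuting $x\cdot\nabla$ through the semigroup, integrating by parts in $s$, and using the elementary identity $\tfrac{s}{2}\pa_s F = SF - x\cdot\nabla F$ to rewrite the resulting $s\pa_s F$ term in terms of $S$ and $x\cdot\nabla$, I expect to land on the clean formula
\begin{equation*}
x\cdot\nabla f(t) \;=\; -\tfrac{t}{2}\, e^{it\Lambda^{1/2}} F(t) \,+\, \tfrac{1}{2}\, f(t) \,+\, \int_0^t e^{is\Lambda^{1/2}} SF(s)\, ds,
\end{equation*}
where the $x\cdot\nabla F$ terms conveniently cancel. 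Taking the weighted Besov norm with $\Omega^k$, $k\leq 3$, using unitarity of the propagator, Minkowski's inequality, and the bound already obtained for $f(t)$, one gets an estimate of order $\jt\, A_0(t) + \int_0^t A_0(s)\,ds + \int_0^t A_1(s)\,ds$. Multiplying by the dispersive factor $\jt^{-1}$ from Lemma \ref{lemlinear} and combining with the bound on the $\Omega$ piece yields exactly the right-hand side of \eqref{decayDuconc}, with the non-decaying $A_0(t)$ arising entirely from the boundary contribution $-\tfrac{t}{2}\,e^{it\Lambda^{1/2}}F(t)$.

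The main obstacle is precisely the non-commutation of $x\cdot\nabla$ with the half-wave propagator $e^{is\Lambda^{1/2}}$, which forces the $s\pa_s$ integration by parts above and produces a boundary term of size $\jt\, A_0(t)$; after multiplication by the dispersive gain $\jt^{-1}$ this becomes the non-decaying $A_0(t)$ in the conclusion. A purely time-split strategy $\int_0^t = \int_0^{t/2}+\int_{t/2}^t$ does not avoid the difficulty, since the near-endpoint regime $s\approx t$ provides no dispersive gain from the propagator $e^{i(s-t)\Lambda^{1/2}}$ and one is again forced to invoke the scaling vector field, regenerating the same boundary contribution.
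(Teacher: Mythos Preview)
Your proposal is correct and follows essentially the same approach as the paper: apply the linear estimate \eqref{linest1} to the profile $f(t)=\int_0^t e^{is\Lambda^{1/2}}F(s)\,ds$, then handle the spatial scaling $\Sigma=x\cdot\nabla$ via the relation $\Sigma=S-\tfrac{1}{2}s\partial_s$ and an integration by parts in $s$, producing exactly the boundary term $\sim t A_0(t)$ and the bulk $\int_0^t(A_0+A_1)\,ds$. Your explicit identity for $x\cdot\nabla f(t)$ is a slightly more transparent packaging of the same computation the paper carries out.
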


\begin{proof}
We first apply the linear estimate \ref{linest1} to obtain
\begin{align*}
\jt \, {\Big\| e^{-it\Lambda^{1/2}} \int_0^t e^{is\Lambda^{1/2}} F(s) \, ds \Big\|}_{L^\infty}
  & \lesssim \sup_{k=0,\dots,3} \sum_{\ell\in\Z} 2^{\ell/2}
  {\Big\| \Sigma \Omega^k \int_0^t e^{is\Lambda^{1/2}} P_\ell F(s) \, ds \Big\|}_{L^2}
  \\
  & \lesssim \sup_{k=0,\dots,3, \, n=0,1} \sum_{\ell\in\Z} 2^{\ell/2}
  {\Big\| \int_0^t e^{is\Lambda^{1/2}} S^n \Omega^k P_\ell F(s) \, ds \Big\|}_{L^2}
  \\
  & + \sup_{k=0,\dots,3} \sum_{\ell\in\Z} 2^{\ell/2}
  {\Big\| \int_0^t s \partial_s \Big[ e^{is\Lambda^{1/2}} \Omega^k P_\ell F(s) \Big] \, ds \Big\|}_{L^2}.
\end{align*}
We have used that $\Sigma := x \cdot \nabla = S - (1/2)s\partial_s$,
and $[S,e^{is\Lambda^{1/2}}] = -1$ and $[\Omega,e^{is\Lambda^{1/2}}]=0$.
The first of the two terms on the above right-hand side is already accounted for in the bound \eqref{decayDuconc}.
For the last term we integrate by parts in $s$ and use the assumption \eqref{decayDuas} to conclude.
\end{proof}


\begin{proof}[Proof of \eqref{prdec35a}]
Using Lemma \ref{lemdecayDu} above,
and commuting the scaling and rotation vector fields and derivatives, we see that
in order to obtain \eqref{prdec35a} it suffices to show
\begin{align}\label{prdec36}
\begin{split}
{\big\| M \big( 
  \nabla^{r_1} Q^{n_1}(t), \nabla^{r_2}u^{n_2}(t) \big) \big\|}_{L^2}
  \lesssim \e_0^{1+} \jt^{-1-}, \\ \qquad |r_1|+|r_2|+|n_1|+|n_2| \leq N_1 + 5.
\end{split}
\end{align}
The number $N_1 + 5$ is coming from the presence of four vector fields in the definition
of $A_1$ in \eqref{decayDuas} and taking the $H^1$ norm instead of the
Besov norm $\dot{B}^{1/2}_{2,1}$.

%

%
%
%

\noindent
{\it Case $|r_1|+|n_1|\geq N_1/2+6$}.
Using \eqref{prdecM2} we can estimate the left-hand side of \eqref{prdec36} by
\begin{align*}
\begin{split}
C{\| Q^{n_1}(t) 
  \|}_{H^{|r_1|+2}} {\| u^{n_2}(t) \|}_{W^{|r_2|+2,\infty-}}.
\end{split}
\end{align*}
We can then use \eqref{dtfest} (since $|r_1|+2 + |n_1| \leq N_1+11$) to estimate the first term,
and \eqref{prdecapriori} (since $|r_2|+|n_2| \leq N_1/2+3 < N_1$) to estimate the second;
this gives and upper bound of $C\e_0 \jt^{-2/3} \cdot \e_0 \jt^{-1+}$ which suffices.

\noindent
{\it Case $|r_1|+|n_1| \leq N_1/2 + 5$}.
In this case we use \eqref{prdecM2} to estimate the left-hand side of \eqref{prdec36} by
\begin{align*}
\begin{split}
& C{\| Q^{n_1}(t) 
  \|}_{W^{|r_1|+2,\infty-}} {\| u^{n_2}(t) \|}_{H^{|r_2|+2}}
  \lesssim \e_0 \jt^{-11/10} \cdot \e_0 \jt^{3p_0},
\end{split}
\end{align*}
having used \eqref{dtfest'} 
and Sobolev-Gagliardo-Nirenberg interpolation with \eqref{dtfest},
and the a priori bound \eqref{decayas1}. 
This concludes the proof of \eqref{prdec36}, hence of \eqref{prdec35a}.
\end{proof}


\begin{proof}[Proof of \eqref{prdec35b}]
To prove \eqref{prdec35b} we are going to use the following lemma,
which gives a (non-optimal) interpolation-type estimate for $u$
when more than $N_1$ vector fields and derivatives are applied to it.

\begin{lemma}\label{lemGu}
Under the assumptions \eqref{decayas1}-\eqref{decayas2} and \eqref{decayVP1}, we have
\begin{align}\label{lemGuconc}
\sup_{2^k \geq \jt^{-2/3}}
  {\| P_k \nabla^r u^n(t) \|}_{L^\infty} \lesssim \e_0 \jt^{-2/3}, \qquad |r|+|n| \leq N_1+7 \quad (= N-5).
\end{align}
\end{lemma}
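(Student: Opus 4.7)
The plan is to prove the lemma by splitting the range of $|r|+|n|$ and of the frequency $2^k$ into cases, and in each case combining either Bernstein's inequality or the linear dispersive bound of Lemma \ref{lemlinear} with the two available bootstrap bounds, namely the $L^\infty$-bound \eqref{decayas2} (valid up to $N_1$ vectorfields/derivatives) and the $H^r$-bound \eqref{decayas1} (valid up to $N_0 - 20$, but with a slow growth $\jt^{3p_0}$). Since $N_0 \gg N_1$ (see \eqref{paramN}), there is plenty of margin to trade vectorfields for derivatives and absorb the small loss $\jt^{3p_0}$ into the target decay $\jt^{-2/3}$ using that $3 p_0 < 1/100$.

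First, when $|r| + |n| \leq N_1$ the bound follows directly from \eqref{decayas2} and the $L^\infty$-boundedness of the Littlewood--Paley projections, giving even the stronger decay $\jt^{-1}$. For the remaining range $N_1 < |r| + |n| \leq N_1 + 7$, I split the frequency into two pieces using an auxiliary threshold $\jt^{\gamma}$ for some fixed $\gamma \in (0, 2/3 - 6p_0)$, say $\gamma = 1/2$. On the high-frequency piece $2^k \geq \jt^{\gamma}$, I apply Bernstein
\[
\| P_k \nabla^r u^n \|_{L^\infty} \lesssim 2^k \| P_k \nabla^r u^n \|_{L^2} \lesssim 2^{k(1 - M)} \| \nabla^{r + M} u^n \|_{L^2},
\]
and use \eqref{decayas1} with $M$ chosen so that $r + M + |n| \leq N_0 - 20$, which is possible up to $M \lesssim N_0 - N_1 - 27$. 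Fixing $M$ large enough that $\gamma(M - 1) > 2/3 + 3 p_0$ gives $\| P_k \nabla^r u^n \|_{L^\infty} \lesssim 2^{k(1-M)} \e_0 \jt^{3p_0} \lesssim \e_0 \jt^{-2/3}$ uniformly in this range.

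For the low/moderate frequencies $\jt^{-2/3} \leq 2^k \leq \jt^{\gamma}$, I will introduce the profile $f^n(t) = e^{it\Lambda^{1/2}} u^n(t)$ as in \eqref{decprof} so that $u^n(t) = e^{-it\Lambda^{1/2}} f^n(t)$, and invoke Lemma \ref{lemlinear} to obtain a bound of the shape
\[
\| P_k \nabla^r u^n(t) \|_{L^\infty} \lesssim \jt^{-1}\, 2^{k/2} \sum_{|k'| \leq 3} \| \Omega^{k'} \Sigma^{\leq 1} \nabla^r f^n(t) \|_{L^2}.
\]
Commuting $\Omega$ (which commutes exactly with $e^{it\Lambda^{1/2}}$) and $\Sigma = S - \tfrac{1}{2} t\partial_t$ (whose commutator with $e^{it\Lambda^{1/2}}$ is a scalar multiple of the flow, as used in the proof of Lemma \ref{lemdecayDu}) past the linear propagator shows that the $L^2$ norm on the right is bounded by $\| \nabla^r u^{n''} \|_{L^2}$ for some $|n''| \leq |n| + 3$, and since $r + |n''| \leq N_1 + 10 \leq N_0 - 20$ this is controlled by $\e_0 \jt^{3 p_0}$ via \eqref{decayas1}. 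Hence $\| P_k \nabla^r u^n \|_{L^\infty} \lesssim \e_0 \jt^{-1 + 3 p_0 + \gamma/2} \lesssim \e_0 \jt^{-2/3}$ in this range, and gluing the two cases completes the proof. The main technical obstacle is the bookkeeping around the commutators with $e^{it\Lambda^{1/2}}$ and the precise form of the dispersive prefactor in Lemma \ref{lemlinear}; modulo this, the derivative/vectorfield count is comfortably within the scope of \eqref{decayas1}, so no delicate interpolation is required.
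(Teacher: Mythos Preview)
There is a genuine gap in the low/moderate-frequency step. Your claim that $\|\Sigma\Omega^{k'}\nabla^r f^n\|_{L^2}$ is bounded by $\|\nabla^r u^{n''}\|_{L^2}$ is incorrect: it is $S$, not $\Sigma = x\cdot\nabla$, whose commutator with $e^{it\Lambda^{1/2}}$ is a constant multiple of the flow. Writing $\Sigma = S - \tfrac12 t\partial_t$ and applying it to the profile gives an unavoidable extra piece $\tfrac12 t\,\partial_t f^n = \tfrac12 t\,e^{it\Lambda^{1/2}}(\partial_t + i\Lambda^{1/2})u^n$, so that
\[
\|\Sigma f^n\|_{L^2} \lesssim \|Su^n\|_{L^2} + \|u^n\|_{L^2} + t\,\|(\partial_t + i\Lambda^{1/2})u^n\|_{L^2}.
\]
The last term is exactly the right-hand side of the equation \eqref{decayequn}, namely $Q^n + \sum a_{n_1}L^{n_1}(P_\omega^{n_1})$; it cannot be controlled by the energy bound \eqref{decayas1} alone. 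This is why the paper applies the form \eqref{linest2} of the linear estimate, which already isolates this second contribution as the term \eqref{lemGupr2}, and then bounds it using Lemma~\ref{lemdtf} for $Q^n$ and the hypothesis \eqref{decayVP1} for the singular forcing $L^{n_1}(P_\omega^{n_1}) = -i\Lambda^{-1/2}R^{(n_1)}\cdot\partial_t P_\omega^{n_1}$.

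Two further symptoms confirm that this piece is missing from your argument: you never invoke \eqref{decayVP1} even though it is a hypothesis of the lemma, and your proof gives no role to the frequency restriction $2^k \geq \jt^{-2/3}$ in the statement. In the paper's argument that restriction is exactly what makes the $\Lambda^{-1/2}$ factor in the forcing harmless, since the $\ell$-sum in \eqref{lemGupr2} is truncated to $2^\ell \gtrsim \jt^{-2/3}$. Your high-frequency Bernstein argument and the $|r|+|n|\le N_1$ case are fine, but to close the remaining regime you must add back the $(\partial_t+i\Lambda^{1/2})u^n$ contribution and estimate it via the equation.
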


A lower bound on the frequencies in \eqref{lemGuconc} is needed to avoid dealing with very small frequencies
in the arguments below, but the exact restriction $2^k \geq \jt^{-2/3}$ is rather arbitrary
and it is unrelated to the decay rate on the right-hand side.

\begin{proof}[Proof of Lemma \ref{lemGu}]
We begin by using \eqref{linest2} to see that, for all $|r|+|n| \leq N_1+7$,
\begin{align}
\label{lemGupr1}
{\| P_k \nabla^r u^n \|}_{L^\infty} & \lesssim
  \jt^{-1} \sup_{|r|+|n| \leq N_1+11} \sum_{\ell \in \Z} 2^{\ell/2} {\| \nabla^r P_\ell u^n \|}_{L^2(\R^2)}
  \\
\label{lemGupr2}
& + \sup_{|r|+|n| \leq N_1+10} \sum_{2^\ell \gtrsim \jt^{-2/3}} 2^{\ell/2}
  {\| 
  \nabla^r P_\ell \partial_t f^n \|}_{L^2(\R^2)}.
\end{align}
Notice how we kept the restriction on not-too-low frequencies in \eqref{lemGupr2}.

The term on the right-hand side of \eqref{lemGupr1}
is estimated directly using the $L^2$ based a priori assumption \eqref{decayas1},
which gives a bound for \eqref{lemGupr1} by $C\jt^{-1} \cdot \e_0 \jt^{3p_0}$;
this is more than enough for the desired conclusion \eqref{lemGuconc}.

To handle the terms in \eqref{lemGupr2} we use the equation \eqref{prdtf0.1}. 
The contribution to $\partial_t f^n$ from the terms $Q^n$ can be estimated directly using \eqref{dtfest},
which is consistent with the bound \eqref{lemGuconc}.
For the contribution of the linear forcing term instead, we recall the definition
\eqref{decayuL} and estimate for any $|r|+|n| \leq N_1+10$,
\begin{align*}
& \sum_{2^\ell \gtrsim \jt^{-2/3}} 2^{\ell/2}
  {\| \nabla^r P_\ell L^n (P_\omega^n) \|}_{L^2(\R^2)}
  \lesssim \sum_{2^\ell \gtrsim \jt^{-2/3}} {\| \nabla^r P_\ell \, \partial_t P_\omega^n \|}_{L^2(\R^2)}
  \\
  & \lesssim
  \log(2+t) \, {\| \nabla^r \partial_t P_\omega^n \|}_{L^2(\R^2)}
  +
  \sum_{2^\ell \gtrsim 1}
  {\| \nabla^r P_\ell \, \partial_t P_\omega^n \|}_{L^2(\R^2)}
  \\
  &
  \lesssim \log(2+t) {\| \nabla^r \partial_t P_\omega^n \|}_{H^1(\R^2)}
  \lesssim \log(2+t) \e_0 \e_1 \jt^\delta,
\end{align*}
having used \eqref{decayVP1};
since $\e_1 \lesssim \jt^{-1}$ this concludes the proof of the lemma.
\end{proof}

We now proceed with the proof of \eqref{prdec35b}.
    Using again Lemma \ref{lemdecayDu} as in the proof of \eqref{prdec35a} above,
we reduce matters to an estimate analogous to \eqref{prdec36}, that is,
\begin{align}\label{prdec36'}
\begin{split}
{\big\| M \big( \nabla^{r_1} L^{n_1}(P_\omega^{n_1})(t), \nabla^{r_2}u^{n_2}(t) \big) \big\|}_{L^2}
  \lesssim \e_0^{1+} \jt^{-1-}, \\ \qquad |r_1|+|r_2|+(|n_1|+|n_2|) \leq N_1 + 5.
\end{split}
\end{align}

We first prove that
\begin{align}\label{dtVoconc'}
\sum_{k \in \Z} {\| P_k \nabla^{r} L^{n}(P_\omega^{n}) \|}_{L^{4+}} \lesssim \e_0\e_1 \jt^\delta,
  \qquad r+|n| \leq N_1+10,
\end{align}
and
\begin{align}\label{prdec36'k}
\sum_{k \in \Z} {\big\| P_k \nabla^{r}u^{n}(t) \big\|}_{L^{4-}}
  & \lesssim \e_0 \jt^{-1/4}, \qquad r+|n| \leq N_1 + 7.
\end{align}

From the definition \eqref{decayuLk}, the estimate \eqref{decayVP1}
and Bernstein's inequality, we have, for all $r+|n| \leq N_1+10$,
\begin{align*}
\sum_{k \in \Z} {\| P_k \nabla^{r} L^{n}(P_\omega^{n}) \|}_{L^{4+}}
  & \lesssim \sum_{k \leq 0} 2^{-k/2} {\| P_k \nabla^{r} \partial_t P_\omega^{n} \|}_{L^{4+}}
  + \sum_{k \geq 1} 2^{-k/2} {\| P_k \nabla^{r} \partial_t P_\omega^{n} \|}_{L^{4+}}
  \\
  & \lesssim \sum_{k \leq 0} 2^{(0+)k} {\| \nabla^{r} \partial_t P_\omega^{n} \|}_{L^2}
  + {\| \nabla^{r} \partial_t P_\omega^{n} \|}_{H^1}
  \lesssim {\| \partial_t P_\omega^{n} \|}_{H^{|r|+1}} \lesssim \e_0\e_1 \jt^\delta.
\end{align*}

Using Bernstein's inequality and interpolation with the bounds \eqref{lemGuconc} and \eqref{decayas1}
we have, for all $|r|+|n| \leq N_1 + 7$,
\begin{align*}
& \sum_{k \in \Z} {\big\| P_k \nabla^{r}u^{n}(t) \big\|}_{L^{4-}}
  \lesssim
 \sum_{2^k \leq \jt^{-2/3}} {\big\| P_k \nabla^{r} u^{n}(t) \big\|}_{L^{4-}}
 + \sum_{2^k \geq \jt^{-2/3}} {\big\| P_k \nabla^{r} u^{n}(t) \big\|}_{L^{4-}}
\\
& \lesssim \jt^{(-1/3)+} {\big\| \nabla^{r} u^{n}(t) \big\|}_{L^{2}}
 + \sum_{2^k \geq \jt^{-2/3}}
 {\big\| P_k \nabla^{r}u^{n}(t) \big\|}_{L^{2}}^{(1/2)+}
  {\big\| P_k \nabla^{r}u^{n}(t) \big\|}_{L^{\infty}}^{(1/2)-}
\\
& \lesssim \jt^{(-1/3)+} \e_0 \jt^{3p_0} + (\e_0 \jt^{-2/3})^{(1/2)-}
  \cdot \sum_{2^k \geq \jt^{-2/3}} {\big\| P_k \nabla^{r}u^{n}(t) \big\|}_{L^{2}}^{(1/2)+}
\\
& \lesssim \jt^{(-1/3)+} \e_0 \jt^{3p_0} + (\e_0 \jt^{-2/3})^{(1/2)-} \log(2+t)
  \sup_{2^k \geq \jt^{-2/3}} \big( 2^{k_+} {\big\| P_k \nabla^{r}u^{n}(t) \big\|}_{L^{2}} \big)^{(1/2)+}
\\
& \lesssim \e_0 \jt^{-1/4}.
\end{align*}
Note how we included a small loss in the second to last
inequality coming from the summation of $k$ with $\jt^{-2/3} \lesssim 2^k \lesssim 1$,
and how we used the validity of \eqref{lemGuconc} for frequencies $2^k \gtrsim \jt^{-2/3}$ in the third inequality.

Using the H\"older-type bound \eqref{prdecMbound} 
with the fact that $k \leq \max(k_1,k_2) + 5$,
and the estimates \eqref{dtVoconc'} and \eqref{prdec36'k} above,
we can bound the left-hand side of \eqref{prdec36'}
for all $|r_1|+|r_2|+|n_1|+|n_2| \leq N_1 + 5$ as follows:
\begin{align*}
\begin{split}
& {\big\| M \big( \nabla^{r_1} L^{n_1}(P_\omega^{n_1})(t), \nabla^{r_2}u^{n_2}(t) \big) \big\|}_{L^2}
\\
& \lesssim
\sum_{k_1,k_2} 2^{\max(k_1,k_2)} {\big\| P_{k_1} \nabla^{r_1} L^{n_1}(P_\omega^{n_1})(t) \|}_{L^{4+}}
  {\big\| P_{k_2} \nabla^{r_2}u^{n_2}(t) \big\|}_{L^{4-}}
\\
& \lesssim  \e_0\e_1 \jt^\delta \cdot \e_0 \jt^{-1/4};
\end{split}
\end{align*}
since $\e_1 \leq \jt^{-1}$, this concludes the proof of \eqref{prdec35b}.

\end{proof}

With \eqref{prdec35a}-\eqref{prdec35b} we have obtained
the desired estimates for the cubic bulk terms, 
and the proof of \eqref{decest0b} is concluded.
The bound \eqref{decest0} follows.

\subsection{Proof of \eqref{decest1}}\label{secprdecest1}
The proof of \eqref{decest1} in the case of the $B_1$ terms is easier than for the $B_{0,1}$ terms
so we can just focus on these latter.
From Lemma \ref{lemdecayDu} we see that it suffices to show (we drop the $\pm$)
\begin{align}\label{prdecest11}
\begin{split}
{\big\| B_{0,1} (\nabla^{r_1}u^{n_1}(t), \nabla^{r_2}P_\omega^{n_2}(t)) \big\|}_{L^2}
  \leq c' \e_0 \jt^{-1-},
  \\ |r_1|+|r_2| + |n_1|+|n_2| \leq N_1 + 5.
\end{split}
\end{align}
Let us consider the case $|r_1|+|n_1| \geq N_1/2$,
and disregard the complementary case which is easier since we can estimate
$\nabla^{r_1}u^{n_1}$ in $L^\infty$ and obtain a bound of the form $C\e_0 \jt^{-1} \cdot \e_1 \jt^{\delta}$.
    We then recall the definition \eqref{decayB1} of $B_{0, 1}$
and estimate using the bounds \eqref{decayVP1} for $P_\omega$, Bernstein,
the estimate \eqref{lemGuconc} for $P_k \nabla^r u^n$ and the a priori assumption 
\eqref{decayas1} on the energy:
for all $|r_1|+|n_1| \geq N_1/2$, $|r_2|+|n_2| \leq N_1/2+5$ we have
\begin{align*}
& {\big\| B_{0,1} (\nabla^{r_1}u^{n_1}, \nabla^{r_2}P_\omega^{n_2}) \big\|}_{L^2}
  \lesssim {\big\| \Lambda^{-1/2} \nabla \nabla^{r_1}u^{n_1} \|}_{L^\infty} {\| \nabla^{r_2}P_\omega^{n_2} \big\|}_{L^2}
  \\
  & \lesssim \Big( \sum_{2^{k_1} \leq \jt^{-2/3}} 2^{k_1/2} {\| P_{k_1} \nabla^{r_1}u^{n_1} \|}_{L^\infty}
  + \sum_{2^{k_1} \geq \jt^{-2/3}} 2^{k_1/2} {\| P_{k_1} \nabla^{r_1}u^{n_1} \|}_{L^\infty} \Big)
  \cdot \e_1 \jt^\delta
  \\
  & \lesssim \Big( \jt^{-1} {\| \nabla^{r_1}u^{n_1} \|}_{L^2} + \e_0 \jt^{-2/3} \Big) \cdot \e_1 \jt^\delta
  \\
  & \lesssim \e_0 \jt^{-2/3} \cdot \e_1 \jt^{\delta}.
\end{align*}
This is a more than sufficient bound for \eqref{prdecest11} since $\e_1 \lesssim \jt^{-1}$.

\subsection{Proof of \eqref{decest1'}}
Because of the $\Lambda^{-1/2}$ factor in \eqref{decayuLk}
we need to be careful once again about the handling of low frequencies and summations over
dyadic indexes. 
With the notation in \eqref{decayuLk}, we
denote the quantity on the left-hand side of \eqref{decest1'} 
as
\begin{align}\label{decest'pr1}
I(t) := \int_0^t e^{i(s-t)\Lambda^{1/2}} 
  \Lambda^{-1/2} R^{(n_1)} \cdot \pa_s P_\omega^{n_1}(s) \, ds;
\end{align}
we drop the dependence on $r,n_1$ with $|r| + |n_1| \leq N_1$, and recall that
$R^{(n_1)} \in \mathrm{span}\{ R, R^\perp\}$.
We then write
\begin{align}\label{decest'pr2}
\begin{split}
I & = I_{l} + I_{m} + I_{h},
\\
I_l & := P_{< L} I, \qquad I_m := P_{[L,H]} I, \qquad I_h := P_{> H} I,
\\
& L := \log_2 \e_0^2, \quad H:= \log_2 \e_0^{-2}, 
\end{split}
\end{align}
with the notation \eqref{cut0}-\eqref{LP0}; note that here $\e_0$ is the same quantity in \eqref{decest1'}.
$I_l$ is a low-frequency contribution with frequencies of size less than $\e_0^2$;
$I_h$ is a high-frequency contribution with frequencies of size larger than $\e_0^{-2}$;
and $I_m$ is the remaining `medium-frequencies' contribution.
We estimate separately the three contributions in \eqref{decest'pr2}.

We first look at the medium frequencies contribution and begin by estimating
\begin{align}\label{Imed}
{\big\| I_m \big\|}_{W^{r,\infty}} \leq C | \log(\e_0) |
  \sup_{\ell' \in [\e_0^2,\e_0^{-2}]} {\| P_{\ell'} I \|}_{W^{r,\infty}}.
\end{align}
We then want to apply Lemma \ref{lemdecayDu} with $F = P_{\ell'} \Lambda^{-1/2} R^{(n_1)} \cdot \pa_s P_\omega^{n_1}$.
We note that, for fixed $\ell'$, and $k = 0,\dots 3$, $n=0,1$, we have
\begin{align}\label{prdecV1'}
\begin{split}
& \sum_{\ell\in\Z} 2^{\ell/2} {\Big\| S^n \Omega^k P_\ell P_{\ell'} \Lambda^{-1/2} R^{(n_1)}
  \cdot \pa_s P_\omega^{n_1}(s) \Big\|}_{H^r}
  \\
  & \leq C \sum_{n_1' \leq n_1 + 4} {\big\| \pa_s P_\omega^{n_1'}(s) \big\|}_{H^r}
  \leq C \e_1 \e_0 \js^\delta,
\end{split}
\end{align}
having used \eqref{decayVP1} for the last inequality. 
Then \eqref{Imed} and the conclusion of Lemma \ref{lemdecayDu} give
\begin{align*}
{\big\| P_{[\e_0^2,\e_0^{-2}]} I \big\|}_{W^{r,\infty}}
 & \leq C | \log(\e_0) |
 \Big( \e_1 \e_0 \jt^\delta + \jt^{-1} \int_0^t \e_1 \e_0 \js^\delta \, ds \Big)
 \\
 & \leq C | \log(\e_0) | \e_1 \e_0 \jt^{\delta};
\end{align*}
the last quantity above is bounded by the right-hand side of \eqref{decest1'} as desired,
since $|\log \e_0| \leq |\log\e_1|$ and we have $t \leq T := c \e_1^{-1+\delta}$
for $c$ sufficiently small, so that
$C | \log(\e_0) | \e_1 \e_0 \jt^{\delta} \leq c'' \e_0 \jt^{-1}$ for all $t\in[0,T]$.

To handle the low-frequency contributions from $I_l$, and the high-frequency
contributions from $I_h$, we first rewrite \eqref{decest'pr1} in a different way integrating by parts in $s$:
\begin{align}\label{decest'ibp2}
I(t) & = \Lambda^{-1/2} R^{(n_1)} \cdot P_\omega^{n_1}(t)
\\
\label{decest'ibp1}
& - e^{-it\Lambda^{1/2}} \Lambda^{-1/2} R^{(n_1)} \cdot P_\omega^{n_1}(0)
\\
\label{decest'ibp3}
& - i\int_0^t e^{i(s-t)\Lambda^{1/2}} R^{(n_1)} \cdot P_\omega^{n_1}(s) \, ds.
\end{align}

Let us first look at the contribution from \eqref{decest'ibp2}.
For the low frequencies, using Bernstein's inequality and the bound \eqref{decayVP1}, we get
\begin{align}
{\| P_{< L} \Lambda^{-1/2} R^{(n_1)} \cdot P_\omega^{n_1}(t) \|}_{W^{r,\infty}}
  \lesssim 2^{L/2} {\| P_\omega^{n_1}(t) \|}_{L^2} \lesssim \e_0 \cdot \e_1 \jt^{\delta}.
\end{align}
As before this is sufficient for the desired bound by the right-hand side of \eqref{decest1'}.
For the high frequencies we instead estimate 
using Sobolev's embedding and \eqref{decayVP1},
\begin{align}
{\| P_{> H} \Lambda^{-1/2} R^{(n_1)} \cdot P_\omega^{n_1}(t) \|}_{W^{r,\infty}}
  \lesssim 2^{-H/2} {\| P_\omega^{n_1}(t) \|}_{W^{r,\infty}} \lesssim \e_0 \cdot \e_1 \jt^{\delta}.
\end{align}

The term \eqref{decest'ibp1} can be handled in the same way, relying on the bounds at the initial time:
\begin{align*}
& {\| P_{< L} e^{-it\Lambda^{1/2}} \Lambda^{-1/2} R^{(n_1)} \cdot P_\omega^{n_1}(0) \|}_{W^{r,\infty}}
  \leq 2^{L/2} {\| P_\omega^{n_1}(0) \|}_{L^2} \leq C \e_0 \e_1,
\\
& {\| P_{> H} e^{-it\Lambda^{1/2}}\Lambda^{-1/2} R^{(n_1)} \cdot P_\omega^{n_1}(0) \|}_{W^{r,\infty}}
  \leq 2^{-H/2} {\| P_\omega^{n_1}(0) \|}_{H^{r+1+}} \leq C \e_0 \e_1.
\end{align*}

Finally, we estimate the small and high frequencies contributions from the term \eqref{decest'ibp3}.
We want to apply again Lemma \ref{lemdecayDu} in a suitable way.
First, we look at $P_{< L} \eqref{decest'ibp3}$,
let $F = P_{< L} R^{(n_1)} \cdot P_\omega^{n_1}(s)$
and, for all $k = 0,\dots 3$, $n=0,1$, estimate
\begin{align*}
\begin{split}
& \sum_{\ell\in\Z} 2^{\ell/2} {\Big\| S^n \Omega^k P_{<L} P_\ell R^{(n_1)} \cdot P_\omega^{n_1}(s) \Big\|}_{H^r}
  \leq C 2^{L/2} \sum_{n_1' \leq n_1 + 4} {\big\| P_\omega^{n_1'}(s) \big\|}_{L^2}
  \leq C \e_0 \e_1 \js^\delta,
\end{split}
\end{align*}
having used \eqref{decayVP1}.
Then, applying the conclusion \eqref{decayDuconc} from Lemma \ref{lemdecayDu} we get
\begin{align}
\begin{split}
{\Big\| \int_0^t e^{i(s-t)\Lambda^{1/2}} R^{(n_1)} \cdot P_\omega^{n_1}(s) \, ds \Big\|}
  & \leq C \e_0 \e_1 \jt^\delta,
\end{split}
\end{align}
which again is consistent with \eqref{decest1'}.
For the high frequency contribution we can proceed similarly using again Lemma \ref{lemdecayDu}
with the bound
\begin{align*}
\begin{split}
& \sum_{\ell\in\Z} 2^{\ell/2} {\Big\| S^n \Omega^k P_{>H} P_\ell R^{(n_1)} \cdot P_\omega^{n_1}(s) \Big\|}_{H^r}
  \leq C 2^{-H/2} \sum_{n_1' \leq n_1 + 4} {\big\| P_\omega^{n_1'}(s) \big\|}_{H^{r+1}}
  \leq C \e_0 \e_1 \js^\delta.
\end{split}
\end{align*}
The proof of \eqref{decest1'} is completed.

\subsection{Proof of \eqref{decest2}}
The last estimate \eqref{decest2} follows similarly using again the linear decay estimate
in Lemma \ref{lemdecayDu}
and then the bound \eqref{lemN3conc} for $N_3$.

The proof of Proposition \ref{propdecayirr} is concluded. $\hfill \Box$

\appendix

\bigskip
\section{Supporting material}\label{appsupp}

\subsection{Linear decay estimate}
Here is the linear estimate which we use to prove decay in Section \ref{secdecay}.

\begin{lemma}[Linear estimate]\label{lemlinear}
With the definitions \eqref{2dvf} and $\Sigma:= x\cdot \nabla_x$, $x\in\R^2$, we have
\begin{align}\label{linest1}
{\big\| e^{-it\Lambda^\frac{1}{2}} f\big\|}_{L^\infty_x(\R^2)} \lesssim
  |t|^{-1} \sum_{k \leq 3} \sum_{\ell \in \Z}
  2^{\ell/2} \fp{\big( {\| \Sigma \Omega^k P_\ell f \|}_{L^2} + {\| \Omega^k P_\ell f \|}_{L^2} \big) }.
\end{align}
%
%
As a consequence
\begin{align}\label{linest2}
\begin{split}
{\| u \|}_{L^\infty_x(\R^2)} & \lesssim
  \jt^{-1} \sum_{|I| \leq 3} \sum_{\ell \in \Z} 2^{\ell/2} \fp{ \big( {\| S (\Omega,\nabla)^I P_\ell u \|}_{L^2(\R^2)}
  + {\|(\Omega,\nabla)^I P_\ell u \|}_{L^2(\R^2)} + {\| u \|}_{H^2(\R^2)} \big) }
  \\ & + \sum_{|I|\leq 3} \sum_{\ell \in \Z} 2^{\ell/2}
  {\| (\Omega,\nabla)^{I} (\partial_t + i \Lambda^\frac{1}{2}) P_\ell u \|}_{L^2(\R^2)}.
\end{split}
\end{align}

\end{lemma}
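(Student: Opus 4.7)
\medskip
The proof of \eqref{linest1} follows the classical Klainerman--Sobolev strategy for dispersive equations, combined with the fact that $\Sigma=x\cdot\nabla_x$ is, up to lower order, the generator of the natural scaling symmetry of $\partial_t+i\Lambda^{1/2}$. I would start from the commutation identity $[\Sigma,\Lambda^{1/2}]=\tfrac{1}{2}\Lambda^{1/2}$, which yields for $U_t:=e^{-it\Lambda^{1/2}}$ the exact relation
\[
\Sigma U_t = U_t\Sigma + \tfrac{it}{2}\Lambda^{1/2}U_t,
\qquad\text{equivalently}\qquad
U_tP_\ell f=\frac{2}{it}\Lambda^{-1/2}\bigl(\Sigma U_t P_\ell f-U_t\Sigma P_\ell f\bigr).
\]
Since $\Lambda^{-1/2}$ is bounded on functions frequency-localized in the annulus $|\xi|\sim 2^\ell$ with operator norm $\sim 2^{-\ell/2}$, and since $[\Sigma,P_\ell]$ only produces harmless $P_{\sim \ell}$ commutators, this identity already extracts the factor $|t|^{-1}$ and reduces \eqref{linest1} to an $L^\infty$ bound on a dyadic piece of the form $\Sigma U_t P_\ell f$ or $U_t\Sigma P_\ell f$.

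The second ingredient is a refined dyadic Sobolev embedding on $\R^2$: for $g$ spectrally supported in $|\xi|\sim 2^\ell$,
\[
\|g\|_{L^\infty(\R^2)}\lesssim 2^{\ell/2}\sum_{k\leq 3}\|\Omega^k g\|_{L^2(\R^2)}.
\]
This is the anisotropic Bernstein-type inequality that trades half of a radial derivative (available for free from the frequency localization, which places the essential spatial support in an annulus of width $\sim 2^{-\ell}$ around $|x|\sim 2^{\ell/2}|t|$) against up to three angular $\Omega$-derivatives, the latter sufficient to run a 1D Sobolev embedding $H^{1/2+}_\theta\hookrightarrow L^\infty_\theta$ combined with a Cauchy--Schwarz split of the angular Fourier series. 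Applying it to each $P_\ell U_t f$ with the commutator identity, swapping $\Sigma$ past $U_t$ on the second term (which costs nothing because we are taking the $L^2$ norm, on which $U_t$ is an isometry), and summing in $\ell$ yields \eqref{linest1}.

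To deduce \eqref{linest2} from \eqref{linest1}, apply the latter to the profile $\tilde{f}(t):=e^{it\Lambda^{1/2}}u(t)$, so that $u(t)=U_t\tilde f(t)$. Using $\Sigma=S-\tfrac{1}{2}t\partial_t$ together with the backward version of the commutator identity, a direct computation gives
\[
\Sigma\tilde{f}(t)=e^{it\Lambda^{1/2}}\bigl(Su-\tfrac{t}{2}(\partial_t+i\Lambda^{1/2})u\bigr),
\]
so the $L^2$ norm of $\Sigma\Omega^kP_\ell \tilde f$ becomes a sum of $\|\Omega^kP_\ell Su\|_{L^2}$ and $t\cdot\|\Omega^kP_\ell(\partial_t+i\Lambda^{1/2})u\|_{L^2}$; the factor $t$ in the second piece cancels the $|t|^{-1}$ from \eqref{linest1} and produces the forcing term in \eqref{linest2}. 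The small-time regime $|t|\lesssim 1$ (where $|t|^{-1}$ must be replaced by $\jt^{-1}$) is handled separately by a crude Bernstein embedding, which is precisely where the extra $\nabla$-derivatives in $(\Omega,\nabla)^I$, $|I|\leq 3$, are needed to recover $\|u\|_{L^\infty}$ from $L^2$-norms without any $t$-decay.

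The main obstacle is the refined $L^\infty\hookleftarrow L^2$ embedding with the sharp exponent $2^{\ell/2}$: a naive use of Bernstein would cost $2^\ell$ and destroy the summability in $\ell$ (one only has $2^{\ell/2}$ in the target, and needs to sum a geometric series whose ratios go to $1$ as $\ell\to 0$). The angular regularity supplied by the three $\Omega$-derivatives is essential here, together with the careful radial localization inherited from frequency support on $|\xi|\sim 2^\ell$; any slackness in these two steps would either break the $|t|^{-1}$ decay rate or inflate the power of $2^\ell$, both of which would be fatal for the applications in Section~\ref{secdecay}.
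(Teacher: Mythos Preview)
Your derivation of \eqref{linest2} from \eqref{linest1} is correct and is exactly what the paper does: write $\Sigma=S-\tfrac12 t\partial_t$, commute $\Sigma$ through $e^{it\Lambda^{1/2}}$, and observe that $\partial_t$ of the profile equals $(\partial_t+i\Lambda^{1/2})u$.

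The proof of \eqref{linest1} itself, however, has a genuine gap. Your ``refined dyadic Sobolev embedding''
\[
\|g\|_{L^\infty(\R^2)}\lesssim 2^{\ell/2}\sum_{k\le 3}\|\Omega^k g\|_{L^2},\qquad \mathrm{supp}\,\hat g\subset\{|\xi|\sim 2^\ell\},
\]
is false. Take $\hat g=\varphi_\ell$ radial: then $\Omega^k g=0$ for $k\ge 1$, $\|g\|_{L^2}\sim 2^{\ell}$, yet $|g(0)|\sim 2^{2\ell}$, so the left side exceeds the right by a factor $2^{\ell/2}$. The parenthetical justification you give (``essential spatial support in an annulus of width $\sim 2^{-\ell}$ around $|x|\sim 2^{\ell/2}|t|$'') is a property of the \emph{evolved} data $e^{-it\Lambda^{1/2}}P_\ell f$, not of a general frequency-localized $g$; it is precisely the dispersive information that a Sobolev embedding cannot see.

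Even granting the embedding, the scheme does not close. After the commutator step you are left with $\|\Sigma U_t P_\ell f\|_{L^\infty}$; passing to $L^2$ (by any embedding) forces you to estimate $\|\Sigma U_t P_\ell f\|_{L^2}$, and commuting $\Sigma$ back through $U_t$ reproduces the term $\tfrac{|t|}{2}2^{\ell/2}\|P_\ell f\|_{L^2}$ with the \emph{opposite} sign of $t$, cancelling the $|t|^{-1}$ you extracted. With only one copy of $\Sigma$ available (as in the statement) there is no way to iterate out of this.

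The paper's proof of \eqref{linest1} is not a vector-field argument at all: it passes to polar coordinates on the Fourier side, expands angularly to obtain a Bessel function $J_m(|x|\rho)$, and then runs a one-dimensional stationary-phase estimate on the radial integral (this is where $\rho\partial_\rho\hat g_m$, i.e.\ $\Sigma$, actually enters). The decay $|t|^{-1}$ arises as the product of the $|t|^{-1/2}$ from 1D stationary phase and the $|t|^{-1/2}$ from the Bessel asymptotics $|J_{m,\pm}(y)|\lesssim m^2\langle y\rangle^{-1/2}$ at $y=|x|\rho\sim t\,2^{\ell/2}$; the factor $m^2$ is what the three $\Omega$-derivatives pay for after summing the angular Fourier series. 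This oscillatory-integral step is where the dispersion is genuinely used, and it cannot be replaced by a static embedding.
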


\fp{We remark the importance of the appearance of at most one scaling vector field}
in \eqref{linest1} and \eqref{linest2}. We will often just use the less precise estimate
\begin{align}\label{linest2-}
\begin{split}
{\| u \|}_{L^\infty_x(\R^2)} & \lesssim
  \jt^{-1} \fp{ \sum_{k\leq 1,|I| \leq 4} {\| S^k (\Omega,\nabla)^I  u \|}_{L^2(\R^2)} }
  + \sum_{|I|\leq 4}
  {\| (\Omega,\nabla)^{I} (\partial_t + i \Lambda^\frac{1}{2}) u \|}_{L^2(\R^2)},
\end{split}
\end{align}
which dispenses of the summation over frequencies.
The presence of the $2^{\ell/2}$ factor at small frequencies is important when estimating the contribution
from the vector potential \eqref{decest1'} with \eqref{decayuLk}.


Similar estimates were proved  in \cite{GMSC} for the propagator $e^{it\Lambda^{3/2}}$,
and used in \cite{Wu3DWW} as well.

\begin{proof}[Proof of Lemma \ref{lemlinear}]
We begin by writing
\begin{align*}
(e^{-it|\nabla|^{1/2}} f)(x)
  = \sum_{\ell\in\Z}(e^{-it|\nabla|^{1/2}} P_\ell f) (x)
  = \sum_{\ell\in\Z} \int_{\R^2} e^{i (x\cdot\xi - t|\xi|^{1/2})} \varphi_\ell(\xi)\what{f}(\xi) \, d\xi,
\end{align*}
and aim to prove that for all $g = P_{[\ell-2,\ell+2]}g$ with
$\sum_{|I| \leq 3} {\| \Sigma (\Omega,\nabla)^I g \|}_{L^2(\R^2)} = 1$, we have
\begin{align}\label{ll1}
    {\| e^{it|\nabla|^{1/2}} g \|}_{L^\infty_x} \lesssim 2^{\ell/2} \dg{|t|}^{-1}.
\end{align}
We use polar coordinates $\xi = \rho\theta$, $\rho \geq 0$ and $\theta \in \mathbb{S}^1$
and expand $\what{g}(\xi)$ in Fourier series in the angular variable:
\begin{align}\label{ll2}
& (e^{it|\nabla|^{1/2}} g)(x)
= \sum_{m\in \Z} \int_{0}^{\infty}\dg{\int_0^{2\pi}} e^{i(|x| \rho \cos \theta - t\rho^{1/2})}
    e^{i\theta m} \what{g}_m(\rho) \, \varphi_\ell(\rho)\, d\theta \rho d\rho,
    \\
\nonumber
& \what{g}_m(\rho) := \frac{1}{2\pi} \int_{0}^{2\pi}
    e^{-i\theta m} \what{g}(\rho(\cos\theta,\sin\theta)) d\theta,
\end{align}
having assumed without loss of generality that $x=(|x|,0)$.
Then we can rewrite \eqref{ll2} as
\begin{align}\label{ll3}
(e^{it|\nabla|^{1/2}} g)(x)
  = \sum_{m\in \Z} \int_{0}^{\infty} e^{-it\rho^{1/2}} J_m(|x|\rho) \what{g}_m(\rho) \, \varphi_\ell(\rho)\, \rho d\rho,
\end{align}
where $J_m$ is the Bessel function of order $m$ and satisfies (see Stein \cite{steinbook})
\begin{align}\label{llbess}
\begin{split}
& J_m(y) := \int_0^{2\pi} \, e^{i(y\cos\theta + m\theta )} \, d\theta = e^{iy} J_{m,+}(y) + e^{-iy} J_{m,-}(y),
\\
& \mbox{with} \qquad \langle y \rangle^{1/2} \big| J_{m,\pm}(y) \big|
  + \langle y \rangle^{3/2}\big| J_{m,\pm}'(y) \big| \lesssim m^2,
\end{split}
\end{align}
where the implicit constant is independent of $m$.
In what follows we only consider the contribution from $J_{m,+}$ since the one involving $J_{m,-}$ is similar.
The term to bound in the sum in \eqref{ll3} is
\begin{align}\label{ll4}
I_m(t,x) := \int_{0}^{\infty} e^{i(|x|\rho-t\rho^{1/2})} J_{m,+}(|x|\rho) \what{g}_m(\rho) \, \varphi_\ell(\rho)\, \rho d\rho.
\end{align}
Notice how this term resembles a $1$ dimensional evolution of the form
$(e^{-it|\partial_x|^{1/2}} G)(|x|)$, with $\what{G}(\rho) \sim \what{g}_m(\rho) \, \varphi_\ell(\rho)\, \rho$.
For such a $1$d evolution one can use \fp{standard stationary phase arguments
to obtain an $L^1 \rightarrow L^\infty$ estimate and then a
(scaling-invariant) interpolation inequality} to get,
\begin{align}\label{ll1d}
\begin{split}
{\| e^{-it|\partial_x|^{1/2}} P_\ell G \|}_{L^\infty} 
& \fp{\lesssim |t|^{-1/2} 2^{3\ell/4} {\| P_{\sim \ell} G \|}_{L^1} }
\\
& \lesssim 2^{\ell/4} \fp{|t|^{-1/2}}
  \fp{\big( {\| x\partial_x P_{\sim \ell} G \|}_{L^2} + {\| P_{\sim \ell} G \|}_{L^2} \big)^{1/2} } 
  {\| P_{\sim \ell} G \|}_{L^2}^{1/2}.
\end{split}
\end{align}
See for example \cite{IoPu4} in the case of $e^{-it|\partial_x|^{3/2}}$.
Applying a similar argument to \eqref{ll4},
and using

(1) the presence of the factor $J_{m,+}$ which decays (see \eqref{llbess})
in the quantity $|x|\rho \approx t 2^{\ell/2}$ (at the stationary point of the phase $|x|\rho-t\rho^{1/2}$),
and

(2) the extra factor of $\rho \approx 2^\ell$,
we deduce
\begin{align}\label{llest}
{\| I_m(t) \|}_{L^\infty} \lesssim \fp{|t|^{-1}} 2^{\ell/2} \fp{m^2}
  \Big[ {\| \rho\partial_\rho \what{g_m} \|}_{L^2(\rho d\rho)} +  {\|\what{g_m} \|}_{L^2(\rho d\rho)} \Big].
\end{align}
\dg{
    The desired result now follows after using the bound
\begin{equation}
  \label{}
  \sum_{m \in \mathbb{Z}} m^2 \| \what{q}_m\|_{L^2(\rho d \rho)}
  \lesssim
  \left(\sum_{m \in \mathbb{Z}} m^6 \|\what{q}_m\|^2_{L^2(\rho d\rho)} \right)^{1/2}
  \lesssim
  \sum_{j \leq 3}
    \|\Omega^j q \|_{L^2(\mathbb{R}^2)},
\end{equation}
for both $q = g$ and $\rho\pa_\rho g$, where we have used Plancherel's theorem.}

The estimate \eqref{linest2} follows from \eqref{linest1} by letting
$u:=e^{-it\Lambda^{1/2}} f$, writing $\Sigma = S - \frac{1}{2}t\partial_t$
and using $\partial_t f = (\partial_t + i\Lambda^\frac{1}{2})u$. 
\end{proof}

\subsection{Bilinear operators and estimates}\label{appsym}
We consider the class of symbols
\begin{align}\label{symclass}
\S^\infty := \big\{
  m: \R^2 \times \R^2 \rightarrow \C \, : \, {\| m \|}_{\S^\infty} := {\| \mathcal{F}^{-1}(m) \|}_{L^1} < \infty \big\}.
\end{align}
Given a symbol $b :\R^2\times \R^2 \rightarrow \C$ we define the corresponding bilinear operator
\begin{align}\label{OpSym}
B(f,g) = \frac{1}{(2\pi)^2} \mathcal{F}^{-1} \Big( \int_{\R^2} b(\xi,\eta) 
  \widehat{f}(\xi-\eta) \widehat{g}(\eta) \, d\eta \Big).
\end{align}
We use the following notation for the localized symbols/operator:
\begin{align}\label{symloc}
b^{k,k_1,k_2}(\xi,\eta) := b(\xi,\eta) \varphi_k(\xi) \varphi_{k_1}(\xi-\eta) \varphi_{k_2}(\eta).
\end{align}
We have the following basic lemma:

\begin{lemma}\label{lemsym}
(i) We have $\S^\infty \hookrightarrow L^\infty(\R \times \R)$.
If $m,m'\in \S^\infty$ then $m\cdot m'\in \S^\infty$ and
\begin{equation}\label{lemsymmm'}
{\| m\cdot m' \|}_{S^\infty} \lesssim {\|m\|}_{S^\infty} {\|m'\|}_{S^\infty}.
\end{equation}
Moreover, if $m \in \S^\infty$, $A: \R^2 \to \R^2$
is a linear transformation, $v \in \R^2$, and $m_{A,v}(\xi,\eta) := m(A(\xi,\eta)+v)$, then
\begin{equation}\label{lemsymAv}
{\| m_{A,v} \|}_{\S^\infty} = {\|m\|}_{\S^\infty}.
\end{equation}

(ii) Assume $p,q,r\in[1,\infty]$ satisfy $1/p+1/q=1/r$, and $m\in \S^\infty$.
Then, for any $f,g\in L^2(\R)$,
\begin{equation}\label{lemsymbil}
{\| B(f,g) \|}_{L^r} \lesssim {\|m\|}_{\S^\infty} {\|f\|}_{L^p} {\|g\|}_{L^q}.
\end{equation}

\end{lemma}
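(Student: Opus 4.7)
The proof is a standard exercise built on the Fourier inversion formula and Minkowski's inequality, but let me lay out the structure clearly since the two parts interact.

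For part (i), the plan is to interpret the $\S^\infty$ norm as the $L^1$ norm of the inverse Fourier transform kernel. Concretely, if $m = \mathcal{F}(K)$ on $\R^2\times\R^2$ with $K\in L^1$, then the embedding $\S^\infty \hookrightarrow L^\infty$ is immediate from $|m(\xi,\eta)| \leq \|K\|_{L^1}$. For the product bound \eqref{lemsymmm'}, the inverse Fourier transform converts pointwise multiplication into convolution, so $\mathcal{F}^{-1}(m\cdot m') = K * K'$ and Young's inequality $\|K*K'\|_{L^1} \leq \|K\|_{L^1}\|K'\|_{L^1}$ gives the claim. For the affine invariance \eqref{lemsymAv}, in the invertible case one simply changes variables in the Fourier integral: writing $m_{A,v}(\zeta) = \int K(y) e^{i(A\zeta+v)\cdot y}\,dy$ and substituting $y' = A^T y$ produces a new $L^1$ kernel of equal $L^1$ norm (with the modulation by $v$ contributing only a phase). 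The general non-invertible case can be handled by a standard approximation/limiting argument, or by noting that all the statements in the paper apply $A$ invertible on the relevant subspace.

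For part (ii), the key move is to rewrite $B(f,g)$ so that the Fourier variables of $f$ and $g$ are decoupled. First I substitute $\xi_1 = \xi-\eta$, $\xi_2 = \eta$ in the defining integral \eqref{OpSym} to obtain
\begin{equation*}
B(f,g)(x) = \frac{1}{(2\pi)^2} \int\int e^{ix\cdot(\xi_1+\xi_2)} \widetilde{m}(\xi_1,\xi_2)\, \widehat{f}(\xi_1)\widehat{g}(\xi_2)\,d\xi_1 d\xi_2,
\end{equation*}
where $\widetilde{m}(\xi_1,\xi_2) := m(\xi_1+\xi_2,\xi_2)$. Now I invoke the affine invariance from part (i) to conclude $\|\widetilde{m}\|_{\S^\infty} = \|m\|_{\S^\infty}$. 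Writing $\widetilde{m} = \mathcal{F}(K)$ with $\|K\|_{L^1} = \|m\|_{\S^\infty}$ and expanding, the double Fourier integral becomes a convolution representation
\begin{equation*}
B(f,g)(x) = c\int\int K(y_1,y_2)\, f(x-y_1)\, g(x-y_2)\, dy_1 dy_2
\end{equation*}
for an absolute constant $c$.

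The bound \eqref{lemsymbil} is then a one-line application of Minkowski's integral inequality followed by H\"older's inequality with exponents satisfying $1/p+1/q=1/r$: translations act isometrically on $L^p$ and $L^q$, so
\begin{equation*}
\|B(f,g)\|_{L^r} \leq c\int\int |K(y_1,y_2)|\, \|f(\cdot-y_1)\, g(\cdot-y_2)\|_{L^r}\, dy_1 dy_2 \leq c\|K\|_{L^1}\|f\|_{L^p}\|g\|_{L^q},
\end{equation*}
which is exactly the stated bound up to the absolute constant. There is no real obstacle here; the only mildly delicate point is ensuring the affine invariance of $\S^\infty$ is set up cleanly enough to justify passing from $m$ to $\widetilde{m}$, which is why part (i) is stated and proved before part (ii).
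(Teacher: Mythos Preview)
Your proof is correct and follows the standard approach. The paper itself does not include a proof of this lemma, treating it as a well-known basic result; your argument---$L^1$ kernel interpretation, Young's inequality for products, change of variables for affine invariance, and the kernel representation $B(f,g)(x)=c\int K(y_1,y_2)f(x-y_1)g(x-y_2)\,dy_1dy_2$ followed by Minkowski and H\"older---is exactly the expected one. One small remark: the equality in \eqref{lemsymAv} as stated requires $A$ invertible (as you note), and the paper's ``$A:\R^2\to\R^2$'' should read $\R^2\times\R^2\to\R^2\times\R^2$; the only application in the paper is to the invertible shear $(\xi_1,\xi_2)\mapsto(\xi_1+\xi_2,\xi_2)$, so your treatment is adequate.
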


We also need a lemma which describes commutation with our vector fields:

\begin{lemma}\label{lemsymcomm}
Given a bilinear operator as in \eqref{OpSym}, define the bilinear commutator with $\Gamma = \Omega$ or $S$ as
\begin{align}
[\Gamma,B(f,g)] := \Gamma B(f,g) - B(\Gamma f,g) - B(f,\Gamma g).
\end{align}
We have
\begin{align}
[\Gamma,B(f,g)] = B^\Gamma(f,g)
\end{align}
with symbols
\begin{align}\label{symcomm}
\begin{split}
b^S(f,g) &:= -(\xi \cdot \nabla_\xi + \eta \cdot \nabla_\eta) b(\xi,\eta),
\\
b^\Omega(f,g) &:= (\xi \wedge \nabla_\xi + \eta \wedge \nabla_\eta) b(\xi,\eta),
\end{split}
\end{align}


\end{lemma}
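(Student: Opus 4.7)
The plan is to prove both commutator identities by direct Fourier-side manipulation. Working from the representation
\begin{align*}
B(f,g)(t,x) = c\int\!\!\!\int e^{ix\cdot\xi}\, b(\xi,\eta)\, \widehat{f}(t,\xi-\eta)\widehat{g}(t,\eta)\, d\eta\, d\xi,
\end{align*}
I would first observe that the time-derivative component of $S=\tfrac12 t\partial_t + x\cdot\nabla_x$ commutes trivially with the bilinear form, since $B$ is bilinear at fixed $t$; hence $[\tfrac12 t\partial_t, B(f,g)]=0$, and for $S$ it suffices to handle the spatial part $S_x := x\cdot\nabla_x$. After a change of variable $\zeta=\xi-\eta$, the exponential factorizes as $e^{ix\cdot\zeta}e^{ix\cdot\eta}$, and the key identity becomes
\begin{align*}
(x\cdot\nabla_x)\bigl[e^{ix\cdot\zeta}e^{ix\cdot\eta}\bigr] = \bigl(\zeta\cdot\nabla_\zeta + \eta\cdot\nabla_\eta\bigr)\bigl[e^{ix\cdot\zeta}e^{ix\cdot\eta}\bigr],
\end{align*}
which is the starting point for an integration by parts in $(\zeta,\eta)$.

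For the $S$ commutator, the integration by parts produces three kinds of terms: one where the derivatives $\zeta\cdot\nabla_\zeta + \eta\cdot\nabla_\eta$ land on the symbol $\tilde b(\zeta,\eta) := b(\zeta+\eta,\eta)$, one where they land on the Fourier transforms of $f$ and $g$, and a zero-order ``correction'' $2d\,\tilde b\,\widehat f\,\widehat g$ coming from $\nabla_\zeta\cdot \zeta = \nabla_\eta\cdot\eta = d$ in $\R^d$ (here $d=2$). The terms hitting $\widehat f$ and $\widehat g$ are rewritten using the identity $\widehat{S_x f}(\zeta) = -d\,\widehat f(\zeta) - \zeta\cdot\nabla_\zeta \widehat f(\zeta)$, which produces $B(S_x f,g) + B(f,S_x g)$ plus exactly the $-2d\,\tilde b\,\widehat f\,\widehat g$ term that cancels the correction. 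A short chain-rule computation shows that $(\zeta\cdot\nabla_\zeta + \eta\cdot\nabla_\eta)\tilde b(\zeta,\eta) = (\xi\cdot\nabla_\xi + \eta\cdot\nabla_\eta)b(\xi,\eta)\big|_{\xi=\zeta+\eta}$, so the leftover symbol-derivative term yields the stated formula $b^S = -(\xi\cdot\nabla_\xi + \eta\cdot\nabla_\eta)b$.

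The $\Omega$ case is cleaner, since the vector field $\zeta\wedge\nabla_\zeta = \zeta_1\partial_{\zeta_2} - \zeta_2\partial_{\zeta_1}$ is divergence-free. Starting again from $(x\wedge\nabla_x)e^{ix\cdot\zeta} = -(\zeta\wedge\nabla_\zeta)e^{ix\cdot\zeta}$ and the analogous identity in $\eta$, integration by parts generates no zero-order correction at all, and the identity $\widehat{\Omega f}(\zeta) = (\zeta\wedge\nabla_\zeta)\widehat f(\zeta)$ directly produces $B(\Omega f,g) + B(f,\Omega g)$ plus the symbol-derivative term. Converting $(\zeta\wedge\nabla_\zeta + \eta\wedge\nabla_\eta)\tilde b$ into $(\xi,\eta)$-variables by the chain rule yields $(\xi\wedge\nabla_\xi + \eta\wedge\nabla_\eta)b$, which is the formula for $b^\Omega$.

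The main obstacle is purely bookkeeping: keeping the signs and the $d$-dimensional correction factors straight so that the cancellation in the $S$ computation is transparent. Once the change of variable $\zeta = \xi-\eta$ is performed and one tracks that $\zeta\cdot\nabla_\zeta$ has divergence $d=2$ while $\zeta\wedge\nabla_\zeta$ has divergence $0$, both formulas drop out of routine integration by parts and chain-rule identities; no analytic estimates are needed.
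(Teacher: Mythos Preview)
Your proposal is correct and is precisely the direct calculation the paper alludes to but does not write out. The change of variables $\zeta=\xi-\eta$, the integration by parts using $(x\cdot\nabla_x)e^{ix\cdot(\zeta+\eta)}=(\zeta\cdot\nabla_\zeta+\eta\cdot\nabla_\eta)e^{ix\cdot(\zeta+\eta)}$ (and its rotational analogue), the cancellation of the $2d$ correction in the $S$ case via $\widehat{S_x f}=-d\widehat f-\zeta\cdot\nabla_\zeta\widehat f$, and the chain-rule identity converting $(\zeta,\eta)$-derivatives of $\tilde b$ back to $(\xi,\eta)$-derivatives of $b$ are exactly the routine steps the paper has in mind.
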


\begin{proof}
These formulas can be checked by a direct calculation.
\end{proof}


Here is basic lemma to handle product and pseudo-product in our spaces.

\begin{lemma}\label{lemprod}
Recall the definition \eqref{defZ} with \eqref{2dvf}. For all $N \geq 0$
\begin{align}\label{prodest0}
\sum_{r+k \leq N} {\| f g \|}_{Z^r_k}
  & \lesssim \sum_{\dg{r+k} \leq N/2} {\| f \|}_{Z^{r,\infty}_k} \sum_{r+k \leq N} {\| g \|}_{Z^r_k}
  + \sum_{r+k \leq N} {\| f \|}_{Z^r_k} \sum_{r+k \leq N/2} {\| g \|}_{Z^{r,\infty}_k},
\end{align}
and, more in general, for $1/p = 1/p_1+1/p_2 = 1/q_1+1/q_2$
\begin{align}\label{prodest0'}
\sum_{r+k \leq N} {\| f g \|}_{Z^{r,p}_k}
  & \lesssim \sum_{r+k \leq N/2} {\| f \|}_{Z^{r,p_1}_k} \sum_{r+k \leq N} {\| g \|}_{Z^{r,p_2}_k}
  + \sum_{r+k \leq N} {\| f \|}_{Z^{r,q_1}_k} \sum_{r+k \leq N/2} {\| g \|}_{Z^{r,q_2}_k}.
\end{align}

The same bounds hold if we replace the product $fg$ by a pseudo-product $B(f,g)$ as in \eqref{OpSym} with
a symbol $b$ satisfying
\begin{align}\label{prodestsym}
{\big\| b^{\Gamma^k} \big\|}_{S^\infty} \lesssim 1, \quad k=0,\dots, N,
\end{align}
where $b^\Gamma$, $\Gamma \in \{S,\Omega\}$ is defined as in \eqref{symcomm} and
$b^{\Gamma^k}$ is defined inductively by $b^{\Gamma^{k}}=( b^{\Gamma^{k-1}} )^\Gamma$ with $b_0 = b$.
\end{lemma}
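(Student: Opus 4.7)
The plan is to reduce the whole statement to Leibniz-rule expansions followed by H\"older's inequality, exploiting that every element of $\Gamma = (\partial_{x_1},\partial_{x_2},S,\Omega)$ is a first-order differential operator. For the ordinary product \eqref{prodest0'}, I would fix $r'+|k'|\leq N$ and expand $\Gamma^{k'}\nabla^{r'}(fg)$ using Leibniz, which produces a linear combination of terms of the form $\Gamma^{k_1}\nabla^{r_1}f \cdot \Gamma^{k_2}\nabla^{r_2}g$ with $|k_1|+|k_2|\leq |k'|$ and $r_1+r_2\leq r'$. The only care needed here is to absorb the commutator relations $[S,\partial_i]=-\partial_i$ and $[\Omega,\partial_i]=\epsilon_{ij}\partial_j$ into the sum: commuting $\Gamma$'s past plain derivatives produces only lower-order terms in $\nabla$ and $\Gamma$ which are already accounted for by the inequalities in the summation indices.

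Having the Leibniz expansion, I would then apply H\"older's inequality on $\R^2$ with the two pairs of exponents $(p_1,p_2)$ and $(q_1,q_2)$ appearing in \eqref{prodest0'}, and split the sum according to the standard \emph{low--high} dichotomy: since $|k_1|+|k_2|+r_1+r_2\leq N$, at least one of $|k_1|+r_1$ and $|k_2|+r_2$ is $\leq N/2$. In the first case I bound the $f$-factor in $L^{p_1}$ by a term appearing in the first factor of the first summand on the right of \eqref{prodest0'} and the $g$-factor in $L^{p_2}$ by a term appearing in the second factor; the other case produces the second summand symmetrically by pairing the $(q_1,q_2)$ exponents. The $L^2$ version \eqref{prodest0} is the specialization $p=2$, $(p_1,p_2)=(q_2,q_1)=(\infty,2)$, with the caveat that the stated constraint $r+2k\leq N/2$ on $f$ is a strengthening of the natural $r+k\leq N/2$ that arises from this argument and thus is immediate.

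For the pseudo-product version, the only new ingredient is the commutator formula from Lemma \ref{lemsymcomm}, which gives
\[
\Gamma B(f,g) = B(\Gamma f, g) + B(f, \Gamma g) + B^{\Gamma}(f,g),
\]
with $b^\Gamma$ as in \eqref{symcomm}. Iterating this identity $|k'|$ times yields an expansion of $\Gamma^{k'}B(f,g)$ as a linear combination of bilinear operators $B^{\Gamma^{j_0}}(\Gamma^{k_1}f,\Gamma^{k_2}g)$ with $j_0+|k_1|+|k_2|\leq |k'|$ (any additional plain derivatives $\nabla^{r'}$ simply pass inside $B$ at the symbol level by multiplying the symbol by $(i\xi)^{r'}$, which is then distributed through $\xi=(\xi-\eta)+\eta$ to land on the two factors). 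The hypothesis \eqref{prodestsym} ensures ${\|b^{\Gamma^{j_0}}\|}_{S^\infty}\lesssim 1$ for all $j_0\leq N$, so the bilinear H\"older estimate \eqref{lemsymbil} of Lemma \ref{lemsym} reproduces the same pointwise term-by-term bounds used in the product case, and the rest of the argument is identical.

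There is no genuine obstacle here; this is a routine bilinear estimate. The only mildly delicate bookkeeping is making sure that commuting $S$ and $\Omega$ past $\nabla$'s (in the product case) and past the symbol $b$ (in the pseudo-product case) generates no terms falling outside the claimed summations. In both cases the commutators are of strictly lower order in the total count $r+|k|$, so they are subsumed by the unrestricted sums on the right-hand sides of \eqref{prodest0}, \eqref{prodest0'}.
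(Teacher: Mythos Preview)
Your approach is essentially identical to the paper's: Leibniz-expand $\nabla^{r}\Gamma^{k}(fg)$, split according to which factor carries at most $N/2$ of the total weight, apply H\"older, and for the pseudo-product case iterate Lemma~\ref{lemsymcomm} and invoke \eqref{lemsymbil} under the hypothesis \eqref{prodestsym}. The paper's own proof does exactly this in two lines.

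One small correction: your remark that the constraint $r+2k\leq N/2$ in \eqref{prodest0} ``is a strengthening of the natural $r+k\leq N/2$ \ldots\ and thus is immediate'' has the implication backwards. A more restrictive constraint on the summation range on the right-hand side makes the bound \emph{smaller}, hence the inequality \emph{stronger}, not easier. In fact the paper's own proof only establishes the version with $r+k\leq N/2$ (consistent with \eqref{prodest0'}), so the $r+2k$ in \eqref{prodest0} is almost certainly a typo; your argument proves what the paper's proof proves.
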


\begin{proof}
We use the notation from \ref{vfnotation} for repeated applications of vector fields
(just $2$d ones, here). 
For any $|r|+|k| \leq N$ with $r = r_1 + r_2$ and $k=k_1+k_2$ we have 
\begin{align}\label{prodestpr1}
\begin{split}
\nabla_x^r \Gamma^k (f g) & = \big( \nabla_x^{r_1} \Gamma^{k_1} f \big)
  \, \big( \nabla_x^{r_2} \Gamma^{k_2} g \big).
\end{split}
\end{align}
Without loss of generality, by the symmetry of the right-hand side of \eqref{prodest0},
we may assume that $|r_1|+|k_1| \leq N/2$ and estimate
\begin{align*}
{\big\| \big( \nabla_x^{r_1} \Gamma^{k_1} f \big)
  \, \big( \nabla_x^{r_2} \Gamma^{k_2} g \big) \big\|}_{L^2}
  & \lesssim {\big\| \nabla_x^{r_1} \Gamma^{k_1} f \big\|}_{L^\infty}
  {\big\| \nabla_x^{r_2} \Gamma^{k_2} g \big\|}_{L^2}
  \\
  & \lesssim \sum_{r_1+k_1 \leq N/2} {\big\| f \big\|}_{Z^{r_1,\infty}_{k_1}}
  \sum_{r_2+k_2 \leq N} {\big\| g \big\|}_{Z^{r_2}_{k_2}}.
\end{align*}
The estimate \eqref{prodest0'} follows identically.
For the same estimate with $f\,g$ replaced by $B(f,g)$ it suffices to use Lemma \ref{lemsymcomm}
to commute vector fields, followed by an application \eqref{lemsymbil} using the assumption \eqref{prodestsym}.
\end{proof}

We also use the following standard product estimate: 
\begin{lemma}\label{lemprodestuse0}
for $f,g:\R^2 \rightarrow \C$, the following estimate holds:
\begin{align}\label{prodestuse00}
{\big\| |\nabla|^{1/2} (f g) \big\|}_{L^2} \lesssim
  {\| f \|}_{W^{1,3}} {\big\| |\nabla|^{1/2} g \big\|}_{L^2},
\end{align}
\end{lemma}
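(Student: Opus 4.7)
The plan is to reduce this to the classical Kato--Ponce fractional Leibniz inequality together with a couple of two-dimensional Sobolev embeddings. Specifically, the starting point is the estimate
\begin{equation*}
\||\nabla|^{1/2}(fg)\|_{L^2(\R^2)} \lesssim \|f\|_{L^\infty} \||\nabla|^{1/2} g\|_{L^2} + \||\nabla|^{1/2} f\|_{L^4} \|g\|_{L^4},
\end{equation*}
which is a standard instance of the fractional Leibniz rule with $s=1/2$, $p=2$, and the H\"older pairings $(\infty,2)$ and $(4,4)$.

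Next, I would estimate each factor on the right-hand side using the 2D Sobolev embeddings. For the $L^\infty$ norm of $f$, since $W^{1,3}(\R^2)\hookrightarrow L^\infty(\R^2)$ (as $1\cdot 3 > 2$), we have $\|f\|_{L^\infty}\lesssim \|f\|_{W^{1,3}}$. For $\||\nabla|^{1/2} f\|_{L^4}$, the embedding $W^{1,3}(\R^2)\hookrightarrow W^{1/2,4}(\R^2)$ holds since $1-2/3 = 1/3 \geq 0 = 1/2 - 2/4$, giving $\||\nabla|^{1/2} f\|_{L^4}\lesssim \|f\|_{W^{1,3}}$. Finally, the Gagliardo--Nirenberg--Sobolev inequality in 2D gives $\|g\|_{L^4}\lesssim \||\nabla|^{1/2} g\|_{L^2}$ (as $1/4 = 1/2 - (1/2)/2$). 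Combining these three embeddings with the fractional Leibniz inequality yields the claim.

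Alternatively, one can avoid invoking Kato--Ponce as a black box and instead argue with a Littlewood--Paley paraproduct decomposition $fg = T_f g + T_g f + R(f,g)$: the low-high paraproduct $T_f g$ is handled by placing the low-frequency factor in $L^\infty$ (using $W^{1,3}\hookrightarrow L^\infty$), while for $T_g f$ and the high-high remainder $R(f,g)$ the frequency is carried by the $f$-piece, so one distributes $|\nabla|^{1/2}$ there and pairs $\||\nabla|^{1/2} f\|_{L^4}$ with $\|g\|_{L^4}$ as above. Either way, the proof is short and the only (very minor) obstacle is bookkeeping of the Sobolev exponents; no part of it requires arguments specific to the water-waves problem.
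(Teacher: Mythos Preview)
Your argument is correct. The paper does not actually supply a proof of this lemma: it is stated as a ``standard product estimate'' and left without justification. Your route via the Kato--Ponce fractional Leibniz rule together with the 2D Sobolev embeddings $W^{1,3}\hookrightarrow L^\infty$, $W^{1,3}\hookrightarrow W^{1/2,4}$, and $\dot H^{1/2}\hookrightarrow L^4$ is exactly the kind of standard computation the authors have in mind, and the alternative paraproduct sketch you give is equally valid.
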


\subsection{A basic trace inequality}
We use the following basic trace estimate:

\begin{lemma}[Trace inequalities]\label{tracelem}
Let $f:\mathcal{D}_t \to \mathbb{R}$, define
$F: \mathbb{R}^2 \times (-\infty, 0]$ by $F(x, z) = f(x, z+h(x))$,
and assume that $\lim_{z\rightarrow -\infty} 
\| |\nabla|  F(\cdot, z)\|_{L^2(\mathbb{R}^2)} = 
\lim_{z\rightarrow -\infty} \| F(\cdot, z)\|_{L^2(\mathbb{R}^2)} = 0$.
Then,
\begin{align}\label{traceineqflat}
    \| |\nabla|^{1/2} F \big|_{\{z = 0\}} \|_{L^2(\mathbb{R}^2)} 
      &\lesssim \| \nabla_{x,z} F\|_{L^2_zL^2_z},
  \\
  \label{traceineqflat0}
      \|F \big|_{\{z = 0\}} \|_{L^2(\mathbb{R}^2)}
      &\lesssim \|F\|_{L^2_zL^2_x} + \| \nabla_{x,z} F\|_{L^2_zL^2_z}.
    \end{align}
In particular,
	if $\nabla h \in L^\infty$,
	\begin{align}
	 \| |\nabla|^{1/2} F \big|_{\{z = 0\}}\|_{L^2(\mathbb{R}^2)}
	 &\lesssim \| \nabla_{x,y} f\|_{L^2(\mathcal{D}_t)},
	 \label{traceineq}
	 \\
	 \|F \big|_{\{z = 0\}} \|_{L^2(\mathbb{R}^2)}
	 &\lesssim \|f\|_{L^2(\mathcal{D}_t)} + \|\nabla_{x, y} f\|_{L^2(\mathcal{D}_t)}.
	 \label{traceineq2}
	\end{align}
\end{lemma}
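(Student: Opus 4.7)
The plan is to prove \eqref{traceineqflat} by a Fourier argument in $x$, derive the simpler \eqref{traceineqflat0} by a direct one-dimensional calculus argument averaged in $z$, and then deduce the consequences \eqref{traceineq}--\eqref{traceineq2} for $f$ on $\mathcal{D}_t$ via the chain rule for the change of variables $y \mapsto z = y-h(x)$.

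First I would establish \eqref{traceineqflat}. Taking the Fourier transform in the horizontal variable and using that $F(\cdot,z)\to 0$ as $z\to -\infty$ (which is exactly the hypothesis), the fundamental theorem of calculus gives, for each fixed $\xi$,
\begin{equation*}
|\widehat{F}(\xi,0)|^2 \;=\; 2\,\Re \int_{-\infty}^{0} \partial_z\widehat{F}(\xi,z)\,\overline{\widehat{F}(\xi,z)}\,dz.
\end{equation*}
Multiplying by $|\xi|$ and applying the weighted AM--GM inequality $2|\xi||ab|\le |a|^2+|\xi|^2|b|^2$ pointwise in $z$ produces
\begin{equation*}
|\xi|\,|\widehat{F}(\xi,0)|^2 \;\le\; \int_{-\infty}^{0}\Big( |\partial_z\widehat{F}(\xi,z)|^2 + |\xi|^2|\widehat{F}(\xi,z)|^2\Big)\,dz.
\end{equation*}
Integrating in $\xi\in\R^2$ and invoking Plancherel on each side converts the left-hand side to $\||\nabla|^{1/2}F(\cdot,0)\|_{L^2_x}^2$ and the right-hand side to $\|\partial_zF\|_{L^2_{z,x}}^2 + \|\nabla_xF\|_{L^2_{z,x}}^2 \le \|\nabla_{x,z}F\|_{L^2_zL^2_x}^2$, which is \eqref{traceineqflat}.

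Next, for \eqref{traceineqflat0} I would argue directly in physical space. For any $z\in[-1,0]$ the identity $F(x,0)=F(x,z)+\int_z^0\partial_sF(x,s)\,ds$ and Cauchy--Schwarz yield $|F(x,0)|^2\le 2|F(x,z)|^2+2\int_{-1}^0|\partial_sF(x,s)|^2\,ds$; averaging over $z\in[-1,0]$ and then integrating in $x$ gives $\|F(\cdot,0)\|_{L^2_x}^2\lesssim \|F\|_{L^2_zL^2_x}^2+\|\partial_zF\|_{L^2_zL^2_x}^2$, which implies \eqref{traceineqflat0}.

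Finally, to pass from the flat half-space bounds \eqref{traceineqflat}--\eqref{traceineqflat0} to the bounds \eqref{traceineq}--\eqref{traceineq2} for $f$ on $\mathcal{D}_t$, I would apply the chain rule: writing $\Phi(x,z)=(x,z+h(x))$ and $F=f\circ\Phi$, one has $\partial_z F=(\partial_y f)\circ\Phi$ and $\partial_{x_i}F=(\partial_{x_i}f)\circ\Phi+\partial_{x_i}h\cdot(\partial_yf)\circ\Phi$. Hence $|\nabla_{x,z}F|^2\lesssim (1+|\nabla h|^2_{L^\infty})\,|\nabla_{x,y}f|^2\circ\Phi$, and the change of variables $y=z+h(x)$ has unit Jacobian, so integration immediately gives the desired bounds on $\mathcal{D}_t$ whenever $\nabla h\in L^\infty$. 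Since the argument is purely Fourier-theoretic plus a calculus chain rule, there is no real obstacle; the only point requiring care is the decay assumption $F(\cdot,z)\to 0$ as $z\to-\infty$, which is needed to drop the boundary term at $-\infty$ and is assumed in the statement.
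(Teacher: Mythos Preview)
Your proof is correct and follows essentially the same approach as the paper: the paper writes the first inequality in physical space as $\int ||\nabla|^{1/2}F(x,0)|^2\,dx = 2\int\!\!\int \partial_z F\,|\nabla|F\,dx\,dz$ and applies Cauchy--Schwarz, which is exactly your Fourier computation unwound via Plancherel, and then handles the passage to $\mathcal{D}_t$ by the same chain-rule/unit-Jacobian argument you give. The only cosmetic difference is that for \eqref{traceineqflat0} the paper repeats the same fundamental-theorem-of-calculus argument (dropping the $|\nabla|^{1/2}$), whereas you average over $z\in[-1,0]$; both are standard and equally valid.
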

\begin{proof}
	We have
 \begin{align*}
  \int_{\mathbb{R}^2} ||\nabla|^{1/2} F(x, 0)|^2\, dx
	&= 2 \int_{-\infty}^0\int_{\mathbb{R}^2} \pa_z |\nabla|^{1/2} F(x, z) |\nabla|^{1/2} F(x, z)\, dxdz\\
	&= 2 \int_{-\infty}^0\int_{\mathbb{R}^2} \pa_z  F(x, z) |\nabla| F(x, z)\, dxdz\\
	&\lesssim \|\nabla_{x, z}F\|_{L^2_zL^2_x}^2
	\lesssim (1 + \|\nabla h\|_{L^\infty(\mathbb{R}^2)})^2 
	\|\nabla_{x, y} f\|_{L^2(\mathcal{D}_t)}^2,
\end{align*}
\dg{using \eqref{equivpr1}},
which gives \eqref{traceineqflat} and \eqref{traceineq}. The bounds
\eqref{traceineqflat0} and \eqref{traceineq2} are proved
in exactly the same way without including the factor $|\nabla|^{1/2}$.
\end{proof}


\bigskip
\section{The boundary equations}\label{appder}
Recall the definitions
\begin{align}\label{vsplit}
v = \nabla \psi + v_\omega, \qquad \varphi = \psi |_{\Dt}
\end{align}
with $\Delta\psi =0$, and 
$v_\omega \cdot n =0$.
Define the restrictions of the horizontal and vertical components to the boundary as follows:
\begin{align}\label{vrest}
v |_{\Dt} = (P,B), \qquad \nabla \psi |_{\Dt} = (P_{ir},B_{ir}), \qquad
  v_\omega |_{\Dt} = \wt{v_\omega} = (P_\omega,B_\omega).
\end{align}
We have
\begin{align}\label{VBir}
P_{ir} = \nabla \varphi - B_{ir} \nabla h,
  \qquad B_{ir} = \frac{G(h)\varphi + \nabla h \cdot \nabla \varphi}{1+|\nabla h|^2},
\end{align}
Recall that
\begin{align}\label{GBV}
G(h)\varphi = (P_{ir},B_{ir})\cdot (-\nabla h, 1) =  (P,B)\cdot (-\nabla h, 1),
\end{align}
where the last identity follows since $v_\omega \cdot n = 0$ and, therefore, we have
\begin{align}\label{VBoid}
B_\omega = \nabla h \cdot P_\omega,
\end{align}
that is, the rotational part of the velocity does not move the boundary.
Moreover, one can show that there exists $a_\omega$ such that
\begin{align}\label{nabao}
\nabla a_\omega = U_\omega := P_\omega + \nabla h B_\omega,
\end{align}
since $\partial_2 U_\omega^1 - \partial_1 U_\omega^2 = \omega |_{\Dt} \cdot (-\nabla h ,1 ) = 0$ 
in view of our assumption that $\omega$ vanishes on the boundary.

Notice that we can also express $a_\omega$ just in terms of $h$ and $P_\omega$:
\begin{align}\label{deraV}
a_\omega = \Lambda^{-1} R \cdot \nabla a_\omega = \Lambda^{-1} R \cdot \big(P_\omega
  + \nabla h (\nabla h \cdot P_\omega) \big).
\end{align}

\subsection{Evolution equations at the boundary}
The equation for the motion of the interface is the standard $\partial_t h = G(h)\varphi$.
The equation for the evolution of the potential is more involved and given by the following:

\begin{lemma}[Boundary evolution equations]\label{lemeqo}
Assume that $\omega\cdot n = 0$. With the notation \eqref{vsplit}-\eqref{nabao} we have
\begin{align}\label{eqo}
\left\{
\begin{array}{rl}
 \partial_t h & = G(h) \varphi
\\
\partial_t \varphi & = - h -\dfrac{1}{2}|\nabla \varphi|^2 + \dfrac{1}{2} \dfrac{(G(h)\varphi + \nabla h
  \cdot \nabla \varphi)^2}{1+|\nabla h|^2} - \Lambda^{-1} R \cdot \partial_t P_\omega
   - \nabla \varphi \cdot P_\omega - \dfrac{1}{2}|P_\omega|^2 + R_\omega
\end{array}
\right.
\end{align}
where
\begin{align}\label{eqoR}
\begin{split}
R_\omega = - \Lambda^{-1} R \cdot  \partial_t \big(\nabla h (\nabla h \cdot P_\omega) \big)
	- \frac{1}{2} (P_\omega \cdot \nabla h)^2 + (G(h)\varphi) P_\omega \cdot \nabla h.
\end{split}
\end{align}
\end{lemma}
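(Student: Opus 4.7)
The plan is to prove the two equations in \eqref{eqo} separately. The first, $\partial_t h = G(h)\varphi$, is the standard kinematic boundary condition: since $(1,v)$ is tangent to $\partial \D$, we have $\partial_t h = v\cdot(-\nabla h,1)|_{\partial\D_t}$, and by \eqref{GBV} (which uses $v_\omega\cdot n = 0$) this equals $G(h)\varphi$.

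For the second equation, I would compute $\partial_t \varphi$ via the chain rule $\partial_t\varphi = (\partial_t\psi)|_{\partial\D_t} + B_{ir}\,\partial_t h$, and derive $(\partial_t\psi)|_{\partial\D_t}$ from a Bernoulli-type identity adapted to the rotational setting. Starting from Euler with $v = \nabla\psi + v_\omega$ and using $v\cdot\nabla v = \nabla(\tfrac12|v|^2) - v\times\omega$, one obtains
\[
\nabla\bigl[\partial_t\psi + \tfrac12|v|^2 + p + gy\bigr] = v\times\omega - \partial_t v_\omega.
\]
The right-hand side is curl-free thanks to the vorticity equation $(\partial_t + v\cdot\nabla)\omega = (\omega\cdot\nabla)v$, so it admits a scalar potential $f$ in $\D_t$, and restricting to the boundary (where $p=0$, $y=h$) yields $(\partial_t\psi)|_{\partial\D_t} = -h - \tfrac12|v|^2|_{\partial\D_t} + f|_{\partial\D_t}$.

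The next step is an algebraic expansion of $|v|^2|_{\partial\D_t} = |P|^2 + B^2$ with $P = P_{ir}+P_\omega$ and $B = B_{ir}+B_\omega$, using $P_{ir} = \nabla\varphi - B_{ir}\nabla h$ from \eqref{VBir} and $B_\omega = \nabla h\cdot P_\omega$ from \eqref{VBoid}. The purely irrotational part $-h - \tfrac12(|P_{ir}|^2+B_{ir}^2) + B_{ir} G(h)\varphi$ collapses to the classical Craig--Sulem--Zakharov expression $-h - \tfrac12|\nabla\varphi|^2 + \tfrac12(G(h)\varphi + \nabla h\cdot\nabla\varphi)^2/(1+|\nabla h|^2)$ by the same algebraic manipulation as in the irrotational case. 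The cross term $-P_{ir}\cdot P_\omega - B_{ir}B_\omega$ telescopes to $-\nabla\varphi\cdot P_\omega$ (the two $B_{ir}$-dependent contributions cancel via $B_\omega = \nabla h\cdot P_\omega$), while the purely rotational quadratic produces $-\tfrac12|P_\omega|^2$ together with a $-\tfrac12(\nabla h\cdot P_\omega)^2$ piece that will be absorbed into $R_\omega$.

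The main obstacle, which I expect to be the hard part, is identifying $f|_{\partial\D_t}$. The key tool is the scalar $a_\omega$ from \eqref{nabao}, which satisfies $\nabla a_\omega = P_\omega + \nabla h\, B_\omega$ with the explicit representation \eqref{deraV}. The claim to be proved is
\[
f|_{\partial\D_t} = -\partial_t a_\omega + G(h)\varphi\, B_\omega = -\Lambda^{-1} R\cdot\partial_t P_\omega - \Lambda^{-1} R\cdot\partial_t(\nabla h\, B_\omega) + G(h)\varphi\, B_\omega.
\]
I would establish this by taking horizontal derivatives of both sides and matching against the tangential trace of $\nabla f|_{\partial\D_t} = (v\times\omega - \partial_t v_\omega)|_{\partial\D_t}$, using the chain-rule identity $\partial_i(f|_{\partial\D_t}) = (\partial_i f)|_{\partial\D_t} + \partial_i h\,(\partial_y f)|_{\partial\D_t}$. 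The correction $G(h)\varphi\, B_\omega$ arises from the moving-boundary contribution when one differentiates a trace in time, and the assumption $\omega\cdot n = 0$ on the boundary is precisely what makes $v\times\omega|_{\partial\D_t}$ expressible in terms of $P_\omega$, $B_\omega$, and $\nabla h$ alone. Once this identification is secured, collecting all the pieces above produces the stated equation, with the remainder $R_\omega$ as in \eqref{eqoR} formed by the two $\Lambda^{-1}R\cdot\partial_t(\nabla h\, B_\omega)$ contribution, the leftover $-\tfrac12(P_\omega\cdot\nabla h)^2$, and the moving-boundary term $G(h)\varphi\, P_\omega\cdot\nabla h$.
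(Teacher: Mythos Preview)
Your plan is sound and the claimed identity $f|_{\partial\D_t}=-\partial_t a_\omega+G(h)\varphi\,B_\omega$ is correct, but the route is genuinely different from the paper's. The paper never introduces a bulk Bernoulli potential $f$; instead it restricts Euler's equations to the surface to get evolution equations for $(P,B)$, combines them into an equation for $P+B\nabla h=\nabla(\varphi+a_\omega)$, and then checks directly that the non-gradient residual $(1/2)\nabla|P|^2-P\cdot\nabla P-\nabla h(P\cdot\nabla B)+(\nabla h\cdot P)\nabla B$ equals $\curl(P+B\nabla h)\,P^\perp=(\omega\cdot N)P^\perp=0$. This yields $\partial_t(\varphi+a_\omega)=-h+\tfrac12 B^2-B\nabla h\cdot P-\tfrac12|P|^2$ in one stroke, after which the expansion $P=P_{ir}+P_\omega$, $B=B_{ir}+B_\omega$ is purely algebraic (and identical to yours).

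Your Bernoulli route trades that boundary computation for the identification of $f|_{\partial\D_t}$. One caution: your heuristic that $\omega\cdot n=0$ makes $v\times\omega|_{\partial\D_t}$ expressible in $P_\omega,B_\omega,\nabla h$ alone is not quite right. What actually happens when you match tangential derivatives is that $\omega\cdot n=0$ reduces the tangential components of $v\times\omega$ to $G(h)\varphi\,(-\omega_2,\omega_1)|_{\partial\D_t}$, which still involves $\omega$; these then cancel against the moving-boundary corrections in $(\partial_t v_\omega)|_{\partial\D_t}$ precisely because $\omega=\curl v_\omega$ (e.g.\ $-\omega_2+\partial_y v_\omega^1=\partial_1 v_\omega^3$). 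So your verification closes, but via $\omega=\curl v_\omega$ rather than a direct simplification of $v\times\omega$. The paper's approach is a bit cleaner because it stays on the boundary throughout and the role of $\omega\cdot n=0$ is isolated in a single curl identity; your approach has the virtue of paralleling the classical irrotational Bernoulli derivation and making the scalar $f$ explicit.
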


\begin{remark}
In the irrotational case the Zakharov formulation in terms of $(P,B)$ reads
\begin{align}\label{ZakVB}
\begin{split}
\partial_t \varphi & = -h -\frac{1}{2}|\nabla \varphi|^2 + \frac{1}{2} (1+|\nabla h|^2) B^2
  = -h - \frac{1}{2}P^2 + \frac{1}{2}B^2 - B \nabla h \cdot P.
\end{split}
\end{align}
One can check (and we will do this in the proof below)
that the same equation extends to the rotational case, in the sense that
\begin{align}\label{ZakVBrot}
\begin{split}
\partial_t (P + \nabla h B)  & = \nabla\big( -h - \frac{1}{2}P^2 + \frac{1}{2}B^2 - B \nabla h \cdot P \big),
\end{split}
\end{align}
where $(P,B)$ are now defined as in \eqref{vrest}.
Then, by expanding $P = P_{ir} + P_\omega$ and $B = B_{ir} + B_\omega$, using \eqref{VBir} and \eqref{VBoid},
\eqref{nabao} and \eqref{deraV}, one can arrive at \eqref{eqo}-\eqref{eqoR}.

Equation \eqref{ZakVBrot} is also equivalent to the formulation used by Castro-Lannes \cite{CaLa}
which has the form
\begin{align}\label{eqaomega1'CL}
\begin{split}
\partial_t U_{\parallel} & =
  -\nabla h - \frac{1}{2} \nabla |U_\parallel|^2 + \frac{1}{2}\nabla \big( (1+|\nabla h|^2) B^2 \big)
\end{split}
\end{align}
where $ U_{\parallel} = P + B \nabla h = \nabla (\varphi + a_\omega)$.
\end{remark}

For completeness we give here a derivation of the equation for $\partial_t \varphi$ in \eqref{eqo},
which slightly differs from the one in \cite{Gin1}.

\begin{proof}[Proof of Lemma \ref{lemeqo}]
Restricting Euler's equations \eqref{freebdy} to the boundary $\partial \mathcal{D}_t$,
using that $p=0$ on the boundary, we have ($g=1$)
\begin{align}\label{eqBV}
& (\partial_t + P \cdot \nabla)P = \fp{- a \nabla h}, \qquad a := -\partial_3 p,
\\
& (\partial_t + P \cdot \nabla)B = a - 1.
\end{align}
From these we get an evolution equations for $\nabla \varphi + \nabla a_\omega = P + \nabla h B$,
that is, the tangential component of the velocity field restricted to the boundary:
\begin{align}\label{eqaomega1}
\begin{split}
\partial_t (\nabla \varphi + \nabla a_\omega ) & =
  - P \cdot \nabla P - a \nabla h
  + \nabla h \big(- P \cdot \nabla B + a - 1 \big)
  + (\nabla\partial_t h) B
  \\
  & = -\nabla h - P\cdot \nabla P - \nabla h (P \cdot \nabla B) +
  B \nabla (-\nabla h \cdot P + B),
\end{split}
\end{align}
having used the kinematic boundary condition $\partial_t h = -\nabla h \cdot P + B$.
We then rewrite \eqref{eqaomega1} as
\begin{align}\label{eqaomega1'}
\begin{split}
\partial_t (\nabla \varphi + \nabla a_\omega ) & =
  \nabla \big( - h + (1/2)B^2 - B \nabla h \cdot P  - (1/2) |P|^2 \big)
  \\
  & + (1/2) \nabla |P|^2 - P\cdot \nabla P - \nabla h (P \cdot \nabla B)
  + \nabla B (\nabla h \cdot P).
\end{split}
\end{align}
To conclude, we observe that the last line above vanishes in view of the
condition  $\omega \cdot n = 0$.
In fact, using that, for $i=1,2$, $(\partial_i F) |_{\Dt} = \partial_i (F|_{\Dt}) - (\partial_3 F)|_{\Dt}
  \partial_i h$,
we can calculate
\begin{align}
    \label{curldotn}
\begin{split}
    \dg{(\nabla \times v)} \cdot n & = - \partial_1 h \big(\partial_2 v_3 - \partial_3 v_2  \big) |_{\Dt}
  - \partial_2 h \big(\partial_3 v_1 - \partial_1 v_3  \big) |_{\Dt}
  + (\partial_1 v_2 - \partial_2 v_1) |_{\Dt}
\\
& = - \partial_1 h \big(\partial_2 B - \partial_3 v_3|_{\Dt} \partial_2 h - \partial_3 v_2|_{\Dt}  \big)
  - \partial_2 h \big(\partial_3 v_1|_{\Dt} - \partial_1 B + \partial_3 v_3|_{\Dt} \partial_1 h  \big)
  \\
  & + (\partial_1 P_2 - \partial_3 v_2|_{\Dt} \partial_1 h
  - \partial_2 P_1 + \partial_3 v_1|_{\Dt} \partial_2 h)
\\
& = - \partial_1 h \partial_2 B
  + \partial_2 h \partial_1 B + \partial_1 P_2 - \partial_2 P_1
  = \curl (P + B \nabla h).
\end{split}
\end{align}
The $\curl$ above naturally denotes the scalar operator in $2$d.
Note how the same calculation with $v_\omega,P_\omega,B_\omega$ instead of $v,P,B$,
shows \eqref{nabao} since $\nabla \times v_\omega \cdot n
= 
\curl (P_\omega + \nabla h B_\omega)$.

\dg{Returning to the last line in \eqref{eqaomega1'},
    by a direct calculation we find $-\nabla h  P\cdot \nabla B + \nabla B (P\cdot \nabla h)
= (\nabla B\cdot \nabla^\perp h) P^\perp$,
and writing $P \cdot \nabla P = (1/2) \nabla |P|^2 + (\curl P) P^\perp$,
we arrive at}
\begin{align}
\begin{split}
& (1/2) \nabla |P|^2 - P\cdot \nabla P - \nabla h (P \cdot \nabla B)
  + \nabla B (\nabla h \cdot P)
  \\
& = \dg{\big(-\curl P +  \nabla B \cdot \nabla^\perp h \big) P^\perp}
  = 0,
\end{split}
\end{align}
\dg{where we used the identity $-\nabla B\cdot \nabla^\perp h = \curl (B\nabla h)$,
the identity \eqref{curldotn} we just proved, and the assumption that
$\curl \omega = 0$ on $\pa \D_t$.}

%

From \eqref{eqaomega1'} we have thus arrived at
\begin{align}\label{eqaomega3}
\begin{split}
\partial_t (\varphi + a_\omega ) & = - h + \frac{1}{2}B^2 - B \nabla h \cdot P  - \frac{1}{2} |P|^2.
\end{split}
\end{align}
To finally obtain \eqref{eqo} we rewrite this as
\begin{align}\label{eqaomega5}
\begin{split}
\partial_t \varphi & = - h - \frac{1}{2} |P_{ir}|^2  + \frac{1}{2}B_{ir}^2 - B_{ir} \nabla h \cdot P_{ir}
  - \partial_t a_\omega
  \\
  & - \frac{1}{2} |P_{\omega}|^2  + \frac{1}{2}B_{\omega}^2 - B_{\omega} \nabla h \cdot P_{\omega}
  - P_{ir} P_\omega  + B_{ir}B_\omega - B_{ir} \nabla h \cdot P_{\omega}
  - B_{\omega} \nabla h \cdot P_{ir}.
\end{split}
\end{align}
The first line of \eqref{eqaomega5} matches the first three terms on the right-hand side of
the equation \eqref{eqo} for $\partial_t \varphi$ (see \eqref{VBir})
plus the terms that contain a $\partial_t$ (see \eqref{deraV}).

The desired claim then follows provide we verify that all the terms
in the second line of \eqref{eqaomega5} match the remaining terms in \eqref{eqo}, that is,
the expression
\begin{align*}
- \nabla \varphi \cdot P_\omega - \dfrac{1}{2}|P_\omega|^2
- \frac{1}{2} (P_\omega \cdot \nabla h)^2 + (G(h)\varphi) P_\omega \cdot \nabla h.
\end{align*}
This can be done by direct inspection using \eqref{VBoid} and \eqref{VBir}-\eqref{GBV}.
\end{proof}


\subsection{Evolution equations for $u$}
Here we diagonalize \eqref{eqo} and derive the main boundary equations  \eqref{der30}
in terms of the single complex valued unknown $h + i \Lambda^{1/2} \varphi$.
\eqref{der30} are the main equations at the boundary, and are used to show
decay for $u$ in Section \ref{secdecay}. 

Let us introduce some notation for the Dirichlet-Neumann map: we let
\begin{align}\label{derG}
& G(h)\varphi = |\nabla|\varphi + G_{\geq 2}(h)\varphi  = |\nabla|\varphi + G_2(h)\varphi + G_{\geq 3}(h,\varphi)
\\
\label{derG2}
& G_2(h)\varphi := -\nabla \cdot (h\nabla\varphi) - |\nabla|(h |\nabla| \varphi),
\end{align}
and $G_{\geq 3}$, defined by \eqref{derG},
contains cubic and higher order terms in $(h,\varphi)$; see Proposition \ref{propDN} below.
The following is a direct consequence of \eqref{eqo}:

\begin{lemma}\label{lemequ}
Let
\begin{align}\label{eqodefu}
u = h + i \Lambda^{1/2} \varphi,
  \qquad h = \frac{1}{2}(u+\bar{u}), \quad \varphi = \frac{1}{2i\Lambda^{1/2}}(u-\bar{u}).
\end{align}
Then
\begin{align}\label{der30}
\begin{split}
\partial_t u + i \Lambda^{1/2} u & = B_0+ N_2 + N_3,
\end{split}
\end{align}
where:

\begin{itemize}

\item $B_0$ are the quadratic terms in $(h,\varphi)$ given by
\begin{align}\label{der31}
B_0 := -\nabla \cdot (h\nabla\varphi) - |\nabla|(h |\nabla| \varphi)
	+ i\Lambda^{1/2} \Big(- \frac{1}{2}|\nabla \varphi|^2 + \frac{1}{2} (|\nabla| \varphi)^2 \Big).
\end{align}

\item $N_2$ gathers quadratic terms with at least one rotational term: 
\begin{align}\label{der32}
N_2 := - \nabla \varphi \cdot P_\omega - \frac{1}{2}|P_\omega|^2
- i \Lambda^{-1/2} R \cdot \partial_t P_\omega.
\end{align}

\item The remainders of cubic and higher homogeneity are given by
\begin{align}\label{der33}
\begin{split}
 N_3 := G_{\geq 3}(h,h,\varphi) + i \Lambda^{1/2} \left[ \frac{(G(h)\varphi + \nabla h
  \cdot \nabla \varphi)^2}{2(1+|\nabla h|^2)}
  - \frac{1}{2}(|\nabla| \varphi)^2 + R_\omega \right]
\end{split}
\end{align}
where $R_\omega$ given as in \eqref{eqoR}.
\end{itemize}
\end{lemma}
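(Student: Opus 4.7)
The plan is a direct computation: apply the definition $u = h + i\Lambda^{1/2}\varphi$ to extract the linear dispersive operator, then substitute the boundary equations \eqref{eqo} and collect terms by order of homogeneity and by whether or not they involve the rotational data $P_\omega$.

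First I would observe the algebraic identity
\begin{align*}
\partial_t u + i\Lambda^{1/2} u = \bigl(\partial_t h - \Lambda \varphi\bigr) + i\Lambda^{1/2}\bigl(\partial_t\varphi + h\bigr),
\end{align*}
using $i\Lambda^{1/2}\cdot i\Lambda^{1/2} = -\Lambda$. This isolates the nonlinearity on the right-hand side, since the linear part $i\Lambda^{1/2}u$ has been absorbed.

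Next I would substitute $\partial_t h = G(h)\varphi$ and use the expansion \eqref{derG}--\eqref{derG2}, so that
\begin{align*}
\partial_t h - \Lambda\varphi = G_2(h)\varphi + G_{\geq 3}(h,\varphi) = -\nabla\cdot(h\nabla\varphi) - |\nabla|\bigl(h|\nabla|\varphi\bigr) + G_{\geq 3}(h,\varphi).
\end{align*}
The first two terms contribute the real quadratic part of $B_0$ in \eqref{der31}, while $G_{\geq 3}$ goes into $N_3$ in \eqref{der33}.

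Then I would substitute the second equation of \eqref{eqo} for $\partial_t\varphi + h$:
\begin{align*}
\partial_t\varphi + h = -\tfrac{1}{2}|\nabla\varphi|^2 + \tfrac{1}{2}\frac{(G(h)\varphi + \nabla h\cdot\nabla\varphi)^2}{1+|\nabla h|^2} - \Lambda^{-1}R\cdot\partial_t P_\omega - \nabla\varphi\cdot P_\omega - \tfrac{1}{2}|P_\omega|^2 + R_\omega.
\end{align*}
Applying $i\Lambda^{1/2}$ and using $i\Lambda^{1/2}(-\Lambda^{-1}R\cdot\partial_t P_\omega) = -i\Lambda^{-1/2}R\cdot\partial_t P_\omega$, I would now split each resulting term by its type: the purely irrotational quadratic part comes from adding and subtracting $\tfrac{1}{2}(|\nabla|\varphi)^2$, giving the imaginary quadratic contribution $i\Lambda^{1/2}\bigl(-\tfrac{1}{2}|\nabla\varphi|^2 + \tfrac{1}{2}(|\nabla|\varphi)^2\bigr)$ in $B_0$; the terms $-\nabla\varphi\cdot P_\omega$, $-\tfrac{1}{2}|P_\omega|^2$ and $-i\Lambda^{-1/2}R\cdot\partial_t P_\omega$ are exactly the rotational quadratic terms collected in $N_2$ (see \eqref{der32}); and the mismatch $\tfrac{1}{2}\frac{(G(h)\varphi + \nabla h\cdot\nabla\varphi)^2}{1+|\nabla h|^2} - \tfrac{1}{2}(|\nabla|\varphi)^2$ together with $R_\omega$, again multiplied by $i\Lambda^{1/2}$, form the cubic-and-higher remainder $N_3$ in \eqref{der33}.

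No serious obstacle is expected: the argument is essentially bookkeeping. The only mild subtlety is to verify that the cubic-and-higher classification is correct—namely, that $\tfrac{1}{2}\frac{(G(h)\varphi + \nabla h\cdot\nabla\varphi)^2}{1+|\nabla h|^2} - \tfrac{1}{2}(|\nabla|\varphi)^2$ is indeed of cubic or higher order in $(h,\varphi)$. This follows because $G(h)\varphi = |\nabla|\varphi + O_2$ and the denominator $1+|\nabla h|^2 = 1 + O_2$, so expanding the square and the geometric series only produces terms with at least three factors of $(h,\varphi)$, confirming that all leftover terms belong in $N_3$.
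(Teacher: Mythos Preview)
Your approach is correct and is exactly what the paper intends: it introduces the lemma as ``a direct consequence of \eqref{eqo}'' and gives no separate proof, so the bookkeeping you outline is the whole argument. One small caution: the rotational terms $-\nabla\varphi\cdot P_\omega$ and $-\tfrac{1}{2}|P_\omega|^2$ sit in the $\partial_t\varphi$ equation and therefore pick up an $i\Lambda^{1/2}$ prefactor after your substitution, just as the forcing term $-\Lambda^{-1}R\cdot\partial_t P_\omega$ becomes $-i\Lambda^{-1/2}R\cdot\partial_t P_\omega$; the displayed formula \eqref{der32} appears to omit this prefactor on those two terms, so when you say they are ``exactly'' the terms in $N_2$ you are matching what looks like a typographical slip rather than the literal output of your own computation.
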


\subsection{Symbols of quadratic operators}
By defining the symbols 
\begin{align}\label{derq}
q(\xi,\eta) := \frac{1}{4i |\eta|^{1/2}} \big( \xi \cdot \eta - |\xi| |\eta| \big)
	- \frac{i |\xi|^{1/2} }{8 |\eta|^{1/2} |\xi-\eta|^{1/2}}
	\big( \eta \cdot (\xi-\eta) + |\eta| |\xi-\eta|  \big)
\end{align}
and
\begin{align}\label{derb+-}
\begin{split}
b_{++}(\xi,\eta) & := q(\xi,\eta),
\\
b_{--}(\xi,\eta) & := - q(\xi,\eta),
\\
b_{+-}(\xi,\eta) & := - q(\xi,\eta) + q(\xi,\xi-\eta),\qquad
b_{-+}(\xi,\eta) := 0,
\end{split}
\end{align}
we write the quadratic terms in \eqref{der31} as
\begin{align}\label{derB_0}
\begin{split}
B_0 = B_0 (u,u) & = \sum_{\eps_1,\eps_2 \in \{+,-\}} B_{\eps_1\eps_2}(u_{\eps_1},u_{\eps_2}),
\end{split}
\end{align}
consistently with the notation symbols/operator \eqref{OpSym}.
Note how we have expressed only the quadratic terms in $(h,\varphi)$ as functions of $(u,\bar{u})$
since these are the only terms on which we will need to use Fourier analysis. 
In particular, 
\eqref{der30}-\eqref{der33} with \eqref{derb+-}-\eqref{derB_0} give us \eqref{decayequ}-\eqref{decayuL}.

It is not hard to verify that the symbols satisfy, see \eqref{symloc} and Lemma \ref{lemsym},
\begin{align}\label{derb+-est}
\begin{split}
& {\big\|b_{\eps_1\eps_2}^{k,k_1,k_2} \big\|}_{\mathcal{S}^\infty} \lesssim 2^{k} 2^{\min(k_1,k_2)/2}.
\end{split}
\end{align}
Note that we chose to write this bound by putting in evidence the vanishing in the output frequency $|\xi|$
since this will be helpful in the nonlinear analysis.
Moreover, one can also directly check that the same bounds hold true after
commuting with vector fields since, see \eqref{symcomm},
\begin{align}\label{qGamma}
q^S(\xi,\eta) = -\frac{3}{2} q(\xi,\eta), \qquad q^\Omega(\xi,\eta) = 0,
\end{align}
and therefore, according to the definition \eqref{symclass} we have, for all $j$,
\begin{align}\label{derb+-estk}
\begin{split}
& {\big\| (b_{\eps_1\eps_2})^{\Gamma^j} 
  \varphi_k(\xi) \varphi_{k_1}(\xi-\eta) \varphi_{k_2}(\eta)\big\|}_{\mathcal{S}^\infty} \lesssim_{|j|} 2^{k} 2^{\min(k_1,k_2)/2}.
\end{split}
\end{align}

\subsection{Dirichlet-Neumann map and estimates of remainder terms}\label{ssecDN}
To conclude this section we give estimates for the cubic and higher order remainder terms $N_3$
defined in \eqref{der33}.

First, let us recall that, in view of the a priori bounds on $h$
in \eqref{aprioridecay} 
and \eqref{apriorie0}, together with the (elliptic) bound \eqref{velpotest} on $\varphi$,
we have, for all $t\in[0,T]$, a priori, that 
\begin{align}\label{aprioriapp}
& 
  \sum_{r+k \leq N_1} {\big\| u(t) \big\|}_{Z^{r,\infty}_{k}(\R^2)} \lesssim \e_0 \jt^{-1},
\quad
\sum_{r+k \leq N_0-20} {\| u(t) \|}_{Z^{r}_k}^2 \lesssim \e_0 \jt^{3p_0}. 
\end{align}
Moreover, in view of the (elliptic) result of Proposition \ref{propalphaintro}
we have the following bound on $\tilde{v}_\omega$, for all $t\in[0,T]$:
\begin{align}\label{lemVoconcapp}
\sum_{r+k \leq N} {\| \wt{v_\omega} \|}_{Z^{r}_k} \lesssim \e_1 \jt^\delta,
\quad \sum_{r+k \leq N-1} {\| \partial_t \wt{v_\omega} \|}_{Z^{r}_k} \lesssim \e_1 \e_0 \jt^{\delta}.
\end{align}
where $N=N_1+12$, see \eqref{paramN}; see \eqref{lembounda-Voconc0}.

We can then show the following estimate on the cubic remainder in the equation \eqref{der30}:

\begin{lemma}\label{lemN3}
Under the a priori assumptions \eqref{apriorie0}-\eqref{aprioridecay}, which in particular
imply \eqref{aprioriapp} and \eqref{lemVoconcapp}, we have 
\begin{align}
\label{lemN3conc}
& \sum_{r+k \leq N_1 + 11} {\| N_3 \|}_{Z^r_k} \lesssim \e_0^2 \jt^{-5/4}.
\end{align}
\end{lemma}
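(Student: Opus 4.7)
The plan is to split $N_3$ into two disjoint pieces, $N_3=N_3^{\mathrm{ir}}+N_3^{\mathrm{rot}}$, where
\begin{align*}
N_3^{\mathrm{ir}} & := G_{\geq 3}(h,h,\varphi) + i\Lambda^{1/2}\Big[\frac{(G(h)\varphi + \nabla h\cdot \nabla\varphi)^2}{2(1+|\nabla h|^2)}-\frac{1}{2}(|\nabla|\varphi)^2\Big],
\\
N_3^{\mathrm{rot}} & := i\Lambda^{1/2} R_\omega,
\end{align*}
and to treat each piece by trilinear product estimates in the $Z^r_k$ spaces. The target rate $\e_0^2\jt^{-5/4}$ is very far from the natural trilinear rate, so after the algebraic bookkeeping no sharp estimate should be required.

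First I would verify that $N_3^{\mathrm{ir}}$ is genuinely cubic or higher in $(h,\varphi)$. For the bracket, I write $G(h)\varphi=|\nabla|\varphi + G_{\geq 2}(h)\varphi$, expand $(1+|\nabla h|^2)^{-1}=1-|\nabla h|^2+O(|\nabla h|^4)$, square $(|\nabla|\varphi + G_{\geq 2}(h)\varphi+\nabla h\cdot \nabla\varphi)$, and observe that the $\frac12 (|\nabla|\varphi)^2$ piece cancels exactly against the subtracted term, leaving a finite sum of expressions of the schematic form $(|\nabla|\varphi)(G_{\geq 2}(h)\varphi)$, $(|\nabla|\varphi)(\nabla h\cdot \nabla\varphi)$, $|\nabla h|^2(|\nabla|\varphi)^2$, and higher-order homogeneities, each at least cubic in $(h,\varphi)$. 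For $G_{\geq 3}(h,h,\varphi)$ and $G_{\geq 2}(h)\varphi$ I would invoke the standard weighted DN-map estimates \eqref{G1est}--\eqref{G2est} (suitably commuted with $\Gamma^k$ as in Lemma~\ref{lemprod}). Then, for any trilinear expression $T(f_1,f_2,f_3)$ of this type with $r+k\leq N_1+11$, I distribute vector fields so that two factors have at most $(N_1+11)/2+1$ vector fields and derivatives and one factor has at most $N_1+11$; the first two are placed in $Z^{r,\infty}_k$ using the dispersive bound in \eqref{aprioriapp} (which costs $\jt^{-1}$ each), and the last is placed in $Z^r_k$ using the energy/elliptic bound in \eqref{aprioriapp} (which costs $\jt^{3p_0}$). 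Since $N_1+11\leq N_0-20$ by \eqref{paramN}, no regularity is lost, the $\Lambda^{1/2}$ only shifts the derivative count by $1/2$, and we obtain ${\|N_3^{\mathrm{ir}}\|}_{Z^r_k}\lesssim \e_0^3\jt^{-2+3p_0}$, which is absorbed into $\e_0^2\jt^{-5/4}$ since $3p_0<\delta<1/100$.

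For $N_3^{\mathrm{rot}}$, I use the schematic form of $R_\omega$ together with $\Lambda^{1/2}\Lambda^{-1}=\Lambda^{-1/2}$, so the first term becomes $\Lambda^{-1/2}R\cdot\partial_t(\nabla h(\nabla h\cdot P_\omega))$. Distributing $\partial_t$ and vector fields, this produces a finite list of trilinear expressions in $(h,\partial_t h,P_\omega,\partial_t P_\omega)$; likewise $(P_\omega\cdot\nabla h)^2$ is quartic, and $(G(h)\varphi)(P_\omega\cdot\nabla h)$ is trilinear in $(\nabla h,P_\omega,G(h)\varphi)$. I bound each factor using the dispersive assumption on $h$ (hence on $\partial_t h=G(h)\varphi$ via \eqref{G1est}), and the elliptic bounds \eqref{lemVoconcapp} on $P_\omega$ and $\partial_t P_\omega$, again placing two factors in $L^\infty$-type norms and one in $L^2$, using that $N_1+11\leq N-1$. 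Writing $\e_1\jt^\delta\leq \bar c\,\jt^{-1+2\delta}$ on $[0,T_{\e_1}]$ (which follows from $t\leq \bar c\,\e_1^{-1+\delta}$), every term becomes at most $\e_0^2\,\bar c\,\jt^{-2+C\delta}$ or smaller, which is again controlled by $\e_0^2\jt^{-5/4}$. The $\Lambda^{-1/2}$ causes no trouble at low frequencies because in the $Z^r_k$-norm we only need $L^2$ control and the small-frequency loss is at most $2^{-k/2}$ summed against the dyadic pieces from the $\partial_t$-derivative of a compactly localized product.

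The main obstacle is handling the term with $\partial_t P_\omega$ (equivalently $\partial_t\tilde v_\omega$), because it is the only place in $N_3$ where we need a time derivative of a rotational quantity; all its estimates must come from the elliptic/fixed-point analysis of Proposition~\ref{propalphaintro} via \eqref{lemVoconcapp}. Once that bound is used, the argument is a bookkeeping exercise in distributing vector fields and invoking Lemma~\ref{lemprod} (or its $\mathcal S^\infty$-based analogue for the $\Lambda^{-1/2}$ multiplier); the extra powers of $\e_0$ generated by any cubic product, together with the smallness of $\e_1\jt^\delta$ on the bootstrap interval, give significantly more decay than the target $\jt^{-5/4}$, hence leave ample room for the $3p_0$-growth from the top energy norm.
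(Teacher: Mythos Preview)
Your approach is essentially the same as the paper's: decompose $N_3$ into pieces that are genuinely cubic (or higher) in $(h,\varphi,P_\omega,\partial_t P_\omega)$, then apply weighted product estimates, using the $L^\infty$ decay \eqref{aprioriapp} on low-order factors and the $L^2$ energy/DN bounds \eqref{aprioriapp}, \eqref{G1est}--\eqref{G3est}, \eqref{lemVoconcapp} on the highest-order factor. The paper carries this out by listing six explicit pieces $N_{3,1},\dots,N_{3,6}$ rather than the irrotational/rotational split you use, but the algebra and estimates are the same, and your counting of vector fields (two factors carrying at most $(N_1+11)/2+1\le N_1$ so they can be placed in $Z^{r,\infty}_k$) is correct.

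There is one genuine gap: your treatment of the $\Lambda^{-1/2}$ in $N_{3,4}=-i\Lambda^{-1/2}R\cdot\partial_t(\nabla h(\nabla h\cdot P_\omega))$ is not valid as written. The sentence about ``the small-frequency loss $2^{-k/2}$ summed against the dyadic pieces from the $\partial_t$-derivative of a compactly localized product'' does not make sense --- the product is not compactly supported in frequency, and $\sum_{k\le 0}2^{-k/2}$ diverges. The paper handles this by Hardy--Littlewood--Sobolev: in $\R^2$ one has $\|\Lambda^{-1/2}f\|_{L^2}\lesssim\|f\|_{L^{4/3}}$, so
\[
\sum_{r+k\le N_1+11}\|N_{3,4}\|_{Z^r_k}\ \lesssim\ \sum_{r+k\le N_1+11}\big\|\partial_t\big(\nabla h(\nabla h\cdot P_\omega)\big)\big\|_{Z^{r,4/3}_k},
\]
after which the trilinear $L^{4/3}$ product estimate (one factor in $L^\infty$, one in $L^4$, one in $L^2$, or similar) closes exactly as you describe. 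This is the clean fix; once you invoke fractional integration instead of the heuristic low-frequency argument, your proof is complete and matches the paper.
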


To prove the above Lemma we need some bounds on the Dirichlet-Neumann operator,
which we state in the next proposition.
Recall the notation \eqref{derG}-\eqref{derG2}, that is,
$G(h)\varphi = |\nabla|\varphi + G_2(h)\varphi + G_{\geq 3}(h,\varphi)$,
and let $G_{\geq 2}(h,\varphi) := G_2(h)\varphi + G_{\geq 3}(h,\varphi)$.

\begin{prop}\label{propDN}
    Under the assumptions \eqref{aprioriapp} we have the linear bounds 
\begin{align}
\label{G1est}
& \sum_{r+k \leq N_0-22} {\| G(h)\varphi \|}_{Z^r_k} \lesssim \e_0 \jt^{3p_0},
\quad
\sum_{r+k \leq N_1-2} {\| G(h)\varphi \|}_{Z^{r,\infty}_k} \lesssim \e_0 \jt^{-1+},
\end{align}
the quadratic bounds
\begin{align}
\label{G2est}
& \sum_{r+k \leq N_0-24} {\| G_{\geq 2}(h,\varphi) \|}_{Z^r_k} \lesssim \e_0^2 \jt^{-1+3p_0},
\quad
& \sum_{r+k \leq N_1-4} {\| G_{\geq 2}(h,\varphi) \|}_{Z^{r,\infty}_k} \lesssim \e_0^2 \jt^{-4/3},
\end{align}
and the cubic bounds
\begin{align}
\label{G3est}
& \sum_{r+k \leq N_0-26} {\| G_{\geq 3}(h,\varphi) \|}_{Z^r_k} \lesssim \e_0^3 \jt^{-5/4}.
\end{align}
\end{prop}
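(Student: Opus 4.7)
The plan is to flatten the domain, set up a Neumann-series/fixed-point expansion for the harmonic extension, read off the first two explicit terms, and bound the tail using the Poisson-kernel estimates of Lemma \ref{explem} together with the product estimates of Lemma \ref{lemprod}. Concretely, introduce $\Psi(t,x,z) = \psi(t,x,z+h(t,x))$ in the flat half-space $\R^2\times\{z<0\}$, so that
\[
G(h)\varphi = \big[(1+|\nabla h|^2)\partial_z \Psi - \nabla h \cdot \nabla_x \Psi\big]\big|_{z=0}.
\]
The equation $\Delta\psi=0$ transforms into a perturbed Laplace equation for $\Psi$ with coefficients depending on $\nabla h$, which (after the change of variables in Lemma \ref{flatlem} specialized to the scalar case) can be written as a fixed point equation
\[
\Psi = e^{z|\nabla|}\varphi + \mathcal{L}_h[\Psi],
\]
where $\mathcal{L}_h$ is built from the operators $T_1,T_2,T_3$ of \eqref{Tidefs} applied to expressions quadratic in $(\nabla h, \nabla_{x,z}\Psi)$. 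Iterating once yields $\Psi = e^{z|\nabla|}\varphi + \mathcal{L}_h[e^{z|\nabla|}\varphi] + \mathcal{L}_h^{\circ 2}[\Psi]$, and restricting the corresponding expression for $G(h)\varphi$ to $z=0$ gives $G(h)\varphi = |\nabla|\varphi + G_2(h)\varphi + G_{\geq 3}(h,\varphi)$ with the explicit formula \eqref{derG2} (which one checks by expanding $\mathcal{L}_h[e^{z|\nabla|}\varphi]$ and using $\partial_z e^{z|\nabla|} = |\nabla|e^{z|\nabla|}$).

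For \eqref{G1est} I would proceed directly (no iteration needed): use Lemma \ref{explem} to bound $\|\underline\Gamma^k\nabla_{x,z}\Psi\|_{L^\infty_z L^2_x}$ and $\|\underline\Gamma^k\nabla_{x,z}\Psi\|_{L^2_zL^2_x}$ in terms of $\||\nabla|^{1/2}\varphi\|_{Z^r_k}$ at the level of $\e_0\jt^{3p_0}$ via a fixed-point argument for the linearized operator (exactly as in Proposition \ref{alphaprop}, but scalar and forced only at the boundary), then restrict to $z=0$ via trace (Lemma \ref{tracelem}) and combine with $\nabla h\cdot \nabla_x\Psi|_{z=0}$ using the product estimate \eqref{prodest0}. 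The $L^\infty$ bound follows from the same trace/Poisson estimates using the decay half \eqref{aprioriapp} of $u$, which controls $\|h\|_{Z^{r,\infty}_k}$ and $\||\nabla|^{1/2}\varphi\|_{Z^{r,\infty}_k}$ (the latter at low regularity, losing an $\infty-$ slot so that the output decays like $\jt^{-1+}$).

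For \eqref{G2est}, write $G_{\geq 2}=G_2 + G_{\geq 3}$ and bound $G_2(h)\varphi = -\nabla\cdot(h\nabla\varphi) - |\nabla|(h|\nabla|\varphi)$ by \eqref{prodest0'}: placing one factor in the $L^\infty$-type norm (decay $\jt^{-1}$, up to the small loss reflected by $3p_0$) and the other in the energy-type $Z^r_k$ norm (growth $\jt^{3p_0}$) gives $\jt^{-1+3p_0}$, and the same splitting in $L^\infty_x$ yields $\jt^{-4/3}$ after using Sobolev-Gagliardo-Nirenberg interpolation (as in \eqref{prdecapriori'}) to improve one of the two low-norm factors. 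The cubic piece $G_{\geq 3}$ is controlled by the bound \eqref{G3est} proved below, which is strictly better.

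For \eqref{G3est}, write $G_{\geq 3}(h,\varphi) = G(h)\varphi - |\nabla|\varphi - G_2(h)\varphi$ and represent it as $\partial_z(\mathcal{L}_h^{\circ 2}[\Psi])|_{z=0} + (\text{boundary correction terms that are already cubic})$. Since each application of $\mathcal{L}_h$ contributes a factor of $\nabla h$ (or $|\nabla h|^2$), the resulting expressions are trilinear in $(h,h,\varphi)$ or higher. Using Lemma \ref{explem} to bound the Poisson-type operators and the product estimate \eqref{prodest0'} twice — always placing two factors of $h$ (or its derivatives) in $L^\infty$ using the $\jt^{-1+}$ decay from \eqref{aprioriapp} and the remaining factor in the energy norm with the $\jt^{3p_0}$ growth — yields a rate of $\jt^{-2+3p_0+}$, which is well within the claimed $\jt^{-5/4}$.

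The main obstacle will be the bookkeeping when distributing vector fields across the three or more factors in the fixed-point expansion while respecting the loss of two derivatives incurred at the top of the hierarchy (so that $N_1-4$ vector fields on $G_{\geq 2}$ requires $N_1-2$ on $h,\varphi$, matching \eqref{paramN}); in particular one must verify that all combinations satisfy $\min(r_1+k_1,r_2+k_2)\leq N_1/2$ so the interpolation arguments giving the $\jt^{-4/3}$ and $\jt^{-5/4}$ decays go through. This is a standard but delicate counting argument and is the only genuinely nontrivial step in the proof.
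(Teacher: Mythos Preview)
Your proposal is correct and follows essentially the same route as the paper: flatten to the half-space, run a fixed-point argument for $\Psi$ in a space built from the Poisson-kernel bounds of Lemma~\ref{explem} (this is exactly Lemma~\ref{LemmaPsi}), then read off $G(h)\varphi$, $G_{\geq 2}$, $G_{\geq 3}$ from successive iterations of the contraction combined with product estimates. The only minor difference is that the paper packages the $L^\infty_x$ estimates via a supremum over Littlewood--Paley pieces (the $\mathcal{L}_p$ space in the proof of Lemma~\ref{LemmaPsi}) and then pays an $O(\log\jt)$ loss to pass to plain $L^\infty$, whereas you describe the loss as coming from an ``$\infty-$'' interpolation; both mechanisms produce the same $\jt^{-1+}$ rate.
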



The above estimate are rather standard and essentially based on a Taylor expansion for small $h$ of
the Dirichlet-Neumann map. In particular, they do not require any paralinearization argument,
and losses of derivatives are allowed, as one can see from the number of vector fields that we use.
However, to our knowledge, they cannot
be found in one single reference in the exact way that they are stated above.
Without the vector fields $S$ and $\Omega$ these are proven, for example, in \cite{GMS2};
Proposition F.1 there 
gives explicit bounds for the quartic and
higher remainder terms, while the term of homogeneity up to three can be handled explicitly.
In the same reference the authors also give estimates involving a weight $x$,
which resemble those for the scaling vector field $S$.
Estimates with rotation vector fields are included in the work of Deng-Ionescu-Pausader-Pusateri \cite{DIPP};
see Proposition B.1 there.
Analogous estimates with the scaling vector fields can also be derived in the same exact way\footnote{The
scaling vector field is not included in the estimates
for the DN map in \cite{DIPP} since that work deals with the gravity-capillary waves system 
which is not scale invariant.}

In what follows, we first use Proposition \ref{propDN}
to obtain the estimate \eqref{lemN3conc} on the cubic remainder $N_3$ in the main evolution equation \eqref{der30}.
We will then sketch the proof of Proposition \ref{propDN} at the end of this section, 
relying on Lemma \ref{LemmaPsi}.

\begin{proof}[Proof of Lemma \ref{lemN3}]
Looking at the definition \eqref{der33} we see that the term $G_{\geq 3}$
is already estimated as desired using \eqref{G3est}.
The remaining terms are 
\begin{align}
\label{N31}
N_{3,1} & := i \Lambda^{1/2} \frac{1}{2} \left[ (G(h)\varphi)^2 - (|\nabla| \varphi)^2 \right],
\\
\label{N32}
N_{3,2} & := i \Lambda^{1/2} \frac{1}{2}\big( (G(h)\varphi) \big)^2
  \left[ \frac{1}{1+|\nabla h|^2} - 1 \right],
\\
\label{N33}
N_{3,3} & := i \Lambda^{1/2} \frac{(G(h)\varphi) \, \fp{(\nabla h \cdot \nabla \varphi)}}{(1+|\nabla h|^2)},
\\
\label{N34}
N_{3,4} & := -i \Lambda^{-1/2} R \cdot  \partial_t \big(\nabla h (\nabla h \cdot P_\omega) \big),
\\
\label{N35}
N_{3,5} & := - \frac{i}{2} \Lambda^{1/2} (P_\omega \cdot \nabla h)^2,
\\
\label{N36} 
N_{3,6} & := i \Lambda^{1/2} \big[ (G(h)\varphi) P_\omega \cdot \nabla h \big].
\end{align}
The first term can be written as $2N_{3,1} = i \Lambda^{1/2} G_{\geq 2}(h,\varphi)
  (G(h)\varphi + |\nabla| \varphi)$,
and we can use \eqref{prodest0}
followed by \eqref{G2est} and \eqref{G1est} and \eqref{aprioriapp} to bound
(recall from \eqref{param} that $N_1 \geq N_0/2 + 5$):
\begin{align*}
\sum_{r+k \leq N_1+11} & {\| N_{3,1} \|}_{Z^r_k}
  \\
  & \lesssim \sum_{r+k \leq N_1+11} {\| G_{\geq 2}(h,\varphi) \|}_{Z^r_k}
  \sum_{r+k \leq N_1-10} \big( {\| G(h)\varphi \|}_{Z^{r,\infty}_k}
    + {\| |\nabla|\varphi \|}_{Z^{r,\infty}_k} \big)
  \\
  & + \sum_{r+k \leq N_1-10} {\| G_{\geq 2}(h,\varphi) \|}_{Z^{r,\infty}_k}
  \sum_{r+k \leq N_1+11} \big( {\| G(h)\varphi \|}_{Z^r_k} + {\| |\nabla|\varphi \|}_{Z^r_k} \big)
  \\
  & \lesssim \e_0^2 \jt^{-3/4} \cdot \e_0 \jt^{-3/4} + \e_0 \jt^{-4/3} \cdot \e_0 \jt^{3p_0}
  \\
  & \lesssim \e_0^3 \jt^{-5/4},
\end{align*}
consistently with \eqref{lemN3conc}.

The terms \eqref{N32} and \eqref{N33} are easily
estimated using \eqref{prodest0} and the linear bounds \eqref{aprioriapp} and \eqref{G1est}.

For the term \eqref{N34} we first use fractional integration and the
standard commutation rules 
to estimate
\begin{align*}
\sum_{r+k \leq N_1+11} {\| N_{3,4} \|}_{Z^r_k} & \lesssim
  \sum_{r+k \leq N_1+11} {\| \partial_t \big(\nabla h (\nabla h \cdot P_\omega) \big) \|}_{Z^{r,4/3}_k}
\end{align*}
Let us look at the term where $\partial_t$ hits the first $h$ factor;
when it hits the second $h$ the argument is identical, and when it hits $P_\omega$ the estimates are even simpler.
Using product estimates, and Sobolev's embedding, we can bound
\begin{align*}
 & \sum_{r+k \leq N_1+11} {\| (\partial_t \nabla h) (\nabla h \cdot P_\omega) \|}_{Z^{r,4/3}_k}
 \\
 & \lesssim
 \sum_{r+k \leq N_1-10} \big( {\| h \|}_{Z^{r,\infty}_k} + {\| \partial_t h \|}_{Z^{r,\infty}_k} \big)
 \big( \sum_{r+k \leq N_1+11} {\| h \|}_{Z^r_k} + {\| \partial_t h \|}_{Z^r_k} \big)
 \sum_{r+k \leq N_1+11} {\| P_\omega \|}_{Z^r_k}
 \\
 & \lesssim \e_0 \jt^{-3/4} \cdot \e_0 \jt^{3p_0} \cdot \e_1 \jt^\delta
 \lesssim \e_0^2 \jt^{-3/2},
\end{align*}
having used 
\eqref{aprioriapp} to estimate $h$,
\eqref{G1est} for $\partial_t h = G(h)\varphi$, \eqref{lemVoconcapp} for $P_\omega$,
and $\e_1 \lesssim \jt^{-1}$. 

The remaining terms \eqref{N35} and \eqref{N36} can be estimated similarly to the ones above
using again \eqref{G1est}, \eqref{lemVoconcapp} and \eqref{aprioriapp}.
\end{proof}

The next lemma constructs and bounds the velocity potential given its value at the surface.
This result is then used to obtain Proposition \ref{propDN}.

\begin{lemma}\label{LemmaPsi}
Fix an integer $N \in (N_1+15, N_0) \cap \Z$,
and let $N_1$ be as above (in particular $N_1 \geq N/2+5$).
Assume that $h$ and $|\nabla|^{1/2}\varphi$ satisfy 
\begin{align}\label{LemmaPsih}
    \sum_{\dg{|r|+|k|} \leq N+1} {\| \nabla^r \Gamma^{k} h \|}_{L^2} \lesssim \e_0 \jt^{p_0},
    \qquad \sum_{\dg{|r|+|k|} \leq N_1} {\| \nabla^r \Gamma^{k} h \|}_{L^\infty} \lesssim \e_0 \jt^{-1},
\end{align}
and
\begin{align}\label{LemmaPsivarphi}
    \sum_{\dg{|r|+|k|} \leq N} {\| \nabla^r \Gamma^{k} |\nabla|^{1/2} \varphi \|}_{L^2} \lesssim \e_0 \jt^{3p_0},
    \qquad \sum_{\dg{|r|+|k|} \leq N_1} {\| \nabla^r \Gamma^{k} |\nabla|^{1/2} \varphi \|}_{L^\infty} \lesssim \e_0 \jt^{-1},
\end{align}

Then, there exists a unique solution $\psi$ to the elliptic problem $\Delta \psi = 0$
in $\D_t$, with $\psi = \varphi$ on $\partial \D_t$,
with $\nabla_{y} \psi \rightarrow 0$ as $y \rightarrow -\infty$.
If $\Psi(x,z) = \psi(x,z+h(t,x))$, adopting the notation from \ref{secvfO}, we have the (linear) $L^2$ bounds
\begin{align}\label{PsiL2}
& {\big\| \underline{\Gamma}^n \, \nabla_{x,z} \Psi \big\|}_{L^2_z L^2_x}
  + {\big\|  \underline{\Gamma}^n |\nabla|^{1/2}\, \Psi \big\|}_{L^\infty_z L^2_x} \lesssim \e_0 \jt^{3p_0},
  \qquad n \leq N,
\end{align}
the (linear) $L^\infty_x$-type bounds, for all $\ell\in\Z$,
\begin{align}\label{Psiinfty}
& {\big\| \underline{\Gamma}^n \nabla_{x,z} P_\ell \Psi \big\|}_{L^2_z L^\infty_x}
  + {\big\| \underline{\Gamma}^n |\nabla|^{1/2} P_\ell \Psi \big\|}_{L^\infty_z L^\infty_x} \lesssim \e_0 \jt^{-1},
  \qquad n \leq N_1-1,
\end{align}
where $P_\ell$ is the standard Littlewood-Paley projection in the $x$ variable.

Moreover, we have the quadratic $L^2_x$-bounds
\begin{align}\label{PsiL2quad}
& {\big\| \underline{\Gamma}^n \nabla_{x,z}(\Psi - e^{z|\nabla|}\varphi) \big\|}_{L^2_zL^2_x}
  + {\big\| \underline{\Gamma}^n |\nabla|^{1/2}(\Psi - e^{z|\nabla|}\varphi) \big\|}_{L^\infty_zL^2_x}
 \lesssim \e_0^2 \jt^{-1+3p_0}, \qquad n\leq N,
\end{align}
and the quadratic $L^\infty_x$-bounds
\begin{align}
\label{Psiinftyquad}
& {\big\| \underline{\Gamma}^n \nabla_{x,z}(\Psi - e^{z|\nabla|}\varphi) \big\|}_{L^2_zL^\infty_x}
  + {\big\| \underline{\Gamma}^n |\nabla|^{1/2} (\Psi - e^{z|\nabla|}\varphi) \big\|}_{L^\infty_zL^\infty_x}
 \lesssim \e_0^2 \jt^{-2+}, \qquad n\leq N_1-1.
\end{align}
\end{lemma}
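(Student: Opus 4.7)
\medskip
\noindent
\textbf{Proof plan for Lemma \ref{LemmaPsi}.}

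My plan is to mimic the fixed point argument used for the vector potential in Proposition \ref{alphaprop}, but for the simpler Dirichlet problem satisfied by $\Psi$. First I would flatten the Laplace equation $\Delta_{x,y}\psi = 0$ in $\D_t$ via the change of variables $z = y-h(t,x)$. Using the identities \eqref{equivpr1'}, one computes
\begin{align*}
(\partial_z^2 + \Delta_x)\Psi = 2\nabla h\cdot\nabla\partial_z\Psi - |\nabla h|^2\partial_z^2\Psi + \Delta h\cdot\partial_z\Psi,
\end{align*}
which I would rewrite in divergence form as $(\partial_z^2 + \Delta_x)\Psi = \partial_z E^a(\Psi) + |\nabla| E^b(\Psi)$, analogously to \eqref{EaEb}, together with the boundary condition $\Psi|_{z=0} = \varphi$ and decay at $z=-\infty$. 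Since there is no forcing (no analogue of the vorticity $W$) and the boundary condition is purely Dirichlet, the fixed point formulation is
\begin{align*}
\Psi = e^{z|\nabla|}\varphi - \tfrac{1}{2}\!\!\int_{-\infty}^0 \!\!\big(e^{(z+s)|\nabla|} - \mathrm{sign}(z-s)e^{-|z-s||\nabla|}\big)E^a(s)\,ds + \tfrac{1}{2}\!\!\int_{-\infty}^0\!\!\big(e^{(z+s)|\nabla|} - e^{-|z-s||\nabla|}\big)E^b(s)\,ds,
\end{align*}
which is the analogue of \eqref{alphaifp}--\eqref{alpha3fp} without forcing and without the (more delicate) Neumann piece.

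Next I would run the contraction scheme in the $\dYn^n$ spaces of Definition \ref{defnormsalpha}, exactly as in Subsections \ref{alphapropssec1}--\ref{alphapropssec2}. The homogeneous Dirichlet piece $e^{z|\nabla|}\varphi$ is controlled by Lemma \ref{explem}, specifically \eqref{expbounds1}--\eqref{expbounds2}, which directly yield
\begin{align*}
{\| e^{z|\nabla|}\varphi \|}_{\dYn^n} \lesssim \sum_{r+k\leq n}{\| |\nabla|^{1/2}\varphi \|}_{Z^r_k} + {\| \nabla|\nabla|^{1/2}\varphi \|}_{Z^r_k}
\end{align*}
so hypothesis \eqref{LemmaPsivarphi} gives an $\e_0\jt^{3p_0}$ bound at level $n=N$. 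The bulk nonlinear terms $E^a,E^b$ are handled exactly as in the proof of \eqref{alphapropLbulk'}--\eqref{alphapropHbulk''}: one uses \eqref{expbounds3}, Remark \ref{remexp}, and the product estimate \eqref{prodestuse0}, combined with the bounds \eqref{LemmaPsih} on $h$. The assumption $N_1 \geq N/2 + 5$ ensures that one can always place the low-frequency factor in $L^\infty$ using \eqref{LemmaPsih}. The contraction then closes in the low norm, and a weak-$\ast$ compactness argument as in Step (seq2)--(seqlim2) of Subsection \ref{ssecpralphaprop} provides the high-norm bound \eqref{PsiL2}.

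For the $L^\infty_x$ bounds \eqref{Psiinfty} I would use the same fixed-point formula, frequency-localize with $P_\ell$, and invoke \eqref{expbounds1} with $p$ near $\infty$ for the Poisson extension, together with H\"older in $x$ to estimate the nonlinear terms (placing $h$ in $L^\infty$ via \eqref{LemmaPsih}). For the quadratic bounds \eqref{PsiL2quad} and \eqref{Psiinftyquad}, observe that
\begin{align*}
\Psi - e^{z|\nabla|}\varphi = -\tfrac{1}{2}\!\!\int_{-\infty}^0\!\!\big(e^{(z+s)|\nabla|} - \mathrm{sign}(z-s)e^{-|z-s||\nabla|}\big)E^a(s)\,ds + \text{(analogous $E^b$-term)},
\end{align*}
so the right-hand side is genuinely quadratic in $(h,\nabla\Psi)$; bounding it by $\e_0^2$-type estimates is then a direct application of the same Poisson-kernel and product estimates used for the linear bound, together with the already-established \eqref{PsiL2}--\eqref{Psiinfty} to control the $\nabla\Psi$ factor.

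The main technical obstacle will be the handling of the scaling vector field $\underline{S} = S + z\partial_z$ in the flattened bulk integrals, since the identities \eqref{equivpr1} introduce terms like $Sh - h$ that can grow in time at a rate $\jt^{p_0}$; this is what forces the slightly weaker $\jt^{3p_0}$ growth in \eqref{PsiL2} rather than the $\jt^{p_0}$ appearing for $h$ itself. A secondary difficulty is keeping track of frequency localization in \eqref{Psiinfty} at very small and very large frequencies, where one must combine the Poisson-kernel bound \eqref{expbounds1} with the commutation \eqref{Lpcomm} of $S$ with $P_\ell$; the extra $P_{\sim\ell}$ term produced by $[S,P_\ell]$ is acceptable because it is absorbed into the same frequency shell in the $\ell^1$-summation implicit in the statement.
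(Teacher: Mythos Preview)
Your overall plan---flatten the Dirichlet problem, write the fixed-point formula \eqref{dirichletformula} with nonlinear terms $E^a,E^b$ depending on $\nabla h$ and $\nabla_{x,z}\Psi$, and iterate using Poisson-kernel bounds---is exactly what the paper does. The real difference is the choice of iteration space, and this is where your proposal has a gap. You propose to run the contraction in the purely $L^2_x$-based spaces $\dYn^n$ and only afterward prove the $L^\infty_x$ bounds \eqref{Psiinfty}. The paper instead runs a \emph{single} fixed point in a combined space $\mathcal{L}_0$ whose norm carries simultaneously a weighted $L^2_x$ component at level $N$ and a weighted frequency-localized $L^\infty_x$ component at level $N_1-1$; see \eqref{prPsi0}--\eqref{prPsi10}.

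The reason the $L^\infty_x$ component must be available \emph{inside} the iteration is the following. When you estimate $\|\underline{\Gamma}^n E^a\|_{L^2_zL^2_x}$ at the high level $n\le N$ and distribute vector fields, you inevitably meet the case where $n_1>N_1$ of them land on $\nabla h$. Then \eqref{LemmaPsih} only gives $\|\Gamma^{n_1}\nabla h\|_{L^2_x}\lesssim\e_0\jt^{p_0}$ (no decay), and you must place $\underline{\Gamma}^{n_2}\partial_z\Psi$ in $L^2_zL^\infty_x$. From $\dYn$ alone this factor is bounded only by Sobolev, with no time decay, so the contribution is $\e_0\jt^{p_0}\|\Psi\|_{\dYn^{\mathrm{low}}}$. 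In Proposition~\ref{alphaprop} this is harmless because the forcing $W$ has a \emph{uniform} low-norm bound $\e_1$; here the forcing $\varphi$ already carries $\e_0\jt^{3p_0}$ at every level, so your high-norm output would be $\e_0\jt^{3p_0}+\e_0^2\jt^{4p_0}$, which does not recover \eqref{PsiL2} without the unassumed restriction $\e_0\jt^{p_0}\lesssim 1$. The paper avoids this by having the $L^\infty_x$ decay of $\nabla_{x,z}\Psi$ built into the fixed-point norm: the low-order $\Psi$ factor then contributes $\jt^{-1+}\|\Psi\|_{\mathcal{L}_0}$ (via a Bernstein argument combining the two components of $\mathcal{L}_0$; see the display after \eqref{prPsi21}), and the product with $\e_0\jt^{p_0}$ yields a genuine $O(\e_0)$ contraction coefficient. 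Your scheme can be repaired by proving the bounds in the order low-$L^2\to$ low-$L^\infty_x\to$ high-$L^2$, but that is not what you wrote, and coupling them as in the paper is cleaner. A small aside: the $\jt^{3p_0}$ rate in \eqref{PsiL2} is inherited directly from the hypothesis \eqref{LemmaPsivarphi}, not from $Sh-h$ commutator terms.
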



Note that the assumptions \eqref{LemmaPsih}-\eqref{LemmaPsivarphi} are all
consistent with the bounds \eqref{aprioriapp} for $N < N_0-20$. 


A similar version of Lemma \ref{LemmaPsi} is essentially contained in Appendix B of \cite{DIPP}
(see in particular Lemma B.4).
The scaling vector field and multiple $\partial_z$ are not included in that Lemma, but can be added
with minor changes to the proofs.
We give some details of the proof for completeness. 

\begin{proof}[Proof of Lemma \ref{LemmaPsi}]
Transforming the elliptic equation $\Delta_{x,y}\psi = 0$ to the flat domain as in \eqref{alphaeq} with \eqref{Eadef}-\eqref{Ebdef}
(with the roles of $(\alpha,\beta)$ played by $(\Psi,\psi)$ here)
and then applying the formula \eqref{dirichletformula} with $F=0$, 
we see that $\psi$ is harmonic with (we omit the time variable) $\psi(x,h(x))=\varphi(x)$, 
if and only if $\psi(x,z+h(t,x)) =: \Psi(x,z)$ is a fixed point of the map
\begin{align}\label{dirichletformulaPsi}
\begin{split}
(T\Psi)(x,z) := e^{z|\nabla|} \varphi(x) &
+ \frac{1}{2} \int_{-\infty}^0 e^{-|z-s||\nabla|} ( \mathrm{sign}(z-s)\dg{E^a - E^b}  
  ) \, ds
  \\
                                         & - \frac{1}{2} \int_{-\infty}^0  e^{(z+s)|\nabla|} (\dg{E^a - E^b} 
  ) \, ds,
\end{split}
\end{align}
with
\begin{align}\label{EadefPsi}
    \dg{E^a} (\pa \Psi) & = \frac{\nabla}{|\nabla|}\cdot \left( \nabla h \, \pa_z \Psi \right), 
\qquad
\dg{E^b}(\pa \Psi) = -|\nabla h|^2 \pa_z \Psi + \nabla h \cdot \nabla \Psi.
\end{align}
Based on \eqref{dirichletformulaPsi},
one can perform a fixed point argument in a small $C\e_0$ ball in an apposite space (that is, the space $\mathcal{L}_0$
in \eqref{prPsi10} below)
that will then imply the main conclusions \eqref{PsiL2}-\eqref{Psiinfty},
and, as a byproduct also \eqref{PsiL2quad}-\eqref{Psiinftyquad}.
We define the following spaces, which will be used just within this proof:
for $g \in \R^2 \rightarrow \mathbb{C}$, let $\mathcal{F}_p$, for $p \in[-10,0]$, be defined by the norm
\begin{align}\label{prPsi0}
\begin{split}
{\| g \|}_{\mathcal{F}_p}
  & := \jt^{-3p_0} \sup_{|n| \leq N+p} {\big\| \Gamma^n |\nabla|^{1/2} g(t) \big\|}_{L^2_x}
  +  
   \jt \sup_{|n| \leq N_1-1+p}  \sup_{\ell \in \Z} {\big\| \Gamma^n \, |\nabla|^{1/2} P_\ell  g(t) \big\|}_{L^\infty_x};
\end{split}
\end{align}
for $G \in \R^2\times(-\infty,0] \rightarrow \mathbb{C}$, let $\mathcal{L}_p$
be defined by the norm
\begin{align}\label{prPsi10}
\begin{split}
{\| G \|}_{\mathcal{L}_{p}} 
  & :=  \jt^{-3p_0} \sup_{|n| \leq N+p} \big( {\big\| \underline{\Gamma}^n \, \nabla_{x,z} G(t) \big\|}_{L^2_z L^2_x}
  + {\big\| \underline{\Gamma}^n |\nabla|^{1/2}\, G(t) \big\|}_{L^\infty_z L^2_x} \big)
  \\
  & +  \jt \sup_{\ell \in \Z} \sup_{|n| \leq N_1 -1 + p}
  \big( {\big\| \underline{\Gamma}^n \, \nabla_{x,z} P_\ell  G(t) \big\|}_{L^2_z L^\infty_x}
  + {\big\|  \underline{\Gamma}^n |\nabla|^{1/2} \, P_\ell  G(t) \big\|}_{L^\infty_z L^\infty_x} \big).
\end{split}
\end{align}
Note that in the $L^\infty_x$ based spaces we only take the $\sup$ over Littlewood-Paley projections.
These spaces are natural ones to estimate the Poisson kernel in. 
Indeed, we have
\begin{align}\label{prPsiPoi1}
{\| e^{z|\nabla|} \varphi \|}_{\mathcal{L}_p} \lesssim {\| \varphi \|}_{\mathcal{F}_p};
\end{align}
the estimate for the $L^2_x$ components follows from the bounds \eqref{expbounds1}-\eqref{expbounds2};
the estimate for the $L^\infty_x$ components follow from the standard
estimates for each fixed Littlewood-Paley piece
\begin{align}\label{prPsiPoiinfy}
\begin{split}
& {\| |\nabla| e^{z|\nabla|} P_\ell \varphi \|}_{L^2_z L^\infty_x} 
  \lesssim {\| |\nabla|^{1/2} P_\ell \varphi \|}_{L^\infty},
\\
& {\| |\nabla|^{1/2} e^{z|\nabla|} P_\ell \varphi \|}_{L^\infty_z L^\infty_x} 
  \lesssim {\| |\nabla|^{1/2} P_\ell \varphi \|}_{L^\infty}, \qquad \ell \in \Z.
\end{split}
\end{align}
We also have bounds for bulk integrals like those appearing in \eqref{dirichletformulaPsi}:
\begin{align}\label{prPsiPoiint}
\begin{split}
{\Big\|  \int_{-\infty}^0 e^{-|z-s||\nabla|} \mathbf{1}_\pm(z-s) F(\cdot,s) \, ds \Big\|}_{\mathcal{L}_p}
	& \lesssim 
	\jt^{-3p_0} \sup_{|n| \leq N+p} {\big\| \underline{\Gamma}^n F(t) \big\|}_{L^2_z L^2_x}
	\\
	& +  \jt \sup_{\ell \in \Z} \sup_{|n| \leq N_1 - 1 + p}
	{\big\| \underline{\Gamma}^n  P_\ell  F(t) \big\|}_{L^2_z L^\infty_x}.
\end{split}
\end{align}
The bound \eqref{prPsiPoiint} for the $L^2_x$-based components of the norm are implied by \eqref{expbounds3}.
The bounds for the $L^\infty_x$-based components are instead obtained
from the following $L^\infty_x$-based estimates at fixed dyadic frequency 
\begin{align}\label{prPsiPoi2infty}
\begin{split}
& {\Big\| \nabla_{x,z} P_\ell \int_{-\infty}^0 e^{-|z-s||\nabla|} \mathbf{1}_\pm(z-s) F(\cdot,s) \, ds \Big\|}_{L^2_zL^\infty_x}
\\
& +  {\Big\| |\nabla|^{1/2} P_\ell \int_{-\infty}^0 e^{-|z-s||\nabla|} \mathbf{1}_\pm(z-s) F(\cdot,s) \, ds \Big\|}_{L^\infty_z L^\infty_x}
	\lesssim {\| F \|}_{L^2_z L^\infty_x},
\end{split}
\end{align}
and then using the same argument that gives \eqref{expbounds3} 
by applying vector fields and using the commutation identities \eqref{Tpmcomm}.
See \ref{ssecPoisson} for the details. 

Applying \eqref{prPsiPoi1} and \eqref{prPsiPoiint} to \eqref{dirichletformulaPsi} we have
\begin{align}
\label{prPsi11}
{\| T \Psi \|}_{\mathcal{L}_0} \lesssim {\| \varphi \|}_{\mathcal{F}_0}
    & + \jt^{-3p_0} \sup_{|n| \leq N} \big( {\big\| \underline{\Gamma}^n \dg{E^a}(t) \big\|}_{L^2_z L^2_x}
	+  {\big\| \underline{\Gamma}^n \dg{E^b}(t) \big\|}_{L^2_z L^2_x} \big)
	\\
\label{prPsi12}
	& +  \jt \sup_{\ell \in \Z} \sup_{|n| \leq N_1 - 1}
    \big( {\big\| \underline{\Gamma}^n  P_\ell  \dg{E^a}(t) \big\|}_{L^2_z L^\infty_x}
    + {\big\| \underline{\Gamma}^n  P_\ell  \dg{E^b}(t) \big\|}_{L^2_z L^\infty_x} \big).
\end{align}
In view of the definition \eqref{prPsi0} and the assumption \eqref{LemmaPsivarphi}
we have ${\| \varphi \|}_{\mathcal{F}_0} \lesssim \e_0$.

From the definitions of the nonlinear terms in \eqref{EadefPsi},
distributing vector fields as usual, and using the assumptions on $h$ in \eqref{LemmaPsih},
we see that
\begin{align}\label{prPsi21}
\begin{split}
& \sup_{|n| \leq N} {\big\| \underline{\Gamma}^n \dg{E^a}(t) \big\|}_{L^2_z L^2_x}  
\\
& \lesssim \sup_{|n| \leq N} {\big\| \underline{\Gamma}^n \nabla_{x,z} \Psi \big\|}_{L^2_z L^2_x}
  \sup_{|n| \leq N/2} {\big\| \Gamma^n \nabla h \big\|}_{L^\infty}
  +  \sup_{|n| \leq N/2} {\big\| \underline{\Gamma}^n \nabla_{x,z} \Psi \big\|}_{L^2_z L^\infty_x}
  \sup_{|n| \leq N} {\big\| \Gamma^n \nabla h \big\|}_{L^2}
\\
& \lesssim \jt^{3p_0} {\| \Psi \|}_{\mathcal{L}_0} \cdot \e_0 \jt^{-1}
  + \jt^{-1+} {\| \Psi \|}_{\mathcal{L}_0} \cdot \e_0 \jt^{p_0}
  \lesssim \e_0 \jt^{-1+3p_0} {\| \Psi \|}_{\mathcal{L}_0};
\end{split}
\end{align}
note that we have used Bernstein's inequality to deduce the inequality for
the $L^2_zL^\infty_x$ norm of $\nabla_{x,z} \Psi$ as follows: for $|n|\leq N/2$
\begin{align*}
& {\big\| \underline{\Gamma}^n \nabla_{x,z} \Psi \big\|}_{L^2_z L^\infty_x}
	\lesssim \sum_\ell {\big\| \underline{\Gamma}^n \nabla_{x,z} P_\ell \Psi \big\|}_{L^2_z L^\infty_x}
	\\
	& \lesssim \log (2+t) \sup_\ell {\big\| \underline{\Gamma}^n \nabla_{x,z} P_\ell \Psi \big\|}_{L^2_z L^\infty_x}
	+ \sum_{2^\ell \geq \jt^{5}} {\big\| \underline{\Gamma}^n \nabla_{x,z} P_\ell \Psi \big\|}_{L^2_z L^\infty_x}
	+ \sum_{2^\ell \leq \jt^{-5}}  {\big\| \underline{\Gamma}^n \nabla_{x,z} \Psi \big\|}_{L^2_z L^\infty_x}
	\\
	& \lesssim \log (2+t) \jt^{-1} {\| \Psi \|}_{\mathcal{L}_0}
	+ \jt^{-5} \sup_{n \leq N/2+3} {\big\| \underline{\Gamma}^n \nabla_{x,z} \Psi \big\|}_{L^2_z L^2_x}
	+ \jt^{-5} \sup_{n \leq N/2} {\big\| \underline{\Gamma}^n \nabla_{x,z} \Psi \big\|}_{L^2_z L^2_x}
	\\
	& \lesssim \jt^{-1+} {\| \Psi \|}_{\mathcal{L}_0}
\end{align*}
A bound as in \eqref{prPsi21} also holds for $\dg{E^b}$, so that, in particular,
the nonlinear terms in \eqref{prPsi11} are bounded by $\e_0 {\| \Psi \|}_{\mathcal{L}_0}$.

We can use similar argument to estimate the $L^\infty_x$ components of the norm appearing in \eqref{prPsi12}:
for any $|n| \leq N_1 -1$ and $\ell \in \Z$
\begin{align}\label{prPsi22}
\begin{split}
    {\big\| \underline{\Gamma}^n  P_\ell  \dg{E^a}(t) \big\|}_{L^2_z L^\infty_x} 
	 & \lesssim \sup_{n \leq N_1 -1} {\big\| \underline{\Gamma}^n \nabla_{x,z} \Psi \big\|}_{L^2_z L^\infty_x}
	\sup_{n \leq N_1 - 1} {\big\| \Gamma^n \nabla h \big\|}_{L^\infty}
	\\
	& \lesssim \jt^{-1+} {\| \Psi \|}_{\mathcal{L}_0} \cdot \e_0 \jt^{-1}
	\lesssim \e_0 \jt^{-2+} {\| \Psi \|}_{\mathcal{L}_0},
\end{split}
\end{align}
having once again used Bernstein to deduce the bound on the $L^2_zL^\infty_x$ norm of $\nabla_{x,z} \Psi$
for very large and very small frequencies from the stronger $L^2_zL^2_x$ norm.
The same bound holds for $\dg{E^b}$.

We have thus obtained
${\| T \Psi \|}_{\mathcal{L}_0} \lesssim {\| \varphi \|}_{\mathcal{F}_0} + \e_0 {\| \Psi \|}_{\mathcal{L}_0}$,
and in the same way we can estimate differences and obtain
${\| T (\Psi_1-\Psi_2) \|}_{\mathcal{L}_0} \lesssim \e_0 {\| \Psi_1 - \Psi_2 \|}_{\mathcal{L}_0}.$
We therefore have a unique fixed point for the map $T$, hence a unique solution to the given elliptic problem
that satisfies ${\|\Psi \|}_{\mathcal{L}_0} \lesssim \e_0$; in view of the definition \eqref{prPsi0},
this gives the desired \eqref{PsiL2}-\eqref{Psiinfty}.

To conclude, we show how \eqref{PsiL2quad} and \eqref{Psiinftyquad} follow from the bounds just proven above.
Indeed, since 
\begin{align}\label{dirichletformulaPsi'}
\begin{split}
\Psi - e^{z|\nabla|} \varphi & =
\frac{1}{2} \int_{-\infty}^0 e^{-|z-s||\nabla|} ( \mathrm{sign}(z-s)\dg{E^a - E^b}  
  ) \, ds
  \\
  & - \frac{1}{2} \int_{-\infty}^0  e^{(z+s)|\nabla|} (\dg{E^a - E^b} 
  ) \, ds,
\end{split}
\end{align}
see \eqref{dirichletformulaPsi}, the bounds \eqref{expbounds3} 
together with the estimate \eqref{prPsi21} (and the analogous one with $\dg{E^b}$ instead of $\dg{E^a}$) imply \eqref{PsiL2quad},
while \eqref{prPsiPoi2infty} (more precisely, its version with vector fields) 
together with the estimate \eqref{prPsi22} (and the analogous one with $\dg{E^b}$ instead of $\dg{E^a}$) give \eqref{Psiinftyquad}.
\end{proof}

\begin{remark}\label{remPsiinfty}
The proof of \eqref{Psiinfty} shows that if we replace the $L^\infty$ bound in \eqref{LemmaPsivarphi} 
by a slightly stronger assumption with an $\ell^1$ sum over frequencies, that is,
\begin{align}\label{LemmaPsivarphi'}
\sum_{\ell\in\Z} \sum_{|r|+|k| \leq N_1} {\| \nabla^r \Gamma^{k} |\nabla|^{1/2} P_\ell \varphi \|}_{L^\infty} \lesssim \e_0 \jt^{-1},
\end{align}
then we can obtain the stronger conclusion
\begin{align}\label{Psiinfty'}
&  \sum_{\ell\in\Z} {\big\| \underline{\Gamma}^n \nabla_{x,z} P_\ell \Psi \big\|}_{L^2_z L^\infty_x} + 
 \sum_{\ell\in\Z} {\big\| \underline{\Gamma}^n |\nabla|^{1/2} P_\ell \Psi \big\|}_{L^\infty_z L^\infty_x} \lesssim \e_0 \jt^{-1}, \qquad n \leq N_1-1,
\end{align}
instead of \eqref{Psiinfty}. 
Indeed, it suffices to sum over the index $\ell$ in
the bounds \eqref{prPsiPoiinfy} and use the stronger assumption \eqref{LemmaPsivarphi'},
and sum over $\ell$ in the inhomogeneous bounds \eqref{prPsiPoi2infty}
and estimate the sum over $\ell$ of the right-hand side of \eqref{prPsi21}
using the stronger $L^2_x$ bounds for very large and very small frequencies.
The estimate \eqref{LemmaPsivarphi'} is obtained in Section \ref{secdecay}, 
see Remark \ref{Remdecay}.
The estimate \eqref{Psiinfty'} is used to deduce decay for the irrotational component
of the velocity in the interior; see Lemma \ref{lemdecayirrot}. 
\end{remark}

\begin{proof}[Proof of Proposition \ref{propDN}]
In Lemma \ref{LemmaPsi} we gave bounds on $\Psi$ following from the assumptions \eqref{aprioriapp}.
Since
\begin{align}
G(h)\varphi = (1+|\nabla h|^2)\partial_z \Psi|_{z=0} - \nabla h \cdot \nabla_x \Psi|_{z=0},
\end{align}
the bounds in \eqref{PsiL2} imply the first bound in \eqref{G1est} provided $N$ is chosen large enough,
and the second bound in \eqref{Psiinfty} implies the second bound in \eqref{G1est},
where the small $\jt^{0+}$ loss is coming from estimating the $\ell^1$ sum over dyadic indexes by
the $\ell^\infty$ norm for an $O(\log \jt)$ set of frequencies $2^\ell \in [\jt^{-5}, \jt^{5}]$, and using
the bound on the $L^2_x$-norm in \eqref{PsiL2} for the remaining very small and very high frequencies.

To obtain the quadratic bounds \eqref{G2est} it suffices to observe that
\begin{align*}
G_{\geq 2}(h)\varphi & =  G(h)\varphi - |\nabla| \varphi
\\
& =
\partial_z \big(\Psi - e^{z|\nabla|}\varphi\big) |_{z=0}
  + |\nabla h|^2 \partial_z \Psi|_{z=0} - \nabla h \cdot \nabla_x \Psi|_{z=0}, 
\end{align*}
and use \eqref{PsiL2quad} and \eqref{Psiinftyquad} in addition to \eqref{PsiL2}-\eqref{Psiinfty}. 

The last estimate \eqref{G3est} can be obtained from similar arguments
and the fixed point formulation in the proof of Lemma \ref{LemmaPsi};
one needs to expand to one more order in the Taylor series for $G(h)\varphi$,
and use \eqref{PsiL2}-\eqref{Psiinftyquad}, 
along the lines of the arguments in \cite{DIPP}.
\end{proof}


\bigskip
\section{The elliptic system for the vector potential}\label{Appalpha}

This appendix contains the details of the proofs of the supporting results in Section \ref{secVP}.

\subsection{The elliptic equation}
We first give the proof of Lemma \ref{flatlem} that derives the elliptic system in the half-space.

\begin{proof}[Proof of Lemma \ref{flatlem}]
This is a somewhat standard calculation but we include some details for the convenience of the reader.
The main point is to translate the boundary conditions \eqref{betapi}-\eqref{betan}
to the flat domain and obtain boundary conditions for $\alpha$.


\noindent
{\it The Poisson equation \eqref{eqflat0}.}
The first step is to express the Laplacian $\Delta_{x,y}$ in terms of the new coordinates.
For this we compute the inverse metric
$g^{-1}$ in this coordinate system, whose components $g^{ab}$ are given by
\begin{equation}
 g^{ab} = \delta^{ij} \pa_i X^a \pa_j X^b,
 \label{}
\end{equation}
with $X^1(x,y) = x^1, X^2(x,y) = x^2, X^3(x,y) = y-h(x)$. We compute
\begin{align*}
 & g^{11} = g^{22} = 1, \qquad
 g^{33} = 1 + |\nabla h|^2, \\
 & g^{i3} = g^{3i} = -\pa_i h, \quad i = 1,2,
\end{align*}
and the remaining entries vanish.
Since $\det g = 1$, the Laplacian in these coordinates takes the form
\begin{equation*}
\Delta = g^{ab}\pa_a\pa_b + \pa_a(g^{ab})\pa_b.
\end{equation*}
We have
\begin{align*}
& g^{ab}\pa_a\pa_b = \pa_1^2 + \pa_2^2 + (1+|\nabla h|^2)\pa_z^2,
- 2\pa_1h \pa_1\pa_z - 2\pa_2 h \pa_2 \dg{\pa_z}
\\
& \pa_a(g^{ab})\pa_b = -\pa_1^2h\pa_z - \pa_2^2 h \pa_z,
\end{align*}
and adding these together we find
\begin{equation*}
\Delta q = (\pa_1^2 + \pa_2^2 + \pa_z^2)q
 + \pa_z ( |\nabla h|^2 \pa_z q) -\pa_1 (\pa_1h \pa_z q )
 - \pa_2 ( \pa_2 h \pa_z q) -\pa_z \left( \nabla h \cdot \nabla q\right).
\end{equation*}
In terms of $\alpha(X,z) = \beta(X, z+h(X))$ and $W(X,z) = \omega(X, z+h(X))$,
the Poisson
equation \eqref{betaell} reads
\begin{equation}\label{alphaeq}
    \pa_z^2\alpha + (\pa_1^2 + \pa_2^2)\alpha = |\nabla| \dg{E^a} + \pa_z \dg{E^b},
\end{equation}
where, writing $\nabla = \nabla_X$,
\begin{align}
\label{Eadef}
\dg{E^a}(\pa \alpha) & = \frac{\nabla}{|\nabla|}\cdot
 \left( \nabla h \, \pa_z \alpha \right) + \frac{1}{|\nabla|} W,
\\
\label{Ebdef}
\dg{E^b}(\pa \alpha) & = -|\nabla h|^2 \pa_z \alpha + \nabla h \cdot \nabla \alpha.
\end{align}

\noindent
{\it The boundary conditions \eqref{eqflat1}-\eqref{eqflat3}.}
We now write the boundary conditions \eqref{betapi}-\eqref{betan} explicitly.
The normal vector to the boundary is
\begin{equation}
 n = (1 + |\nabla h|^2)^{-1/2} (\pa_y - \nabla h\cdot \nabla),
 \label{normal}
\end{equation}
which is defined for all $(x,y)$. Recalling that $\Pi_i^j = \delta_i^j - n_i n^j$,
we compute
\begin{align}\label{Pis}
\begin{split}
\Pi_1^1 &= 1 - n_1n^1 = 1 - (1 + |\nabla h|^2)^{-1}(\pa_1h)^2,
  \\
 \Pi_1^2 &= -n_1n^2 = -(1+|\nabla h|^2)^{-1} \pa_1h\pa_2 h,\\
 \Pi_1^3 &= -n_1 n^3 = (1+|\nabla h|^2)^{-1} \pa_1 h,\\
 \Pi_2^2 &= 1 - n_2n^2 = 1 - (1 + |\nabla h|^2)^{-1} (\pa_2h)^2,\\
 \Pi_2^3 &= -n_2 n^3 = (1+|\nabla h|^2)^{-1} \pa_2 h,\\
 \Pi_3^3 &= 1 - n_3n^3 = 1 - (1 + |\nabla h|^2)^{-1},
\end{split}
\end{align}
which determine the remaining components since $\Pi$ is symmetric.
The boundary conditions \eqref{betapi} then give us, for $i=1,2$,
\begin{align}
\beta_i - (1+|\nabla h|^2)^{-1}\partial_i h \big( \partial_1 h \, \beta_1 + \partial_2 h \beta_2 - \beta_3) = 0
\end{align}
and, therefore, in terms of $\alpha$ they read
\begin{align}
\alpha_i & = B_i,\qquad
 B_i := - (1+ |\nabla h|^2)^{-1} \pa_i h(\alpha_3 - \nabla h\cdot \alpha)|_{z = 0},
 \quad i =1,2
\end{align}
as claimed in \eqref{eqflat1}-\eqref{eqflat2}.

We now write \eqref{betan} explicitly.
We start from \eqref{divS} which we rewrite as
\begin{equation}\label{divS'}
\pa_n \beta_n + (\Pi_i^j\pa_j n^i) \beta_n = 0,
\end{equation}
where we recall that $\beta_n = n \cdot \beta = (1+|\nabla h|^2)^{-1/2})(\beta_3 - \nabla h \cdot (\beta_1,\beta_2))$.
We then pass to the new coordinates using the expression \eqref{normal} for the normal vector,
and calculate the first term in \eqref{divS'}:
\begin{align}
\label{divS'1}
\begin{split}
& \pa_n \beta_n =
 \\
 & = (1+ |\nabla h|^2)^{-1/2}
 \left( (1+ |\nabla h|^2) \pa_z - \nabla h \cdot \nabla \right)
 \left( (1+ |\nabla h|^2)^{-1/2} (\alpha_3 -\nabla h \cdot (\alpha_1,\alpha_2))
 \right)\\
 & = \partial_z \alpha_3 - \nabla h\cdot \partial_z (\alpha_1,\alpha_2)
 - (1+|\nabla h|^2)^{-1} \nabla h\cdot \nabla (\alpha_3 - \nabla h\cdot (\alpha_1,\alpha_2))
 \\
 & - \big( (1+|\nabla h|^2)^{-1/2} \nabla h \cdot \nabla (1+|\nabla h|^2)^{-1/2} \big)
 (\alpha_3 -\nabla h\cdot (\alpha_1,\alpha_2)), 
\end{split}
\end{align}
where the expressions above are evaluated at $z=0$.
We then write out explicitly the curvature terms $\Pi_i^j\pa_in^j$;
we first record that
\begin{align*}
\pa_i n^j &= - \pa_i \left( (1+|\nabla h|^2)^{-1/2} \pa_j h\right), \quad i,j=1,2
\\
\pa_i n^3 &= \pa_i \left( (1+|\nabla h|^2)^{-1/2}\right), \quad i=1,2,
\end{align*}
and then, using the expressions \eqref{Pis} for the projection $\Pi$, we find
\begin{align}\label{divS'2}
\begin{split}
\Pi_i^j\pa_in^j
 & = 
 -\nabla \cdot \big( (1 + |\nabla h|^2)^{-1/2} \nabla h \big).
\end{split}
\end{align}
In view of \eqref{divS'1} and \eqref{divS'2}, the boundary condition \eqref{divS'} 
then reads
\begin{equation}\label{dzaplha3}
\pa_z \alpha_3 = B_3,
\end{equation}
where
\begin{align}\label{dnformulaB3app}
\begin{split}
B_3 & = \nabla h \cdot \partial_z (\alpha_1,\alpha_2)
 + (1+|\nabla h|^2)^{-1} \nabla h \cdot \nabla (\alpha_3 - \nabla h\cdot (\alpha_1,\alpha_2))
 \\
 & + \big( (1+|\nabla h|^2)^{-1/2} \nabla h \cdot \nabla (1+|\nabla h|^2)^{-1/2} \big)
 (\alpha_3 -\nabla h\cdot (\alpha_1,\alpha_2))
 \\
 & + \big[ \nabla \cdot \big( (1 + |\nabla h|^2)^{-1/2} \nabla h \big) \big]
 (1+ |\nabla h|^2)^{-1/2} \big( \alpha_3 -\nabla h \cdot (\alpha_1,\alpha_2) \big)
 \\
 & = \nabla h \cdot \partial_z (\alpha_1,\alpha_2)
 + (1+|\nabla h|^2)^{-1} \nabla h \cdot \nabla (\alpha_3 - \nabla h\cdot (\alpha_1,\alpha_2))
 \\
 & + A(\nabla h, \nabla^2 h)(\alpha_3 -\nabla h\cdot (\alpha_1,\alpha_2))
\end{split}
\end{align}
with
\begin{align}\label{dnformulaA3app}
\begin{split}
A(\nabla h, \nabla^2 h) := 
  \nabla \cdot \big( (1 + |\nabla h|^2)^{-1} \nabla h \big) .
\end{split}
\end{align}
This concludes the proof of Lemma \ref{flatlem}.
\end{proof}


Next, we give the formulas for the solution of Poisson's equation in the half-space
that are used to obtain the fixed point formulation of Lemma \ref{lemmafp}.

\begin{lemma}\label{lemmaAfp}
Let $u : \R^2_x \times \{z \leq 0\}$ be the solution of
\begin{align}\label{eqflatA}
    (\partial_z^2 + \Delta_x) u = \partial_z \dg{E^a} + |\nabla|
    \dg{E^b} + F, \quad \text{ in} \quad \R^2_x \times \{z<0\}.
\end{align}
Then:
\begin{itemize}

\item[(i)] If we assign Dirichlet boundary conditions
\begin{align}\label{bcA}
u(x,0) & = B(x)
\end{align}
$u$ is formally given by
\begin{align}\label{dirichletformula}
\begin{split}
u(x,z) = e^{z|\nabla|} B(x)
 & + \frac{1}{2} \int_{-\infty}^0 e^{-|z-s||\nabla|} \Big( \mathrm{sign}(z-s)\dg{E^a - E^b}  - \frac{1}{|\nabla|}F \Big) \, ds
 \\
 & - \frac{1}{2} \int_{-\infty}^0  e^{(z+s)|\nabla|} \Big(\dg{E^a - E^b} - \frac{1}{|\nabla|} F \Big) \, ds.
\end{split}
\end{align}

\item[(ii)] If we assign Neumann boundary conditions
\begin{align}\label{bcA'}
\partial_z u(x,0) & = B'(x)
\end{align}
$u$ is formally given by
\begin{align}\label{neumformula}
\begin{split}
u(x,z) & = \frac{1}{|\nabla|}  e^{z|\nabla|} B'(x)
- \frac{1}{|\nabla|} e^{z|\nabla|} \dg{E^a}(z=0)
 \\
       & - \frac{1}{2} \int_{-\infty}^0 e^{(z+s)|\nabla|} \big(- \dg{E^a + E^b} + \frac{1}{|\nabla|} F  \big) \,ds
 \\
 & + \frac{1}{2} \int_{-\infty}^0 e^{-|z-s||\nabla|}
 \big( \mathrm{sign}(z-s)\dg{E^a  - E^b} - \frac{1}{|\nabla|} F  \big)\, ds.
\end{split}
\end{align}
\end{itemize}

\end{lemma}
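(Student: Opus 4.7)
The plan is to reduce the problem to a family of one-dimensional ODEs in $z$ by taking the partial Fourier transform in the horizontal variable $x$. For each fixed frequency $\xi\in\R^2$, writing $\hat u(\xi,z)$ etc., the equation \eqref{eqflatA} becomes
\[
(\partial_z^2 - |\xi|^2)\hat u(\xi,z) \;=\; \partial_z \hat E_a + |\xi|\hat E_b + \hat F,\qquad z<0,
\]
together with the Dirichlet data $\hat u(\xi,0)=\hat B(\xi)$ in case (i) and Neumann data $\partial_z\hat u(\xi,0)=\hat B'(\xi)$ in case (ii). In both cases I would split $\hat u = \hat u_h + \hat u_p$ into a homogeneous and particular part, taking as the decaying homogeneous solution $e^{z|\xi|}\hat B(\xi)$ in case (i) and $|\xi|^{-1} e^{z|\xi|}\hat B'(\xi)$ in case (ii), so that in physical space these correspond exactly to the first terms $e^{z|\nabla|}B$ of \eqref{dirichletformula} and $|\nabla|^{-1}e^{z|\nabla|}B'$ of \eqref{neumformula}.

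For the particular part I would use the explicit Green's function on the half line, constructed by the method of images from the fundamental solution $-\tfrac{1}{2|\xi|} e^{-|z-s||\xi|}$ of $\partial_z^2-|\xi|^2$ on $\R$. For the Dirichlet problem this gives
\[
G_D(z,s) \;=\; -\tfrac{1}{2|\xi|} e^{-|z-s||\xi|} + \tfrac{1}{2|\xi|} e^{(z+s)|\xi|},
\]
and for the Neumann problem the sign of the image term is flipped. One checks directly that $G_D(0,s)=0$ and $\partial_z G_N(0,s)=0$ for $s<0$. The particular solution is then $\hat u_p(\xi,z) = \int_{-\infty}^0 G(z,s)\big[\partial_s \hat E_a(s) + |\xi|\hat E_b(s) + \hat F(s)\big]\,ds$, and expanding and inverting the Fourier transform produces precisely the bulk integrals that appear in \eqref{dirichletformula} (resp. \eqref{neumformula}) once one identifies $|\xi|$ with the operator $|\nabla|$.

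The step that requires a little care — and which I expect is the only non-routine point — is the $\partial_z E_a$ term. For this I would integrate by parts in $s$ against $G(z,s)$: this moves the derivative onto the Green's function, producing terms $\partial_s G(z,s) = \mp\tfrac12\mathrm{sign}(z-s)e^{-|z-s||\xi|} \pm \tfrac12 e^{(z+s)|\xi|}$ which, after substituting, reproduce the $\mathrm{sign}(z-s)E_a$ contributions as well as the non-symmetric $\pm E_a$ terms multiplying the $e^{(z+s)|\nabla|}$ kernel in \eqref{dirichletformula}--\eqref{neumformula}. In the Dirichlet case the boundary contribution at $s=0$ vanishes because $G_D(z,0)=0$ for $z<0$, which is why no extra boundary term appears in \eqref{dirichletformula}; in the Neumann case, however, $G_N(z,0)=-|\xi|^{-1} e^{z|\xi|}$ is nonzero, and the integration by parts produces precisely the extra term $-|\nabla|^{-1} e^{z|\nabla|}E_a(z=0)$ visible on the first line of \eqref{neumformula}. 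Decay at $-\infty$ removes the boundary term at $s=-\infty$, and collecting everything yields the stated formulas.

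Finally, since the derivation is formal (it proceeds under the understanding that the various Fourier integrals converge, which is implicit in the statement "formally a fixed point"), no further regularity issue needs to be addressed at this stage: the rigorous meaning of the formulas is deferred to the fixed-point argument of Proposition \ref{alphaprop}, where the mapping properties of the operators $T_1,T_2,T_3$ in \eqref{Tidefs} and of $e^{z|\nabla|}$ are established in the relevant function spaces via Lemma \ref{explem}.
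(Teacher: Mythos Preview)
Your proposal is correct and follows essentially the same route as the paper. The only cosmetic difference is that the paper writes the general decaying solution with an undetermined constant $c_1$ and then solves for it from the boundary data, whereas you package the same computation as a half-line Green's function built by the method of images; the substantive step---integrating by parts in $s$ on the $\partial_s E_a$ term and noting that the boundary contribution survives only in the Neumann case, producing the $-|\nabla|^{-1}e^{z|\nabla|}E_a(z=0)$ term---is identical in both arguments.
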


\begin{proof}
Taking the Fourier transform in $x$
one obtains the general solution of $(\partial_z^2 + \Delta_x) u = f$ in the lower half-space,
that decays to zero as $z\rightarrow -\infty$, in the form
\begin{align}\label{general}
\what{u}(\xi,z) = c_1 e^{z|\xi|} + \int_{-\infty}^z \frac{1}{2|\xi|}
 \big( e^{(z-s)|\xi|} - e^{(s-z)|\xi|} \big) \what{f}(s,\xi) \, ds.
\end{align}
Imposing the boundary condition \eqref{bcA} gives
\begin{equation*}
\what{u}(z, \xi)
 = \what{u}(0, \xi) e^{z|\xi|}
 +\int_{-\infty}^0 \frac{1}{2|\xi|} e^{(z+s)|\xi|}\what{f}\,ds
 -\int_{-\infty}^0 \frac{1}{2|\xi|} e^{-|z-s||\xi|} \what{f}\, ds.
\end{equation*}
When $f$ is the right-hand of \eqref{eqflatA}, an integration by parts in $s$ on the terms $\partial_s\dg{E^a}$
gives \eqref{dirichletformula}.

When instead we impose Neumann boundary conditions \eqref{bcA'}, from \eqref{general} we compute
\begin{align*}
\what{B'}(\xi) = c_1 |\xi| + \int_{-\infty}^0 \frac{1}{2}
 \big( e^{-s|\xi|} + e^{s|\xi|} \big) \hat{f}(s,\xi) \, ds,
\end{align*}
and therefore
\begin{align*}
\what{u}(\xi,z) & = \frac{1}{|\xi|} e^{z|\xi|} \Big(\what{B'}(\xi)
   - \int_{-\infty}^0 \frac{1}{2}
   \big( e^{-s|\xi|} + e^{s|\xi|} \big) \hat{f}(s,\xi) \, ds \Big)
\\
& + \int_{-\infty}^z \frac{1}{2|\xi|}
 \big( e^{(z-s)|\xi|} - e^{(s-z)|\xi|} \big) \hat{f}(s,\xi) \, ds
 \\
& = \frac{1}{|\xi|} e^{z|\xi|} \what{B'}(\xi)
  - \int_{-\infty}^0 \frac{1}{2|\xi|} e^{(z+s)|\xi|} \hat{f}(s,\xi) \, ds
  - \int_{-\infty}^0 \frac{1}{2|\xi|} e^{-|z-s||\xi|} \hat{f}(s,\xi) \, ds.
\end{align*}
Plugging-in for $f$ the right-hand side of \eqref{eqflatA} and integrating by parts on the $\partial_s\dg{E^a}$ term
gives \eqref{neumformula}.
\end{proof}

\subsection{Bounds for the Poisson kernel in weighted spaces}\label{ssecPoisson}
In this subsection we give the proof of Lemma \ref{explem}.

\begin{proof}[Proof of Lemma \ref{explem}]
We prove the estimates \eqref{expbounds1}-\eqref{expbounds3} as well as $L^\infty_x$
based estimates that are used in the proof of Lemma \ref{LemmaPsi}.
 
{\it Proof of  \eqref{expbounds1} and \eqref{expbounds2}}.
We being by recalling the following standard bounds for the Poisson Kernel,
\begin{align}
\label{expbounds01}
{\big\| e^{z|\nabla|} f \big\|}_{L^\infty_zL^p_x} & \lesssim {\| f \|}_{L^p_x},\qquad 1 < p <\infty,
\\
\label{expbounds02}
{\big\| |\nabla|^{1/2} e^{z|\nabla|} f \big\|}_{L^2_zL^2_x} & \lesssim {\| f\|}_{L^2_x}.
\end{align}
The estimates \eqref{expbounds01}-\eqref{expbounds02} give \eqref{expbounds1} and \eqref{expbounds2} with $r=0=k$.
The bounds with $k=0$ and any $r$ follow immediately.
To prove the bound with vector fields we first compute the commutator of $\underline{\Gamma} = \Omega$ or
$\underline{S} = z\partial_z + x \cdot \nabla + (1/2)t\partial_t$ with the Poisson kernel:
\begin{align}\label{comm0}
\begin{split}
[\underline{\Gamma}, e^{z|\nabla|}] &= c_\Gamma e^{z|\nabla|},
 \qquad c_{\underline{S}} = -2, \quad c_{\Omega} = 0,
 \\
[\underline{\Gamma}, |\nabla|^\ell] &= d_{\Gamma,\ell} |\nabla|^\ell,
 \qquad d_{\underline{S},\ell} = \ell, \quad d_{\Omega,\ell} = 0,
\end{split}
\end{align}
which are easy to see, for example taking the Fourier transform in $x$.

Then, using \eqref{comm0} and \eqref{expbounds01},
and recalling the definition of the $Z^{r,p}_k$ spaces, we have
\begin{align*}
{\big\| \underline{\Gamma}^k e^{z|\nabla|} f \big\|}_{L^\infty_z W^{r,p}} &
  \lesssim \sum_{k' \leq k} {\big\| e^{z|\nabla|} \underline{\Gamma}^{k'} f \big\|}_{L^\infty_z W^{r,p}}
  \\
  & \lesssim \sum_{k' \leq k} {\big\| \Gamma^{k'} f \big\|}_{W^{r,p}} = {\| f \|}_{Z^{r,p}_k}.
\end{align*}
This is \eqref{expbounds1}.
The second estimate \eqref{expbounds2} can be obtained in the same way.

\noindent
{\it Proof of  \eqref{expbounds3}}.
We adopt the short-hand
\begin{align*}
T_{\pm} f (x,z) := \int_{-\infty}^0 e^{-|z-s||\nabla|}  \mathbf{1}_\pm(s-z) f(x,s) \, ds.
\end{align*}
Consider first the case without vector fields, $k=0$.
In what follows, we let $p \in [1,2]$ and $q\in[2,\infty]$.
Taking the Fourier transform in $x$ 
we have
\begin{align}\label{explempr1}
\mathcal{F}\big( T_{\pm} f (x,z) \big)
  = e^{-|\cdot||\xi|}  \mathbf{1}_\mp(\cdot) \ast_z \widehat{f}(\xi,\cdot) \mathbf{1}_-(\cdot).
\end{align}
Using the Littlewood-Paley projectors $P_l$, $l\in\Z$ (see \eqref{LP0}),
and orthogonality we see that
\begin{align*}
{\| T_{\pm} f(x,z) \|}_{L^q_z L^2_x} \approx
  {\big\| {\| \mathcal{F}\big( T_{\pm} P_l f (\cdot,z) \big) \|}_{\ell^2(\Z)L^2_\xi(\R^2) } \big\|}_{L^q_z}.
\end{align*}
Applying Minkowski's inequality, followed by \eqref{explempr1} and Youngs's inequality with $1+1/q=1/p+1/\rho$,
gives
\begin{align*}
\begin{split}
{\| T_{\pm} f(x,z) \|}_{L^q_z L^2_x} & \lesssim
  {\big\| {\| \varphi_{[l-2,l+2]}(\xi)e^{-|\cdot||\xi|}  \mathbf{1}_\mp(\cdot)
  \ast_z \varphi_l(\xi)\widehat{f}(\xi,\cdot) \|}_{L^q_z} \big\|}_{\ell^2(\Z)L^2_\xi(\R^2) }
  \\
& \lesssim
  {\big\| {\| \varphi_{[l-2,l+2]}(\xi)e^{-|\cdot||\xi|} \|}_{L^\rho_z}
  \, {\| \varphi_l(\xi)\widehat{f}(\xi,\cdot) \|}_{L^p_z} \big\|}_{\ell^2(\Z)L^2_\xi(\R^2)}
\\
& \lesssim {\big\| {\|2^{-l/\rho} \varphi_l(\xi)\widehat{f}(\xi,\cdot) \|}_{L^p_z}
  \big\|}_{\ell^2(\Z)L^2_\xi(\R^2)}.
\end{split}
\end{align*}
Applying again Minkowski and using orthogonality we get
\begin{align}\label{explemprqp}
{\big\| |\nabla|^{(1+1/q-1/p)} T_{\pm} f(x,z) \big\|}_{L^q_z L^2_x}
  & \lesssim {\| f \|}_{L^p_z L^2_x}.
\end{align}
Using $(q,p) = (\infty,2)$ and $(2,2)$ we obtain the bounds
\begin{align}\label{explempr2}
{\big\| |\nabla|^{1/2} T_{\pm} f(x,z) \big\|}_{L^\infty_z L^2_x}
  + {\big\| |\nabla| T_{\pm} f(x,z) \big\|}_{L^2_z L^2_x}
  & \lesssim {\| f \|}_{L^2_z L^2_x}.
\end{align}
Similar estimates hold if we replace the kernel $e^{-|z-s||\nabla|}$ with $e^{(z+s)|\nabla|}$; 
see also Remark \ref{remexp}.
Using instead \eqref{explemprqp} with $(q,p) = (\infty,6/5)$ and $(2,6/5)$ we obtain the bounds
\begin{align}\label{explempr2'}
{\big\| |\nabla|^{1/2} T_{\pm} f(x,z) \big\|}_{L^\infty_z L^2_x}
  + {\big\| |\nabla| T_{\pm} f(x,z) \big\|}_{L^2_z L^2_x}
  & \lesssim {\| |\nabla|^{1/3}f \|}_{L^{6/5}_z L^2_x}.
\end{align}
The bounds \eqref{explempr2} and \eqref{explempr2'} give us \eqref{expbounds3} with $k=0$ and any $r$.

To obtain the estimates with vector fields it suffices to use the following
identities:
\begin{align}\label{Tpmcomm}
\underline{\Gamma} T_\pm & = T_\pm \underline{\Gamma} f + c^\pm_{\underline{\Gamma}} T_\pm f,
\end{align}
where $c^\pm_{\underline{S}} = -1$ and $c^{\pm}_\Omega = 0$.
The identity for $\underline{\Gamma} = \Omega$ is obvious.
To obtain the one for $\underline{\Gamma} = \underline{S}$, recall \eqref{3dvf},
it suffices to show the same identity just for the spatial part $\underline{\Sigma} := z\partial_z + x \cdot \nabla_x$.
Observe that for any $\tau$
\begin{equation*}
[x\cdot \nabla_x , e^{\tau |\nabla|}] = -(2 + \tau |\nabla|) e^{\tau|\nabla|}
\end{equation*}
and, therefore,
\begin{equation}\label{horizontalscaling'}
[z\partial_z + x\cdot \nabla_x , e^{(z-s) |\nabla|}] = (s|\nabla| -2)e^{(z-s)|\nabla|}
  = (-s\partial_s -2)e^{(z-s)|\nabla|}.
\end{equation}
It follows that
\begin{align*}
\begin{split}
\underline{\Sigma} T_+ f (x,z) & = \underline{\Sigma} \int_{z}^0 e^{(z-s)|\nabla|} f(x,s) \, ds,
   \\
   & = \int_{z}^0 e^{z|\nabla|} (-s\partial_s e^{-s|\nabla|}) f (x,s) \, ds
   + \int_{z}^0 e^{(z-s)|\nabla|} (x\cdot \nabla - 2) f (x,s) \, ds
   - z f(x,z).
\end{split}
\end{align*}
Integrating by parts in $s$ in the first integral above, we see that all the boundary terms cancel out 
and we obtain
\begin{align*}
\begin{split}
\underline{\Sigma} T_+ f (x,z)
   & = \int_{z}^0 e^{(z-s)|\nabla|} \partial_s \big(s f (x,s) \big)\, ds
   + \int_{z}^0 e^{(z-s)|\nabla|} (x\cdot \nabla - 2) f (x,s) \, ds
   \\
   & = T_+ \big( (\underline{\Sigma} - 1) f \big) (x,z),
\end{split}
\end{align*}
which implies \eqref{Tpmcomm} for $T_+$. 
For the operator $T_-$ we use the same argument:
from \eqref{horizontalscaling'}
\begin{align*}
\begin{split}
\underline{\Sigma} T_- f (x,z) & = \underline{\Sigma} \int_{-\infty}^z e^{(s-z)|\nabla|} f(x,s) \, ds,
   \\
   & = \int_{-\infty}^z e^{-z|\nabla|} (-s\partial_s e^{s|\nabla|}) f (x,s) \, ds
   + \int_{-\infty}^z e^{(s-z)|\nabla|} (x\cdot \nabla - 2) f (x,s) \, ds
   + z f(x,z)
   \\
   & = T_- \big( (\underline{\Sigma} -1) f \big)(x,z)
\end{split}
\end{align*}
having used again integration by parts in $s$ in the last step.

To conclude we 
use the above commutation identities \eqref{Tpmcomm} and \eqref{explempr2}-\eqref{explempr2'} 
to obtain
\begin{align*}
\begin{split}
& \sum_{|k'|\leq k} {\Big\| \underline{\Gamma}^{k'}
  |\nabla|^{1/2} \int_{-\infty}^0 e^{-|z-s||\nabla|}  \mathbf{1}_\pm(s-z) f(x,s) \, ds \Big\|}_{L^\infty_z H^r}
  \\
  & \lesssim \sum_{|k'|\leq k} {\Big\| |\nabla|^{1/2} \int_{-\infty}^0 e^{-|z-s||\nabla|}
  \mathbf{1}_\pm(s-z) \, \underline{\Gamma}^{k'} f(x,s) \, ds \Big\|}_{L^\infty_z H^r}
  \\
  & \lesssim
  \sum_{|k'|\leq k} \min \big( {\| \Gamma^{k'} f \|}_{L^2_z H^r},
  {\big\| |\nabla|^{1/3} \Gamma^{k'} f \big\|}_{L^{6/5}_z H^r} \big)
\end{split}
\end{align*}
This gives us the desired bound on the first term on the left-hand side of \eqref{expbounds3}.
The same argument can be applied to the second term on the left-hand side of \eqref{expbounds3}
using the $L^2_z$ bounds in \eqref{explempr2} and \eqref{explempr2'}.
This concludes the proof of Lemma \ref{explem}.
\end{proof}


%
%


\bigskip
\section{Energy estimates: proof of Proposition \ref{propEv}}\label{secEv}
\fp{Our goal in this appendix is to show how to obtain energy estimate involving} 
vector fields for solutions of \eqref{freebdy}, 
as opposed to estimates that only involve derivatives $\nabla_{x, y}$, as those that 
can be found in \cite{CL,CS2,ShZ2} for example; \fp{see also \cite{GMSC} where weighted energy estimates
are obtained for the irrotational problem with surface tension.}

To prove the main energy estimate \eqref{propEvEE} one must exploit the invariances and structure of the equations.
This structure is more transparent in the irrotational problem, 
once it has been properly rewritten on the boundary,
because it is straightforward to commute the linearized operator $\pa_t + i \Lambda^{1/2}$ with the fields $(S, \Omega)$. 
For the rotational problem, we must instead commute suitable vector fields with the full system \eqref{freebdy}. 
\fp{In what follows we will give some details on how to do this 
and, in particular, on how to derive the higher-order system \eqref{freebdyvfs},
which is the main step for the proof of \eqref{propEvEE}.}
We can then verify that the commuted system \eqref{freebdyvfs} has essentially the same structure as the 
original problem up to acceptable lower order error terms. 
\fp{This naturally leads to the definition
of the weighted energy functionals \eqref{En1n2} 
which control $n_1$ scaling fields and $n_2$ rotation fields
applied to the velocity $v$ and the height $h$, see \eqref{vn}.
These are in turn related to the functionals $\mathcal{E}_{r,k}$ appearing in the statement of Proposition \ref{propEv}.}

\medskip
\fp{{\it Set-up and the higher-order system}.}
To obtain \eqref{freebdyvfs} we commute
\eqref{freebdy} with the fields $\uS$ and $\Omega$ in \eqref{3dvf}.
More precisely, we apply the scaling field $\uS$ componentwise but 
use Lie derivatives $\mathcal{L}_{\uO}$ ($\mathcal{L}_{\uO} X =[\uO, X]$
for vector fields $X$ and $\mathcal{L}_{\uO} q = \uO q$ for functions $q$)
with respect to the rotation fields. 
This is because these operators preserve the divergence-free condition
\begin{align}\label{divSLie}
\div \uS^{n_1} \mathcal{L}_{\uO}^{n_2} v = 0. 
\end{align}
Moreover, the Lie derivatives $\mathcal{L}_{\uO}$ commute with gradients,\dg{
\begin{equation}
  \label{liecomm}
  \mathcal{L}_{\uO}^n \nabla_{x, y} q = \nabla_{x, y} \uO^n q,
\end{equation}
while the scaling field nearly commutes with the gradient, in the sense that
    \begin{equation}
\uS^n \nabla_{x, y} q = \nabla_{x, y} (\uS - 1)^n q.\end{equation}}
As a result, we have the \dg{following identity, which we will use to commute gradients
with our operators,}
\begin{equation}
  \label{SLie}
  \uS^{n_1} \mathcal{L}_{\uO}^{n_2} \nabla_{x, y} q = 
  \nabla_{x, y} (\uS - 1)^{n_1} \mathcal{L}_{\uO}^{n_2} q.
\end{equation}
In light of the above, and the fact that $\uS^{n} \pa_t  = \pa_t (\uS - \tfrac{1}{2})^n$, 
it is natural to work with \dg{the commuted quantities}
\begin{equation}
  \label{vn}
  v^{n_1, n_2} := \uS_{1/2}^{n_1} \mathcal{L}_{\uO}^{n_2} v,
    \qquad
    h^{n_1, n_2} := S_1^{n_1} \Omega^{n_2} h,
    \qquad
    P^{n_1, n_2} := \uS_1^{n_1}\uO^{n_2} (p + y)
    = \uS_1^{n_1}\uO^{n_2}p ,
\end{equation}
where we are abbreviating $\uS_a := \uS - a$, and similarly with $S_a$,
and where the last identity in \eqref{vn} holds if $n_1 + n_2 \geq 1$. 
Notice that Sobolev norms of $(v^{n_1, n_2}, h^{n_1, n_2})$ 
are equivalent to norms of 
$(\uS^{n_1} \Omega^{n_2} v, S^{n_1} \Omega^{n_2}  h)$, for $n_1 + n_2 \leq k$ with a fixed $k$.

Our main claim is that the above variables satisfy the system
\begin{subequations}\label{freebdyvfs}
\begin{alignat}{2}
  (\pa_t + v\cdot \nabla) v^{n_1, n_2} + \nabla_{x,y} P^{n_1, n_2} &= F_{n_1, n_2},
                                                        &&\quad \text{ in } \mathcal{D}_t,
                                                        \label{vn1n2eq}
  \\
  \div v^{n_1, n_2} &= 0, &&\quad \text{ in } \mathcal{D}_t,
  \label{divvn1n2eq}
  \\
  P^{n_1, n_2} &= (-\pa_y p) h^{n_1, n_2} + G_{n_1, n_2},
               &&\quad \text{ on } \pa \mathcal{D}_t, \label{pn1n2eqn}\\
  (\pa_t + v\cdot \nabla) h^{n_1, n_2} &= (v^{n_1, n_2})\cdot (1, -\nabla h) + H_{n_1, n_2},
                                       &&\quad \text{ on } \pa \mathcal{D}_t,
                                       \label{dtheqn}
\end{alignat}
\end{subequations}
where the terms $F_{n_1, n_2}, G_{n_1, n_2}, H_{n_1, n_2}$ consist of nonlinear
acceptable error terms, \fp{in the sense that they 
involve less (or equal) than $n_1$ scaling or $n_2$ rotation vector fields;}
\dg{in other words, these will satisfy estimates of the form
\begin{align}\label{errorbound}
\begin{split}
\|F_{n_1, n_2}\|_{L^2(\mathcal{D}_t)}
  + \|G_{n_1, n_2}\|_{L^2(\pa \mathcal{D}_t)}
  + \|H_{n_1, n_2}\|_{L^2(\pa \mathcal{D}_t)}
  \\ \lesssim Z_0(t) \sum_{r+k \leq n_1+n_2}\left( 
  {\| v(t) \|}_{X^r_k(\D_t)} 
  + {\| h(t) \|}_{Z^r_k(\R^2)}\right),
\end{split} 
\end{align}
where $Z_0$ is defined as in \eqref{propEvdec};
notice that the norms on the right-hand side of \eqref{errorbound} 
are included in the right-hand side of \eqref{propEveq}.}

\medskip
\fp{{\it Proof of \eqref{freebdyvfs}-\eqref{errorbound}}}
The equations \eqref{freebdyvfs} with the bounds \eqref{errorbound} follow
after applying $\uS^{n_1} \mathcal{L}_{\uO}^{n_2}$
to the original system \eqref{freebdyeul}-\eqref{freebdyinc} using the identities \eqref{SLie}, \eqref{divSLie}
and distributing vector fields, as we now show.


First, to derive the boundary conditions \eqref{pn1n2eqn}-\eqref{dtheqn}
we split the fields $\uS, \uO$ into tangential (to the boundary)
and transverse components, by defining
\begin{equation}
    \label{tangdecomposition}
    \uS_T := \uS + \uS(h - y)\pa_y, 
    \qquad
    \uO_T := \uO + \uO(h-y) \pa_y,
\end{equation}
which are tangent to the boundary since they annihilate the boundary-defining
function $y - h$. \dg{We also note that by definition $\uS_T h = \uS h = Sh$
and $\uO_T h = \Omega h$.}
\dg{To get \eqref{pn1n2eqn}, we use that $\uS (h-y) = (S - 1) h$ and $p = 0$
at the boundary, and we find }
\begin{align}\label{Pn1n2}
\begin{split}
P^{n_1, n_2} = (\uS_T-1)^{n_1}\uO_T^{n_2} p -(S-1)^{n_1} \Omega^{n_2} h \pa_y p
  + G_{n_1, n_2}
  \\
  = ( - \pa_y p) h^{n_1, n_2} + G_{n_1, n_2}, 
\end{split}
\end{align}
where $G_{n_1, n_2}$ collects nonlinear error terms
\dg{generated by using the expressions in \eqref{tangdecomposition} to express
vector fields in terms of tangential vector fields. These terms have
strictly fewer vector fields falling on $h$ and $p$ than in the other quantities
in the above expression, and can be bounded as in \eqref{errorbound}.}

\dg{We now show the validity of \eqref{dtheqn} 
which is the higher order version of $\partial_t h = v \cdot N$ with $N := (-\nabla h ,1)$. 
We start by re-writing
the quantity $v^{n_1, n_2}$ appearing on the right-hand side of \eqref{dtheqn} as
\begin{align}\label{dtheqnpr1}
v^{n_1,n_2} = (\uS_T - \tfrac{1}{2})^{n_1} \mathcal{L}_{\uO_T}^{n_2} v + H_{n_1,n_2}^1
\end{align}
where $H_{n_1,n_2}^1$ are acceptable nonlinear error terms with fewer vector fields, which can be bounded
as in \eqref{errorbound}. Then (slightly abusing notation)}
\begin{equation}
  \label{vcdotN}
  \big( (\uS_T - \tfrac{1}{2})^{n_1}\mathcal{L}_{\uO_T}^{n_2} v \big)\cdot N 
  = (\uS_T - \tfrac{1}{2})^{n_1}\uO_T^{n_2} \left(v \cdot N 
 \right) 
  + v\cdot  (\uS_T - \tfrac{1}{2})^{n_1} \mathcal{L}_{\uO}^{n_2} \nabla h
  + H_{n_1, n_2}^2,
\end{equation}
\dg{where $H_{n_1, n_2}^2$ can also be bounded by the right-hand side of \eqref{errorbound}}.
\dg{To deal with the first term on the right-hand side of \eqref{vcdotN},
    we recall that \eqref{freebdybc} gives $v\cdot N 
  = \pa_t h$, and since the operators
$\uS_T, \uO_T$ are tangent to the boundary, at the boundary we have
\begin{align}
\label{vNdth}
\begin{split}
  (\uS_T - \tfrac{1}{2})^{n_1}\uO_T^{n_2} \left(v \cdot N 
  \right) & = 
    (\uS_T - \tfrac{1}{2})^{n_1}\uO_T^{n_2} \pa_t h
    = \pa_t \left((S - 1)^{n_1} \uO^{n_2} h \right) = \pa_t h_{n_1,n_2},
\end{split}
\end{align}
where we used $(S - \tfrac{1}{2})\pa_t = \pa_t(S - 1)$.
}

\dg{To handle the second term on the right-hand side of \eqref{vcdotN}, we 
    use that $\uS_T h = S h$ and $\uO_T h = \Omega h$ and the commutator identity
    \eqref{SLie} to write
\begin{align}\label{dtheqnpr2}
v\cdot(\uS_T - \tfrac{1}{2})^{n_1} \mathcal{L}_{\uO}^{n_2} \nabla h = v\cdot \nabla h^{n_1, n_2}
 + H_{n_1, n_2}^3,
\end{align}
where $H_{n_1, n_2}^3$are terms that can be bounded by the right-hand side of \eqref{errorbound}
 after using the trace inequality \eqref{traceineq2}.
Combining \eqref{dtheqnpr1}-\eqref{dtheqnpr2}, 
completes the derivation of \eqref{dtheqn}.
}

\fp{The equation \eqref{vn1n2eq} can be derived in a more standard fashion, using again \eqref{SLie}
so we skip the details.}

\medskip
\fp{{\it Energy functionals and conclusion}.}
Starting from \eqref{freebdyvfs}, one can begin to carry out energy estimates by 
(applying derivatives as in the standard case and) multiplying 
the equation \eqref{vn1n2eq} with (derivatives of) $v^{n_1,n_2}$ and integrating over $\D_t$.
Integrating by parts the pressure term one sees that bulk terms vanish 
in view of \eqref{divvn1n2eq} and \eqref{divSLie}.
The remaining boundary integral of $P^{n_1,n_2} (v^{n_1,n_2} \cdot n)$,
where $n$ is the unit normal, 
can be manipulated using the formulas \eqref{pn1n2eqn} and \eqref{vcdotN}-\eqref{vNdth}.
This motivates the definition of high order energies of the form
\begin{align}\label{En1n2}
E_{n_1,n_2}(t) := \frac{1}{2} \int_{\D_t} \big| v^{n_1,n_2} \big|^2 \,dxdy
  + \frac{1}{2} \int_{\mathbb{R}^2} \big(- \partial_y p) \big| h^{n_1,n_2} \big|^2 \, dx.
\end{align}
Note that the Taylor sign condition $-\pa_y p \geq c_0 > 0$ holds automatically in our setting
of small solutions,
since \eqref{freebdyeul} gives $-\pa_y p = g + \partial_t v_3 + v \cdot \nabla v_3 \geq g - C\e_0$.
\fp{One can then consider the functionals \eqref{En1n2} for $n_1+n_2 \leq k$ and include $r$ regular derivatives,
by defining
\begin{align*}
\mathcal{E}_{r,k} := \sum_{n_1+n_2\leq k} \frac{1}{2} \int_{\D_t} \big| \nabla^r_{x,y}v^{n_1,n_2} \big|^2 \,dxdy
  + \frac{1}{2} \int_{\mathbb{R}^2} \big(- \partial_y p) \big| \nabla^r h^{n_1,n_2} \big|^2 \, dx.
\end{align*}
These can be chosen as the functionals appearing in Proposition \ref{propEv}, 
and it is easy to verify that \eqref{propEveq} holds.
The claimed a priori estimates \eqref{propEvEE} can 
then be obtained based on the system \eqref{freebdyvfs}, following standard arguments as those in \cite{CL,ShZ2}.}



\bigskip


\begin{thebibliography}{100}

\normalsize

\bibitem{ABZ2}
T. Alazard, N. Burq and C. Zuily.
\newblock On the Cauchy problem for gravity water waves.
\newblock {\em Invent. Math.} 198 (2014), no. 1, 71-163.


\bibitem{ADa}
T. Alazard and J.-M. Delort.
\newblock Global solutions and asymptotic behavior for two dimensional gravity water waves.
\newblock {\em Ann. Sci. \'Ec. Norm. Sup\'er.} 48 (2015), 1149-1238.

\bibitem{ADb}
T. Alazard and J.-M. Delort.
\newblock Sobolev estimates for two dimensional gravity water waves
\newblock {\em  Ast\'erisque} 374 (2015) viii+241 pages.




\bibitem{BeFrMa} 
M. Berti, L. Franzoi and A. Maspero.
\newblock Pure gravity traveling quasi-periodic water waves with constant vorticity.
\newblock 
{\em Comm. Pure Appl. Math}. Vol. 77 (2024), no. 2, 990-1064.


\bibitem{BeMaMu} 
M. Berti, A. Maspero and F. Murgante.
\newblock Hamiltonian Birkhoff normal form for gravity-capillary water waves with constant vorticity: 
almost global existence.
\newblock Preprint {\em arXiv:2212.12255}.


 
\bibitem{BeFePu} 
M. Berti, R. Feola, and F. Pusateri.
\newblock Birkhoff normal form and long time existence for periodic gravity water waves.
\newblock {\em Comm. Pure Appl. Math.} 76 (2023), no. 7, 1416-1494.



\bibitem{CaLa}
A. Castro and D. Lannes.
\newblock Well-posedness and shallow-water stability for
a new Hamiltonian formulation of the water waves equations with vorticity.
\newblock {\em Indiana Univ. Math. J.} 64 (2015), no. 4, 1169-1270.

\bibitem{CHS}
H. Christianson, V. Hur, and G. Staffilani.
\newblock Strichartz estimates for the water-wave problem with surface tension.
\newblock {\em Comm. Partial Differential Equations} 35 (2010), no. 12, 2195-2252.


\bibitem{CL}
D. Christodoulou and H. Lindblad.
\newblock On the motion of the free surface of a liquid.
\newblock {\em Comm. Pure Appl. Math.} 53 (2000), no. 12, 1536-1602.

\bibitem{LL}
H. Lindblad, and C. Luo. 
\newblock A priori estimates for the compressible Euler equations for a liquid with 
free surface boundary and the incompressible limit. 
\newblock {\em Communications on Pure and Applied Mathematics.} 71.7 (2018): 1273-1333.

\bibitem{CS2}
D. Coutand and S. Shkoller.
\newblock Well-posedness of the free-surface incompressible Euler equations with or without surface tension.
\newblock {\em  J. Amer. Math. Soc.} 20 (2007), no. 3, 829-930.

\bibitem{CraigLim}
W. Craig.
\newblock An existence theory for water waves and the Boussinesq and Korteweg-de Vries scaling limits.
\newblock {\em  Comm. Partial Differential Equations}, 10 (1985), no. 8, 787-1003.


\bibitem{CraigSS}
W. Craig, U. Schanz and C. Sulem.
\newblock The modulational regime of three-dimensional water waves and the Davey-Stewartson system.
\newblock {\em Ann. Inst. H. Poincar\'e Anal. Non Lin\'eaire}, 14 (1997), no. 5, 615-667.


\bibitem{Craik}
A.D.D. Craik.
\newblock The origins of water wave theory.
\newblock {\em Annual review of fluid mechanics}. Vol. 36, 1-28.
\newblock Annu. Rev. Fluid Mech., 36, Annual Reviews, Palo Alto, CA, 2004.


\bibitem{DelortICM}
J.-M. Delort.
\newblock Long time existence results for solutions of water waves equations. 
\newblock {\em Proceedings of the ICM}, Rio de Janeiro 2018. Vol. III. 
Invited lectures, 2241-2260, World Sci. Publ., Hackensack, NJ, 2018.


\bibitem{DeIoPu1}
Y. Deng, A. D. Ionescu, and F. Pusateri.
\newblock On the wave turbulence theory of 2D gravity waves, I: deterministic energy estimates.
\newblock Preprint {\em arXiv:2211.10826}.




\bibitem{DIPP}
Y. Deng, A. D. Ionescu, B. Pausader, and F. Pusateri.
\newblock Global solutions for the $3D$ gravity-capillary water waves system.
\newblock {\em Acta Math.} 219 (2017), no. 2, 213-402.



\bibitem{Zhengetal}
M. Ehrnstrom, S. Walsh and C. Zeng.
\newblock Smooth stationary water waves with exponentially localized vorticity.
\newblock {\em J. Eur. Math. Soc.} (JEMS), 25 (2023), no. 3, 1045-1090. 



\bibitem{GMS2}
P. Germain, N. Masmoudi and J. Shatah.
\newblock Global solutions for the gravity surface water waves equation in dimension 3.
\newblock {\em Ann. of Math.} 175 (2012), 691-754.



\bibitem{GMSC}
P. Germain, N. Masmoudi and J. Shatah.
\newblock Global solutions for capillary waves equation in dimension 3.
\newblock {\em Comm. Pure Appl. Math.}, 68 (2015), no. 4, 625-687.


\bibitem{Gin1}
D. Ginsberg.
\newblock On the lifespan of three-dimensional gravity water waves with vorticity.
\newblock {\em arXiv:1812.01583}. 

\bibitem{Sun}
C. 
Sun.
\newblock Large time existence of Euler–Korteweg equations and two-fluid Euler–Maxwell equations with vorticity.
\newblock {\em Nonlinear Analysis}, Volume 207, (2021), 112273.


\bibitem{StraussONEPAS}
S. Haziot, V. Hur, W. A. Strauss, J. F. Toland, 
E. Wahl\'{e}n, S. Walsh, M. H. Wheeler. 
\newblock Traveling water waves -- the ebb and flow of two centuries. 
\newblock {\em Quart. Appl. Math.} 80 (2022), no. 2, 317-401.



\bibitem{IT}
M. Ifrim and D. Tataru.
\newblock Two dimensional water waves in holomorphic coordinates II: global solutions.
\newblock {\em Bull. Soc. Math. France} 144 (2016), 369-394.

\bibitem{IT2}
M. Ifrim and D. Tataru.
\newblock The lifespan of small data solutions in two dimensional capillary water waves.
\newblock  {\em Arch. Ration. Mech. Anal.} 225 (2017), no. 3, 1279-1346. 


\bibitem{ITv}
M. Ifrim and D. Tataru.
\newblock Two dimensional gravity water waves with constant vorticity: I. Cubic lifespan.
\newblock {\em Anal. PDE} 12 (2019), no. 4, 903-967.



\bibitem{IPTT}
M. Ifrim, B. Pineau, D. Tataru and M. A. Taylor.
\newblock Sharp Hadamard local well-posedness, enhanced uniqueness and pointwise continuation 
criterion for the incompressible free boundary Euler equations.
\newblock Preprint {\em arXiv:2309.05625}.




\bibitem{IoPu2}
A. D. Ionescu and F. Pusateri.
\newblock Global solutions for the gravity water waves system in 2D.
\newblock  {\em Invent. Math.} 199 (2015), no. 3, 653-804.

\bibitem{IoPu3}
A. D. Ionescu and F. Pusateri.
\newblock Global analysis of a model for capillary water waves in 2D.
\newblock {\em Comm. Pure Appl. Math.} 69 (2016), no. 11, 2015-2071.

\bibitem{IoPu4}
A. D. Ionescu and F. Pusateri.
\newblock Global regularity for 2d water waves with surface tension.
\newblock {\em Mem. Amer. Math. Soc.} 256 (2018), Memo 1227.

\bibitem{IoPuRev}
A. D. Ionescu and F. Pusateri.
\newblock Recent advances on the global regularity for irrotational water waves.
\newblock {\em  Philos. Trans. Roy. Soc. A} 376 (2018), no. 2111, 20170089, 28 pp.

\bibitem{IoPu5}
A. D. Ionescu and F. Pusateri.
\newblock Long-time existence for multi-dimensional periodic water waves. 
\newblock {\it Geom. Funct. Anal.} 29 (2019), 811-870.


\bibitem{IoLie}
A. D. Ionescu and V. Lie.
\newblock Long term regularity of the one-fluid Euler-Maxwell system in 3D with vorticity.
\newblock {\em Advances in Mathematics} 325 (2018), 719-769.


\bibitem{Lannes}
D. Lannes.
\newblock Well-posedness of the water waves equations.
\newblock {\em J. Amer. Math. Soc.} 18 (2005), 605-654.


\bibitem{LannesBook}
D. Lannes.
\newblock The water waves problem. Mathematical analysis and asymptotics.
\newblock Mathematical Surveys and Monographs, Vol. 188. American Mathematical Society, Providence, RI, 2013. xx+321 pp.

\bibitem{HL}
H. Lindblad.
\newblock Well-posedness for the motion of an incompressible liquid with free surface boundary.
\newblock {\em Ann. of Math.} 162 (2005), 109-194.


\bibitem{ShZ2}
J. Shatah and C. Zeng.
\newblock A priori estimates for fluid interface problems. 
\newblock {\em Comm. Pure Appl. Math.} 61 (2008), no. 6, 848-876.


\bibitem{ShZ3}
J. Shatah and C. Zeng.
\newblock Local well-posedness for the fluid interface problem.
\newblock {\em Arch. Ration. Mech. Anal.} 199 (2011), no. 2, 653-705.



\bibitem{steinbook}
E. M. Stein.
\newblock Harmonic analysis: real-variable methods, orthogonality, and oscillatory integrals.
\newblock 
Princeton University Press, Princeton, NJ, 1993, xiv+695 pp.




\bibitem{SulemBook}
C. Sulem and P.L. Sulem.
\newblock The nonlinear Schr\"{o}dinger equation. Self-focussing and wave collapse.
\newblock {\em Applied Mathematical Sciences}, 139.
\newblock Springer-Verlag, New York, 1999.


\bibitem{Su1}
Q. Su.
\newblock Long time behavior of 2D water waves with point vortices. 
\newblock {\em Comm. Math. Phys.} 380 (2020), no. 3, 1173-1266.


\bibitem{Su2}
Q. Su.
\newblock
On the Transition of the Rayleigh-Taylor Instability in 2d Water Waves with Point Vortices. 
\newblock {\em Annals of PDE} 9.2 (2023) 19.

\bibitem{Wanglow}
L. Wang.
\newblock Low regularity well-posedness for two dimensional deep gravity water waves with constant vorticity.
\newblock Preprint {\em arXiv:2312.09347}.



\bibitem{Wa1}
X. Wang.
\newblock Global infinite energy solutions for the 2D gravity water waves system.
\newblock {\em  Comm. Pure Appl. Math.} 71 (2018), no. 1, 90-162.


\bibitem{Wa2}
X. Wang.
\newblock Global regularity for the 3D finite depth capillary water waves. 
\newblock {\em Ann. Sci. \'Ec. Norm. Sup\'er.} (4) 53 (2020), no. 4, 847-943.



\bibitem{WZZZ}
C. Wang, Z. Zhang, W. Zhao and Y. Zheng.
\newblock Local well-posedness and break-down criterion of the incompressible Euler equations with free boundary. 
\newblock {\em Mem. Amer. Math. Soc.} 270 (2021), no. 1318, v + 119 pp.



\bibitem{Wu1}
S. Wu.
\newblock Well-posedness in Sobolev spaces of the full water wave problem in 2-D.
\newblock {\em Invent. Math.} 130 (1997), 39-72.


\bibitem{Wu2}
S. Wu.
\newblock Well-posedness in Sobolev spaces of the full water wave problem in 3-D.
\newblock {\em J. Amer. Math. Soc.}, 12 (1999), 445-495.


\bibitem{WuAG}
S. Wu.
\newblock Almost global wellposedness of the 2-D full water wave problem.
\newblock {\em Invent. Math.}, 177 (2009), 45-135.


\bibitem{Wu3DWW}
S. Wu.
\newblock Global wellposedness of the 3-D full water wave problem.
\newblock {\em Invent. Math.}, 184 (2011), 125-220.



\bibitem{WunonC1}
S. Wu.
\newblock Wellposedness of the 2D full water wave equation in a regime that allows for non-$C^1$ interfaces.
\newblock {\em Invent. Math.}, 217 (2019), 241-375.



\bibitem{Zak0} 
V. E. Zakharov.
\newblock Stability of periodic waves of finite amplitude on the surface of a deep fluid.
\newblock {\em Zhurnal Prikladnoi Mekhaniki i Teckhnicheskoi Fiziki} 9 (1968), no.2,  86-94.
\newblock {\em J. Appl. Mech. Tech. Phys.}, 9, 1990-1994.


\end{thebibliography}
\end{document}